\title{Conformality for a robust class of non-conformal attractors}
\author[]{Beatrice Pozzetti}
\author[]{Andr\'es Sambarino}
\author[]{Anna Wienhard}
\thanks{A.S. was partially financed by ANR DynGeo ANR-16-CE40-0025. B.P. and  A.W acknowledge funding by the Deutsche Forschungsgemeinschaft within the Priority Program SPP 2026 ``Geometry at Infinity''. A.W. acknowledges funding  by the European Research Council under ERC-Consolidator grant 614733, and by the Klaus-Tschira-
Foundation.}
\date{}
\subjclass[]{}
\renewcommand*{\backref}[1]{}
\renewcommand*{\backrefalt}[4]{\quad \tiny
  \ifcase #1 (\textbf{NOT CITED.})%
  \or    (Cited on page~#2.)%
  \else   (Cited on pages~#2.)%
  \fi}
\def\MRbibitem{\@ifnextchar[\my@lbibitem\my@bibitem}
\def\mybiblabel#1#2{\@biblabel{{\hyperref{http://www.ams.org/mathscinet-getitem?mr=#1}{}{}{#2}}}}
\def\myhyperanchor#1{\Hy@raisedlink{\hyper@anchorstart{cite.#1}\hyper@anchorend}}
\def\my@lbibitem[#1]#2#3#4\par{%
  \item[\mybiblabel{#2}{#1}\myhyperanchor{#3}\hfill]#4%
  \@ifundefined{ifbackrefparscan}{}{\BR@backref{#3}}%
  \if@filesw{\let\protect\noexpand\immediate% write to aux-file
    \write\@auxout{\string\bibcite{#3}{#1}}}\fi\ignorespaces%
}
\def\my@bibitem#1#2#3\par{%
  \refstepcounter\@listctr% standard tex item counter for the generic item number
  \item[\mybiblabel{#1}{\the\value\@listctr}\myhyperanchor{#2}\hfill]#3%
  \@ifundefined{ifbackrefparscan}{}{\BR@backref{#2}}%
  \if@filesw\immediate\write\@auxout% write to aux-file
    {\string\bibcite{#2}{\the\value\@listctr}}\fi\ignorespaces%
}
\newcommand{\xqedhere}[2]{%
  \rlap{\hbox to#1{\hfil\llap{\ensuremath{#2}}}}}
\newcommand{\Z}{\mathbb{Z}} 
\newcommand{\R}{\mathbb{R}} 
\newcommand{\C}{\mathbb{C}}
\newcommand{\N}{\mathbb{N}}
\renewcommand{\P}{\mathbb{P}}
\newcommand{\F}{\mathbb{F}}
\newcommand{\K}{\mathbb K}
\newcommand{\HH}{\mathbb H}
\newcommand{\sym}{{\sf{S}}}
\newcommand{\lb}{\llbracket}
\newcommand{\rb}{\rrbracket}
\newcommand{\eps}{\varepsilon}
\newcommand{\G}{\sf{\Gamma}}    % the group
\newcommand{\Gr}{\cal G}    % Grassmannian
\newcommand{\cone}{{\cal C}}
\newcommand{\E}{\Sigma}
\newcommand{\g}{\gamma}
\newcommand{\z}{\zeta}
\newcommand{\bord}{\partial}
\renewcommand{\L}{\Lambda}
\newcommand{\bordcone}{\cone_\infty}
\newcommand{\sroot}{{\sf{a}}}
\newcommand{\II}{\mathbf{I}}
\newcommand{\JJ}{\mathbf{J}}
\newcommand{\PP}{\mathbf{P}}
\renewcommand{\split}{\textrm{split}}
\newcommand{\scr}{\mathscr}
\renewcommand{\sf}[1]{{\mathsf{#1}}}
\newcommand{\cal}{\mathcal}
\renewcommand{\frak}{\mathfrak}
\newcommand{\stp}{\pi}
\renewcommand{\over}{\overline}
\newcommand{\Bending}{\ker(d_\rho\conj+\id)}
\newcommand{\EE}{{\sf{E}}}
\DeclareMathOperator{\holder}{Hol}
\DeclareMathOperator{\Anosov}{hom}
\DeclareMathOperator{\supp}{supp}
\DeclareMathOperator{\conj}{\tau}
\DeclareMathOperator{\class}{C}
\DeclareMathOperator{\diam}{diam}
\DeclareMathOperator{\SL}{{\mathsf{SL}}}
\DeclareMathOperator{\PSL}{{\mathsf{PSL}}}
\DeclareMathOperator{\GL}{{\mathsf{GL}}}
\DeclareMathOperator{\SO}{{\mathsf{SO}}}
\DeclareMathOperator{\PGL}{{\mathsf{PGL}}}
\DeclareMathOperator{\PO}{{\mathsf{PO}}}
\DeclareMathOperator{\PSO}{{\mathsf{PSO}}}
\DeclareMathOperator{\id}{id}
\DeclareMathOperator{\Isom}{Isom}
\DeclareMathOperator{\LC}{LC{}}
\DeclareMathOperator{\Hff}{{Hf{}f}}
\DeclareMathOperator{\Hess}{Hess}
\DeclareMathOperator{\Out}{Out}
\DeclareMathOperator{\hitchin}{\mathscr{H}}
\newcommand{\mm}{\mathbf{m}}
\newcommand{\st}{\,\mathord{\colon}\,} % "such that" in the definition of a set
\renewcommand{\angle}{\measuredangle}
\newcommand{\Wedge}{\mathsf{\Lambda}}  % Big wedge (for exterior powers)
\DeclareMathOperator{\Sp}{{\mathsf{Sp}}}
\DeclareMathOperator{\PSp}{{\mathsf{PSp}}}
\DeclareMathOperator{\PU}{{\mathsf{PU}}}
\DeclareMathOperator{\POK}{\PO_\K}
\newcommand{\HdC}{\mathbb H^d_\C}%complex hyperbolic space
\newcommand{\HR}{\mathbb H_\R}%real hyperbolic space
\newcommand{\HdH}{\mathbb H^d_\HH}%quaternionic hyperbolic space
\newcommand{\HoH}{\mathbb H^1_\HH}%quaternionic hyperbolic space
\newcommand{\HdK}{\mathbb H^d_\K}%complex/quaternionic hyperbolic space
\newcommand{\calO}{\mathcal O}%the valuation ring of a non-Archimedean local field $\K$
\DeclareMathOperator{\shift}{shift}
\newcommand{\bsm}{\left(\begin{smallmatrix}}
\newcommand{\esm}{\end{smallmatrix}\right)}
\newcommand{\bpm}{\begin{pmatrix}}
\newcommand{\epm}{\end{pmatrix}}
\renewcommand{\epsilon}{\varepsilon}
\setlist[enumerate,1]{label = {\upshape(\roman*)},ref = \roman*}
\setlist[enumerate,2]{label = {\upshape(\alph*)},ref = \alph*}
\newtheorem{thmA}{Theorem}
\newtheorem*{thm*}{Theorem}
\newtheorem*{cor*}{Corollary}
\newtheorem*{prop*}{Proposition}
\newtheorem{thm}{Theorem}[section]
\newtheorem{cor}[thm]{Corollary}
\newtheorem{lemma}[thm]{Lemma}
\newtheorem{prop}[thm]{Proposition}
\theoremstyle{definition}
\newtheorem*{defi*}{Definition}
\newtheorem{defi}[thm]{Definition}
\newtheorem{remark}[thm]{Remark}
\newtheorem{ex}[thm]{Example}
\theoremstyle{remark}
\newtheorem{obs}[thm]{Remark}
\newcommand{\nocontentsline}[3]{}
\newcommand{\tocless}[2]{\bgroup\let\addcontentsline=\nocontentsline#1{#2}\egroup}
\begin{document}

\begin{abstract} In this paper we investigate the Hausdorff dimension of limit sets of Anosov representations. In this context we revisit and extend the framework of hyperconvex representations and establish a convergence property for them, analogue to a differentiability property. As an application of this convergence, we prove that the Hausdorff dimension of the limit set of a hyperconvex representation is equal to a suitably chosen critical exponent. %In the appendix, in collaboration with M. Bridgeman, we extend a classical result on the Hessian of the Hausdorff dimension on purely imaginary directions. 
\end{abstract}

\maketitle

\tableofcontents

\section{Introduction}

In his seminal paper, Sullivan \cite{sullivan} describes the Hausdorff dimension of the limit set $\sf L_\G,$ of a discrete group $\G$ acting on the real hyperbolic $n$-space, in terms of the Dirichlet series $$s\mapsto\sum_{\g\in\G}e^{-sd(o,\g o)}.$$ More precisely, the \emph{critical exponent} of such a series is $$h_\G=\inf\Big\{s:\sum_{\g\in\G}e^{-sd(o,\g o)}<\infty\Big\}=\sup\Big\{s:\sum_{\g\in\G}e^{-sd(o,\g o)}=\infty\Big\}$$ and Sullivan shows:

\begin{thm*}[Sullivan]If $\G$ is a convex co-compact subgroup of $\PSO(1,n)$ then the Hausdorff dimension of  $\sf L_\G$ is $h_\G.$
\end{thm*}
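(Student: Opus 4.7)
The plan is to prove separately $\dim_H \sf L_\G \le h_\G$ and $\dim_H \sf L_\G \ge h_\G$, both relying on the classical \emph{shadow lemma}: for a sufficiently large fixed $R$, the shadow $\calO_R(\g o) \subset \bord\HH^n$ of the ball $B(\g o,R)$ seen from a basepoint $o$ is essentially a spherical cap of diameter comparable to $e^{-d(o,\g o)}$. Convex cocompactness enters in two guises: via cocompactness of the $\G$-action on the convex hull of $\sf L_\G$ (needed for the shadow lemma), and through the fact that every limit point is the endpoint of a geodesic ray passing through the $R$-neighborhood of infinitely many orbit points.

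For the upper bound, fix $s > h_\G$. For any threshold $T > 0$, the family $\{\calO_R(\g o) \st d(o,\g o) \ge T\}$ covers $\sf L_\G$, and the diameters of these shadows tend uniformly to $0$ as $T \to \infty$. The $s$-cost of this cover is controlled, up to a multiplicative constant, by the Poincar\'e tail $\sum_{d(o,\g o) \ge T} e^{-sd(o,\g o)}$, which vanishes by definition of $h_\G$. Hence $\mathcal H^s(\sf L_\G) = 0$, and taking $s \searrow h_\G$ yields $\dim_H \sf L_\G \le h_\G$.

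For the lower bound, construct a Patterson--Sullivan measure $\mu$ of dimension $h_\G$ on $\sf L_\G$ by taking a weak-$\ast$ accumulation point, as $s \searrow h_\G$, of the normalized atomic measures
\[\mu_s \;=\; \frac{1}{\sum_{\g\in\G} e^{-sd(o,\g o)}}\sum_{\g\in\G} e^{-sd(o,\g o)}\,\delta_{\g o},\]
with a Patterson weight inserted if needed to force divergence of the series at $h_\G$. The limit $\mu$ is $h_\G$-conformal under $\G$ and supported on $\sf L_\G$. The key analytic step is then the shadow estimate $\mu(\calO_R(\g o)) \asymp e^{-h_\G d(o,\g o)}$, which upgrades conformality into Ahlfors regularity $\mu(B(\xi,r)) \asymp r^{h_\G}$ for $\xi \in \sf L_\G$; the mass distribution principle immediately yields $\dim_H \sf L_\G \ge h_\G$.

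The principal obstacle is the \emph{lower} direction of the shadow estimate. The upper bound $\mu(\calO_R(\g o)) \lesssim e^{-h_\G d(o,\g o)}$ follows directly from conformality, but the reverse inequality requires showing that pushing $\mu$ forward by $\g^{-1}$ leaves a uniformly positive fraction of mass inside a fixed compact region of $\bord\HH^n$ viewed from $o$. This is where convex cocompactness is indispensable: since the orbit stays within bounded distance of the convex hull of $\sf L_\G$, geodesic rays from $o$ to a subset of $\sf L_\G$ of definite $\mu$-measure must actually traverse the $R$-ball around any prescribed orbit point, producing the needed uniform lower bound.
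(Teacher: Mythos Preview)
The paper does not actually prove Sullivan's theorem; it is quoted in the introduction as a classical result of Sullivan (with citation) that motivates the paper's generalizations to Anosov representations. Your outline is a correct sketch of the standard proof.

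It is worth noting, though, that your strategy is precisely the template the paper follows for its own results. The upper bound in Proposition~\ref{Hffand1st} is exactly your shadow-covering argument, with translated cone types $\g\bordcone(\g)$ playing the role of the shadows $\calO_R(\g o)$ and Lemma~\ref{l.dominationattractor} replacing the hyperbolic geometry that makes shadows into balls of radius $\asymp e^{-d(o,\g o)}$. The lower bound in Theorem~\ref{thm:eqifLC} constructs a Patterson measure $\mu^{\sroot_1}_\rho$ exactly as you do, proves a shadow-lemma analogue (Proposition~\ref{measure.conetype1}) giving $\mu^{\sroot_1}_\rho(\rho(\eta)X_\infty(\eta))\asymp (\sigma_2/\sigma_1)^{h^{\sroot_1}_\rho}$, and concludes via the mass distribution principle. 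The paper's notion of \emph{locally conformal point} is engineered to recover, in the non-conformal setting, the step you flag as the principal obstacle: that shadows are coarsely round balls. In $\HH^n$ the isometry group acts conformally on the visual boundary, so every limit point is trivially locally conformal and Sullivan's theorem falls out as the prototype case.
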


This is related to understanding the Hausdorff dimension of a hyperbolic set in dynamical terms. Indeed, the non-wandering set of the geodesic flow of $\G\backslash\HH^n$ is, by definition, a maximal isolated compact hyperbolic set, $h_\G$ is its topological entropy and Sullivan's result can be interpreted in terms of the Ledrappier-Young formula \cite{ly1}.

Describing the Hausdorff dimension of a hyperbolic repeller as a dynamical quantity is today well understood in the \emph{conformal} setting, i.e. when the derivative of the dynamics, restricted to the unstable distribution, acts as a conformal map (see Chen-Pesin's survey \cite{nonconformal}  and references therein). 
Analogously, Sullivan's result has been generalized to convex-cocompact groups of a CAT(-1)-space $X$ (see for example Bourdon \cite{bourdon} and Yue \cite{yue}). The metric on the visual boundary $\bord X$ used to compute the Hausdorff dimension is the \emph{visual metric}, for which the action of $\Isom X$ is conformal (i.e. sends balls to balls).

However, other natural metrics on $\bord X$ appear in very common situations: if $X$ is a rank 1 symmetric space of non-compact type, then its visual boundary carries the structure of a differentiable manifold and thus one would also like to understand the Hausdorff dimension of  limit sets for a (any) Riemannian metric on $\bord X.$ Unless $X$ is the real hyperbolic $n$-dimensional space, the Riemannian structure behaves differently from the visual structure: the action of $\Isom X$ is no longer conformal.

The dynamical characterization of Hausdorff dimension in a non-conformal setting is still not completely understood. We refer the reader again to Chen-Pesin's survey \cite{nonconformal}. Let us also note that only very recently B\'ar\'any-Hochman-Rapaport \cite{PlanarSets} provided a complete answer for Iterated-Function-Systems on the plane.
On the discrete groups side, Dufloux \cite{HCSchottky} has studied a class of Schottky subgroups of isometries of the complex hyperbolic $n$-space, that he calls \emph{well positioned}, and proves the analogue of Sullivan's result for the Hausdorff dimension of the limit set with respect to any Riemannian metric.

\subsection{This paper} 
In this paper we are interested in describing the Hausdorff dimension of the limit set of discrete subgroups of
a semi-simple Lie group $G,$ for a Riemannian structure on the flag spaces (or boundaries) of $G.$ The groups we will consider, called \emph{Anosov representations}, are in many ways similar to convex cocompact subgroups of $\SO(1,n),$ but do not act conformally on the boundaries of $G.$
\medskip

Anosov representations where introduced by Labourie \cite{Labourie-Anosov} for fundamental groups of negatively curved closed manifolds and the definition was extended by Guichard-W. \cite{GW-Domains} to any hyperbolic group. Such representations provide the appropriate generalization of the class of convex co-compact subgroups in the context of Lie groups of higher rank \cite{GW-Domains, KLP-Morse, KLP1}.  
 
We will not use the original definition but follow a more recent approach, developed by Kapovich-Leeb-Porti \cite{KLP1}, G\'eritaud-Guichard-Kassel-W. \cite{GGKW} and in particular Bochi-Potrie-S. \cite{BPS}, that provides a simplified definition and gives better quantitative control of Anosov representations. 

Let $\K=\R$ or $\C,$ consider an inner (or Hermitian if $\K=\C$) product in $\K^d$ and, for $g\in\GL_d(\K),$ denote by $g\mapsto g^*$ the corresponding adjoint operator. The singular values of $g$, i.e. the square root of the modulus of the eigenvalues of $gg^*$, are denoted by $$\sigma_1(g)\geq\cdots\geq\sigma_d(g).$$

Let $\G$ be a finitely generated discrete group, consider a finite symmetric generating set $S$ and denote by $|\,|$ the associated word metric on $\G.$ Given $p\in\lb1,d-1\rb$ denote by $\Gr_p(\K^d)$ the Grassmannian of $p$-dimensional subspaces of $\K^d.$
For a homomorphism $\rho:\G\to\PGL_d(\K),$ the following are equivalent:

\begin{itemize}
\item[i)] There exist positive constants $c,\mu$ such that for all $\g\in\G$ one has $$\frac{\sigma_{p+1}}{\sigma_p}\big(\rho(\g)\big)\leq ce^{-\mu|\g|},$$ 
\item[ii)] The group $\G$ is word-hyperbolic and there exist $\rho$-equivariant maps $(\xi^p,\xi^{d-p}):\bord\G\to\Gr_p(\K^d)\times\Gr_{d-p}(\K^d)$ such that for every $x\neq y\in\bord\G$ one has $$\xi^p(x)\oplus\xi^{d-p}(y)=\K^d,$$ and a suitable associated flow is contracting.
\end{itemize}

If either condition is satisfied we will say that $\rho$ is an $\{\sroot_p\}$-\emph{Anosov representation}\footnote{ The implication ii)$\Rightarrow$i) comes from Labourie \cite{Labourie-Anosov} and Guichard-W. \cite{GW-Domains}. The implication i)$\Rightarrow$ii) is more recent and due to Kapovich-Leeb-Porti \cite{KLP1}, see also Gu\'eritaud-Guichard-Kassel-W. \cite{GGKW} and Bochi-Potrie-S. \cite{BPS} for  different approaches. In the language of Bochi-Potrie-S. \cite[Section 3.1]{BPS} a representation verifying condition i) is called \emph{$p$-dominated}.}. For such a representation, the critical exponent $h^{\sroot_p}_\rho$  of the Dirichlet series \begin{equation}\label{DirichletRoot}
\Phi_\rho^{\sroot_p}(s)=\sum_{\g\in\G} \left(\frac{\sigma_{p+1}}{\sigma_p}\big(\rho(\g)\big)\right)^s\end{equation} 
is well defined. By definition, the series is convergent for every $s>h^{\sroot_p}_\rho$ and divergent for every $0<s<h^{\sroot_p}_\rho.$ 

If $\rho$ is furthermore $\{\sroot_{p+1}\}$-Anosov then $h^{\sroot_p}_\rho$ is analytic with respect to $\rho,$ and agrees with the entropy of a suitably defined flow (see for example Bridgeman-Canary-Labourie-S. \cite{pressure} and Potrie-S. \cite[Corollary 4.9]{exponentecritico}). But in general little is known about $h^{\sroot_p}_\rho$ without this extra assumption.

We will mainly focus on $\{\sroot_{1}\}$-Anosov representations. The chosen inner product on $\K^d$ induces a metric on $\P(\K^d),$ we will denote by $\Hff(A)$ the Hausdorff dimension of a subset $A\subset\P(\K^d)$ for this metric. As a first result we obtain the following, independently obtained by Glorieux-Monclair-Tholozan \cite{GMT}.
\begin{prop*}[Proposition~\ref{Hffand1st}]
Let $\rho: \G\to\PGL_d(\K)$ be $\{\sroot_{1}\}$-Anosov. Then 
$$\Hff\big(\xi^1(\bord\G)\big)\leq h^{\sroot_1}_\rho.$$
\end{prop*}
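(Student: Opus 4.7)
My approach is a covering upper bound: build an efficient cover of the limit set $\xi^1(\bord\G)\subset\P(\K^d)$ by balls whose radii are controlled by the singular value gap $\sigma_2/\sigma_1$ of $\rho(\gamma)$, and then compare the Hausdorff $s$-measure against the tail of the Dirichlet series $\Phi^{\sroot_1}_\rho(s)$.

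First I would set up the linear algebra of contraction. For $g\in\GL_d(\K)$ with Cartan decomposition $g=K_gA_gL_g^*$ where $A_g=\mathrm{diag}(\sigma_1(g),\ldots,\sigma_d(g))$, introduce the attracting direction $U_g:=[K_ge_1]\in\P(\K^d)$ and the repelling hyperplane $S_g:=[L_g\langle e_2,\ldots,e_d\rangle]\in\Gr_{d-1}(\K^d)$. A standard calculation in singular value decomposition gives the quantitative contraction lemma: for every $\epsilon>0$ there is $C_\epsilon>0$ such that
$$d([v],S_g)\geq\epsilon\ \Longrightarrow\ d(g\cdot[v],U_g)\leq C_\epsilon\,\sigma_2(g)/\sigma_1(g).$$

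The technical core is then a uniform \emph{fellow-traveler estimate}: there exist constants $R,C,\epsilon>0$ such that for every $\gamma\in\G$ and every $\eta\in\bord\G$ lying in the combinatorial shadow
$\mathrm{Sh}_R(\gamma):=\{\eta\in\bord\G:(\eta|\gamma)_e\geq|\gamma|-R\}$,
one has
$$d\bigl(\xi^1(\eta),\,U_{\rho(\gamma)}\bigr)\leq C\,\sigma_2/\sigma_1\bigl(\rho(\gamma)\bigr).$$
By $\rho$-equivariance $\xi^1(\eta)=\rho(\gamma)\cdot\xi^1(\gamma^{-1}\eta)$, this reduces via the contraction lemma to the uniform lower bound $d\bigl(\xi^1(\gamma^{-1}\eta),\,S_{\rho(\gamma)}\bigr)\geq\epsilon$. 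Establishing this separation is the main obstacle of the proof: it uses the transversality of the equivariant pair $(\xi^1,\xi^{d-1})$, the fact — a consequence of the $\{\sroot_1\}$-Anosov property — that $S_{\rho(\gamma)}$ is close to $\xi^{d-1}$ of a boundary point dual to $\gamma^{-1}\eta$, and the word-hyperbolic geometry of $\G$ encoded in the Gromov product; the required estimates are precisely of the kind developed in \cite{BPS}.

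Granting the fellow-traveler bound, the rest is a routine covering argument. Since every $\eta\in\bord\G$ lies in $\mathrm{Sh}_R(\gamma)$ for some $\gamma\in\G$ of each fixed word length $n$, the family $\bigl\{B\bigl(U_{\rho(\gamma)},\,C\sigma_2/\sigma_1(\rho(\gamma))\bigr):|\gamma|=n\bigr\}$ covers $\xi^1(\bord\G)$, and its diameters tend to $0$ uniformly as $n\to\infty$ by the $\{\sroot_1\}$-Anosov contraction $\sigma_2/\sigma_1(\rho(\gamma))\leq ce^{-\mu|\gamma|}$. Fix now $s>h^{\sroot_1}_\rho$; the series $\Phi^{\sroot_1}_\rho(s)$ converges, so the layer sum $\sum_{|\gamma|=n}(\sigma_2/\sigma_1)^s(\rho(\gamma))$ tends to $0$ as $n\to\infty$. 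For arbitrary $\delta>0$, choosing $n$ so large that all covering balls have radius less than $\delta$ gives
$$\mathcal H^s_\delta\bigl(\xi^1(\bord\G)\bigr)\leq C^s\sum_{|\gamma|=n}\bigl(\sigma_2/\sigma_1(\rho(\gamma))\bigr)^s\xrightarrow[n\to\infty]{}0.$$
Hence the Hausdorff $s$-measure of $\xi^1(\bord\G)$ vanishes, so $\Hff\bigl(\xi^1(\bord\G)\bigr)\leq s$; letting $s\searrow h^{\sroot_1}_\rho$ completes the proof.
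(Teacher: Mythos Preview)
Your proposal is correct and follows essentially the same approach as the paper. The paper uses the cone-type covering $\cal U_T=\{\gamma\,\bordcone(\gamma):|\gamma|\geq T\}$ in place of your Gromov-product shadows $\mathrm{Sh}_R(\gamma)$, but these play an identical role; your ``fellow-traveler estimate'' is precisely the combination of Lemma~\ref{UniformAngle} (the uniform angle between $\xi^1(x)$ and $U_{d-1}(\rho(\gamma^{-1}))$ for $x$ in the cone of $\gamma$, proved via the BPS machinery you allude to) with the contraction Lemma~\ref{l.dominationattractor}, and the final Hausdorff-measure comparison with the tail of $\Phi^{\sroot_1}_\rho(s)$ is verbatim the same.
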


In order to discuss situations in which equality holds, we introduce the notion of \emph{locally conformal points} of $\rho$ (Definition \ref{LocallyConformal}), these are points of $\bord\G$ designed to detect some asymptotic conformality of the non-conformal action of $\rho(\G)$ when restricted to the limit set $\xi^{1}(\bord\G).$ Using Patterson's construction we then obtain a (not necessarily quasi-invariant) measure $\mu^{\sroot_1}_\rho$ on $\bord\G$. Following Sullivan, we then prove the following.

\begin{thm*}[Theorem \ref{thm:eqifLC}]
 If the set of locally conformal points of $\rho$ has positive $\mu_{\rho}^{\sroot_{1}}$-measure, then 
 $$\Hff\big(\xi^1(\bord\G)\big) =  h^{\sroot_1}_\rho.$$
\end{thm*}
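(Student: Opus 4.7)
The plan is to match the upper bound $\Hff(\xi^1(\bord\G)) \leq h^{\sroot_1}_\rho$ from Proposition \ref{Hffand1st} with a lower bound obtained by a Sullivan-type shadow argument, using the measure $\mu_\rho^{\sroot_1}$ supplied by Patterson's construction. The guiding principle is that, although $\rho(\G)$ does not act conformally on $\P(\K^d)$, at a locally conformal point the non-conformal distortion becomes irrelevant to first order, and the Patterson measure becomes comparable to a $h^{\sroot_1}_\rho$-dimensional Hausdorff measure.

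\textbf{Step 1: Transfer the measure.} Push $\mu_\rho^{\sroot_1}$ forward through $\xi^1$ to obtain a finite Borel measure $\nu = \xi^1_*\mu_\rho^{\sroot_1}$ supported on $\xi^1(\bord\G) \subset \P(\K^d)$. Since $\xi^1$ is a homeomorphism onto its image (coming from the $\{\sroot_1\}$-Anosov property), the hypothesis provides a set $E \subset \xi^1(\bord\G)$ of positive $\nu$-measure consisting of images of locally conformal points.

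\textbf{Step 2: Shadow lemma.} For $\g\in\G$ define a ``shadow'' $\calO_\g\subset\bord\G$ as the set of boundary points whose $\rho$-image under $\xi^1$ lies in a controlled neighborhood of the attracting line of $\rho(\g)$. Patterson's construction, together with the singular value control $\frac{\sigma_2}{\sigma_1}(\rho(\g))\leq ce^{-\mu|\g|}$ guaranteed by $\{\sroot_1\}$-Anosov, yields the estimate
\[
\mu_\rho^{\sroot_1}(\calO_\g) \asymp \left(\frac{\sigma_2}{\sigma_1}(\rho(\g))\right)^{h^{\sroot_1}_\rho}.
\]
The usual ingredients — equivariance, the $(h^{\sroot_1}_\rho)$-conformality of the Patterson measure with respect to the cocycle $\log(\sigma_1/\sigma_2)$, and the fact that the shadow of the identity has uniformly bounded measure from above and below — are all available here.

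\textbf{Step 3: Comparison of shadows and Euclidean balls.} This is where the local conformality hypothesis enters decisively and is the main obstacle. For a point $x\in\bord\G$ that is locally conformal and a sequence $\g_n\to x$ in the group, Definition \ref{LocallyConformal} is designed precisely so that the image under $\xi^1$ of the shadow $\calO_{\g_n}$ is comparable to a genuine round ball in $\P(\K^d)$ centered at $\xi^1(x)$, of radius comparable to $\sigma_2/\sigma_1(\rho(\g_n))$. Without local conformality this image would be an elongated ``ellipsoid'' and no Frostman-type estimate could be derived. Combining Steps 2 and 3, there is a constant $C>0$ such that for $\nu$-almost every $z\in E$ and arbitrarily small radii $r>0$,
\[
\nu\bigl(B(z,r)\bigr) \leq C\, r^{h^{\sroot_1}_\rho}.
\]

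\textbf{Step 4: Frostman's lemma.} The estimate in Step 3 together with $\nu(E)>0$ implies, by the mass distribution principle, that $\Hff(E) \geq h^{\sroot_1}_\rho$, hence $\Hff(\xi^1(\bord\G)) \geq h^{\sroot_1}_\rho$. Combined with Proposition \ref{Hffand1st} this yields the equality asserted in the theorem. The delicate point throughout is Step 3, where one has to prove that at locally conformal points the discrepancy between the dynamical ``shadow'' and Euclidean balls is sub-exponential along the scales determined by $\sigma_2/\sigma_1$; the remainder of the argument then follows Sullivan's template almost verbatim.
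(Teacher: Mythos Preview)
Your outline follows the same Sullivan template as the paper: upper bound from Proposition \ref{Hffand1st}, a shadow-type estimate (the paper's Proposition \ref{measure.conetype1}), comparison of shadows with round balls at locally conformal points (the paper's Corollary \ref{c.conetypeBall}), and a Frostman argument to conclude.

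There is, however, a genuine issue in Step 2. You invoke ``equivariance'' and ``$(h^{\sroot_1}_\rho)$-conformality of the Patterson measure with respect to the cocycle $\log(\sigma_1/\sigma_2)$'' as available ingredients. They are not: the paper explicitly notes (in the introduction and in Remark \ref{r.Quint}) that $\mu_\rho^{\sroot_1}$ is \emph{not} necessarily quasi-invariant, and there is no reason the Radon--Nikodym derivative of $\g_*\mu_\rho^{\sroot_1}$ with respect to $\mu_\rho^{\sroot_1}$ should be governed by $\sigma_2/\sigma_1$ alone. The paper's shadow estimate (Proposition \ref{measure.conetype1}) is therefore not proven via conformality; instead one works directly with the approximating measures $\mu_\rho^s$, uses Lemma \ref{l.no_cancellation} on elements $\g$ with $U_1(\rho(\g))$ near the thickened cone type, and passes to the weak-$*$ limit. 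The resulting bounds are also asymmetric: one only obtains $(\sigma_d/\sigma_1)^{h} \lesssim \mu_\rho^{\sroot_1}(\rho(\eta)X_\infty(\eta)) \lesssim (\sigma_2/\sigma_1)^{h}$, not the $\asymp (\sigma_2/\sigma_1)^h$ you assert. Fortunately only the upper bound is needed for Frostman, so your Steps 3--4 survive once Step 2 is repaired along these lines.
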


\medskip
Interestingly, for a rich class of Anosov representations, a 3-point transversality condition, inspired by Labourie \cite{Labourie-Anosov}, forces asymptotic conformality:
%In order to provide a rich class of examples of representations for which every point is locally conformal, we consider Anosov representations satisfying an additional 3-point transversality condition, which is inspired by Labourie \cite{Labourie-Anosov}.

\begin{defi*} Consider $p,q,r\in\lb1,d-1\rb$ such that $p+q\leq r.$ A $\{\sroot_p,\sroot_q,\sroot_r\}$-Anosov representation $\rho:\G\to\PGL_d(\K)$ is called $(p,q,r)$-\emph{hyperconvex} if for every triple of pairwise distinct points $x,y,z\in\bord\G$ one has $$\big(\xi^p(x)\oplus \xi^q(y)\big)\cap \xi^{d-r}(z)=\{0\}.$$ (Note that $p$ and $q$ are not required to be distinct.)
\end{defi*}

The main result of this paper is the following.

\begin{thmA}[{Corollary~\ref{2FrenetHffDim}} and Corollary~\ref{inequality}] \label{thA}Let $\rho$ be $(1,1,2)$-hyperconvex. Then $$h^{\sroot_1}_\rho=\Hff\big(\xi^1(\bord\G)\big)\leq\Hff\big(\P(\K^2)\big).$$
\end{thmA}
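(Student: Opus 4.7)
The statement splits into the equality $h^{\sroot_1}_\rho=\Hff(\xi^1(\bord\G))$ and the bound $\Hff(\xi^1(\bord\G))\le\Hff(\P(\K^2))$, and I would treat them separately.

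\textbf{Equality via local conformality.} The goal is to apply Theorem~\ref{thm:eqifLC}, so it suffices to show that under $(1,1,2)$-hyperconvexity \emph{every} point of $\bord\G$ is locally conformal; then the set of locally conformal points trivially has full $\mu_\rho^{\sroot_1}$-measure. The key geometric input is that $(1,1,2)$-hyperconvexity forces the limit set $\xi^1(\bord\G)$ to be, in an asymptotic sense, tangent to the $2$-plane $\xi^2(x)$ at $\xi^1(x)$: indeed, for $y\to x$ the two-dimensional subspace $\xi^1(x)\oplus\xi^1(y)$ lies in $\xi^2(x)+\xi^2(y)$ and, by the transversality condition against $\xi^{d-2}(z)$ for a third point $z$, it must converge to $\xi^2(x)$. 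Now let $\g_n\in\G$ be a sequence with $\g_n^{-1}$ having attracting $1$-flag tending to $\xi^1(x^+)$; since $\rho$ is both $\{\sroot_1\}$- and $\{\sroot_2\}$-Anosov, the Cartan decomposition of $\rho(\g_n^{-1})$ gives exponential contraction rates controlled by $\sigma_{p+1}/\sigma_p$ for $p=1,2$. Because the limit set sits in the aforementioned tangential $2$-plane up to asymptotically negligible error, the non-tangential singular-value ratios $\sigma_j/\sigma_1$ with $j\ge 3$ do not appear in the effective derivative of $\rho(\g_n^{-1})$ along $\xi^1(\bord\G)$, and the action collapses to something whose distortion is governed by the single ratio $\sigma_2/\sigma_1$. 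This ``ball-to-ball'' behavior is precisely local conformality, and should be the convergence/differentiability property announced in the abstract.

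\textbf{Upper bound via projection to $\P(\K^2)$.} Fix an auxiliary point $z\in\bord\G$. Hyperconvexity at triples containing $z$ says that $\xi^1(x)\oplus\xi^{d-2}(z)$ has trivial intersection with $\xi^{d-2}(z)$ for $x\neq z$, so the projection
\[
\pi_z:\K^d\to\K^d/\xi^{d-2}(z)\cong\xi^2(z)\cong\K^2
\]
is injective on every $\xi^1(x)$ with $x\neq z$, and sends distinct $\xi^1(x),\xi^1(y)$ to linearly independent directions. Hence
\[
\Phi_z:\bord\G\setminus\{z\}\to\P(\xi^2(z))\cong\P(\K^2),\qquad x\mapsto[\pi_z(\xi^1(x))],
\]
is well defined, continuous and injective. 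Being a restriction of a smooth linear projection, $\Phi_z$ is Lipschitz on any compact subset of $\bord\G\setminus\{z\}$ (for the projective distance), and its inverse, defined on the compact set $\Phi_z(\xi^1(\bord\G)\setminus\{\xi^1(z)\})$, is Lipschitz by continuity and compactness. Covering $\bord\G$ by finitely many compact pieces, each missing a suitable auxiliary $z$, yields local bi-Lipschitz embeddings of $\xi^1(\bord\G)$ into $\P(\K^2)$, from which the Hausdorff-dimension bound follows.

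\textbf{Main obstacle.} The routine part is the upper bound: once injectivity and smoothness of $\Phi_z$ are granted, the Lipschitz argument is essentially formal. The substantive step is the verification of local conformality from $(1,1,2)$-hyperconvexity. It requires quantitative control showing that the tangential direction $\xi^2(x)/\xi^1(x)$ genuinely captures the direction of the limit set at $\xi^1(x)$, and that the error (measured against the transverse singular-value ratios $\sigma_j/\sigma_2$, $j\ge 3$) decays fast enough to be absorbed into the leading conformal factor $\sigma_2/\sigma_1$. This is where the extra Anosov hypothesis on $\sroot_2$ becomes indispensable, and it is the place I expect the main technical work to be concentrated.
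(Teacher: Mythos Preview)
Your overall plan for the equality is correct and matches the paper: show every point is locally conformal (using the $\{\sroot_2\}$-Anosov condition so that $p_2=2$ is automatic) and then invoke Theorem~\ref{thm:eqifLC}. The convergence $\xi^1(w)\oplus\xi^1(y)\to\xi^2(x)$ that you sketch is exactly the paper's Theorem~\ref{FRep->Fpoint}, and the paper's Proposition~\ref{p.main} makes your heuristic about the angle condition precise.

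There is, however, a genuine gap in your upper bound argument, and you have misidentified which half is routine. You claim that the inverse of $\Phi_z$ is Lipschitz ``by continuity and compactness''. This is false: a continuous bijection between compact metric spaces is a homeomorphism, but its inverse need not be Lipschitz (think of $t\mapsto t^2$ on $[0,1]$). Injectivity of $\Phi_z$, which is all that $(1,1,2)$-hyperconvexity gives you directly, does not rule out the limit curve being \emph{tangent} to the fibre $z_\rho^{d-2}$ at some point, in which case the projection would collapse distances and the dimension bound would fail. What you actually need is that for $w,y$ close to $x$, the line $w^1_\rho\oplus y^1_\rho$ makes a definite angle with $z_\rho^{d-2}$; this is equivalent to $w^1_\rho\oplus y^1_\rho$ converging to $x^2_\rho$ (which is transverse to $z_\rho^{d-2}$), i.e.\ precisely the convergence property you invoked for the equality. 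The paper uses Theorem~\ref{FRep->Fpoint} in exactly this way in the proof of Proposition~\ref{inequality}. So the ``main technical work'' you correctly located in the equality part is needed \emph{again} for the inequality; the stereographic projection is not Lipschitz for formal reasons but because of the differentiability forced by hyperconvexity.
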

The aforementioned analyticity result for $h^{\sroot_p}_\rho$, together with Theorem \ref{thA}, has the following consequence:

\begin{cor} Let $\{\rho_u:\G\to\PGL_d(\K)\}_{u\in D}$ be an analytic family of $(1,1,2)$-hyperconvex representations, then $u\mapsto \Hff\big(\xi^1_u(\bord\G)\big)$ is analytic. 
\end{cor}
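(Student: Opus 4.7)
The strategy is to reduce the corollary to the combination of Theorem A with the analyticity result for critical exponents cited in the excerpt (from Bridgeman--Canary--Labourie--S. and Potrie--S.). By Theorem A, for each $u\in D$ we have the pointwise identity
\[
\Hff\bigl(\xi^1_u(\bord\G)\bigr)=h^{\sroot_1}_{\rho_u},
\]
so the function $u\mapsto\Hff\bigl(\xi^1_u(\bord\G)\bigr)$ coincides with $u\mapsto h^{\sroot_1}_{\rho_u}$. It therefore suffices to show that the latter is an analytic function of $u$.

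The cited result guarantees analyticity of $\rho\mapsto h^{\sroot_p}_\rho$ provided $\rho$ is simultaneously $\{\sroot_p\}$-Anosov and $\{\sroot_{p+1}\}$-Anosov. Thus I need to check that a $(1,1,2)$-hyperconvex representation is both $\{\sroot_1\}$-Anosov and $\{\sroot_2\}$-Anosov. But this is built into the definition of $(p,q,r)$-hyperconvexity recalled above: a $(1,1,2)$-hyperconvex representation is, in particular, $\{\sroot_1,\sroot_2\}$-Anosov, i.e.\ $\{\sroot_1\}$-Anosov and $\{\sroot_2\}$-Anosov. Hence $\rho\mapsto h^{\sroot_1}_\rho$ is analytic on an open neighborhood of each $\rho_u$ in the representation variety.

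Composing the analytic map $u\mapsto\rho_u$ with the analytic map $\rho\mapsto h^{\sroot_1}_\rho$ gives the desired analyticity of $u\mapsto h^{\sroot_1}_{\rho_u}$, and therefore of $u\mapsto\Hff\bigl(\xi^1_u(\bord\G)\bigr)$. The one subtle point worth flagging explicitly is that both Anosov conditions (and, although not needed here, hyperconvexity itself, which is an open condition in the space of $\{\sroot_1,\sroot_2\}$-Anosov representations by Theorem A's underlying transversality framework) are preserved under small analytic deformations, so the critical exponent is well defined and analyticity is a local statement that can be applied at every $u\in D$. There is no real obstacle here: the proof is essentially a direct combination of Theorem A with the previously established analyticity of the $\sroot_1$-critical exponent under a two-step Anosov assumption, which is automatic for $(1,1,2)$-hyperconvex representations.
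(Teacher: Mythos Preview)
Your proof is correct and follows exactly the paper's approach: the corollary is stated as an immediate consequence of Theorem~A (giving $\Hff(\xi^1_u(\bord\G))=h^{\sroot_1}_{\rho_u}$) together with the cited analyticity of $h^{\sroot_1}_\rho$ under the $\{\sroot_1,\sroot_2\}$-Anosov assumption, which is automatic for $(1,1,2)$-hyperconvex representations by definition. The paper does not give a separate proof; your write-up simply spells out the one-line deduction.
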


In fact Theorem \ref{thA} holds in greater generality. We can replace $2's$ by any $p\in\lb2,d-1\rb$ if we additionally require that for every $\g\in\G$ one has $$\sigma_2\big(\rho(\g)\big)=\sigma_p\big(\rho(\g)\big),$$ see 
Corollary~\ref{11pLC} and Corollary~\ref{inequality}. This extra condition on the singular values should be interpreted as a restriction on the Zariski closure of the representation (see subsection \ref{sec:POK} for situations such as $\PSp(1,n)$ and $\PU(1,n),$  and subsection \ref{s.SOpq} for the group $\PSO(p,q)$).

A key ingredient for the proof of Theorem~\ref{thA}  is the following convergence property for hyperconvex representations, from the inequality readily follows. 

\begin{thmA}[{Theorem \ref{FRep->Fpoint}}]\label{thB} Let $\rho$ be $(p,q,r)$-hyperconvex, then for every $(w,y)\in\bord^{(2)}\G$ one has $$\lim_{(w,y)\to (x,x)}d\big(\xi^p(w)\oplus \xi^q(y),\xi^r(x)\big)=0.$$
\end{thmA}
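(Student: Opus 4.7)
The plan is to argue by contradiction and exploit the contracting dynamics that the $\{\sroot_r\}$-Anosov property provides on Grassmannians. If the conclusion fails, there exist $x\in\bord\G$, $\eps_0>0$ and a sequence $(w_n,y_n)\in\bord^{(2)}\G$ with $w_n,y_n\to x$ such that $d(\xi^p(w_n)\oplus\xi^q(y_n),\xi^r(x))\ge\eps_0$. The strategy is to translate everything into ``bounded position'' by a well-chosen sequence $\gamma_n\in\G$ and then use hyperconvexity in the limit to obtain a transversality statement that contradicts the dynamics.

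Concretely, I would fix a point $z\in\bord\G\setminus\{x\}$ and use the cocompactness of the $\G$-action on the space $\bord^{(3)}\G$ of pairwise distinct triples (a standard feature of word hyperbolic groups) to find $\gamma_n\in\G$ such that, after passing to a subsequence,
$$(\gamma_n^{-1}w_n,\;\gamma_n^{-1}y_n,\;\gamma_n^{-1}z)\longrightarrow(a,b,c)\in\bord^{(3)}\G,$$
with $a,b,c$ pairwise distinct. Since the pair $(\gamma_n^{-1}w_n,\gamma_n^{-1}y_n)\to(a,b)$ with $a\neq b$ whereas $(w_n,y_n)\to(x,x)$, the sequence $\gamma_n$ must leave every finite subset of $\G$. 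By equivariance of $\xi^p$ and $\xi^q$,
$$\xi^p(w_n)\oplus\xi^q(y_n)\;=\;\rho(\gamma_n)V_n,\qquad V_n:=\xi^p(\gamma_n^{-1}w_n)\oplus\xi^q(\gamma_n^{-1}y_n),$$
and continuity of the boundary maps gives $V_n\to V:=\xi^p(a)\oplus\xi^q(b)$ in $\Gr_{p+q}(\K^d)$, the direct sum being of the expected dimension $p+q$ because $\xi^p(a)\cap\xi^q(b)=\{0\}$ (a consequence of hyperconvexity on the triple $(a,b,c)$). Applying $(p,q,r)$-hyperconvexity to that same triple yields the crucial
$$V\cap\xi^{d-r}(c)=\{0\}.$$

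The conclusion is then a contracting-dynamics statement. Because $\gamma_n^{-1}z\to c$ and $\gamma_n\to\infty$ in $\G$ with orbit converging to $x$, the dominated-splitting characterisation of Anosov representations in \cite{BPS} identifies the attracting $r$-subspace of $\rho(\gamma_n)$ (in the codomain) as converging to $\xi^r(x)$ and the repelling $(d-r)$-subspace (in the domain) as converging to $\xi^{d-r}(c)$. The standard basin-of-attraction statement associated to such a domination then reads: any sequence of $(p+q)$-planes $V_n\to V$ with $V$ transverse to the repelling plane is pushed by $\rho(\gamma_n)$ into a shrinking neighbourhood of the attracting $r$-plane; since $p+q\le r$ this gives
$$d\big(\rho(\gamma_n)V_n,\,\xi^r(x)\big)\longrightarrow 0,$$
the required contradiction.

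The main obstacle lies precisely in this final step: the hypothesis furnishes domination only at level $r$ and no boundary map to $\Gr_{p+q}(\K^d)$, so the convergence on the lower-dimensional Grassmannian has to be extracted directly from the singular-gap/cone-field machinery of \cite{BPS}; care is needed to handle the fact that $V_n$ is a varying family of planes rather than a fixed one, which is why establishing uniform transversality to the (also varying) repelling subspace $\xi^{d-r}(\gamma_n^{-1}\cdot o)\to\xi^{d-r}(c)$ is the technical heart of the argument.
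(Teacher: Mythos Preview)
Your approach is essentially correct and reaches the goal by a softer, more qualitative route than the paper's. The paper works directly along a geodesic ray $\{\alpha_i\}$ to $x$: its key step is Proposition~\ref{p.main}, which establishes a \emph{uniform} lower bound $\sin\angle\big((\alpha_i^{-1}w)^p_\rho\oplus(\alpha_i^{-1}y)^q_\rho,\,U_{d-r}(\rho(\alpha_i^{-1}))\big)>\epsilon$ for all $w,y$ in the cone type of $\alpha_i$; then Lemma~\ref{l.dominationattractor} and Lemma~\ref{l.uniformbdry} give $d(w^p_\rho\oplus y^q_\rho,x^r_\rho)\le Ce^{-\mu i}$, an explicit exponential rate. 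Your argument bypasses Proposition~\ref{p.main} entirely: cocompactness on $\bord^{(3)}\G$ produces a limit triple $(a,b,c)$ on which hyperconvexity gives transversality $V\cap\xi^{d-r}(c)=\{0\}$ only at the limit, and then the same contraction lemma finishes. This is more elegant for the bare convergence statement but forfeits the exponential rate, which the paper needs elsewhere (e.g.\ to show the stereographic projection is locally Lipschitz in Proposition~\ref{inequality}).

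Two points where you should be more explicit. First, the ``obstacle'' you flag is not one: Lemma~\ref{l.dominationattractor} applies verbatim to any subspace $P\in\Gr_{p+q}$ with $\angle(P,U_{d-r}(g^{-1}))>\epsilon$ when $g$ has a gap of index $r$ and $p+q\le r$, since every line in $P$ is pushed into the $\frac{\sigma_{r+1}}{\sigma_r}(g)\epsilon^{-1}$-neighbourhood of $U_r(g)$; no boundary map to $\Gr_{p+q}$ is required. Second, you must justify $U_r(\rho(\gamma_n))\to\xi^r(x)$ and $U_{d-r}(\rho(\gamma_n^{-1}))\to\xi^{d-r}(c)$. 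This follows once you note that $(w_n|z)_{\gamma_n}$ is bounded (since $\gamma_n^{-1}w_n\to a\neq c\leftarrow\gamma_n^{-1}z$), hence $\gamma_n$ lies at bounded distance from the geodesic $(w_n,z)$, which itself passes within bounded distance of $e$; thus $\gamma_n$ stays within bounded distance of a geodesic ray from $e$ towards $w_n\to x$, and Lemma~\ref{l.qg} combined with Lemma~\ref{l.uniformbdry} gives both convergences.
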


We further investigate how vast the class of hyperconvex representations is. On the one hand one has the following remarks that provide many examples by the \emph{represent and deform} method (see subsection~\ref{repdef}): 
 \begin{itemize}
 \item[-] if $\rho:\G\to\PGL_d(\R)$ is hyperconvex then, by complexifiyng, one obtains a hyperconvex representation over $\C:$ this is direct from the definition; 
\item[-] the space of $(p,q,r)$-hyperconvex representations is open in $\hom(\G,\PGL_d(\K))$ (Proposition \ref{FrenetOpen}).
\end{itemize}

On the other hand there are  some `verifiable' restrictions imposed by the hyperconvexity condition. For example, a $(1,1,p)$-hyperconvex representation of $\G$ induces a continuous injective map $$\bord\G-\{\textrm{point}\}\to\P(\K^p),$$ (see Corollary \ref{r.obstruction}), and there might be topological obstructions for the existence of such a map.  More interesting restrictions arise when $\K=\R$ and $\bord\G$ is a manifold:

\begin{cor*}[Proposition \ref{diff}] Let $\G$ be such that $\bord\G$ is homeomorphic to a $(p-1)$-dimensional sphere. If $\rho:\G\to\PGL_d(\R)$ is $(1,1,p)$-hyperconvex then $\xi^1(\bord\G)$ is a $\class^1$ sphere.
\end{cor*}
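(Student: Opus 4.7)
The plan is the following. Being $(1,1,p)$-hyperconvex entails $\{\sroot_1,\sroot_p\}$-Anosov, so the flag maps $\xi^1$ and $\xi^p$ exist, are continuous and satisfy the standard nesting $\xi^1(x)\subset\xi^p(x)$. Anosov transversality forces $\xi^1$ to be injective, and since $\bord\G\cong S^{p-1}$ is compact, $\xi^1$ is a topological embedding and $\xi^1(\bord\G)$ is a topologically embedded $(p-1)$-sphere in $\P(\R^d)$. The task is to upgrade this to $\class^1$ regularity.

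For each $x\in\bord\G$, the projective subspace $\P(\xi^p(x))\subset\P(\R^d)$ has dimension $p-1$ and passes through $\xi^1(x)$; I propose its tangent space at $\xi^1(x)$, namely $T_x:=T_{\xi^1(x)}\P(\xi^p(x))$, as the candidate tangent space of $\xi^1(\bord\G)$ at $\xi^1(x)$. Continuity of $\xi^p$ makes $x\mapsto T_x$ a continuous distribution of $(p-1)$-planes along $\xi^1(\bord\G)$.

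The crucial input is Theorem~\ref{thB} applied with $(p,q,r)=(1,1,p)$:
$$\lim_{(w,y)\to(x,x)}d\bigl(\xi^1(w)\oplus\xi^1(y),\xi^p(x)\bigr)=0.$$
Fix $x$ and work in the affine chart $u\mapsto[v+u]$ of $\P(\R^d)$, with $v\in\xi^1(x)$ a unit vector and $u\in v^\perp$. Writing $\xi^1(w)=[v+u_w]$ and $\xi^1(y)=[v+u_y]$, one has $\xi^1(w)\oplus\xi^1(y)=\mathrm{span}(v+u_w,u_y-u_w)$; since $v\in\xi^p(x)$, a direct orthogonal-projection computation shows that the distance of this $2$-plane to $\xi^p(x)$ is comparable, as $u_w,u_y\to 0$, to the angular distance between the direction of $u_y-u_w$ and the $(p-1)$-plane $\xi^p(x)\cap v^\perp$, which is precisely the affine model of $T_x$. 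Compactness of $\bord\G$ together with continuity of the flag maps then promotes Theorem~\ref{thB} to the following uniform \emph{secant-angle bound}: for every $\eps>0$ there is $\delta>0$ such that, whenever $x\in\bord\G$ and distinct $w,y\in\bord\G$ are sufficiently close to $x$, the direction of the chord from $\xi^1(w)$ to $\xi^1(y)$ makes angle less than $\eps$ with $T_x$.

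To conclude, I invoke the classical differential-topological principle that a topological $k$-submanifold $M$ of a smooth manifold equipped with a continuous field $T_\bullet M$ of $k$-planes satisfying such a uniform secant-angle bound is automatically a $\class^1$-submanifold with tangent bundle $T_\bullet M$. Concretely, one projects orthogonally onto $T_xM$: the secant-angle bound forces this projection to be locally injective on $M$, invariance of domain promotes it to a local homeomorphism, and the inverse exhibits $M$ near $x$ as the graph of a continuous function $f_x$ with $df_x(0)=0$; transporting this picture to nearby base points via the continuity of $T_\bullet M$ yields continuity of $df_x$. Applied to $M=\xi^1(\bord\G)$ with $T_\bullet M$ as above, this produces the required $\class^1$ sphere. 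The main technical point is the conversion of the dimension-mismatched Grassmannian distance in Theorem~\ref{thB} into the clean uniform secant-angle statement; the final step is standard.
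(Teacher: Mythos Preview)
Your proposal is correct and follows essentially the same approach as the paper's own proof: use Theorem~\ref{thB} to see that secant directions at $x$ limit to $T_{\xi^1(x)}\P(\xi^p(x))$, then invoke continuity of $\xi^p$ to get a continuously varying tangent bundle and hence $\class^1$-regularity. You are in fact more explicit than the paper about the conversion from the Grassmannian distance in Theorem~\ref{thB} to a secant-angle bound and about the differential-topological step (local graph via orthogonal projection and invariance of domain), whereas the paper simply asserts that Theorem~\ref{FRep->Fpoint} gives differentiability with the stated tangent space and that continuity of $x\mapsto x^p_\rho$ finishes the proof.
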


Using openness of hyperconvexity we find new explicit examples of Zariski dense groups with $\class^1$ limit set.
\begin{cor*}[Corollary~\ref{Zdense}]
There exist Zariski dense subgroups $\G<\PGL_{d(d+1)}(\R)$ whose limit set is a $\class^1$ sphere of dimension $d-1$.
\end{cor*}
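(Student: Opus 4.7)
The strategy is to begin with a Fuchsian-type seed representation of a convex cocompact lattice in $\PO(1,d)$, verify the $(1,1,d)$-hyperconvexity condition by direct inspection of its boundary flags, and then deform it to a Zariski-dense representation while staying inside the open set of hyperconvex representations given by Proposition~\ref{FrenetOpen}; the $\class^1$ regularity of the limit set is then delivered by Proposition~\ref{diff}.

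More concretely, I would first fix a torsion-free uniform lattice $\G_0 < \PO(1,d)$, so that $\bord\G_0$ is canonically the round $(d-1)$-sphere, and construct a linear representation $\tau \colon \PO(1,d) \to \PGL_{d(d+1)}(\R)$ of the desired dimension as a suitable subquotient of a tensor power of the standard representation of $\PO(1,d)$. The induced equivariant boundary map would then realize a Veronese-like embedding of $S^{d-1}$ into $\P(\R^{d(d+1)})$, and the $(1,1,d)$-hyperconvexity of $\rho_0 := \tau|_{\G_0}$ would reduce to the transversality statement that for any three pairwise distinct points $x,y,z$ of the quadric $S^{d-1}$, the two-plane $\xi^1(x) \oplus \xi^1(y)$ does not meet the codimension-$d$ subspace $\xi^{d^2}(z)$; this is a linear-algebra computation on the osculating flags of the Veronese image, coupled with $\{\sroot_1,\sroot_d\}$-Anosovness of $\rho_0$ which is automatic since $\tau$ is non-degenerate and $\G_0$ is convex cocompact.

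Proposition~\ref{FrenetOpen} then furnishes an open neighborhood $U \subset \hom(\G_0, \PGL_{d(d+1)}(\R))$ of $\rho_0$ consisting entirely of $(1,1,d)$-hyperconvex representations, so it suffices to produce a Zariski-dense representation $\rho \in U$. For this I would invoke a bending deformation in the style of Johnson--Millson: choose $\G_0$ arithmetic of simplest type so that $\G_0 \backslash \HH^d$ contains a codimension-one totally geodesic closed hypersurface, and twist $\rho_0$ along it by a one-parameter family in the centralizer, inside $\PGL_{d(d+1)}(\R)$, of the stabilizer of that hypersurface. Iterating along several independent hypersurfaces yields a family of deformations whose Zariski closures strictly grow at each step, and a dimension count combined with the classification of maximal proper Lie subalgebras of $\mathfrak{sl}_{d(d+1)}$ produces, arbitrarily close to $\rho_0$ and hence still inside $U$, a representation $\rho$ that is Zariski dense in $\PGL_{d(d+1)}(\R)$. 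Applying Proposition~\ref{diff} to $\rho$ then concludes the proof: since $\bord\G_0 \cong S^{d-1}$ and $\rho$ is $(1,1,d)$-hyperconvex, the limit set $\xi^1_\rho(\bord\G_0)$ is a $\class^1$ sphere of dimension $d-1$.

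The main obstacle is controlling the Zariski closure of the bent representations: a bending in an arbitrary direction might a priori be absorbed into a proper algebraic subgroup of $\PGL_{d(d+1)}(\R)$ that also contains $\tau(\PO(1,d))$, so the bending parameters must be chosen carefully. Iterating the construction along several independent hypersurfaces together with a maximal-subalgebra exhaustion argument is, in my view, the cleanest way to force genuine Zariski density; alternatively one could try a generic local deformation within $U$ and rely on a rigidity-type statement to show that the locus of non-Zariski-dense representations in $U$ is a proper real-analytic subset, which would immediately give Zariski-dense examples in every neighborhood of $\rho_0$.
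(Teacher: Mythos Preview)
Your strategy matches the paper's: seed representation from a cocompact lattice in $\PO(1,d)$, verify $(1,1,d)$-hyperconvexity of the seed, invoke openness (Proposition~\ref{FrenetOpen}), bend \`a la Johnson--Millson along a totally geodesic hypersurface to reach Zariski density, then apply Proposition~\ref{diff}. The paper makes two concrete choices that streamline your outline. First, for $\tau$ it takes the second symmetric power $\sym^2$ of the standard representation of $\SO(d,1)$ on $\R^{d+1}$; hyperconvexity of $\sym^2\rho$ is then obtained not by a direct flag computation but via Proposition~\ref{symm}, which pushes the convergence property of Theorem~\ref{FRep->Fpoint} forward through the smooth map $\sym^2$ and appeals to Proposition~\ref{easyConverse}. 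Second, and more to the point of your stated obstacle, the paper bypasses your iterated-bending and maximal-subalgebra exhaustion entirely: it decomposes $\sym^2(\R^{d+1})$ explicitly as an $\SO(d-1,1)$-module, reads off that the centralizer of $\sym^2(\SO(d-1,1))$ is $\GL(2,\R)\times\R^*$ (strictly larger than the centralizer $\R^*$ of $\sym^2(\SO(d,1))$), and observes that a \emph{single} bend by an element of this $\GL(2,\R)$ that does not fix the trivial summand already yields a Zariski-dense representation. This is both shorter and more verifiable than a dimension count against the list of maximal proper subalgebras of $\mathfrak{sl}_N$.
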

Sharper results of similar nature were obtained by Zhang-Zimmer \cite{ZZ}.

\medskip

We now turn to the special situation when $\bord\G$ is a circle. Then Theorem~\ref{thA} gives the following computation of $h^{\sroot_1}_\rho$:

\begin{cor*} Assume $\bord\G$ is homeomorphic to a circle, if $\rho:\G\to\PGL_d(\R)$ is $(1,1,2)$-hyperconvex then $h^{\sroot_1}_\rho=1.$
\end{cor*}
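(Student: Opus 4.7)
The plan is to combine the main result of the paper, Theorem~\ref{thA}, with the differentiability statement of Proposition~\ref{diff} applied with $p=2$, and then invoke a classical fact about Hausdorff dimension of $\class^1$ submanifolds.

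First, since $\rho$ is $(1,1,2)$-hyperconvex, Theorem~\ref{thA} yields
$$h^{\sroot_1}_\rho=\Hff\big(\xi^1(\bord\G)\big).$$
So the problem reduces to computing the Hausdorff dimension of the limit curve $\xi^1(\bord\G)\subset\P(\R^d)$. Next, by hypothesis $\bord\G$ is homeomorphic to the circle $S^1$, which is a sphere of dimension $p-1=1$. Proposition~\ref{diff}, applied with $p=2$, then guarantees that $\xi^1(\bord\G)$ is a $\class^1$ submanifold of $\P(\R^d)$ homeomorphic to a circle.

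Finally, a compact $\class^1$ submanifold of dimension one is locally bi-Lipschitz to an interval (via arclength parametrization coming from any smooth Riemannian background metric on $\P(\R^d)$), so its Hausdorff dimension equals $1$. Combining with the equality above we obtain $h^{\sroot_1}_\rho=1$. There is no real obstacle here: the corollary is essentially a direct packaging of Theorem~\ref{thA} and Proposition~\ref{diff}, the only input beyond the paper being the standard fact that the Hausdorff dimension of a one-dimensional $\class^1$ manifold is $1$.
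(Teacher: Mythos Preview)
Your proof is correct and follows exactly the paper's approach: the corollary is stated there as a ``direct consequence of Proposition~\ref{diff} and Corollary~\ref{2FrenetHffDim}'' (the latter being the content of Theorem~\ref{thA}), which is precisely the combination you spell out. The only addition you make explicit is the standard fact that a compact $\class^1$ curve has Hausdorff dimension $1$, which the paper leaves implicit.
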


This implies Potrie-S. \cite[Theorem B]{exponentecritico} and further generalizes it to the Hitchin component of $\PSO(p,p).$ The proof of  Potrie-S. \cite[Theorem A]{exponentecritico} applies then verbatim also to the Hitchin component of $\PSO(p,p)$ and we thus obtain a rigid inequality for the critical exponent in the symmetric space of $\PSO(p,p)$.
We refer the reader to Sections \ref{HitchinPSL} and \ref{HitchinSO} for more details on Hitchin representations. 

While the property of having  constant $h^{\sroot_1}_\rho$ was expected to be a rare phenomenon, peculiar to Hitchin components, or possibly higher rank Teichm\"uller theories, we provide, in Section \ref{SL2}, many more examples of representations of fundamental groups of surfaces for which Theorem \ref{thA} applies. Interestingly enough, when $\bord\G$ is a circle (and $\K=\R$), $(1,1,2)$-hyperconexity is not only a local condition, but it can be pushed far away. We say that an $\{\sroot_1\}$-Anosov representation is \emph{weakly irreducible} if $\xi^1(\bord\G)$ is not contained in a proper subspace of $\P(\R^d)$.

\begin{prop*}[{Proposition \ref{FrenetClosed}}] Assume that $\bord\G$ is homeomorphic to a circle. Then the space of real weakly irreducible $(1,1,2)$-hyperconvex representations of  $\G$ is closed among real weakly irreducible $\{\sroot_1,\sroot_2\}$-Anosov representations. 
\end{prop*}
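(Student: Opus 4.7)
The plan is as follows. Let $(\rho_n)_n$ be a sequence of weakly irreducible $(1,1,2)$-hyperconvex representations converging to a weakly irreducible $\{\sroot_1,\sroot_2\}$-Anosov representation $\rho_\infty$; the task is to show that $\rho_\infty$ is itself $(1,1,2)$-hyperconvex. As a (standard) preliminary, stability of Anosov constants under small perturbations yields uniform convergence of the flag maps $\xi^p_n\to\xi^p_\infty$ on $\partial\G$ for $p\in\{1,2,d-2,d-1\}$.

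For each $z\in\partial\G$ and each $n\in\N\cup\{\infty\}$, the Anosov transversality $\xi^1_n(x)\oplus\xi^{d-1}_n(z)=\R^d$ for $x\neq z$ forces $\xi^1_n(x)\not\subset\xi^{d-2}_n(z)$, and so $\xi^1_n(x)$ projects to a nonzero line in the quotient $\R^d/\xi^{d-2}_n(z)\cong\R^2$. This yields a continuous map
\[
 \eta^n_z\colon\partial\G\setminus\{z\}\longrightarrow\P\bigl(\R^d/\xi^{d-2}_n(z)\bigr)\cong S^1,
\]
whose injectivity (as $z$ varies) is precisely the $(1,1,2)$-hyperconvexity of $\rho_n$. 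For finite $n$, being a continuous injection of the arc $\partial\G\setminus\{z\}\cong\R$ into $S^1$, $\eta^n_z$ is a strictly monotone topological embedding, with orientation locally constant in $n$. Passing to the uniform limit, after a consistent identification of the varying target circles with a fixed $S^1$, $\eta^\infty_z$ is continuous and weakly monotone. If $\rho_\infty$ failed $(1,1,2)$-hyperconvexity at some triple $(x_0,y_0,z)$ with $x_0\neq y_0$, then $\eta^\infty_z(x_0)=\eta^\infty_z(y_0)$, and weak monotonicity would force $\eta^\infty_z$ to be constant on the whole open subarc $I\subset\partial\G\setminus\{z\}$ joining $x_0$ to $y_0$. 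Equivalently, $\xi^1_\infty(I)$ would lie in a projective hyperplane $\P(H_z)\subsetneq\P(\R^d)$ with $H_z\supset\xi^{d-2}_\infty(z)$.

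To contradict weak irreducibility, the plan is to propagate this local confinement via $\G$-equivariance and the minimality of the $\G$-action on $\partial\G$. If $H_z$ happens to be $\rho_\infty(\G)$-invariant, then from $\xi^1_\infty(I)\subset\P(H_z)$ and equivariance one immediately gets $\xi^1_\infty(\partial\G)\subset\P(H_z)$, a direct contradiction. Otherwise, by density of $\G$-orbits on $\partial\G$, one finds $\g\in\G$ such that $\rho_\infty(\g)H_z\neq H_z$ and $I\cap\g I$ is a nonempty open arc; then
\[
 \xi^1_\infty(I\cap\g I)\;\subset\;\P\bigl(H_z\cap\rho_\infty(\g)H_z\bigr),
\]
a projective subspace of strictly smaller dimension. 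Iterating this dimension-drop procedure terminates in finitely many steps, ending either in $\rho_\infty(\G)$-invariance of the current subspace (contradicting weak irreducibility directly) or at dimension one (forcing $\xi^1_\infty$ to be constant on an open arc, hence on all of $\partial\G$ by continuity and minimality, again contradicting weak irreducibility).

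The most delicate point, which I expect to be the main obstacle, is the transfer from injective embeddings $\eta^n_z$ to a weakly monotone limit $\eta^\infty_z$. The target circles $\P(\R^d/\xi^{d-2}_n(z))$ themselves vary with $n$, so one must first identify them with a fixed $S^1$ in a uniformly consistent way, which amounts to establishing uniform transversality between $\xi^1_n$ and $\xi^{d-2}_n$ together with uniform convergence of the associated quotient projections. Once this technical setup is in place, the monotonicity inheritance and the subsequent equivariance/minimality dimension-drop become essentially formal.
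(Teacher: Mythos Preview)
Your approach is sound and genuinely different from the paper's. The paper argues contrapositively, showing that the set of weakly irreducible $\{\sroot_1,\sroot_2\}$-Anosov representations that are \emph{not} $(1,1,2)$-hyperconvex is open: given such a $\rho$, Lemma~\ref{stereographic2} (which is precisely the statement underlying your propagation step) guarantees that $\pi_{z,\rho}$ collapses no interval, and then an intermediate-value argument produces an interval $I\subset\bord\G\setminus\{z\}$ and a point $w\notin I$ with $\pi_{z,\rho}(w)$ in the interior of the arc $\pi_{z,\rho}(I)$; this configuration is manifestly stable under perturbation of $\rho$. This neatly avoids any limit-of-monotone-maps issue. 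Your direct closedness argument works as well, but two points deserve more care than you give them. First, for the passage to a weakly monotone limit it is not enough to identify the varying target circles with a fixed $S^1$: a limit of embeddings $\R\hookrightarrow S^1$ need not be weakly monotone in any useful sense unless one can lift consistently to $\R$. The clean fix is to observe that every $\eta^n_z$ misses the point $[\xi^{d-1}_n(z)/\xi^{d-2}_n(z)]$ by $\{\sroot_1\}$-transversality, and that these avoided points converge; this lets you view all $\eta^n_z$ as maps $\R\to\R$, where monotonicity of limits and constancy on $[x_0,y_0]$ are immediate. Second, in your dimension-drop the existence of $\g$ satisfying \emph{both} $\g I\cap I\neq\emptyset$ and $\rho_\infty(\g)H_z\neq H_z$ does not follow from minimality alone. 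It is simpler (and this is exactly Lemma~\ref{stereographic2}) to take the span $V$ of $\xi^1_\infty(I)$, pick any $\g$ with attracting fixed point $\g^+\in I$, check $\rho_\infty(\g)V=V$, and then propagate via $\g^{-n}I\nearrow\bord\G\setminus\{\g^-\}$ to conclude $\xi^1_\infty(\bord\G)\subset\P(V)$ in a single step.
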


\medskip
\medskip 

Throughout the paper we allow $\K$ to be a local field (not necessarilly Archimedean, as we required in this introduction). Originally Anosov representations were only defined over Archimedean fields as it is possible to show that if $\G$ admits a Anosov representation $\rho:\G\to\PSL_d(\K)$ for non-Archimedean $\K$, then $\G$ is virtually free. The main result of our paper, however, associates to such an action an interesting geometric quantity, the Hausdorff dimension of the limit set, which we are able to relate to a dynamical data, the orbit growth rate. We find this very interesting, and this justifies the extra work needed to develop the theory in this more general setting.

The main results go through in this generality, except the analyticity of Hausdorff dimension: the key step is to show that for a $\{\sroot_1\}$-Anosov representation its \emph{entropy}, defined by $$\limsup_{t\to\infty}\frac1t\log\#\{\g\in\G:\log\sigma_1\big(\rho(\g)\big)\leq t\}$$ is analytic with $\rho.$ We don't know if this is true, but % in Appendix \ref{a.A} we 
one can use the thermodynamical formalism to prove that the Hausdorff dimension depends continuously on the representation (and is actually as regular as the map $\rho\mapsto\xi_\rho$ is).
%%%%%%%%%%APPENDIX
\subsection*{Outline of the paper}
The preliminaries of the paper, collected in Section \ref{s.2} come from three different areas: quantitative linear algebra, dynamics and geometric group theory. In \S~\ref{s.2.1} we recall relations between the singular values of an element in $\PGL_d(\K)$ and metric properties of its action on Grassmannian manifolds, in the general context of a local field $\K.$ In \S~\ref{s.2.2} we discuss the dynamical backgrounds and indicate how to extend Bochi-Gourmelon's theorem as well as the theory of dominated splittings to general local fields. \S~\ref{s.2.3} collects the facts about hyperbolic groups and cone types that we will need in the paper.

Section \ref{s.3} concerns Anosov representations: we extend to the non-Archimedean setup the definition  and the results we will need, particularly concerning the definition and properties of the equivariant boundary maps. Our discussion follows the lines of Bochi-Potrie-S. \cite{BPS}.

In Section \ref{s.4} we prove that for any Anosov representation the Hausdorff dimension of the limit curve provides a lower bound for the critical exponent for the first root. In Section \ref{s.5} we give a condition guaranteeing that such bound is optimal, namely the abundance of \emph{locally conformal points} with respect to a suitable measure.

Section \ref{section:FrenetReps} concerns the notion of $(p,q,r)$-hyperconvexity, an open condition (Proposition \ref{FrenetOpen}) that guarantees abundance of locally conformal points: this is the content of Proposition \ref{p.main}, the main technical result of the paper. Using the theory of $\SL_2$ representations we provide in \S~\ref{SL2} many examples of hyperconvex representations of fundamental groups of surfaces and hyperbolic three manifolds. 

In Section \ref{s.diff} we discuss another interesting consequence of hyperconvexity: such property guarantees a weak differentiability property for the limit set (Theorem \ref{FRep->Fpoint}) which allows us, on the one hand, to obtain good bounds on the Hausdorff dimension (Proposition \ref{inequality}), and on the other to provide examples of Zariski dense subgroups whose limit set in the projective space is a $\class^1$ manifold: we obtain these through the \emph{represent and deform} method explained, in a concrete example, in Proposition \ref{symm}. 

In Section \ref{s.exLC} we discuss in detail two families of representations for which all our results apply: on the one hand we detail the geometric meaning of our notions in the case of convex cocompact subgroups of rank one groups, rediscovering and generalizing results of Dufloux (\S~\ref{sec:POK}), on the other we give a concrete criterion that guarantees hyperconvexity for subgroups of $\SO(p,q)$ and provide examples of groups that satisfy it (\S~\ref{s.SOpq}).

The last section of the paper (Section \ref{s.surfaces}) concerns representations of fundamental groups of hyperbolic surfaces (or more generally compact hyperbolic orbifolds). For these we show that hyperconvexity is also a closed condition (Proposition \ref{FrenetClosed}), and discuss a new proof and generalization of a result of Potrie-S. \cite{exponentecritico}.

%The two appendices of the paper rely on the Thermodynamical formalism: in Appendix \ref{a.A} we extend results of S. \cite{exponential} and Bridgeman-Canary-Labourie-S. \cite{pressure} to show continuity of the entropy map on the character variety also in the non-Archimedean setting, this implies that the regularity of the dependence of the Hausdorff dimension on the representation is at least as good as the variation of the boundary map. 
%In the second appendix, joint with Martin Bridgeman, we prove Theorem~\ref{Hessian-Hausdorff}. Our main result (Theorem~\ref{thA}) was the last missing piece to be combined with results from Bridgeman-Canary-Labourie-S. \cite{pressure,LiouvillePressure} and Bridgeman-Taylor \cite{WP-QF}.

\section{Preliminaries}\label{s.2}
In the paper we will need preliminaries from three different sources: quantitative linear algebra, dynamics and particularly the work of Bochi-Gourmelon \cite{BG} and Bochi-Potrie-S. \cite{BPS}  on dominated sequences, and algebraic and metric properties of hyperbolic groups. We recall the results we need here.
\subsection{Quantitative linear algebra}\label{s.2.1}
As anticipated at the end of the introduction, in the paper we will be dealing with representations of finitely generated groups on finite dimensional vector spaces over local fields. We recall here some quantitative results we will need. More details on algebraic groups over local fields can be found in Quint \cite{Quint-Divergence}.

\subsubsection{Angles and distances on Grassmannians} We denote by $\K$  a local field, and by $|\,|:\K\to\R^+$ its absolute value.  Recall that if $\K$ is $\R$ or $\C$ then $|\,|$ is the usual modulus, if, instead, $\K$ is non-Archimedean, we require that $|\omega|=\frac 1q$ where $\omega$ denotes the \emph{uniformizing element}, namely a generator of the maximal ideal of the valuation ring $\calO$, and $q$ is the cardinality of the residue field $\calO/\omega\calO$ (this is finite because $\K$ is, by assumption, local).

Given a finite dimensional vector space $V$ over $\K$, we denote by $\|\,\|:V\to\R^+$ a \emph{good norm}: for an Archimedean field $\K$ this means that $\|\,\|$ is induced from an Hermitian product, if $\K$ is non-Archimedean this means that there exists a basis $\{e_1,\ldots, e_n\}$ such that $\|\sum a_i e_i\|=\max\{|a_i|\}$. In this second case we say that a decomposition $V=V_1\oplus V_2$ is \emph{orthogonal} if $\|v_1+v_2\|=\max\{\|v_1\|,\|v_2\|\}$ for all $v_1\in V_1$ and $v_2\in V_2$. In general, since $\K$ is locally compact, any two norms on $V$ are equivalent.

The choice of a good norm $\|\,\|$ on $V$ induces a good norm on every exterior power of $V$ (this is discussed in Quint \cite{Quint-Divergence}). This allows to generalize the notion of angle to the non-Archimedean setting: for $v,w\in V$, we define $\angle (v,w)$ to be the unique number in $[0,\pi]$ such that
$$\sin\angle (v,w)=\frac{\|v\wedge w\|}{\|v\|\|w\|}.$$
Observe that the angle crucially depends on the choice of the norm. Following Bochi-Potrie-S. \cite{BPS} we define the angle of two subspaces $P,Q<\K^d$ as 
$$\angle(P,Q)=\min_{v\in P^\times}\min_{w\in Q^\times}\angle(v,w), $$
where $P^\times=P\setminus\{0\}$, $Q^\times=Q\setminus\{0\}$.

The sine of the angle gives a distance, that we sometimes denote by $d$, on the projective space $\P(V)$, and more generally on every Grassmannian $\Gr_k(V)$: we set for $P,Q\in\Gr_k(V)$
$$d(P,Q):=\max_{v\in P^\times}\min_{w\in Q^\times}\sin\angle(v,w)=\min_{v\in P^\times}\max_{w\in Q^\times}\sin\angle(v,w),$$
this corresponds to the Hausdorff distance of $\P(P),\P(Q)$ regarded as subsets of $\P(V)$ with the aforementioned distance. 
Observe that 
$$d(P,Q)\geq \sin\angle(P,Q)$$
and the latter inequality is, apart from very special cases, strict. 

More generally we extend the distance to subspaces of possibly different dimension:  for $P\in\Gr_k(V)$, $Q\in\Gr_l(V)$, $k\leq l$ we set 
$$d(P,Q):=\max_{v\in P^\times}\min_{w\in Q^\times}\sin\angle(v,w)=\min_{W\in\Gr_k(Q)} d(P,W).$$
Such distance vanishes if and only if $P\subset Q$.

\subsubsection{Singular values}\label{ss.dom_sing}
Assume now that $\K$ is commutative.
Given a $\K$-norm on $V$ we say that $g\in\GL(V,\K)$ is a \emph{semi-homothecy} if there exists a $g$-invariant $\K$-orthogonal decomposition $V=V_1\oplus\cdots\oplus V_k$ and $\sigma_1,\cdots,\sigma_k\in\R_+$ such that for every $i\in\lb1,k\rb$ and every $v_i\in V_i$ one has $$\|gv_i\|=\sigma_i\|v_i\|.$$ The numbers $\sigma_i$ are called the ratios of the semi-homothecy $g.$

Consider a maximal abelian subgroup of diagonalizable matrices $A\subset\GL(V,\K),$ let $K\subset\GL(V,\K)$ be a compact subgroup such that if $N_{\GL}(A)$ is the normalizer of $A$ in $\GL(V,\K)$ then $N_{\GL}(A)=(N_{\GL}(A)\cap K)A.$ Following Quint \cite[Th\'eor\`eme 6.1]{Quint-localFields} there exists a $\K$-norm $\|\,\|$ on $V$ such that 
\begin{itemize}
\item[-] $\|\,\|$ is preserved by $K,$ 
\item[-] $A$ acts on $(V,\|\,\|)$ by semi-homothecies with respect to a common $\K$-orthogonal decomposition of $V$ in one dimensional subspaces.
\end{itemize}

Whenever such a norm is fixed, for every $g\in\GL(V)$ we denote the  \emph{norm} and its \emph{co-norm} by 
$$\|g\|:=\max_{v\in V^\times}\frac{\|gv\|}{\|v\|} \qquad\mm(g)=\inf_{v\in V^\times} \frac{\|gv\|}{\|v\|}.$$

Let $d=\dim V$. Keeping notation from Quint \cite{Quint-localFields}, we denote by $\EE:=\R^d$ a real vector space with a restricted root system of $\GL(V)$,  and by 
$$\EE^+=\{x=(x_1,\ldots, x_d)\in\R^d|\,x_1\geq\ldots\geq x_d\}$$
a Weyl chamber of $\EE.$
We will denote by $\sroot_i\in\sf E^*$ the simple roots of $\EE$, so that $$\sroot_i(x)=x_i-x_{i+1}\in\R.$$ The choice of an ordering $(e_1,\ldots, e_d)$ of the joint eigenlines of $A$ (the eigenlines are uniquely determined Quint \cite[Lemma II.1.3]{Quint-Divergence}) induces a map $\nu:A\to\EE$ given by  
$$\nu(a):=(\log\sigma_1(a),\ldots, \log\sigma_d(a)),$$ 
where $\sigma_1(a),\cdots,\sigma_d(a)$ are the semi-homotecy ratios in the basis $\{e_1,\ldots, e_d\}$. We set $A^+:=\nu^{-1}(\EE^+)$, so that $A^+$ consists of those elements $a\in A$ whose corresponding semi-homothecy ratios satisfy $\sigma_1(a)\geq\cdots\geq\sigma_d(a)$. 

With respect to the basis $\{e_1,\ldots, e_d\}$, when $\K$ is non-Archimedean, it holds that $K=\GL(d,\calO)$, and the map $\nu$ extends to the Cartan projection, still denoted $\nu$ from the whole $\GL(V,\K)$: indeed $\GL(V,\K)=K A^+K$, and, given $a_1, a_2\in A^+$, the element $a_1$ belongs to $Ka_2K$ if and only if $\nu(a_1)=\nu(a_2)$.  In particular we can set $\nu(g)=\nu(a_g)$ for any element $a_g\in A$ such that there exist $k_g,l_g\in K$ with $g=k_ga_gl_g$ (Bruhat-Tits \cite[Section 3.3]{BrTi}).

For every $g\in \GL(V,\K)$, we choose a Cartan decomposition $g=k_ga_gl_g$ as above  and define, for $p \in \lb1,d-1\rb,$  
$$u_p(g)=k_g\cdot  e_p\in  V.$$ 
If $\K$ is Archimedean, the set $\{u_p(g):p\in\lb1,d-1\rb\}$ is an \emph{arbitrary} orthogonal choice of axes (ordered in decreasing length) of the ellipsoid $\{Av \st \|v\| = 1\}.$ Note that for every $v$ that lies in the span of $g^{-1}u_p(g)$ one has $\|gv\|=\sigma_p(g)\|v\|.$ With a slight abuse of notation we will often also denote by $u_p(g)$ the corresponding point in $\P V$.

We furthermore denote by $U_p(g)$ the \emph{Cartan attractor} of $g$: $$U_p(g)=u_1(g)\oplus\cdots\oplus u_p(g)=k_g\cdot(e_1\oplus\cdots\oplus e_p).$$

\begin{defi}\label{d.gap}
An element $g\in \GL(V,\K)$ is said to have  \emph{a gap of index} $p$ if $\sigma_p(g) > \sigma_{p+1}(g)$. In that case, if $\K$ is Archimedean,  the $p$-dimensional space $U_p(g)$ is independent of the Cartan decomposition of $g.$
\end{defi}
Note that if $g$ has a gap of index $p$, then the decomposition $$U_{d-p}(g^{-1}) \oplus  g^{-1}(U_p(g))$$ is orthogonal: this is clear when $\K$ is Archimedean (see Remark \ref{r.2.4} for the general case) 

 \begin{remark}\label{rem.2.2}
 If $\K$ is not Archimedean, the components $k_g,l_g$ in the Cartan decomposition are not uniquely determined even if $g$ has gaps of every index; in particular the spaces $U_p(g)$ always depend on the choice of the Cartan decomposition. For example take $d=2$; if $|a|>|b|$ we have 
 $$\bpm a&0\\0&b\epm= \bpm1&0\\b/a&1\epm\bpm a&0\\0&b\epm\bpm1&0\\-1&1\epm$$ and both $\bsm1&0\\b/a&1\esm$ and $\bsm1&0\\-1&1\esm$ belong to $K=\GL(2,\calO)$. In this example it is easy to verify that the set of possible Cartan attractors $U_1(g)$ coincides with the ball of center $e_1$ and radius $|b/a|$. Note that, since $\K$ is non-Archimedean, any point in this ball is a center.
\end{remark}

\subsubsection{Quantitative results}
Many of the auxiliary technical results in \cite{BPS} rely on the min-max characterization of singular values of linear maps from $\R^d$ to $\R^d$. This characterization in fact generalizes to any local field if one replaces the singular values with the semi-homothecy ratios:
$$\sigma_p(A)=\max_{P\in\Gr_p(V)}\mm(A|_P)\qquad \sigma_{p+1}(A)=\min_{Q\in\Gr_{d-p}(V)}\|A|_Q\|.$$ Therefore the quantitative linear algebraic facts collected in \cite[Appendix 3]{BPS} carry through. We now state the ones that we will use in the following.

\begin{lemma}[{\cite[Lemma A.4]{BPS}}]\label{l.qg} 
Let $g ,h\in \GL(V,\K)$ have a gap of index $p$.
Then for any possible choice of Cartan attractor $U_p(g)$ (resp. $U_p(gh)$):
\begin{eqnarray}
d(U_p(gh), U_p(g)) \le \|h\|\|h^{-1}\|\frac{\sigma_{p+1}}{\sigma_p}(g)\label{l.qg1} \\
d(U_p(gh), gU_p(h)) \le \|g\|\|g^{-1}\|\frac{\sigma_{p+1}}{\sigma_p}(h) \label{l.qg2} .
\end{eqnarray}
\end{lemma}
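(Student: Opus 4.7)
The plan is to adapt the $\K=\R$ argument of \cite{BPS} to a general local field, combining the min-max characterization of $\sigma_p,\sigma_{p+1}$ recalled in \S\ref{ss.dom_sing} with the good-norm orthogonal splittings that come from the Cartan decomposition. The key geometric input is: for any $g$ with gap of index $p$ and any Cartan decomposition $g=k_g a_g l_g$, setting $V_p^+(g):=g^{-1}U_p(g)$, one obtains an orthogonal decomposition
\[
V \;=\; V_p^+(g)\,\oplus\, U_{d-p}(g^{-1})
\]
on which $g$ acts with all ratios $\ge \sigma_p(g)$ on the first factor (whose image is $U_p(g)$) and with all ratios $\le \sigma_{p+1}(g)$ on the second (whose image is $U_p(g)^\perp$). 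Here orthogonality in the non-Archimedean case is the ultrametric version $\|v_1+v_2\|=\max\{\|v_1\|,\|v_2\|\}$; the analogous statement holds for $h$.

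For \eqref{l.qg1}, I would write any unit vector $u \in U_p(gh)$ as $u = (gh)v$ with $v = (gh)^{-1}u \in V_p^+(gh)$, observe that min-max gives the submultiplicativity $\sigma_p(gh) \ge \sigma_p(g)\mm(h)$, and hence $\|v\| \le \|h^{-1}\|/\sigma_p(g)$. Decomposing $hv = w_1+w_2$ orthogonally along the splitting for $g$ gives $u = gw_1 + gw_2$ orthogonal, with $gw_1\in U_p(g)$, $gw_2\in U_p(g)^\perp$, and $\|gw_2\|\le \sigma_{p+1}(g)\|h\|\|v\|$. Finally, in an orthogonal basis of $V$ adapted to the splitting $V = U_p(g)\oplus U_p(g)^\perp$, a direct computation (using that wedges of distinct basis vectors have unit norm in a good-norm basis) yields $\sin\angle(u,U_p(g)) = \|gw_2\|/\|u\|$, which combines with the previous estimates to produce \eqref{l.qg1}.

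For \eqref{l.qg2} I would argue symmetrically: decompose the same $v$ as $v=v_1+v_2$ orthogonally along the splitting for $h$, so that $hv_1\in U_p(h)$, $\|hv_2\|\le \sigma_{p+1}(h)\|v\|$, and $g(hv_1)\in g U_p(h)$. The subtlety, compared with the previous case, is that the decomposition $u=g(hv_1)+g(hv_2)$ need not be orthogonal (since $g$ does not preserve orthogonality), so the explicit sine-of-angle formula of the previous step is no longer available. Instead, I would use the soft estimate $\sin\angle(u,w_0)\le \|u-w_0\|/\|u\|$ valid for any nonzero $w_0$, which follows from the identity $u\wedge w_0 = (u-w_0)\wedge w_0$ and the multiplicativity of the wedge norm. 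Taking $w_0 = g(hv_1)$ and using the lower bound $\|u\|\ge \mm(g)\sigma_p(h)\|v\|$ (again from min-max) produces the claimed bound.

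The main obstacle is handling the non-uniqueness of the Cartan decomposition in the non-Archimedean case (Remark \ref{rem.2.2}): the attractors $U_p(g)$, $U_p(h)$, $U_p(gh)$ are not canonical. The argument sketched above is, however, insensitive to this — every ingredient (orthogonal decomposition, min-max submultiplicativity, the two sine-of-angle estimates) holds for any admissible choice of Cartan decomposition — so one obtains the stated uniform bounds.
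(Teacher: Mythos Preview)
Your proposal is correct. The paper does not reproduce a proof but simply cites \cite[Lemma A.4]{BPS}, noting beforehand that the min--max characterization of $\sigma_p,\sigma_{p+1}$ and the good-norm orthogonality structure extend to any local field so that the original real argument carries over; your sketch is precisely this extension, with the right extra care about non-uniqueness of Cartan attractors in the non-Archimedean case.
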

\begin{remark}\label{r.2.4}
If $\K$ is non-Archimedean, the Cartan attractors $U_p(g)$ are not uniquely defined  (cfr. Remark \ref{rem.2.2}). However it follows from Lemma \ref{l.qg} that, given two  different Cartan decompositions for $g$, $g=k_g a_g l_g=k_g' a_g'l_g'$, and denoting  $V_p=\langle e_1,\ldots, e_p \rangle$, we have
$$ d(k_gV_p,k_g' V_p)\leq\frac{\sigma_{p+1}}{\sigma_p}(g),$$
namely all possible different choices for $U_p(g)$ are contained in a ball of radius  $\frac{\sigma_{p+1}}{\sigma_p}(g)$. As the distance $d$ is, in this case, non-Archimedean, we deduce, also in this case, that any choice of $U_p(g)$ is orthogonal to $gU_{d-p}(g^{-1})$ for any other choice of $U_{d-p}(g^{-1})$.
\end{remark}
\begin{lemma}[{\cite[Lemma A.6]{BPS}}]\label{l.dominationattractor} 
Let $g \in \GL(V,\K)$ have a gap of index $p$.
Then, for all $P \in \Gr_{p}(V)$ transverse to $U_{d-p}(g^{-1})$ we have:
$$
d(g(P), U_p(g)) \le \frac{\sigma_{p+1}}{\sigma_p}(g) \, \frac{1}{\sin\angle(P,U_{d-p}(g^{-1}))} \, .
$$
\end{lemma}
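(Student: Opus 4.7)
\smallskip

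The plan is to reduce to the case where $g$ equals its diagonal piece in a Cartan decomposition, and then decompose vectors along the two relevant subspaces. Fix a Cartan decomposition $g=k_ga_gl_g$ with $k_g,l_g\in K$ and $a_g\in A^+$, and let $V_p=\langle e_1,\dots,e_p\rangle$, $V_{d-p}^-=\langle e_{p+1},\dots,e_d\rangle$. Then $U_p(g)=k_gV_p$ and $U_{d-p}(g^{-1})=l_g^{-1}V_{d-p}^-$. Since $K$ preserves the good norm and hence preserves angles and the distance $d$ on Grassmannians, setting $Q:=l_gP$ reduces the statement to showing
$$d(a_gQ,V_p)\;\le\;\frac{\sigma_{p+1}}{\sigma_p}(g)\,\frac{1}{\sin\angle(Q,V_{d-p}^-)}$$
for every $Q\in\Gr_p(V)$ transverse to $V_{d-p}^-$. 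Note that the decomposition $V=V_p\oplus V_{d-p}^-$ is orthogonal (for the good norm) and $a_g$-invariant, with semi-homothecy ratios at least $\sigma_p$ on $V_p$ and at most $\sigma_{p+1}$ on $V_{d-p}^-$.

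Write $\theta=\angle(Q,V_{d-p}^-)$. The first key step is to show that every $v\in Q$, decomposed as $v=v_++v_-$ with $v_+\in V_p$ and $v_-\in V_{d-p}^-$, satisfies $\|v_+\|\ge \|v\|\sin\theta$. Indeed, $v-v_-=v_+$, so $v\wedge v_-=v_+\wedge v_-$, and by orthogonality of the decomposition the induced good norm on $\Lambda^2V$ gives $\|v_+\wedge v_-\|=\|v_+\|\|v_-\|$. Thus $\sin\angle(v,v_-)=\|v_+\|/\|v\|$, and the definition of $\angle(Q,V_{d-p}^-)$ forces $\|v_+\|/\|v\|\ge\sin\theta$ (the case $v\in V_p$ is trivial).

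The second step is to estimate the angle between $a_gv\in a_gQ$ and its projection $a_gv_+\in V_p$. Since $a_gv_\pm$ lie in the two orthogonal $a_g$-invariant summands, the same orthogonality gives
$$\sin\angle(a_gv,a_gv_+)\;=\;\frac{\|a_gv_+\wedge a_gv_-\|}{\|a_gv\|\,\|a_gv_+\|}\;\le\;\frac{\|a_gv_-\|}{\|a_gv_+\|}\;\le\;\frac{\sigma_{p+1}\|v_-\|}{\sigma_p\|v_+\|}\;\le\;\frac{\sigma_{p+1}}{\sigma_p\sin\theta},$$
where the last bound uses Step~1 together with $\|v_-\|\le\|v\|$ (valid both in the Archimedean case, where $\|v\|^2=\|v_+\|^2+\|v_-\|^2$, and the non-Archimedean case, where $\|v\|=\max(\|v_+\|,\|v_-\|)$). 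Since $a_gv_+\in V_p$, taking the supremum over $v\in Q^\times$ yields $d(a_gQ,V_p)\le\sigma_{p+1}(g)/(\sigma_p(g)\sin\theta)$, as desired.

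The main subtlety, rather than an obstacle, is ensuring that the computations go through uniformly over archimedean and non-archimedean local fields. This is guaranteed by the results recalled in \S\ref{s.2.1}: existence of a good norm adapted to the Cartan decomposition, the orthogonal joint eigenline decomposition of $A$, and the multiplicativity of the good norm on $\Lambda^2V$ on pairs of mutually orthogonal vectors. In the non-archimedean case $k_g,l_g$ are not unique (Remark~\ref{rem.2.2}), but the bound obtained depends only on $\sigma_p(g),\sigma_{p+1}(g)$ and on $\angle(P,U_{d-p}(g^{-1}))$, which makes sense for any choice of $U_{d-p}(g^{-1})$ in view of Remark~\ref{r.2.4}.
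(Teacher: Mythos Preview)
Your proof is correct and is precisely the standard argument behind \cite[Lemma A.6]{BPS}; the paper itself does not reprove this lemma but simply cites it, noting that the quantitative linear-algebraic facts from \cite[Appendix A]{BPS} carry over to arbitrary local fields via the min--max characterization of semi-homothecy ratios. Your reduction via the Cartan decomposition, the orthogonal splitting $V_p\oplus V_{d-p}^-$, and the two estimates on $\|v_+\|$ and $\sin\angle(a_gv,a_gv_+)$ are exactly what is needed, and your care with the non-Archimedean case (multiplicativity of the good norm on orthogonal wedges, the remark on non-uniqueness of $k_g,l_g$) is appropriate.
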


\begin{lemma}[{\cite[Lemma A.7]{BPS}}]\label{l.no_cancellation}
Let $g$, $h \in \GL(V,\K)$.
Suppose that $g$ and $gh$ have gaps of index $p$.
Let $\alpha \coloneqq \angle \big( U_p(h), U_{d-p}(g^{-1}) \big)$.
Then:
\begin{align*}
\sigma_p(gh)     &\ge (\sin \alpha) \, \sigma_p(g) \, \sigma_p(h) \, ,  		\\
\sigma_{p+1}(gh) &\le (\sin \alpha)^{-1} \, \sigma_{p+1}(g) \, \sigma_{p+1}(h) \, . 
\end{align*}
\end{lemma}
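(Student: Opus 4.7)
The plan is to prove both inequalities by exhibiting explicit witnessing subspaces for the min-max characterizations of singular values recalled at the start of \S\ref{ss.dom_sing}: $\sigma_p(A)=\max_P\mm(A|_P)$ over $P\in\Gr_p(V)$ and $\sigma_{p+1}(A)=\min_Q\|A|_Q\|$ over $Q\in\Gr_{d-p}(V)$. The key geometric input is the orthogonal decomposition $V=g^{-1}U_p(g)\oplus U_{d-p}(g^{-1})$, valid for any choice of Cartan attractor by Remark~\ref{r.2.4}, on which $g$ acts as a semi-homothecy with ratios $\sigma_1(g),\ldots,\sigma_p(g)$ on the first factor and $\sigma_{p+1}(g),\ldots,\sigma_d(g)$ on the second, and whose image under $g$ is again an orthogonal decomposition.

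For the lower bound on $\sigma_p(gh)$, I would fix a Cartan decomposition $h=k_ha_hl_h$ and test the min-max formula with the $p$-dimensional subspace $P:=l_h^{-1}V_p$. Then $h(P)=U_p(h)$ and $\mm(h|_P)\geq\sigma_p(h)$. For $v\in P$, decomposing $hv=w_1+w_2$ along $V=g^{-1}U_p(g)\oplus U_{d-p}(g^{-1})$, the wedge-product formula for the sine, together with multiplicativity of the wedge norm on orthogonal factors, gives $\sin\angle(hv,U_{d-p}(g^{-1}))\leq\|w_1\|/\|hv\|$; since $hv\in U_p(h)$, the left-hand side is bounded below by $\sin\alpha$. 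Applying $g$ and using orthogonality of the image decomposition yields $\|ghv\|\geq\|gw_1\|\geq\sigma_p(g)\|w_1\|\geq\sigma_p(g)\sin\alpha\cdot\|hv\|$, and combining with $\|hv\|\geq\sigma_p(h)\|v\|$ delivers $\mm(gh|_P)\geq(\sin\alpha)\sigma_p(g)\sigma_p(h)$, proving the first inequality.

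The upper bound on $\sigma_{p+1}(gh)$ is dual: I would take $Q:=h^{-1}(U_{d-p}(g^{-1}))$, so that $hQ\subset U_{d-p}(g^{-1})$ and hence $\|g(hv)\|\leq\sigma_{p+1}(g)\|hv\|$ for $v\in Q$. Decomposing $v=v_1+v_2$ along the orthogonal splitting $V=l_h^{-1}V_p\oplus U_{d-p}(h^{-1})$, which $h$ sends orthogonally to $V=U_p(h)\oplus k_h\langle e_{p+1},\ldots,e_d\rangle$ with operator norm $\sigma_{p+1}(h)$ on the second factor, the angle between $hv\in U_{d-p}(g^{-1})$ and $U_p(h)$ is at least $\alpha$, and the same wedge-product computation gives $\|hv_2\|\geq(\sin\alpha)\|hv\|$. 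Combining with $\|hv_2\|\leq\sigma_{p+1}(h)\|v\|$ yields $\|hv\|\leq(\sin\alpha)^{-1}\sigma_{p+1}(h)\|v\|$, and the claim follows by plugging this into $\|g(hv)\|\leq\sigma_{p+1}(g)\|hv\|$.

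The main obstacle I anticipate is the non-Archimedean setting, where Cartan attractors are not unique (Remark~\ref{rem.2.2}) and norms are ultrametric rather than induced by inner products. However, Remark~\ref{r.2.4} already guarantees that the needed orthogonality statements hold for any choice of attractor, and the ultrametric identity $\|v_1+v_2\|=\max(\|v_1\|,\|v_2\|)$ on orthogonal decompositions gives the same elementary bounds $\|w_1\|\leq\|hv\|$ and $\|v_i\|\leq\|v\|$ used above. Since the sine is defined uniformly through wedge products and behaves multiplicatively on vectors in orthogonal subspaces in either setting, the whole argument carries through without substantive change.
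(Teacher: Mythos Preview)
Your proof is correct and is precisely the argument the paper has in mind: the lemma is quoted from \cite{BPS} without proof, with the remark that the min-max characterization $\sigma_p(A)=\max_P\mm(A|_P)$, $\sigma_{p+1}(A)=\min_Q\|A|_Q\|$ is what makes the \cite{BPS} arguments go through over any local field. Your choice of witnessing subspaces $P=l_h^{-1}V_p$ and $Q=h^{-1}U_{d-p}(g^{-1})$, together with the orthogonal splittings $l_g^{-1}V_p\oplus l_g^{-1}V_p^\perp$ and its image under $g$, is exactly that argument spelled out, and your handling of the non-Archimedean case via Remark~\ref{r.2.4} and the ultrametric identity is the correct adaptation.
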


Given a subspace $P\in \Gr_p(V)$ we denote by $P^\perp$ a chosen orthogonal complement of $P$; this always exists, but is not unique if $\K$ is non-Archimedean.
Suppose that $P$, $W \in \Gr_p(V)$ satisfy $d(P,W)<1$.
Then $W \cap P^\perp  =  \{0\}$, and so there exists a unique linear map 
\begin{equation}\label{e.graph}
L_{W,P} \colon P \to P^\perp
\quad \text{such that} \quad
W  =  \big\{ v+ L_{W,P}(v) \st v \in P \big\}\, .
\end{equation}
The association $L_{W,P}\mapsto W$ provides an affine chart for $\Gr_p(V)$. The next lemma states that this chart is 1-Lipschitz, and it is 4-biLipschitz on a sufficiently small neighbourhood of $P$:

\begin{lemma}[{\cite[Lemma A.11]{BPS}}]\label{l.d_and_norm}
Let $P$, $P_1$, $P_2 \in \Gr_p(V),$ with $d(P_i,P)<1$, then 
$$ d(P_1,P_2) \leq \|L_{P_1,P} - L_{P_2,P}\|$$
for all choices of $P^\perp$.
 If moreover $d(P_i,P)< 1/\sqrt2$ then
$\|L_{P_1,P} - L_{P_2,P}\| \leq 4 d(P_1,P_2) \,.
$
\end{lemma}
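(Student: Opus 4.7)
The plan is to prove both inequalities by exploiting the explicit graph parametrization (\ref{e.graph}). For the first inequality, fix any $u\in P^\times$ and set $v_1:=u+L_{P_1,P}(u)\in P_1^\times$, $v_2:=u+L_{P_2,P}(u)\in P_2^\times$. Then $v_1-v_2=(L_{P_1,P}-L_{P_2,P})(u)\in P^\perp$. Using the elementary estimate $\sin\angle(v_1,v_2)\cdot\|v_1\|=\min_{\lambda\in\K}\|v_1-\lambda v_2\|\le\|v_1-v_2\|$ together with the fact that $\|u\|\le\|v_1\|$ (in the Archimedean case since $\|v_1\|^2=\|u\|^2+\|L_{P_1,P}(u)\|^2$, in the non-Archimedean case since $\|v_1\|=\max(\|u\|,\|L_{P_1,P}(u)\|)$ by orthogonality of the decomposition $V=P\oplus P^\perp$), we obtain
$$\sin\angle(v_1,v_2)\le\frac{\|(L_{P_1,P}-L_{P_2,P})(u)\|}{\|u\|}\le\|L_{P_1,P}-L_{P_2,P}\|.$$
Taking $\min_{v_2\in P_2^\times}$ on the left (which is bounded above by the particular value at our $v_2$) and $\max_{v_1\in P_1^\times}$ afterwards yields $d(P_1,P_2)\le\|L_{P_1,P}-L_{P_2,P}\|$.

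For the second inequality, fix $u\in P$ with $\|u\|=1$ and set $v_1:=u+L_{P_1,P}(u)$. By definition of $d(P_1,P_2)$ one can choose $v_2^*\in P_2$ realizing $\sin\angle(v_1,v_2^*)=\min_{v_2\in P_2^\times}\sin\angle(v_1,v_2)\le d(P_1,P_2)$. Choosing $v_2^*$ as the orthogonal projection of $v_1$ onto $P_2$, this minimum is attained as $\|v_1-v_2^*\|/\|v_1\|$, hence $\|v_1-v_2^*\|\le\|v_1\|\,d(P_1,P_2)$. Writing $v_2^*=u^*+L_{P_2,P}(u^*)$ with $u^*\in P$, the equality
$$v_1-v_2^*=(u-u^*)+\bigl(L_{P_1,P}(u)-L_{P_2,P}(u^*)\bigr)$$
decomposes along $V=P\oplus P^\perp$, so both components are controlled by $\|v_1-v_2^*\|$ (the $P$-component has norm at most $\|v_1-v_2^*\|$ and similarly for the $P^\perp$-component).

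The triangle inequality then gives
$$\|(L_{P_1,P}-L_{P_2,P})(u)\|\le\|L_{P_1,P}(u)-L_{P_2,P}(u^*)\|+\|L_{P_2,P}\|\,\|u^*-u\|\le\bigl(1+\|L_{P_2,P}\|\bigr)\,\|v_1-v_2^*\|.$$
The key auxiliary observation is that $d(P_i,P)<1/\sqrt2$ forces $\|L_{P_i,P}\|\le 1$: indeed one checks directly from the graph description that the orthogonal distance from a unit $v_i\in P_i$ to $P$ equals $\|L_{P_i,P}(u_i)\|/\|v_i\|$ (Archimedean case) or $\|L_{P_i,P}(u_i)\|/\max(\|u_i\|,\|L_{P_i,P}(u_i)\|)$ (non-Archimedean case), and in each setting the bound $1/\sqrt2$ on $d$ yields $\|L_{P_i,P}\|\le 1$ as well as $\|v_1\|\le\sqrt2\,\|u\|=\sqrt2$. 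Plugging in and using $\|u\|=1$ produces
$$\|(L_{P_1,P}-L_{P_2,P})(u)\|\le 2\sqrt2\,d(P_1,P_2)\le 4\,d(P_1,P_2),$$
and the supremum over unit $u\in P$ gives the claimed estimate.

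The main obstacle is a conceptual one: one must handle the non-Archimedean case, where orthogonal complements are non-unique and norms are ultrametric, in parallel with the Archimedean case. The argument above is written in a way that uses only the orthogonality relation $V=P\oplus P^\perp$ and the monotonicity of orthogonal projections with respect to norms, both of which hold verbatim in the ultrametric setting; in fact the constants in the non-Archimedean case are tighter, so the uniform bound of $4$ is easily accommodated.
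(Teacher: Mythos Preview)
Your proof is correct and follows essentially the same approach as the paper, which does not give a detailed argument but simply notes that the proof of \cite[Lemma A.11]{BPS} carries over using the triangle inequality and the characterization $d(P_1,P_2)=\max_{w\in P_1^\times}\min_{v\in P_2^\times}\|v-w\|/\|w\|$; you have written out precisely those details. One small expository point: when you invoke ``the orthogonal projection of $v_1$ onto $P_2$'' you are implicitly using an orthogonal complement of $P_2$, not of $P$, and in the non-Archimedean case this is not canonical; it is cleaner (and sufficient) to simply choose $v_2^*\in P_2$ realizing $\min_{v\in P_2}\|v_1-v\|/\|v_1\|\le d(P_1,P_2)$, which is exactly what the paper's distance characterization provides. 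With that phrasing the argument is uniform over both cases, and indeed in the non-Archimedean case your estimate sharpens to $2\,d(P_1,P_2)$, as the paper also remarks.
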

\begin{proof}
The proof of \cite[Lemma A.11]{BPS} smartly combines the triangular inequality for the distance $d$ and the characterization
$$d(P_1,P_2)=\max_{w\in P_1^*}\min_{v\in P_2^*}\frac{\|v-w\|}{\|w\|}.$$
Since both hold when $V$ is a vector space over a local field $\K$, the proof generalizes without modifications. In case $\K$ is non-Archimedean, one could also deduce the better estimate $\|L_{P_1,P} - L_{P_2,P}\| \leq 2 d(P_1,P_2)$.
\end{proof}

The next lemma is a variation of  \cite[Lemma A.10]{BPS}. In  \cite{BPS} there is an assumption on $d(P_i,P)$ depending on $g$ that we replace here with the contraction assumption $d(gP_i,gP)<1/\sqrt 2$. 
Despite the proof is very similar to  \cite[Lemma A.10]{BPS}, we include it for completeness:

\begin{lemma}\label{l.expand}
Let $V$ be a $d$-dimensional $\K$-vector space, and $g\in \GL(V)$.
Choose $P \in \Gr_p(V)$ and
$Q \in \Gr_{d-p}(V)$ such that the pairs $(P,Q)$ and  $(gP,gQ)$ are orthogonal.
Then for every $P_i \in \Gr_p(V),$ ($i=1,2$) with $P_i\cap Q=\{0\}$ and $d(gP_i,gP)<1/\sqrt2$, it holds
\begin{equation}\label{e.min_expansiono}
d(gP_1,gP_2) \ge  \, \frac{\mm(g|_Q)}{4\| g|_P \|} \, d(P_1,P_2) \, .
\end{equation}
\end{lemma}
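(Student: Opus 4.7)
My plan is to mimic the strategy of \cite[Lemma A.10]{BPS}, exploiting the graph parametrization of nearby planes in a Grassmannian provided by Lemma \ref{l.d_and_norm}, but setting up the two affine charts carefully so that the contraction hypothesis $d(gP_i,gP)<1/\sqrt2$ (rather than a pointwise smallness assumption on $d(P_i,P)$) is what triggers the bi-Lipschitz half of Lemma \ref{l.d_and_norm}.

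First, since $(P,Q)$ is orthogonal and each $P_i$ is transverse to $Q$, we may write $P_i$ as the graph of a linear map $L_i:=L_{P_i,P}:P\to Q$; similarly the orthogonality of $(gP,gQ)$ together with the assumption $d(gP_i,gP)<1/\sqrt2<1$ guarantees that $gP_i$ is the graph of a linear map $\widetilde L_i:gP\to gQ$. A direct computation identifies the two parametrizations: if $v\in P$ then $v+L_iv\in P_i$, so $g(v+L_iv)=gv+gL_iv\in gP_i$ with $gv\in gP$ and $gL_iv\in gQ$, which forces
$$\widetilde L_i \;=\; (g|_Q)\circ L_i\circ (g|_P)^{-1}.$$

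Next I would estimate operator norms. For any $v\in P\setminus\{0\}$, setting $w=g|_P v\in gP$ one has $\|w\|\le\|g|_P\|\,\|v\|$ and
$$\|(\widetilde L_1-\widetilde L_2)w\| \;=\; \|g|_Q\bigl((L_1-L_2)v\bigr)\| \;\ge\; \mm(g|_Q)\,\|(L_1-L_2)v\|,$$
so dividing by $\|w\|$ and taking the supremum over $v$ yields
$$\|\widetilde L_1-\widetilde L_2\| \;\ge\; \frac{\mm(g|_Q)}{\|g|_P\|}\,\|L_1-L_2\|.$$
Finally I apply Lemma \ref{l.d_and_norm} twice: the 1-Lipschitz half gives $\|L_1-L_2\|\ge d(P_1,P_2)$, and the 4-bi-Lipschitz half, which is applicable precisely because $d(gP_i,gP)<1/\sqrt2$, gives $d(gP_1,gP_2)\ge\tfrac14\|\widetilde L_1-\widetilde L_2\|$. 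Chaining the three inequalities produces \eqref{e.min_expansiono}.

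I don't foresee a serious obstacle: the only subtlety is that over a non-Archimedean $\K$ the orthogonal complement $Q$ (respectively $gQ$) used to form the affine chart is not canonical, but the hypothesis of the lemma already hands us orthogonal decompositions $V=P\oplus Q$ and $V=gP\oplus gQ$, so the chart $L\mapsto\{v+Lv:v\in P\}$ is well defined and Lemma \ref{l.d_and_norm} applies as stated. The argument is otherwise a routine pull-back of distances through the linear map $g$.
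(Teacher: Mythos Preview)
Your proof is correct and is essentially the paper's own argument: both parametrize $P_i$ and $gP_i$ as graphs over the given orthogonal decompositions, relate the two graph maps by conjugation with $g$, and then apply the two halves of Lemma~\ref{l.d_and_norm} exactly as you do. The only cosmetic difference is that the paper writes the conjugation relation as $L_i=(g^{-1}|_{gQ})\circ M_i\circ(g|_P)$ and bounds $\|L_1-L_2\|$ from above, whereas you invert this and bound $\|\widetilde L_1-\widetilde L_2\|$ from below; the resulting inequalities are identical.
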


\begin{proof}[Proof of Lemma~\ref{l.expand}] Using the same notation as in \eqref{e.graph}, for each $i = 1,2$, we consider the linear map $L_i = L_{P_i,P}: P \to P^\perp$ and $M_i =L_{g P_i,g P}: gP \to gP^\perp$;
these are well defined since $P_i \cap Q  =  \{0\}$.
Clearly the two maps are related by
$L_i  =  (g^{-1}|_{gQ}) \circ  M_i \circ (g|_P)$.
As a consequence,
\[
\|L_1 - L_2\|  =  \big\| (g^{-1}|_{gQ}) \circ (M_1-M_2) \circ (g|_P) \big\|
\le \frac{\| g|_P \|}{\mm(g|_Q)} \, \|M_1 - M_2\| \, .
\]
Lemma~\ref{l.d_and_norm} gives:
$$
\|L_1 - L_2\| \ge d(P_1,P_2) \, .
$$ Since by assumption $d(g P_i, g P) < 1/\sqrt 2$, Lemma~\ref{l.d_and_norm} implies:
$$
\|M_1 - M_2\| \leq 4 d(gP_1, gP_2) \, .
$$ Putting these three estimates together, we get 
$$
d(gP_1, gP_2)\geq \frac{1}{4} \|M_1 - M_2\| \geq \frac{1}{4}\frac{\mm(g|_Q)}{\| g|_P \|}\|L_1 - L_2\|\geq  \frac{1}{4}\frac{\mm(g|_Q)}{\| g|_P \|}d(P_1,P_2). $$
\end{proof}
The following corollary of Lemma \ref{l.expand} will be useful in Section \ref{coarselyball}:
\begin{cor}\label{c.expand}
Let $V$ be a $\K$-vector space, $W<V$ a subspace of dimension 2, and $g\in\GL(V)$. Denote by $\sigma_i(g|_{W})$ the semi-homothecy ratios of $g:W\to gW$ where the norm on $W$ (resp. $gW$) is induced by the norm on $V$. For every $P_i\in\P W$ with $P_i\cap U_{d-1}(g^{-1})=\{0\}$, and $d(gP_i,u_1(g|_W))<1/\sqrt 2$ it holds
$$
d(gP_1,gP_2) \ge  \, \frac{\sigma_2(g|_W)}{4 \|g|_W \|} \, d(P_1,P_2) \, .
$$
\end{cor}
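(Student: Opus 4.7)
The plan is to apply Lemma \ref{l.expand} to the isomorphism $g|_W\colon W\to gW$ between two copies of $\K^2$, each equipped with the norm induced from $V$. Although Lemma \ref{l.expand} is stated for an automorphism $g\in\GL(V)$, its proof only uses the graph representations of subspaces (Lemma \ref{l.d_and_norm}) together with the invertibility of $g$ on the relevant spaces, so it applies verbatim to $g|_W$, with the role of ``$V$'' played by $W$ on the domain side and by $gW$ on the codomain side.

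For the choice of subspaces, I take $P:=g|_W^{-1}(u_1(g|_W))\in\P W$ and $Q:=g|_W^{-1}(u_2(g|_W))\in\P W$, the right singular directions of $g|_W$. By construction $(P,Q)$ is orthogonal in $W$ and $(gP,gQ)=(u_1(g|_W),u_2(g|_W))$ is orthogonal in $gW$, while $\|g|_P\|=\sigma_1(g|_W)=\|g|_W\|$ and $\mm(g|_Q)=\sigma_2(g|_W)$. Thus the factor $\mm(g|_Q)/(4\|g|_P\|)$ produced by Lemma \ref{l.expand} is precisely the desired $\sigma_2(g|_W)/(4\|g|_W\|)$.

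It then remains to verify the two hypotheses of Lemma \ref{l.expand}: (i) $P_i\cap Q=\{0\}$ and (ii) $d(gP_i,gP)<1/\sqrt 2$. Condition (ii) is exactly the second stated hypothesis, since $gP=u_1(g|_W)$. Condition (i), which in this $2$-dimensional setting reduces to $P_i\neq Q$, follows from (ii): if $P_i$ equalled $Q$, then $gP_i=u_2(g|_W)$ would be orthogonal to $u_1(g|_W)$ in $gW$, forcing $d(gP_i,u_1(g|_W))=1$ and contradicting (ii). Applying Lemma \ref{l.expand} then yields the claimed inequality. The transversality hypothesis $P_i\cap U_{d-1}(g^{-1})=\{0\}$ does not enter this argument; it appears to be listed because it is the natural form of the assumption in the intended application in Section \ref{coarselyball}, where it will be supplied by the Anosov dynamics. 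The main (mild) obstacle is to verify that the proof of Lemma \ref{l.expand} adapts from an automorphism of $V$ to the isomorphism $g|_W\colon W\to gW$, with the orthogonal complements $P^\perp$ and $(gP)^\perp$ taken inside $W$ and $gW$ respectively; this is straightforward.
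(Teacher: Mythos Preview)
Your proof is correct and essentially identical to the paper's: the paper applies Lemma~\ref{l.expand} with $P=u_2(g^{-1}|_{gW})$ and $Q=u_1(g^{-1}|_{gW})$, which are exactly the same lines as your $P=g|_W^{-1}(u_1(g|_W))$ and $Q=g|_W^{-1}(u_2(g|_W))$ (the right singular directions of $g|_W$). Your additional observations---that the hypothesis $d(gP_i,u_1(g|_W))<1/\sqrt2$ already forces $P_i\neq Q$, and that the transversality condition $P_i\cap U_{d-1}(g^{-1})=\{0\}$ is not actually used---are correct and go slightly beyond what the paper's one-line proof makes explicit.
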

\begin{proof}
This follows directly from Lemma \ref{l.expand} once we choose $P=u_2(g^{-1}|_{gL})$,  $Q=u_1(g^{-1}|_{gL})$. 
\end{proof}
Another useful corollary of Lemma \ref{l.expand} is the following.
\begin{cor}\label{l.expandold}
Given $\alpha > 0$, there exist positive $\delta$ and $b$ with the following properties.
Let $V$ be a $d$-dimensional $\K$-vector space, and $g\in \GL(V)$.
Suppose that $P \in \Gr_p(V)$ and
$Q \in \Gr_{d-p}(V)$ satisfy
\begin{equation}\label{e.bi_separation}
\min \{ \angle(P,Q), \angle(gP,gQ) \} \ge \alpha \, .
\end{equation}
Then for every $P_i \in \Gr_p(V),$ ($i=1,2$) with $P_i\cap Q=\{0\}$ such that $d(gP_i,gP)<\delta$ one has 
\begin{equation}\label{e.min_expansion}
d(gP_1,gP_2) \ge b \, \frac{\mm(g|_Q)}{\| g|_P \|} \, d(P_1,P_2) \, .
\end{equation}
\end{cor}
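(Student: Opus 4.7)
The plan is to adapt the proof of Lemma~\ref{l.expand}, replacing the orthogonal complements $P^\perp$ and $(gP)^\perp$ by the transverse subspaces $Q$ and $gQ$ themselves, at the cost of incurring constants that depend on the angle bound $\alpha$. First, I would represent each $P_i$ as the graph of a linear map $L_i\colon P\to Q$, and each $gP_i$ as the graph of $M_i\colon gP\to gQ$. The equivariance identity
$$L_i = (g|_Q)^{-1}\circ M_i\circ (g|_P)$$
continues to hold verbatim and immediately yields
$$\|L_1-L_2\|\le\frac{\|g|_P\|}{\mm(g|_Q)}\,\|M_1-M_2\|,$$
exactly as in the orthogonal case.

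The substantive new ingredient is a non-orthogonal analogue of Lemma~\ref{l.d_and_norm}: whenever $\angle(P',Q')\ge\alpha$ and $P_i'\in\Gr_p(V)$ is transverse to $Q'$ with graph representation $N_i\colon P'\to Q'$, there exist positive constants $\delta_0(\alpha)$ and $c(\alpha)$ such that
$$c(\alpha)^{-1}\|N_1-N_2\|\le d(P_1',P_2')\le c(\alpha)\|N_1-N_2\|$$
whenever $d(P_i',P')<\delta_0(\alpha)$. I would prove this by fixing an orthogonal complement $(P')^\perp$ (whose existence is granted by \S\ref{s.2.1}), representing each $P_i'$ also as the graph of a map $N_i'\colon P'\to (P')^\perp$, and comparing $N_i$ with $N_i'$. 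The two graph representations are intertwined, to leading order, by the linear isomorphism $\pi_{(P')^\perp}|_{Q'}\colon Q'\to(P')^\perp$, together with a lower-order correction governed by $\pi_{P'}|_{Q'}$. The angle hypothesis $\angle(P',Q')\ge\alpha$ is precisely a quantitative bound on the norm and co-norm of these projections in terms of $\alpha$ alone, and Lemma~\ref{l.d_and_norm} supplies the bi-Lipschitz equivalence between $\|N_1'-N_2'\|$ and $d(P_1',P_2')$.

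Applying this comparison at $P$ with complement $Q$, and at $gP$ with complement $gQ$, and chaining the resulting inequalities with the equivariance estimate above, yields
$$d(gP_1,gP_2)\ge b\,\frac{\mm(g|_Q)}{\|g|_P\|}\,d(P_1,P_2)$$
for $b=c(\alpha)^{-2}$ and $\delta=\delta_0(\alpha)$, as required. The main obstacle is the non-orthogonal graph-norm comparison of the previous paragraph; once this is in hand, the rest is a direct transcription of the proof of Lemma~\ref{l.expand}. One subtle point to be mindful of is that in the non-Archimedean setting orthogonal complements are not unique (cf.~Remark~\ref{r.2.4}), but any choice will do and all angle-dependent constants can be traced back to uniform bounds on the projection $\pi_{(P')^\perp}|_{Q'}$.
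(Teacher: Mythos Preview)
Your approach works, but there is one point to be careful about. Your non-orthogonal bi-Lipschitz comparison, as you state it, carries the closeness hypothesis $d(P_i',P')<\delta_0(\alpha)$. On the image side (at $gP,gQ$) this is supplied by $d(gP_i,gP)<\delta$, but on the domain side (at $P,Q$) no bound on $d(P_i,P)$ is given; the only hypothesis there is $P_i\cap Q=\{0\}$. Fortunately you only need the one-sided inequality $d(P_1,P_2)\le c(\alpha)\,\|L_1-L_2\|$ at the domain, and that holds without any closeness: for $w=v+L_1(v)\in P_1$ take $u=v+L_2(v)\in P_2$, so $\|w-u\|\le\|L_1-L_2\|\,\|v\|$, and the projection estimate $\|v\|\le\|w\|/\sin\alpha$ (from $\angle(P,Q)\ge\alpha$) finishes it. With this adjustment your argument goes through.

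The paper's proof is different and much shorter: rather than building a non-orthogonal analogue of Lemma~\ref{l.d_and_norm}, it simply replaces the given good norm by two auxiliary good norms --- one making $(P,Q)$ orthogonal, one making $(gP,gQ)$ orthogonal --- and invokes Lemma~\ref{l.expand} directly. The angle bound $\alpha$ controls how far these auxiliary norms are from the original, so the distances $d$, the operator norm $\|g|_P\|$, and the co-norm $\mm(g|_Q)$ all change by multiplicative constants depending only on $\alpha$, which are absorbed into $b$ and $\delta$. Your argument is essentially an explicit unpacking of what this change of norms does at the level of graph charts; it buys concreteness (and makes the $\alpha$-dependence of the constants visible) at the cost of more bookkeeping, while the paper's route is a clean black-box reduction.
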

\begin{proof}
Since all good norms are equivalent, the general case follows from Lemma \ref{l.expand} by {considering two norms, one for which $P$ and $Q$ orthogonal and one that makes $gP$ and $gQ$ orthogonal, the operator norm and $\mm$ are to be computed using both these norms.}
\end{proof}
{Along the same lines we get a bound on how elements $g\in\GL(V)$ contract on open sets in Grassmannians:
\begin{cor}\label{c.least contraction}
Let $g \in \GL(V,\K)$ have a gap of index $p$.
Then, for every $\alpha>0$ there is $b$ such that  for all $P_1, P_2 \in \Gr_{p}(V)$  with $\angle(P_i,g^{-1}U_{d-p}(g^{-1}))>\alpha$  we have:
$$
d(g(P_1), g(P_2)) \le b\frac{\sigma_{p+1}}{\sigma_p}(g) \, {d(P_1,P_2)} \, .
$$
\end{cor}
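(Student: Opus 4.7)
The plan is to derive the desired contraction bound for $g$ from the expansion estimate Lemma~\ref{l.expand}, applied to $g^{-1}$ with a carefully chosen pair of orthogonal decompositions of $V$.

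\medskip

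\emph{Setting up orthogonal splittings.} Fix a Cartan decomposition $g=k_g a_g l_g$ and set
$$P:=U_p(g)=k_g\langle e_1,\ldots,e_p\rangle,\qquad Q:=gU_{d-p}(g^{-1})=k_g\langle e_{p+1},\ldots,e_d\rangle.$$
Both pairs $(P,Q)$ and $(g^{-1}P, g^{-1}Q)=(g^{-1}U_p(g), U_{d-p}(g^{-1}))=(l_g^{-1}\langle e_1,\ldots,e_p\rangle, l_g^{-1}\langle e_{p+1},\ldots,e_d\rangle)$ are orthogonal decompositions of $V$, since $k_g,l_g\in K$ preserve the good norm. This is exactly the setup needed to feed Lemma~\ref{l.expand} to the inverse map.

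\medskip

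\emph{Semi-homothecy ratios and the raw bound.} A direct calculation using the Cartan form gives $\mm(g^{-1}|_Q)=1/\sigma_{p+1}(g)$ and $\|g^{-1}|_P\|=1/\sigma_p(g)$. Lemma~\ref{l.expand} applied to $g^{-1}$ therefore yields, for every $\tilde P_i\in\Gr_p(V)$ with $\tilde P_i\cap Q=\{0\}$ and $d(g^{-1}\tilde P_i, g^{-1}P)<1/\sqrt 2$, the estimate
$$d(g^{-1}\tilde P_1,g^{-1}\tilde P_2)\ge \tfrac{1}{4}\tfrac{\sigma_p(g)}{\sigma_{p+1}(g)}\,d(\tilde P_1,\tilde P_2).$$
Substituting $\tilde P_i:=gP_i$ rearranges this to the claimed upper bound $d(gP_1,gP_2)\le 4(\sigma_{p+1}/\sigma_p)(g)\,d(P_1,P_2)$, provided the translated hypotheses $P_i\cap U_{d-p}(g^{-1})=\{0\}$ and $d(P_i, g^{-1}U_p(g))<1/\sqrt 2$ are verified.

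\medskip

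\emph{Translating the angular hypothesis.} Since $g^{-1}U_p(g)$ and $g^{-1}U_{d-p}(g^{-1})$ give a direct sum decomposition $V=g^{-1}U_p(g)\oplus g^{-1}U_{d-p}(g^{-1})$, the assumption $\angle(P_i,g^{-1}U_{d-p}(g^{-1}))>\alpha$ confines $P_i$ to a compact subset of $\Gr_p(V)$ that avoids a neighborhood of $g^{-1}U_{d-p}(g^{-1})$ and lies close to $g^{-1}U_p(g)$. Combined with Lemma~\ref{l.dominationattractor} (which bounds $d(gP_i,U_p(g))$ by the gap ratio $\sigma_{p+1}/\sigma_p$ times $1/\sin\alpha$), this yields both hypotheses of Lemma~\ref{l.expand} after absorbing an $\alpha$-dependent multiplicative constant into the final $b$.

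\medskip

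\emph{Main obstacle.} The delicate step is the last one: the quantitative conversion of the angular hypothesis into the two distance bounds required by Lemma~\ref{l.expand} must be uniform in $g$. One handles this by combining the graph-coordinate bi-Lipschitz estimate (Lemma~\ref{l.d_and_norm}) with the orthogonality of the target decomposition $V=U_p(g)\oplus gU_{d-p}(g^{-1})$ and with Lemma~\ref{l.dominationattractor}, carefully tracking that only the constant $b$ depends on $\alpha$, while the factor $\sigma_{p+1}/\sigma_p$ remains intact. The case of small $\alpha$ is dealt with by noting that when $\sigma_{p+1}/\sigma_p$ is bounded below, the inequality is trivial from $\diam\Gr_p(V)\le 1$; the interesting regime is small $\sigma_{p+1}/\sigma_p$, where Lemma~\ref{l.dominationattractor} makes $d(gP_i, U_p(g))$ automatically small enough to invoke Lemma~\ref{l.d_and_norm} on the image side.
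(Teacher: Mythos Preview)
Your core approach—applying Lemma~\ref{l.expand} to $g^{-1}$ with $P=U_p(g)$ and $Q=gU_{d-p}(g^{-1})$—is exactly equivalent to the paper's direct use of Lemma~\ref{l.d_and_norm} in graph coordinates, and your computation $\mm(g^{-1}|_Q)/\|g^{-1}|_P\|=\sigma_p/\sigma_{p+1}$ is correct. The reduction to verifying $P_i\cap U_{d-p}(g^{-1})=\{0\}$ and $d(P_i,g^{-1}U_p(g))<1/\sqrt2$ is the right one.

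However, your handling of the conversion from the angle hypothesis to these distance conditions has two genuine errors. First, Lemma~\ref{l.dominationattractor} bounds $d(gP_i,U_p(g))$ on the \emph{image} side, but what you need is $d(P_i,g^{-1}U_p(g))<1/\sqrt2$ on the \emph{domain} side; knowing $gP_i$ lands near $U_p(g)$ does not help, since in the contraction direction Lemma~\ref{l.d_and_norm} is applied with the hypothesis-free inequality on the image and the $1/\sqrt2$-inequality on the domain. Second, your ``trivial case'' when $\sigma_{p+1}/\sigma_p$ is bounded below is wrong: the statement is a Lipschitz bound, and $\diam\Gr_p(V)\le1$ gives nothing when $d(P_1,P_2)$ is small.

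The correct fix, which is what the paper's terse ``comparison of different norms'' encodes (exactly as in Corollary~\ref{l.expandold}), is this: since $g^{-1}U_p(g)$ and $U_{d-p}(g^{-1})$ are orthogonal, the angle bound $\angle(P_i,U_{d-p}(g^{-1}))>\alpha$ gives $d(P_i,g^{-1}U_p(g))<\cos\alpha<1$, uniformly in $g$. This may exceed $1/\sqrt2$, but rescaling the norm on the $U_{d-p}(g^{-1})$ factor by a factor depending only on $\alpha$ brings it below $1/\sqrt2$; the cost is an $\alpha$-dependent distortion of all distances, which is absorbed into $b$. Equivalently, one observes that the bi-Lipschitz constant in Lemma~\ref{l.d_and_norm} can be taken to depend only on an upper bound $d(P_i,P)<1-\delta$ rather than the specific threshold $1/\sqrt2$.
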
 
\begin{proof}
If we  assume that $d(P_i, U_{p}(g))\geq 1/\sqrt 2$ the result follows readily from Lemma \ref{l.d_and_norm} by 
considering the linear maps $L_i:= L_{P_i,U_p(g)}$ and $M_i:=L_{gP_i,gU_p(g)}$. As above $L_i  =  (g^{-1}|_{U_{d-p}(g^{-1})}) \circ  M_i \circ (g|_{U_p(g)})$. In this case the result follows as $\mm( g|_{U_p(g)})=\sigma_p(g)$, and  $\mm(g^{-1}|_{U_{d-p}(g^{-1})})=1/\sigma_{p+1}(g)$.  The general statement follows by comparison of different norms.
\end{proof}
}
\subsection{Dynamical background}\label{s.2.2}
We now turn to the dynamical preliminaries. The goal of this section is to extend the results of Bochi-Gurmelon \cite{BG} and Bochi-Potrie-S. \cite{BPS} to the non-Archimedean setting.
\subsubsection{Dominated splittings and Bochi-Gurmelon's Theorem}\label{sec:2.2}
In this section we recall the definition of dominated splittings and review its connection with cone fields. 

Let $X$ be a compact metric space equipped with a continuous homeomorphism $\vartheta:X\to X.$ Let $V$ be a finite dimensional $\K$-vector space and let $\psi_0:X\to\GL(V,\K)$ be continuous. We will denote by $\psi:X\times V\to X\times V$ the induced cocycle defined by $$\psi_x(v)=\psi(x,v)=(\vartheta (x),\psi_0(x)v).$$

\begin{defi}\label{d.domination} Consider a good norm $\|\,\|$ on $V.$ Let $\L\subset X$ be a $\vartheta$-invariant subset, then we say that $\psi|\L$ has a \emph{dominated splitting} if the trivial bundle $\L\times V$ splits as a Whitney sum of two $\psi$-invariant sub-bundles $V=E\oplus F$ with the following extra condition: there exist positive $\mu$ and $c$ such that for every $n$ positive, $x\in\L,$ $u\in E_x$ and $w\in F_x$ one has $$\frac{\|\psi_x^nu\|}{\|\psi_x^nw\|}\leq c e^{-\mu n}\frac{\|u\|}{\|w\|}.$$ In this situation we say moreover that $F$ (resp. $E$) is the unstable (resp. stable) bundle and that $F$ \emph{dominates} $E.$ 
\end{defi}

Note that this condition is independent of the chosen norm. The dominated splitting of $\psi|\L$ is unique provided \emph{its index}, i.e. $\dim_\K F,$ is fixed and it extends to the closure $\overline{\L}$ of $\L$ (see Crovisier-Potrie \cite[Proposition 2.2 and 2.5]{CP} whose proof works verbatim in our setting). Furthermore:
\begin{prop}[{\cite{CP}}]\label{p.splitting compatibility}
Suppose a linear flow $\psi$ has dominated splittings $E^1\oplus F^1$ and $E^2\oplus F^2$ of index $p_1\leq p_2$. Then $E^2\subseteq E^1$ and $F^1\subseteq F^2$.
\end{prop}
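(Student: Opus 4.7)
The plan is to establish both inclusions by a unified ``mutual domination'' strategy: the core observation is that a nontrivial overlap between the unstable bundle of one splitting and the stable bundle of the other is forbidden, because the two domination estimates would then force simultaneous exponential growth and decay of the same quantity. If $p_1 = p_2$, the claimed inclusions reduce to equalities, which follow at once from the uniqueness of dominated splittings with a prescribed index mentioned just above; so from now on I assume $p_1 < p_2$.

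First I would establish the auxiliary claim that $F^1_x \cap E^2_x = \{0\}$ for every $x \in \L$. A dimension count gives $\dim(E^1_x \cap F^2_x) \geq (d-p_1) + p_2 - d = p_2 - p_1 \geq 1$, so there is always a nonzero $v' \in E^1_x \cap F^2_x$. If some nonzero $v \in F^1_x \cap E^2_x$ also existed, the $E^1\oplus F^1$-domination applied to the pair $(v' \in E^1, v \in F^1)$ would give $\|\psi^n v'\| \leq c e^{-\mu n} \|\psi^n v\| \cdot \|v'\|/\|v\|$, and the $E^2\oplus F^2$-domination applied to $(v \in E^2, v' \in F^2)$ would give $\|\psi^n v\| \leq c e^{-\mu n} \|\psi^n v'\| \cdot \|v\|/\|v'\|$. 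Multiplying these two inequalities yields $1 \leq c^2 e^{-2\mu n}$, impossible for $n$ large.

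Next I would prove $F^1 \subseteq F^2$ by contradiction. Suppose $w \in F^1_x \setminus F^2_x$, and write $w = u + w'$ with $u \in E^2_x \setminus \{0\}$ and $w' \in F^2_x$; further decompose $u = u_1 + u_2$ with $u_1 \in E^1_x$, $u_2 \in F^1_x$. Iterating backward, the $E^1\oplus F^1$-domination applied to $(u_1, u_2)$ produces, when $u_1 \neq 0$, the estimate $\|\psi^{-n}u\| \geq \tfrac12 \|\psi^{-n}u_1\|$ for $n$ large, and the $E^2\oplus F^2$-domination applied to $(u, w')$ gives $\|\psi^{-n}w\| \geq \tfrac12 \|\psi^{-n}u\|$ for $n$ large. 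Chaining these produces $\|\psi^{-n}w\| \geq \tfrac14 \|\psi^{-n}u_1\|$, which contradicts the $E^1\oplus F^1$-domination applied directly to $(u_1 \in E^1, w \in F^1)$. Hence $u_1 = 0$, placing $u$ in $F^1_x \cap E^2_x = \{0\}$ by the previous paragraph, contradicting $u \neq 0$. The symmetric inclusion $E^2 \subseteq E^1$ is obtained by the mirror argument, using \emph{forward} iteration on a hypothetical $v \in E^2_x \setminus E^1_x$: decompose $v$ across $E^1 \oplus F^1$ and the resulting $F^1$-component across $E^2 \oplus F^2$; the same chain of domination estimates forces the $F^2$-component to vanish, landing again in $F^1_x \cap E^2_x = \{0\}$.

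The main obstacle is book-keeping: three simultaneous decompositions have to be tracked, and each domination estimate must be invoked in the correct direction (backward for the $F$-inclusion, forward for the $E$-inclusion) and in the right order to set up a chain that is finally contradicted by a fourth application of one of the two dominations. No analytic subtlety arises: the constants $c, \mu$ are uniform on $\L$, and every step is a purely formal consequence of the growth inequalities defining domination, so the argument goes through verbatim when $\K$ is non-Archimedean.
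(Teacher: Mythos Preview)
The paper does not supply its own proof of this proposition; it is quoted as a fact from Crovisier--Potrie \cite{CP}. Your argument is correct and is essentially the standard one: the key step---showing $F^1_x \cap E^2_x = \{0\}$ by playing the two domination inequalities against each other on a pair $(v,v')$ with $v' \in E^1 \cap F^2$---is exactly the mechanism that drives the proof in \cite{CP}, and your subsequent contradiction via iterated decomposition is a clean way to finish.

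One minor remark on presentation: when you invoke domination under \emph{backward} iteration you are implicitly using that the inequality in Definition~\ref{d.domination}, stated for positive $n$ at every base point, can be rewritten at $\vartheta^{-n}(x)$ to yield $\|\psi^{-n}w\|/\|\psi^{-n}u\| \le c e^{-\mu n}\|w\|/\|u\|$ for $u\in E_x$, $w\in F_x$. This is routine but worth making explicit, since the paper's definition is phrased only for forward time.
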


In the case when $\K=\R$ Bochi-Gourmelon \cite[Theorem A]{BG} gave the following criterion for dominated splittings to exist; their the proof generalizes to every local field $\K$, as Oseledets theorem holds in this generality:

\begin{thm}[Bochi--Gourmelon {\cite{BG}}]\label{t.BG} Let $X$ be a compact metric space, $V$ a $\K$-vector space and $\psi:X\times V\to X\times V$ a linear cocycle. Then the linear flow  $\psi$ has a dominated splitting $E \oplus F$ with $\dim F=p$ if and only if there exist $c>0$, $\mu>0$ such that for every $x \in X$ and $n\geq 0$ we have
$$
\frac{\sigma_{p+1}}{\sigma_p} (\psi^n_x)< c e^{-\mu n} \, .
$$
Moreover, the bundles\footnote{For completeness, let us note that the space $U_{p}$ associated to an operator from a vector space equipped with a good norm to itself, can be defined for an operator between two vector spaces both equipped with good norms.} are given by:
$$
F_x  =  \lim_{n \to +\infty} U_p \big( \psi^n_{\vartheta^{-n}(x)} \big)\textrm{ and }E_x =  \lim_{n \to +\infty} U_{d-p} \big( \psi^{-n}_{\vartheta^n(x)} \big),$$
and these limits are uniform.
\end{thm}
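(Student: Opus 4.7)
The plan is to follow the strategy of Bochi--Gourmelon \cite{BG} verbatim, using the quantitative linear algebra collected in subsection \ref{s.2.1}---most crucially Lemma \ref{l.qg} and Remark \ref{r.2.4}---as a substitute for the Archimedean tools employed in the original argument.

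The forward direction is a direct consequence of the definition: taking unit vectors in $F_x$ and $E_x$, the minimax characterization
$$\sigma_p(\psi^n_x) \ge \mm(\psi^n_x|_{F_x}), \qquad \sigma_{p+1}(\psi^n_x) \le \|\psi^n_x|_{E_x}\|,$$
which is valid over any local field once singular values are interpreted as semi-homothecy ratios, combined with the uniform lower bound on the angles $\angle(E_x, F_x)$ (immediate from continuity of the splitting and compactness of $X$), gives the exponential decay of $\sigma_{p+1}/\sigma_p(\psi^n_x)$.

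For the converse, I would set
$$F_x := \lim_{n\to\infty} U_p\bigl(\psi^n_{\vartheta^{-n}(x)}\bigr), \qquad E_x := \lim_{n\to\infty} U_{d-p}\bigl(\psi^{-n}_{\vartheta^{n}(x)}\bigr),$$
making an arbitrary admissible choice of Cartan attractor at each step. Writing $\psi^{n+1}_{\vartheta^{-n-1}(x)} = \psi^n_{\vartheta^{-n}(x)} \cdot \psi_0(\vartheta^{-n-1}(x))$ and applying \eqref{l.qg1} of Lemma \ref{l.qg} bounds the successive distances by a uniform multiple of $(\sigma_{p+1}/\sigma_p)(\psi^n_{\vartheta^{-n}(x)})$, where continuity of $\psi_0$ and compactness of $X$ control the factor $\|\psi_0\|\|\psi_0^{-1}\|$ uniformly. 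The singular value ratio decays exponentially by hypothesis, so the sequence is Cauchy at a uniform exponential rate: the limit $F_x$ exists, the convergence is uniform in $x$, and $x\mapsto F_x$ is continuous. Equivariance $\psi_0(x)F_x = F_{\vartheta(x)}$ follows from the companion inequality \eqref{l.qg2} applied with $g = \psi_0(x)$; transversality $F_x \oplus E_x = V$ follows from the orthogonality of $U_p(g)$ and $gU_{d-p}(g^{-1})$ recorded in Remark \ref{r.2.4}, combined with Lemma \ref{l.dominationattractor}; and the domination estimate for the resulting splitting follows from submultiplicativity of singular value ratios (Lemma \ref{l.no_cancellation}).

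The main subtle point specific to the non-Archimedean setting is the non-uniqueness of $U_p(g)$ noted in Remark \ref{rem.2.2}. The key observation, already recorded in Remark \ref{r.2.4}, is that any two admissible choices lie within a common ball of radius $\sigma_{p+1}/\sigma_p(g)$---precisely the quantity controlling our Cauchy estimate---so the ambiguity is absorbed into the error terms and the limits $F_x, E_x$ are well-defined independently of choices. Uniformity of the convergence, needed for continuity of the bundles, is then automatic from compactness of $X$.
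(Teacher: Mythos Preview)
Your construction of the candidate bundles $F_x$ and $E_x$ as uniform Cauchy limits via Lemma~\ref{l.qg}, and your handling of the non-uniqueness of $U_p$ via Remark~\ref{r.2.4}, are fine. The forward direction is also fine. The gap is in transversality. The orthogonality recorded in Remark~\ref{r.2.4} relates $U_p(g)$ and $gU_{d-p}(g^{-1})$ for the \emph{same} operator $g$. But $F_x$ is built from the past of the orbit, $g=\psi^n_{\vartheta^{-n}(x)}$, while $E_x$ is built from the future, $g=(\psi^n_x)^{-1}=\psi^{-n}_{\vartheta^n(x)}$; applying the orthogonality to $g=\psi^n_{\vartheta^{-n}(x)}$ tells you that $U_p(\psi^n_{\vartheta^{-n}(x)})$ is orthogonal to $\psi^n_{\vartheta^{-n}(x)}U_{d-p}(\psi^{-n}_x)$, a space determined by the past, not to anything converging to $E_x$. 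Lemma~\ref{l.dominationattractor} does not bridge this, and your domination argument via Lemma~\ref{l.no_cancellation} presupposes exactly the angle bound you have not established.

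This is precisely the step where the paper (following Bochi--Gourmelon and the sketch in \cite[Section~A.4]{BPS}) invokes the multiplicative ergodic theorem, in Margulis's version \cite[Theorem~V.2.1]{Marg} valid over local fields: for every ergodic invariant measure the singular value gap forces a gap in the Lyapunov spectrum, the resulting Oseledets splitting agrees with your candidate $E\oplus F$ on a full-measure set, and a compactness argument then propagates the uniform angle bound everywhere. You have omitted this ingredient entirely, and the quantitative linear algebra of \S\ref{s.2.1} alone does not supply it.
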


\begin{proof}
Bochi-Gourmelon's proof is based on the one hand on some angle estimates building upon the min-max characterization of singular values of matrices in $\GL_d(\R)$, and on the other hand on the multiplicative ergodic theorem (Oseledets theorem). The former hold verbatim in the general local field setting once the singular values are replaced by the semi-homothecy ratios as defined in Section \ref{ss.dom_sing}, the required multiplicative ergodic theorem was established (following Oseledets original proof) by Margulis \cite[Theorem V.2.1]{Marg}, the integrability of $\psi$ follows from its continuity and the compactness of the base $X$. With these ingredients at hand, the sketch of the proof explained in \cite[Section A.4]{BPS} applies verbatim.
\end{proof}

The existence of a dominated splitting can be furthermore characterized in terms of cone fields; this will be crucial to prove openness of Anosov representations in Section \ref{s.anosov} (note that the non-Archimedean case has not yet been established). Given a decomposition $V=V_1\oplus V_2$ and a positive $a$, then the subset defined by 
$$\{v\in V: a\|v_1\|\geq \|v_2\|\}$$ is called a $a$-\emph{cone} (of dimension $\dim V_1$) on $V.$ 

A \emph{cone field} on $\L\subset X$ is a continuous choice $x\mapsto \scr C_{a(x),x}$ of a $a(x)$-cone on $V$ (of fixed dimension) for each $x\in \L.$ Cone fields can be used to characterize dominated splittings. 

\begin{prop}[{See Sambarino \cite[Proposition 2.2]{Samb}}]\label{prop_conefields}
Let $\L\subset X$ be $\vartheta$-invariant. The cocycle $\psi|_\L$ has a dominated splitting of index $i$  if and only if there exists  a map $a:\Lambda\to\R^+$ bounded away from 0 and $\infty$, a cone field $\scr C_{a(x),x}$ on $\Lambda$ of dimension $i$, a number $0<\lambda<1$ and a positive integer $n_0$ such that for every $x\in\L$ the closure of $\psi_x^{n_0}(\scr C_{a(x),x}) $ is contained in $\scr C_{\lambda a(\vartheta^n(x)),\vartheta^n(x)}.$
\end{prop}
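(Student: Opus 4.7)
The plan is to mirror Sambarino's proof of \cite[Proposition 2.2]{Samb} in the Archimedean case, verifying that each step carries over to a general local field $\K$; the main subtleties will appear in the reverse direction, where the classical argument relies on Euclidean convexity of the cone.

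\textbf{Forward direction ($\Rightarrow$).} Suppose $\psi|_\Lambda$ has a dominated splitting $V=E\oplus F$ with $\dim F=i$. By Theorem \ref{t.BG} the sub-bundles $E,F$ depend continuously on $x\in\Lambda$. For a fixed constant $a>0$, I would set
$$\mathcal{C}_{a,x} := \{v_F+v_E\in F_x\oplus E_x : a\|v_F\|\geq \|v_E\|\},$$
a continuous $i$-dimensional cone field on $\Lambda$. Since the splitting is $\psi$-invariant, for $v=v_F+v_E\in\mathcal{C}_{a,x}$ the components $\psi^n_x v_F,\psi^n_x v_E$ lie in $F_{\vartheta^n x},E_{\vartheta^n x}$, and Definition \ref{d.domination} gives
$$\frac{\|\psi^n_x v_E\|}{\|\psi^n_x v_F\|} \leq c e^{-\mu n}\frac{\|v_E\|}{\|v_F\|}\leq c e^{-\mu n}a.$$
Choosing $n_0$ with $c e^{-\mu n_0}<\lambda<1$ and $a(x)\equiv a$ then yields $\psi^{n_0}_x(\mathcal{C}_{a,x})\subset \mathcal{C}_{\lambda a,\vartheta^{n_0} x}$ and closing up is routine.

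\textbf{Reverse direction ($\Leftarrow$).} Given the invariant cone field, the cleanest route is to reduce to Theorem \ref{t.BG} via a uniform singular value gap, rather than building $E,F$ by hand. Write each cone as $\mathcal{C}_{a(x),x}=\{v_1+v_2\in V_1(x)\oplus V_2(x):a(x)\|v_1\|\geq\|v_2\|\}$ with $\dim V_1(x)=i$. The strict invariance together with compactness of $\Lambda$ and the uniform pinching $0<a_0\leq a(x)\leq a_1<\infty$ forces $V_1$-directions to dominate $V_2$-directions: any $v\in \mathcal{C}_{a(x),x}$ attains, up to bounded multiplicative constants, the top $i$ semi-homothecy ratios of $\psi^n_x$, while vectors in a complementary cone (which is invariant under $\psi^{-1}$ by contrapositive) realise the bottom $d-i$. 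Quantifying this by iterating the contraction $\lambda<1$ should produce
$$\frac{\sigma_{i+1}}{\sigma_i}\bigl(\psi^n_x\bigr)\leq \tilde c e^{-\tilde\mu n}$$
uniformly in $x\in\Lambda$ and $n\geq 0$, after which Theorem \ref{t.BG} supplies the dominated splitting of index $i$, together with the continuity and uniqueness of $E,F$ via uniform convergence of the Cartan attractors.

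\textbf{Main obstacle.} The crux is the quantitative passage from cone contraction to a uniform singular value gap: naive iteration of $\psi^{n_0}_x(\mathcal{C}_{a(x),x})\subset\mathcal{C}_{\lambda a(\vartheta^{n_0}x),\vartheta^{n_0}x}$ only bounds the aperture of $\psi^{kn_0}_x(\mathcal{C}_{a(x),x})$ by $\lambda a_1$, not by $\lambda^k$, so one has to exploit linearity and compactness to extract true geometric contraction. In the Archimedean case this step relies on convexity of the cones (or equivalently on the Hilbert projective metric). In the non-Archimedean case I would work throughout with the basis-adapted good norm from \S\ref{ss.dom_sing}: it provides orthogonal complements and makes the semi-homothecy ratios behave as in the Euclidean setting, so that the angle estimates of \S\ref{s.2.1} substitute for convexity and yield the needed gap $\sigma_i/\sigma_{i+1}$. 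I expect this is the only point in the argument where the ultrametric setting requires extra care.
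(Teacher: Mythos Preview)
The paper does not prove this proposition; it simply cites \cite[Proposition 2.2]{Samb}. So there is no in-paper argument to compare against.

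Your forward direction is correct and standard.

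In the reverse direction you correctly identify the crux but do not resolve it. The claim that vectors in $\mathcal{C}_{a(x),x}$ realise, up to constants, the top $i$ semi-homothecy ratios of $\psi^n_x$ is precisely the conclusion you want; and saying that the good norm and the angle estimates of \S\ref{s.2.1} ``substitute for convexity'' is a gesture, not a proof. You are right that naive iteration of the hypothesis only gives aperture $\lambda a_1$, not $\lambda^k a_1$; turning the strict nesting into a genuine exponential gap for $\sigma_{i+1}/\sigma_i$ is the whole content of the implication, and you have not done it.

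The standard argument (presumably the one in the cited reference) bypasses Theorem~\ref{t.BG} entirely. One sets
\[
F_x=\bigcap_{k\geq 0}\psi^{kn_0}_{\vartheta^{-kn_0}x}\bigl(\mathcal{C}_{\vartheta^{-kn_0}x}\bigr)
\]
and builds $E_x$ from the complementary cones under the inverse cocycle. Strict nesting (closure mapped into a strictly thinner cone) plus linearity makes the induced graph transform on $\hom(V_1(x),V_2(x))$ a contraction, so the images of any $i$-plane inside the cone form a Cauchy sequence in the Grassmannian and $F_x$ is a single $i$-plane; transversality $E_x\oplus F_x=V$ comes from complementarity of the two cone families; and the domination inequality is then read off directly by comparing the $V_1$- and $V_2$-components of vectors in $F_x\subset\mathcal{C}_x$ versus $E_x\subset V\setminus\mathcal{C}_x$, using the uniform bounds on $a(x)$. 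This is elementary, works verbatim over any local field, and does not invoke Oseledets. Your detour through Theorem~\ref{t.BG} is heavier machinery and leaves exactly the hard step unproved.
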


\subsubsection{Dominated sequences}
Bochi-Potrie-S. \cite[Section 2]{BPS} applied Bochi-Gourmelon's Theorem \ref{t.BG}  to the compact space of \emph{dominated sequences of matrices}, and got useful implications on the relative position of the axes of the ellipsoid associated to the products of such sequences: we recall now the relevant definitions and results from \cite{BPS} where these were first established.

Given $C>1$, define the following compact set: 
$$
\cal D(C) \coloneqq \big\{ g \in \GL(V,\K), \ \|g\| \le C , \ \|g^{-1}\| \le C \big\}.
$$
If $I$ is a (possibly infinite) interval in $\Z$, the set $\cal D(C)^I$ is endowed with the product topology, turning it into a compact metric space.

Let $p \in \lb1,d-1\rb$, $\mu>0$, $c>0$.
For each interval $I\subset \Z$, we denote by $\cal D(C,p,c,\mu,I)$ the set of sequences of matrices $(g_n) \in \cal D(C)^I$
such that for all $m, n \in I$ with $m \ge n$ we have
$$ 
\frac{\sigma_{p+1}}{\sigma_p} (g_{m} \cdots g_{n+1} g_n)  \leq c e^{-\mu (m-n+1)}  \, .
$$
\begin{defi}\label{dominatedsequence} An element of $\GL(V,\K)^I$ is a \emph{dominated sequence} if it belongs to $\cal D(C,p,c,\mu,I)$ for some $C,\,p,\,c,$ and $\mu.$
\end{defi}

Consider the map $\shift:\cal D(C)^\Z\to\cal D(C)^\Z$ defined by $$\shift((g_n)_{-\infty}^\infty)=(g_{n+1})_{-\infty}^\infty$$ and let $\psi_0:\cal D(C)^\Z\to\GL(V,\K)$ be $\psi_0((g_n))=g_0.$ The subsets $\cal D(C,p,c,\mu,\Z)$ are $\shift$-invariant and automatically verify the hypothesis of Theorem \ref{t.BG}.

\begin{prop}[{Bochi-Potrie-S. \cite[Proposition 2.4]{BPS}}]\label{prop:BPS2.5} For each sequence $x  =  (g_n) \in \cal D(C,p,\mu,c,\Z)$, the limits:
\begin{align*}
F_x &\coloneqq \lim_{n \to +\infty} U_p \big( g_{-1} g_{-2} \cdots g_{-n} \big) \, , \\
E_x &\coloneqq \lim_{n \to +\infty} U_{d-p} \big(g_0^{-1}\cdots g_{n-2}^{-1} g_{n-1}^{-1} \big) \, ,
\end{align*}
exist and are uniform over $\cal D(C,p,\mu,c,\Z)$. 
Moreover, $F$ dominates $E$ and $E \oplus F$ is a dominated splitting for the linear cocycle over the shift defined above.
\end{prop}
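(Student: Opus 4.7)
The statement is a direct specialization of Bochi--Gourmelon's Theorem \ref{t.BG} to a canonical cocycle built over the set of dominated sequences. The proof proposal is therefore to set up this cocycle, check compactness and shift-invariance of the base, verify the uniform gap hypothesis, and then read off the formulas for $F_x$ and $E_x$.

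\textbf{Step 1: The base is a compact invariant system.} First I would show that $X \coloneqq \cal D(C,p,c,\mu,\Z)$ is a compact metric space on which $\shift$ acts as a homeomorphism. Compactness of $\cal D(C)^\Z$ is Tychonoff, and $X \subset \cal D(C)^\Z$ is closed because each defining inequality
\[
\tfrac{\sigma_{p+1}}{\sigma_p}(g_m \cdots g_n)\le c\,e^{-\mu(m-n+1)}
\]
involves only finitely many coordinates and the function $g \mapsto \tfrac{\sigma_{p+1}}{\sigma_p}(g)$ is continuous on $\cal D(C)$ (continuity of the semi-homothecy ratios, which works over any local field since norms and exterior products are continuous). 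Shift-invariance is immediate from the definition.

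\textbf{Step 2: The canonical cocycle.} Define $\psi_0 \colon X \to \GL(V,\K)$ by $\psi_0(x) = g_0$ for $x = (g_n)$; this is continuous as it is evaluation at a single coordinate. Let $\psi$ be the associated linear cocycle over $(X,\shift)$. Then for $x = (g_n) \in X$ and $n \geq 1$ one has $\psi^n_x = g_{n-1}\cdots g_1 g_0$, and the defining inequality (applied with lower index $0$ and upper index $n-1$) gives the uniform estimate
\[
\tfrac{\sigma_{p+1}}{\sigma_p}(\psi^n_x) \le c\,e^{-\mu n}\qquad\text{for all }x\in X,\ n\ge 1.
\]

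\textbf{Step 3: Invoke Bochi--Gourmelon.} Since the hypothesis of Theorem \ref{t.BG} is verified uniformly, we obtain a continuous dominated splitting $V = E \oplus F$ of the bundle $X \times V$, with $\dim F = p$, and the bundles are given by the uniform limits
\[
F_x = \lim_{n\to+\infty} U_p\!\bigl(\psi^n_{\shift^{-n}(x)}\bigr),\qquad E_x = \lim_{n\to+\infty} U_{d-p}\!\bigl(\psi^{-n}_{\shift^n(x)}\bigr).
\]
It remains to rewrite these in the form stated in Proposition \ref{prop:BPS2.5}. Unwinding the cocycle identity, $\psi^n_{\shift^{-n}(x)} = g_{-1} g_{-2} \cdots g_{-n}$, which gives the formula for $F_x$. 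For $E_x$ one has $\psi^{-n}_{\shift^n(x)} = (\psi^n_x)^{-1} = g_0^{-1} g_1^{-1}\cdots g_{n-1}^{-1}$, yielding the formula for $E_x$.

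\textbf{Main obstacle.} There is no real obstacle; the proposition is essentially a packaging of Theorem \ref{t.BG}. The only point that requires mild care is ensuring that the extension of Bochi--Gourmelon to general local fields (established in the excerpt via Margulis' multiplicative ergodic theorem in place of Oseledets') applies to this particular base, which is immediate once compactness, continuity, and the uniform gap are in hand.
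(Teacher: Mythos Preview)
Your proposal is correct and is exactly the argument the paper intends: the sentence preceding the proposition states that the subsets $\cal D(C,p,c,\mu,\Z)$ are $\shift$-invariant and automatically verify the hypothesis of Theorem~\ref{t.BG}, and your Steps~1--3 simply make this explicit, with the formulas for $F_x$ and $E_x$ obtained by unwinding the cocycle exactly as you do.
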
 

By a compactness argument, the proposition above ensures transversality for Cartan attractors and repellers computed in finite, but sufficiently long, sequences of matrices:
\begin{lemma}[{Bochi-Potrie-S. \cite[Lemma 2.5]{BPS}}]\label{l.seq_splitting}
Given $C>1$, $\mu>0$, and $c>0$, there exist $L \in \N$ and $\delta>0$
with the following properties.
Suppose that $I \subset \Z$ is an interval and $\{g_i\}_{i\in I}$ is an element of $\cal D(C,p,c,\mu,I)$.
If $n < k < m$ all belong to $I$ and $\min \{ k-n, m-k \} > L$ then:
$$
\angle \big( U_p(g_{k-1} \cdots g_{n+1} g_n) , \, U_{d-p}(g_k^{-1}g_{k+1}^{-1}\cdots g_{m-1}^{-1} \big) > \delta \, .
$$
\end{lemma}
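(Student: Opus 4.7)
The plan is to argue by contradiction, using compactness of $\cal D(C)^\Z$ and the uniform-convergence part of Proposition~\ref{prop:BPS2.5}. Suppose the conclusion fails. Then for every $N\in\N$ there exist an interval $I_N\subset\Z$, a sequence $\{g_i^{(N)}\}_{i\in I_N}\in\cal D(C,p,c,\mu,I_N)$ and integers $n_N<k_N<m_N$ in $I_N$ with $\min\{k_N-n_N,\,m_N-k_N\}\geq N$ such that
$$\angle\bigl(U_p(g^{(N)}_{k_N-1}\cdots g^{(N)}_{n_N}),\,U_{d-p}(g^{(N)-1}_{k_N}\cdots g^{(N)-1}_{m_N-1})\bigr)\longrightarrow 0.$$
I would shift indices so that $k_N$ becomes the origin and extend each finite sequence arbitrarily (say by identity) to a bi-infinite element $h^{(N)}\in\cal D(C)^\Z$. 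The relevant dominated estimate then holds on the interval $[n_N-k_N,\,m_N-k_N-1]$, and these intervals exhaust $\Z$ as $N\to\infty$.

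Next I would extract, by compactness of the product space $\cal D(C)^\Z$ in the product topology, a subsequential pointwise limit $h\in\cal D(C)^\Z$. Since every finite pair $n\leq m$ in $\Z$ is eventually contained in $[n_N-k_N,\,m_N-k_N-1]$, passing to the limit in the inequality defining $\cal D(C,p,c,\mu,I)$ shows that $h\in\cal D(C,p,c,\mu,\Z)$. Proposition~\ref{prop:BPS2.5} then provides transverse subspaces $F_h$ and $E_h$ given as uniform limits over $\cal D(C,p,c,\mu,\Z)$; combining pointwise convergence $h^{(N)}\to h$ with continuity of $U_p$ on finite products and with this uniformity yields $F_{h^{(N)}}\to F_h$ and $E_{h^{(N)}}\to E_h$.

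Finally, again by the uniformity of Proposition~\ref{prop:BPS2.5}, since $k_N-n_N\to\infty$ and $m_N-k_N\to\infty$, the Cartan attractors $U_p(h^{(N)}_{-1}\cdots h^{(N)}_{n_N-k_N})$ and $U_{d-p}(h^{(N)-1}_{0}\cdots h^{(N)-1}_{m_N-k_N-1})$ converge to $F_h$ and $E_h$ respectively. Translating back, this says exactly that the pair $U_p(g^{(N)}_{k_N-1}\cdots g^{(N)}_{n_N})$, $U_{d-p}(g^{(N)-1}_{k_N}\cdots g^{(N)-1}_{m_N-1})$ converges to $(F_h,E_h)$, whose angle is strictly positive by transversality of the dominated splitting. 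This contradicts the assumption that this angle tends to $0$, and picking $\delta$ slightly smaller than any such positive lower bound and $L$ large enough yields the conclusion.

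The main subtle point I anticipate is ensuring that the subsequential limit $h$ genuinely belongs to $\cal D(C,p,c,\mu,\Z)$ (so that Proposition~\ref{prop:BPS2.5} is applicable, rather than an approximate version on longer and longer intervals). This is handled by the exhausting-intervals argument above, which is robust because the condition defining $\cal D(C,p,c,\mu,\Z)$ involves only finitely many coordinates for each choice of $m,n$ and is closed under pointwise limits. Once this is in place, the remainder is a direct application of the uniformity statement in Proposition~\ref{prop:BPS2.5}.
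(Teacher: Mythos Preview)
Your approach is correct and matches the paper's: the paper does not give a proof of this lemma but simply cites \cite[Lemma 2.5]{BPS} and says it follows ``by a compactness argument'' from Proposition~\ref{prop:BPS2.5}, which is precisely what you carry out. One small point: you write $F_{h^{(N)}}\to F_h$, but $F_{h^{(N)}}$ is not defined since $h^{(N)}\notin\cal D(C,p,c,\mu,\Z)$; what you actually need (and essentially argue in the next paragraph) is that the finite Cartan attractors along $h^{(N)}$ converge to $F_h$, which follows by first truncating to a fixed length using Lemma~\ref{l.qg} and then using pointwise convergence $h^{(N)}\to h$ on that fixed window together with the uniformity in Proposition~\ref{prop:BPS2.5}.
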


\subsection{Hyperbolic groups}\label{s.2.3}
The last source of preliminaries comes from geometric group theory. Here we recall basic facts about hyperbolic groups and cone types.

Let $\G$ be a finitely generated group. We fix a finite symmetric generating set $S$ and denote by $|\,|$ the associated word length: for $\g\in\G-\{e\}$ we denote by $|\g|$ the least number of elements of $S$ needed to write $\g$ as a word on $S,$ and define the induced distance $d_\G(\g,\eta)=|\g^{-1}\eta|.$ A geodesic segment on $\G$ is a sequence $\{\alpha_i\}_0^k$ of elements in $\G$ such that $d_\G(\alpha_i,\alpha_j)=|i-j|.$

In the paper we will be only interested in word-hyperbolic groups, namely such that the metric space $(\G,|\,|)$ is Gromov hyperbolic. Following the footprints of \cite{BPS}, our analysis will be based on the study of cone types, and natural objects associated to them. 
\subsubsection{Cone types}\label{conetypes}
In the paper we follow Cannon's original definition of cone types, which is more convenient for our geometric purposes, but the reader should be warned that the definition used in \cite{BPS} is slightly different
\begin{defi}
The \emph{cone type} of $\g\in\G$ is defined by $$\cone(\g)=\{\eta\in\G:|\g\eta|=|\eta|+|\g|\}.$$
\end{defi}
Notice that if $\eta\in\cone(\g)$ then $$d_\G(\g^{-1},\eta)=|\g\eta|=|\eta|+|\g|=|\eta|+|\g^{-1}|=d_\G(e,\eta)+d_\G(e,\g^{-1}),$$ i.e. there exists a geodesic segment through $e$ with endpoints $\g^{-1}$ and $\eta.$ Reciprocally, the endpoint of a geodesic segment starting at $\g^{-1}$ and passing through $e$ necessarily belongs to $\cone(\g).$

\begin{figure}[hbt]
	\centering
	\begin{tikzpicture}[scale = 0.5]
	\draw [thick] circle [radius = 3];
	\draw [fill] circle [radius = 0.03];	
	\node [left] at (0,0) {$e$};
	\draw [thick] (0.004,-1.3) -- (0.1,-0.7) -- (-0.1,-0.3 ) -- (0,0);
	\draw [thick] (0,0) -- (-0.3,1) -- (0.2,2) -- (0,3);
	\draw [thick] (0.2,2) -- (0.4, 2.7) -- (0.7,2.91);
	\draw [thick] (1,0.2) -- (0.89,1.37);
	\draw [thick] (0.2,2) -- (0.4, 2.7);
	\draw [thick] (1.8,0.8) -- (2.7,1.3);
	\draw [thick] (0,0) -- (1,0.2) -- (1.8,0.8) -- (2.846,0.948);
	\node [below] at (0.004,-1.3) {$\g^{-1}$};
	\draw [fill] (0.7,2.91) circle [radius = 0.03];
	\draw [fill] (2.7,1.3) circle [radius = 0.03];
	\draw [fill] (0.004,-1.3) circle [radius = 0.03];
	\draw [fill] (2.12,2.12) circle [radius = 0.03];
	\draw [fill] (0,3) circle [radius = 0.03];	
	\draw [fill] (2.846,0.948) circle [radius = 0.03];	
	\draw [thick] (0,0) -- (0.89,1.37) -- (1.2,2) -- (2.12,2.12);
	\draw [thick] (1.2,2) -- (1.4,2.65);
	\draw [fill] (1.4,2.65) circle [radius = 0.03];
	\node [right] at (3,-2) {$\G$};
	\draw [->] (3.5,3) to [out = 180,in = 90] (1.5,1.2);
	\node [right] at (3.5,3) {$\cone(\g)$};
	\end{tikzpicture}
	\caption{The cone type of $\g\in\G$}\label{figure:conetype}
\end{figure}
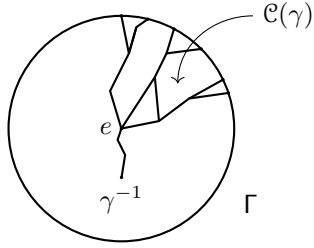
A fundamental result of Cannon is that, provided $\G$ is hyperbolic, there are only finitely many cone types (see for example Bridson-Heafliger \cite[p.~455]{BH} or Coornaert-Delzant-Papadopoulos \cite[p.~145]{CDP}).

\subsubsection{The geodesic automaton}\label{s.geodaut}

See Bridson-Haefliger \cite[p.~456]{BH}.

Given a cone type $\cone$ and $a \in S \cap \cone$, one easily checks that for every $\g \in \G$ with $\cone(\g)  =  \cone$ one gets 
$$
a\cone(\g a)\subset \cone(\g).
$$ 
Furthermore it is easy to verify that in such case the cone type $\cone(\g a)$ doesn't depend on $\gamma$ (see for example \cite[Lemma 4.3]{CDP}), and, with a slight abuse of notation we will denote such cone type $a\cdot \cone$.

The \emph{geodesic automaton} of $\G$ (this also depends on $S$) is the labelled graph $\cal G$ defined as follows:
\begin{itemize}
\item the vertices are the cone types of $\G$; 
\item there is an edge $\cone_1 \xrightarrow{a} \cone_2$ from vertex $\cone_1$ to vertex $\cone_2$, labelled by a generator $a \in S$, 
iff  $a\in \cone_1$ and $\cone_2  =   a\cdot\cone_1$.
\end{itemize}

Since $\G$ is hyperbolic there are only finitely many cone types and thus the geodesic automaton has a finite number of vertices.

Let us explain the relation with geodesics.
Consider a \emph{geodesic segment} $(\g_0, \g_1, \dots, \g_\ell)$
that is, a sequence of elements of $\G$ such that $d(\g_n,\g_m)  =  |n-m|$,
and assume that $\g_0  =  \id$.
Then, there are $a_0$, \dots $a_{\ell-1}$ in a generating set $S$ such that 
$\g_n  =  a_0 a_1 \cdots a_{n-1}$.
Note that for each $n$, the following is an edge of the geodesic automaton graph $\cal G$:
$$
\cone(\g_n) \xrightarrow{a_n} \cone(\g_{n+1}). 
$$
Thus we obtain a (finite) walk on $\cal G$ starting from the vertex $\cone(\id)$.
Conversely, for each such walk we may associate a geodesic segment starting at the identity.

Let us define also the \emph{recurrent geodesic automaton} as the maximal recurrent subgraph $\cal G^*$ of $\cal G$; its vertices are called \emph{recurrent cone types}.

Let $\L_\G$ be the subset of all bi-infinite labelled sequences of $\cal G^*.$ It is a closed shift-invariant subset of $(\cal G^*)^\Z$ and the induced dynamical system $\shift:\L_\G\to\L_\G$ is a sofic shift (as in Lind-Marcus \cite{LM}).

The following concept will be useful in Section \ref{s.5}.
\begin{defi}\label{d.nested}
Given an integer $k$ we say that two cone types $\cone_1, \cone_2$ are  \emph{$k$-nested} if there is a path of length $k$ in the geodesic automaton from $\cone_1$ to $\cone_2$. In this case there is an element $\beta\in\Gamma$ with $|\beta|=k$ and such that $\beta\cone_2\subset\cone_1.$
\end{defi}

Since $\G$ is hyperbolic, there are only finitely many cone types, therefore, for every $k$, there are only finitely many $k$-nested pairs of elements (however, as soon as $\G$ is non-elementary, the number of $k$-nested pairs grows exponentially with $k$). The following is clear from the definitions:
 
\begin{lemma}\label{l.nested}
If $\{\alpha_i\}\subset \G$ is a geodesic, then the pair $(\cone(\alpha_i),\cone(\alpha_{i+k}))$ is $k$-nested.
\end{lemma}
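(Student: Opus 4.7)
The plan is to set $\beta := \alpha_i^{-1}\alpha_{i+k}$, which has word length exactly $k$ since $\{\alpha_j\}$ is a geodesic, and then verify directly both equivalent characterizations in Definition~\ref{d.nested}: the existence of a length-$k$ walk in the geodesic automaton from $\cone(\alpha_i)$ to $\cone(\alpha_{i+k})$, and the inclusion $\beta\cone(\alpha_{i+k})\subset\cone(\alpha_i)$.

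For the first, I would factor $\beta$ along the sub-geodesic, writing $\alpha_{j+1}=\alpha_j a_j$ for some $a_j\in S$, so that $\beta=a_i a_{i+1}\cdots a_{i+k-1}$. The identity $|\alpha_{j+1}|=|\alpha_j|+1$ immediately gives $a_j\in\cone(\alpha_j)$, and hence, by the definition recalled in \S\ref{s.geodaut}, an edge $\cone(\alpha_j)\xrightarrow{a_j}\cone(\alpha_{j+1})=a_j\cdot\cone(\alpha_j)$ of $\cal G$. Concatenating these $k$ edges yields the desired walk, spelled by $\beta$.

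For the second, I would take $\eta\in\cone(\alpha_{i+k})$ and run a short word-length computation: since $\alpha_i\beta=\alpha_{i+k}$, one has $|\alpha_i\beta\eta|=|\alpha_{i+k}\eta|=|\alpha_{i+k}|+|\eta|=|\alpha_i|+|\beta|+|\eta|$; combining this with the triangle-inequality bounds $|\alpha_i\beta\eta|\leq|\alpha_i|+|\beta\eta|\leq|\alpha_i|+|\beta|+|\eta|$, every inequality in the chain must be an equality. This yields at once both $|\beta\eta|=|\beta|+|\eta|$ and $|\alpha_i\beta\eta|=|\alpha_i|+|\beta\eta|$, the latter being precisely $\beta\eta\in\cone(\alpha_i)$.

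The only delicate point of the argument is the outgoing hypothesis $|\alpha_{j+1}|=|\alpha_j|+1$ for each $j\in\{i,\ldots,i+k-1\}$, equivalently $|\alpha_{i+k}|=|\alpha_i|+k$, which is what makes the sub-geodesic legible as a walk in $\cal G$ starting at $\cone(\alpha_i)$. This property is the tacit convention of \S\ref{s.geodaut}, where the walk--geodesic correspondence is set up for geodesics emanating from the identity; it is also the setting in which the lemma is later invoked. Once this is granted, both verifications are essentially definitional and there is no substantial obstacle.
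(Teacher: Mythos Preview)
Your proposal is correct and is precisely the unpacking of definitions that the paper has in mind (the paper gives no proof beyond ``clear from the definitions''). You have also correctly isolated the one tacit hypothesis, namely that $|\alpha_{i+k}|=|\alpha_i|+k$, which is the outgoing convention of \S\ref{s.geodaut} and the setting in which the lemma is applied.
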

%\begin{proof}
%As $(\alpha_i)_i$ is a geodesic, for every $i,k$ there exists $\beta_{i,k}$ with $\alpha_{i+k}=\alpha_i\beta_{i,k}$ and $|\alpha_{i+k}|=|\alpha_i|+|\beta_{i,k}|$.  If $x$ then belongs to  $\cone(\alpha_{i+k})$, $\beta_{i,k}x$ belongs to  $\cone(\alpha_{i})$: indeed $|\alpha_i|+|\beta_{i,k}x|\geq|\alpha_i(\beta_{i,k}x)|=|\alpha_{i+k}x|=|\alpha_i|+|\beta_{i,k}|+|x|\geq|\alpha_i|+|\beta_{i,k}x| $;   the same argument shows that $|\beta_{i,k}x|=|\beta_{i,k}|+|x|$.  The word $\beta_{i,k}$ labels the desired path in the geodesic automaton.
%\end{proof} 

\subsubsection{Coverings of the Gromov boundary}\label{s.covering}
Recall that, as $\G$ is Gromov hyperbolic, its boundary $\bord\G$, consisting of equivalence classes of geodesic rays, is well defined up to homeomorphism.  
We associate to every cone type $\cone$ which is not the cone type of the identity a subset of $\bord\G$, the \emph{cone type at infinity},  by considering limit points of geodesic rays starting on $e$ and totally contained in $\cone:$
$$\bordcone =\{[(\alpha_i)]| (\alpha_i)\text{  geodesic ray }, \alpha_0=e, \alpha_i\in\cone\}.$$
It follows from the discussion in the previous paragraph that every point in $\bord\G$ is contained in at least one of the sets $\bordcone$. As there are only finitely many cone types, we obtain a finite covering of $\bord\G$ by considering $\cal U=\{\bordcone(\gamma)\}$. Starting from this covering we will construct new coverings that will serve as our Sullivan shadows:
\begin{figure}[hh]
	\centering 
	\begin{tikzpicture}[scale = 0.5]
	\begin{scope}[rotate=45]
	
	\draw [thick] circle [radius = 3];
	\draw [fill] circle [radius = 0.03];	
	\node [below left] at (0,0) {$\g$};
	\draw [thick] (0.004,-1.3) -- (0.1,-0.7) -- (-0.1,-0.3 ) -- (0,0);
	\draw [thick] (0,0) -- (-0.3,1) -- (0.2,2) -- (0,3);
	\draw [thick] (0.2,2) -- (0.4, 2.7) -- (0.7,2.91);
	\draw [thick] (0.2,2) -- (0.4, 2.7);
	\draw [thick] (1,0.2) -- (0.89,1.37);
	\draw [thick] (1.8,0.8) -- (2.7,1.3);
	\draw [thick] (0,0) -- (1,0.2) -- (1.8,0.8) -- (2.846,0.948);
	\node [below] at (0.004,-1.3) {$e$};
	\draw [fill] (0.7,2.91) circle [radius = 0.03];
	\draw [fill] (2.7,1.3) circle [radius = 0.03];
	\draw [fill] (0.004,-1.3) circle [radius = 0.03];
	\draw [fill] (2.12,2.12) circle [radius = 0.03];
	\draw [fill] (0,3) circle [radius = 0.03];	
	\draw [fill] (2.846,0.948) circle [radius = 0.03];
	\draw [thick] (0,0) -- (0.89,1.37) -- (1.2,2) -- (2.12,2.12);
	\draw [thick] (1.2,2) -- (1.4,2.65);
	\draw [fill] (1.4,2.65) circle [radius = 0.03];
		\node [right] at (3,-2) {$\G$};
 \filldraw [red](0,3) circle [radius=2pt];
 \filldraw [red](0.7,2.91) circle [radius=2pt];
\filldraw [red](2.846,0.948) circle [radius=2pt];
\filldraw [red](2.7,1.3) circle [radius=2pt];
\filldraw [red](2.12,2.12) circle [radius=2pt];
\filldraw [red] (1.4,2.65) circle [radius=2pt];
\end{scope}

	\end{tikzpicture}
	\caption{The set $\g\cdot\bordcone(\g)$}\label{figure:ImageOfConetype}
\end{figure}

\begin{lemma}\label{SullivanShadows} Given $T>0,$ the family of open sets $$\cal U_T\coloneq\{\g\bordcone(\gamma):|\g|\geq T\}$$ defines an open covering of $\bord\G.$
\end{lemma}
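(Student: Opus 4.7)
The plan is to establish the covering property directly and then address openness via the structure of the geodesic automaton. For covering, given $x \in \bord\G$, I would fix a geodesic ray $(\alpha_i)_{i \geq 0}$ from $e$ representing $x$ and pick any integer $n \geq T$. Setting $\gamma := \alpha_n$, consider the tail ray $\beta_j := \gamma^{-1}\alpha_{n+j}$. Left translation by $\gamma^{-1}$ is an isometry of $\G$, so $(\beta_j)_{j\geq 0}$ is a geodesic ray starting at $\beta_0 = e$. Moreover each $\beta_j$ sits in $\cone(\gamma)$: the geodesic property of $(\alpha_i)$ gives
$$|\gamma\beta_j| = |\alpha_{n+j}| = n + j = |\gamma| + |\beta_j|,$$
which is precisely the cone-type condition. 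Hence $[(\beta_j)] \in \bordcone(\gamma)$ and, since $(\gamma\beta_j) = (\alpha_{n+j})$ is cofinal with $(\alpha_i)$, one has $x = \gamma \cdot [(\beta_j)] \in \gamma\bordcone(\gamma)$. The element $\gamma$ has length $n \geq T$, so this set belongs to $\cal U_T$.

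For openness, I would argue as follows. A boundary point $z$ lies in $\gamma\bordcone(\gamma)$ precisely when it admits a geodesic ray representative from $e$ that passes through $\gamma$ and whose tail labels describe an infinite walk in the geodesic automaton $\cal G$ that starts at the vertex $\cone(\gamma)$. Since the automaton is finite and the Gromov boundary of a hyperbolic group is naturally obtained as the topological quotient of the sofic shift $\L_\G$ by the cofinality relation, such walk-based cylinder sets correspond to open subsets of $\bord\G$. Equivalently, one can observe that if a representative of $z$ and a representative of $w \in \gamma\bordcone(\gamma)$ agree up to time strictly greater than $|\gamma|$, then $z$ also admits a representative through $\gamma$, so shadows of deep elements contain open neighbourhoods of each of their points.

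I do not expect any serious obstacle: the covering part is a direct definitional check combining the geodesic property of $(\alpha_i)$ with the equality characterizing $\cone(\gamma)$, and the openness claim is a well-known feature of the symbolic coding of boundaries of word-hyperbolic groups via the geodesic automaton. The only delicate point is making the symbolic-versus-Gromov-boundary identification precise for openness, but this is orthogonal to the lemma's substantive role as a combinatorial replacement for Sullivan shadows in later sections.
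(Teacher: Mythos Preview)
Your proposal is correct and follows essentially the same approach as the paper. The paper's proof is a single sentence: given $x\in\bord\G$, take a geodesic ray $(\alpha_i)_0^\infty$ from $e$ to $x$ and observe that $x\in\alpha_i\bordcone(\alpha_i)$ for every $i$; your argument simply unpacks this claim by writing down the tail ray $\beta_j=\alpha_i^{-1}\alpha_{i+j}$ and verifying the cone-type inequality. The paper does not address openness at all in its proof (it treats it as part of the standing background on cone types), so your extra paragraph on the symbolic coding is additional detail rather than a different route.
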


\begin{proof} We have to check that every point $x\in\bord\G$ is covered, but this is evident since considering a geodesic ray $(\alpha_i)_0^\infty$ in $\G$ starting from $e$ converging to $x,$ one has that for all $i,$ $x\in\alpha_i\bordcone(\alpha_i),$ see Figure \ref{figure:ImageOfConetype}.
\end{proof}

\section{Anosov representations}\label{s.3}
Anosov representations  from fundamental groups of negatively curved closed manifolds to $\PGL(d,\R)$ were introduced by Labourie \cite{Labourie-Anosov} and generalized by Guichard-W. \cite{GW-Domains} to any hyperbolic group. In this section we will generalize  to non-Archimedean local fields the work of \cite{BPS}, which provides a simplified definition\footnote{Morse actions on Euclidean buildings (and thus in particular Anosov subgroups of $\PGL_d(\K)$ when $\K$ is non-Archimedean) were already defined by Kapovich-Leeb-Porti \cite[Definition 5.35]{KLP-Morse}, the interest of such concept was also suggested in G\'eritaud-Guichard-Kassel-W. \cite[Remark 1.6 (a)]{GGKW}.}.

\subsection{Anosov representations and dominated splittings}\label{s.anosov} 

Let $\G$ be a discrete group of finite type, fix a finite symmetric generating set $S_\G$ and denote by $|\,|$ the associated word length.
\begin{defi}\label{defAnosov} Consider $p\in\lb1,d-1\rb.$ A representation $\rho:\G\to\PGL_d(\K)$ is \emph{$\{\sroot_p\}$-Anosov}\footnote{In the language of Bochi-Potrie-S. \cite[Section 3.1]{BPS} a $\{\sroot_p\}$-Anosov representation  is called \emph{$p$-dominated}.}
 if  there exist positive constants $c,\mu$ such that for all $\g\in\G$ one has 
\begin{equation}\label{def1}
\frac{\sigma_{p+1}}{\sigma_{p}}\big(\rho(\g)\big)\leq ce^{-\mu|\g|}.
\end{equation} A $\{\sroot_1\}$-Anosov representations will be called \emph{projective Anosov}.
\end{defi}

One has the following direct remark.

\begin{obs}[Bochi-Potrie-S.]\label{Anosov-dom}
Let $\rho:\G\to\PGL_d(\K)$ be $\{\sroot_p\}$-Anosov. Given a geodesic $\{\alpha_i\}_{i\in\Z}$ let us denote by $\over{\alpha}_i=\alpha_{i+1}^{-1}\alpha_i\in S_\G,$ then we have 
 $$\big(\rho(\over\alpha_i)\big)_{i\in\Z}\in \cal D(C,p,c,\mu,\Z)$$ 
 where $c,\mu$ come from equation (\ref{def1}) and $C=\max\{\|\rho(a)\|:a\in S_\G\}.$ Note also that  
$$\big(\rho(\over\alpha_i)\big)_{i\in\Z} \in\cal D(C,d-p,c,\mu,\Z),$$
 and thus Theorem \ref{t.BG} provides the following splittings of $\K^d$ $$E^p_{(\rho(\over\alpha_i))}\oplus F^{d-p}_{(\rho(\over\alpha_i))}\textrm{ and } E^{d-p}_{(\rho(\over\alpha_i))}\oplus F^{p}_{(\rho(\over\alpha_i))},$$ 
with the obvious inclusions according to dimension. {By domination, these four bundles vary continuously\footnote{{This follows from Proposition \ref{prop_conefields}, see also, for example, \cite[Theorem A.15]{BPS}.}} in $\cal D(C,p,c,\mu,\Z)\cap \cal D(C,d-p,c,\mu,\Z).$}
 Finally, Proposition \ref{prop:BPS2.5} yields, for $k\in\{p,d-p\}$ and $m\geq0$ $$U_k(\rho(\alpha_m))=U_k(\over\alpha_0^{-1}\cdots\over\alpha_m^{-1})\to E^k_{(\rho(\over\alpha_i))}$$ and $$U_k(\rho(\alpha_{-m}))=U_k(\over\alpha_{-1}\cdots\over\alpha_{-m})\to F^k_{(\rho(\over\alpha_i))}$$ as $m\to\infty.$
 \end{obs}

Using dominated splittings it is possible to deduce strong angle estimates between Cartan attractors along geodesic rays through the origin; for example the next result is a direct consequence of Lemma \ref{l.seq_splitting}.
\begin{prop}[Bochi-Potrie-S.]\label{weakmorselemma} 
Let $\rho:\G\to\PGL_d(\K)$ be a $\{\sroot_p\}$-Anosov representation. 
 Then there exists $\delta>0$ and $L\in\N$ such that for every geodesic segment $(\alpha_i)_0^k$ in $\G$  through $e$ with $|\alpha_0|,|\alpha_k|\geq L$ one has 
 $$\angle\Big(U_p\big(\rho(\alpha_k)\big),U_{d-p}\big(\rho(\alpha_0)\big)\Big)>\delta.$$ 
\end{prop}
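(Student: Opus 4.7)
The plan is to recast the geodesic segment as a dominated sequence of matrices so that Lemma \ref{l.seq_splitting} applies directly. Let $j\in\lb 0,k\rb$ be the index with $\alpha_j=e$, so that $|\alpha_0|=j$ and $|\alpha_k|=k-j$, both at least $L$ by hypothesis. For $i$ in an interval $I\subset\Z$ containing $\lb 0,k-1\rb$, set $g_i:=\rho(\alpha_{i+1}^{-1}\alpha_i)$. Since consecutive elements on the geodesic differ by a generator, each $g_i$ lies in $\rho(S_\G)$, and the whole sequence lies in a compact $\cal D(C)$ with $C=\max_{s\in S_\G}\max\{\|\rho(s)\|,\|\rho(s)^{-1}\|\}$. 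The key telescoping identity $g_m g_{m-1}\cdots g_n=\rho(\alpha_{m+1}^{-1}\alpha_n)$ combined with the geodesic property $|\alpha_{m+1}^{-1}\alpha_n|=m-n+1$ and the Anosov bound (\ref{def1}) shows $(g_i)\in\cal D(C,p,c,\mu,I)$.

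Next I would exploit the fact that the Anosov condition is symmetric under $p\leftrightarrow d-p$: since $\sigma_\ell(g)=\sigma_{d+1-\ell}(g^{-1})^{-1}$ and $|\g^{-1}|=|\g|$, the sequence also lies in $\cal D(C,d-p,c,\mu,I)$. This symmetry is crucial because applying Lemma \ref{l.seq_splitting} at index $d-p$ rather than $p$ is exactly what puts $U_p$ on $\rho(\alpha_k)$ and $U_{d-p}$ on $\rho(\alpha_0)$, matching the order in the statement. Concretely I would apply the lemma to $(g_i)$ at index $d-p$ with $n'=0$, $k'=j$, $m'=k$. The same telescoping identifies
\[
g_{k'-1}\cdots g_{n'}=\rho(\alpha_j^{-1}\alpha_0)=\rho(\alpha_0),\qquad \bigl(g_{m'-1}\cdots g_{k'}\bigr)^{-1}=\rho(\alpha_j^{-1}\alpha_k)=\rho(\alpha_k),
\]
and the gap condition $\min\{k'-n',m'-k'\}>L$ becomes precisely $|\alpha_0|,|\alpha_k|>L$, provided one takes the $L$ in the proposition slightly larger than the one produced by the lemma. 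The lemma's conclusion is then exactly $\angle\big(U_p(\rho(\alpha_k)),U_{d-p}(\rho(\alpha_0))\big)>\delta$.

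The one delicate point, which I expect to be the only real obstacle, is the right endpoint: the sequence $(g_i)$ is naturally defined on $\lb 0,k-1\rb$ only, so $m'=k$ does not a priori belong to $I$. I see two equivalent remedies. The cleaner one is to extend the geodesic by one further generator, which is always possible for the infinite non-elementary hyperbolic groups that carry Anosov representations, and which preserves the domination estimate. Alternatively one applies Lemma \ref{l.seq_splitting} with $m'=k-1$, obtaining $\angle(U_p(\rho(\alpha_{k-1})),U_{d-p}(\rho(\alpha_0)))>\delta$, and absorbs the residual $(k-1)\to k$ shift using Lemma \ref{l.qg}(i): for $s=\alpha_{k-1}^{-1}\alpha_k\in S_\G$ one has $d(U_p(\rho(\alpha_k)),U_p(\rho(\alpha_{k-1})))\leq C^{2}c\,e^{-\mu(L-1)}$, and the triangle inequality for angles (in the form $\angle(P,Q)\geq\angle(P',Q)-\arcsin d(P,P')$) shows that $\delta$ only has to be replaced by $\delta/2$ once $L$ is taken large enough. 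Either remedy finishes the proof.
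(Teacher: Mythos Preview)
Your proposal is correct and follows exactly the approach the paper indicates: the paper simply declares the proposition ``a direct consequence of Lemma~\ref{l.seq_splitting}'' via Remark~\ref{Anosov-dom}, and your write-up is precisely the unpacking of that remark. Your observation that one must apply the lemma at index $d-p$ (equivalently, reverse the geodesic) to get $U_p$ on $\rho(\alpha_k)$ and $U_{d-p}$ on $\rho(\alpha_0)$ is the only subtlety, and you handle it correctly; the endpoint off-by-one you flag is a genuine but harmless indexing artifact, and either of your fixes works.
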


Bochi-Potrie-S. \cite{BPS} applied the theory of dominated splittings to the sofic shift $\Lambda_\G\to\Lambda_\G$ induced by the recurrent geodesic automaton (see Section \ref{s.geodaut}), to get  an easy proof of openness of Anosov representations. Their proof easily  extends to every local field:

\begin{prop}\label{p.open}
The set of $\{\sroot_p\}$-Anosov representations is open in $\hom(\G,\PGL_d(\K)).$ 
\end{prop}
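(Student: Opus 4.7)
The plan is to transfer the $\{\sroot_p\}$-Anosov condition for $\rho$ into a dominated-splitting condition for an auxiliary linear cocycle over the sofic shift $\Lambda_\G\to\Lambda_\G$ built from the recurrent geodesic automaton, and then to exploit the manifestly open nature of dominated splittings (via the cone field criterion of Proposition~\ref{prop_conefields}) under $C^0$-perturbation of the cocycle.

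Concretely, I would associate to each $\rho\in\hom(\G,\PGL_d(\K))$ the linear cocycle $\psi^\rho:\Lambda_\G\times\K^d\to\Lambda_\G\times\K^d$ whose fiberwise action at $x=(\bar\alpha_i)_{i\in\Z}\in\Lambda_\G$ is multiplication by $\rho(\bar\alpha_0)$. By Remark~\ref{Anosov-dom} together with Theorem~\ref{t.BG}, the $\{\sroot_p\}$-Anosov hypothesis on $\rho$ provides a dominated splitting of index $p$ for $\psi^\rho$. Proposition~\ref{prop_conefields} then supplies a continuous invariant $p$-dimensional cone field $x\mapsto\mathscr{C}_{a(x),x}$, a contraction factor $\lambda\in(0,1)$, and an integer $n_0$ such that the closure of $\psi^\rho_x{}^{n_0}(\mathscr{C}_{a(x),x})$ is contained in $\mathscr{C}_{\lambda a(\shift^{n_0}x),\shift^{n_0}x}$ for every $x$.

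Since the generating set $S_\G$ is finite, a small perturbation $\rho'$ of $\rho$ in $\hom(\G,\PGL_d(\K))$ gives a $C^0$-small perturbation of $\psi^\rho$ on the compact base $\Lambda_\G$. By continuity the same cone field remains strictly invariant for $\psi^{\rho'}$ with a slightly worse but still contracting factor $\lambda'<1$, so Proposition~\ref{prop_conefields} gives a dominated splitting of index $p$ for $\psi^{\rho'}$, and Theorem~\ref{t.BG} then yields uniform constants $c',\mu'>0$ with $\tfrac{\sigma_{p+1}}{\sigma_p}((\psi^{\rho'})^n_x)\le c'e^{-\mu' n}$ for all $x\in\Lambda_\G$, $n\ge 0$.

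It remains to translate this estimate back into the $\{\sroot_p\}$-Anosov inequality \eqref{def1} for $\rho'$ on every element of $\G$. Given $\g\in\G$, fix a geodesic word $(\alpha_0=e,\alpha_1,\dots,\alpha_{|\g|}=\g)$ with labels $\bar\alpha_i\in S_\G$; the associated path of cone types in the geodesic automaton visits non-recurrent states only in a bounded initial and terminal segment (there are only finitely many cone types, hence finitely many transient ones), while the middle portion corresponds to a finite orbit in $\Lambda_\G$. The cocycle estimate controls the middle block, and the bounded prefix/suffix contribute only uniformly bounded multiplicative factors in the singular value ratio, provided one avoids cancellations: this is exactly what Lemma~\ref{l.no_cancellation} delivers, using the uniform angle lower bound of Proposition~\ref{weakmorselemma} (which is inherited by $\rho'$ by continuity, since the bound depends only on a dominated splitting which persists). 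One concludes that $\tfrac{\sigma_{p+1}}{\sigma_p}(\rho'(\g))\le c''e^{-\mu''|\g|}$ for uniform $c'',\mu''>0$, so $\rho'$ is $\{\sroot_p\}$-Anosov. The main obstacle is precisely this bridging step from $\Lambda_\G$ to all of $\G$: everything else is a soft openness argument, but the finite-segment-to-bi-infinite-orbit passage requires the no-cancellation input of Lemma~\ref{l.no_cancellation} together with uniform transversality, which is why Proposition~\ref{weakmorselemma} is set up beforehand.
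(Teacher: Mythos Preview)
Your approach is essentially the paper's: reduce the Anosov condition to a dominated splitting for the cocycle $\psi^\rho$ over the sofic shift $\Lambda_\G$, use continuity of $\rho\mapsto\psi^\rho$ (which only depends on $\rho|_{S_\G}$), and invoke openness of dominated splittings via the cone-field criterion. The paper's proof is simply more terse: it asserts the equivalence ``$\rho$ is $\{\sroot_p\}$-Anosov $\iff$ $A_\rho$ has a dominated splitting over $\Lambda_\G$'' as a known fact from \cite{BPS} and is done in three sentences, whereas you attempt to spell out the harder implication (dominated splitting on $\Lambda_\G$ $\Rightarrow$ Anosov for $\rho'$) explicitly.

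One caution about your bridging argument: you invoke Proposition~\ref{weakmorselemma} for $\rho'$ to get the angle lower bound feeding Lemma~\ref{l.no_cancellation}, but Proposition~\ref{weakmorselemma} is stated for $\{\sroot_p\}$-Anosov representations, which is precisely what you are trying to establish for $\rho'$. Your parenthetical (``the bound depends only on a dominated splitting which persists'') is the right instinct, but to make it non-circular you must appeal directly to Lemma~\ref{l.seq_splitting} applied to the finite dominated sequences you have already obtained \emph{along paths in $\Lambda_\G$} for $\rho'$, rather than to Proposition~\ref{weakmorselemma}. Alternatively, since the geodesic automaton has finitely many cone types and hence any path in $\cal G$ visits only boundedly many non-recurrent states (each transient state lies on no cycle and so is visited at most once), the transient contributions can be absorbed into the constant via the elementary estimates $\sigma_{p+1}/\sigma_p(gh),\,\sigma_{p+1}/\sigma_p(hg)\le\|h\|\,\|h^{-1}\|\,\sigma_{p+1}/\sigma_p(g)$; combining several recurrent blocks still requires Lemma~\ref{l.no_cancellation}, but the needed angle bound now comes from Lemma~\ref{l.seq_splitting} applied within each recurrent block, which you already control. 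With this adjustment the argument is complete and matches the paper's.
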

\begin{proof}
A representation $\rho:\G\to\PGL_d(\K)$ induces a linear cocycle $A_\rho$ over $\L_\G$, which admits a dominated splitting if and only if the representation $\rho$ is $\{\sroot_p\}$-Anosov.
Observe that the cocycle $A_\rho$ varies continuously with the representation, since it only depends on the value of $\rho$ on a generating set of $\G.$
Since, by Proposition \ref{prop_conefields}, having a dominated splitting is an open condition on the space of cocycles, the result follows.
\end{proof}

\subsection{Boundary maps}
From now on we will assume that $\G$ is a word hyperbolic group. This is not a restriction:  Kapovich-Leeb-Porti proved that the only groups admitting Anosov representations are hyperbolic \cite[Theorem 6.15]{KLP-Morse} (cfr. also Bochi-Potrie-S. \cite{BPS} for a different proof in the Archimedean case). We can thus talk freely  about the Gromov boundary $\partial \G.$

 An important property of $\{\sroot_p\}$-Anosov representations is that they admit equivariant boundary maps:
\begin{prop}[{Bochi-Potrie-S. \cite[Proposition 4.9]{BPS}}]\label{p.bdry}
If $\rho:\G\to\PGL_d(\K)$ is $\{\sroot_p\}$-Anosov, then for any geodesic ray $\{\gamma_n\}$ with endpoint $x$, the limits
$$\xi^p_\rho(x):=\lim_{n\to\infty}U_p(\rho(\gamma_n))\quad \xi^{d-p}_\rho(x):=\lim_{n\to\infty}U_{d-p}(\rho(\gamma_n))$$
exist and do not depend on the ray; they define continuous $\rho$-equivariant transverse maps $\xi^{p}:\bord\G\to\Gr_p(\K^d)$, $\xi^{d-p}:\bord\G\to\Gr_{d-p}(\K^d)$.  
\end{prop}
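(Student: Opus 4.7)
The plan is to leverage the dominated-sequence framework already set up in Observation~\ref{Anosov-dom}, combined with the quantitative estimates of Section~\ref{s.2.1}. Start by proving existence of the limit along one ray from the identity. If $(\gamma_n)$ is a geodesic ray with $\gamma_0=e$ and $\gamma_{n+1}=\gamma_n s_{n+1}$ for $s_{n+1}\in S$, then Lemma~\ref{l.qg}\eqref{l.qg1} gives
$$ d\big(U_p(\rho(\gamma_{n+1})),U_p(\rho(\gamma_n))\big)\le C\,\frac{\sigma_{p+1}}{\sigma_p}\big(\rho(\gamma_n)\big), $$
where $C=\max_{s\in S}\|\rho(s)\|\|\rho(s)^{-1}\|$. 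By Definition~\ref{defAnosov} the right-hand side decays exponentially, so the sequence is Cauchy in the compact Grassmannian $\Gr_p(\K^d)$. The same argument with $d-p$ in place of $p$ yields $\xi_\rho^{d-p}(x)$.

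Independence of the ray follows from hyperbolicity: two rays with a common endpoint fellow-travel, so if $(\gamma'_n)$ is a second such ray we may write $\gamma'_n=\gamma_{n'}h_n$ with $|h_n|$ uniformly bounded. Applying Lemma~\ref{l.qg}\eqref{l.qg1} with $g=\rho(\gamma_{n'})$ and $h=\rho(h_n)$ and letting $n\to\infty$ forces the two limits to agree. Equivariance is obtained by the companion estimate Lemma~\ref{l.qg}\eqref{l.qg2}: if $(\gamma_n)$ is a ray to $x$, then suitable subsequences of $(\eta\gamma_n)$ converge to $\eta x$, and
$$ d\big(U_p(\rho(\eta\gamma_n)),\,\rho(\eta)U_p(\rho(\gamma_n))\big)\le \|\rho(\eta)\|\|\rho(\eta)^{-1}\|\,\frac{\sigma_{p+1}}{\sigma_p}\big(\rho(\gamma_n)\big)\longrightarrow 0, $$
giving $\xi^p_\rho(\eta x)=\rho(\eta)\xi^p_\rho(x)$. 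For continuity, note that the Cauchy tail bound above depends only on the constants $c,\mu,C$ from the Anosov condition; so if $x_n\to x$ in $\bord\G$, choose rays to $x_n$ and $x$ sharing initial segments of length $L_n\to\infty$ (possible because $\bord\G$ has the appropriate topology and cone types of length $L_n$ form a basis of neighbourhoods), and both $\xi^p_\rho(x_n)$ and $\xi^p_\rho(x)$ lie within $O(e^{-\mu L_n})$ of the common vertex $U_p(\rho(\text{shared tip}))$.

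For transversality, given $x\neq y\in\bord\G$ pick a bi-infinite geodesic $(\alpha_i)_{i\in\Z}$ with $\alpha_0=e$, asymptotic to $y$ as $i\to-\infty$ and to $x$ as $i\to+\infty$. Observation~\ref{Anosov-dom} identifies $(\rho(\bar\alpha_i))_{i\in\Z}$ as a point of $\cal D(C,p,c,\mu,\Z)$, and Proposition~\ref{prop:BPS2.5} produces a dominated splitting $E^p\oplus F^{d-p}$ at that point whose fibres, by the second half of the same proposition, are exactly $\xi^{d-p}_\rho(y)$ and $\xi^p_\rho(x)$ respectively; transversality is then built into the notion of dominated splitting. (Alternatively, the uniform lower bound of Proposition~\ref{weakmorselemma} on angles along long geodesic segments through $e$ passes to the limit.)

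The one subtlety — and essentially the only point where the non-Archimedean setting requires extra care — is that the Cartan attractors $U_p(\rho(\gamma_n))$ are not uniquely defined (Remark~\ref{rem.2.2}). This is harmless here: Remark~\ref{r.2.4} bounds the diameter of the set of admissible choices by $\frac{\sigma_{p+1}}{\sigma_p}(\rho(\gamma_n))$, which tends to $0$, so any two selections produce the same limit and all estimates above remain valid verbatim.
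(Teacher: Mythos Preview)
Your proof is correct and follows essentially the same approach as the paper's: both use Lemma~\ref{l.qg}\eqref{l.qg1} for the Cauchy property and independence of the ray, Lemma~\ref{l.qg}\eqref{l.qg2} for equivariance, and address the non-Archimedean ambiguity of $U_p$ via Remark~\ref{r.2.4}. You are more explicit than the paper (which largely defers to \cite{BPS}), in particular by spelling out the transversality argument via the dominated splitting of Proposition~\ref{prop:BPS2.5}, but the underlying ideas are identical.
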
 

\begin{proof}
The proof in \cite[Proposition 4.9]{BPS} works without modification in our context: despite $U_p(\rho(\gamma_n))$ is not uniquely defined, Lemma \ref{l.qg} (\ref{l.qg1}) guarantees that, for every choice of $U_p(\rho(\gamma_n))$, the sequence $\{U_p(\rho(\gamma_n))\}$ is Cauchy, and therefore has a limit; furthermore, since any pair of geodesic rays defining $x$ is at bounded distance, Lemma \ref{l.qg} (\ref{l.qg1})  shows that the limit doesn't depend on the chosen sequence, and the maps are continuous. The equivariance follows from Lemma \ref{l.qg} (\ref{l.qg2}).\end{proof}

The uniformity of the limits in Proposition \ref{p.bdry} can be quantified explicitly (cfr. \cite[Lemma 4.7]{BPS}). This will be useful in the proof of Theorem \ref{FRep->Fpoint}:
\begin{lemma}\label{l.uniformbdry}
 Let $\rho:\G\to\PGL_d(\K)$ be  $\{\sroot_p\}$-Anosov. Then there exist constants $C,\mu$ such that, for every $\alpha\in\Gamma$ and every $x\in\alpha\bordcone(\alpha)$, 
$$d(\xi^p(x),U_p(\rho(\alpha)))\leq Ce^{-\mu |\alpha|}.$$
In particular, given $\eps>0$ there exists $L\in\N$ such that 
$$\overline{\bigcup_{\g:|\g|\geq L} U_p\big(\rho(\g)\big)}\subset \cal N_\epsilon(\xi^p_\rho(\bord\G)).$$
\end{lemma}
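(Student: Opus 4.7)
The plan is to quantify the convergence from Proposition~\ref{p.bdry} by telescoping Lemma~\ref{l.qg}(\ref{l.qg1}) along a geodesic ray passing through $\alpha$, exploiting the exponential decay of $\sigma_{p+1}/\sigma_p$ given by the Anosov condition \eqref{def1}.

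First, given $\alpha \in \G$ and $x \in \alpha\bordcone(\alpha)$, I would exhibit a geodesic ray $(\gamma_n)_{n\geq 0}$ from $e$ with $\gamma_{|\alpha|} = \alpha$ and endpoint $x$: concatenate any geodesic from $e$ to $\alpha$ with the $\alpha$-translate of the geodesic ray in $\cone(\alpha)$ defining a representative of $\alpha^{-1}x \in \bordcone(\alpha)$. Writing $\gamma_{n+1} = \gamma_n s_n$ for generators $s_n \in S$ and setting $C_0 := \max_{s \in S} \|\rho(s)\|\,\|\rho(s^{-1})\|$, Lemma~\ref{l.qg}(\ref{l.qg1}) then gives
\[
d\bigl(U_p(\rho(\gamma_{n+1})),\, U_p(\rho(\gamma_n))\bigr) \leq C_0 \,\frac{\sigma_{p+1}}{\sigma_p}\big(\rho(\gamma_n)\big) \leq C_0\, c\, e^{-\mu n},
\]
where $c,\mu$ are the Anosov constants. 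Summing this geometric series from $n=|\alpha|$ and using $\xi^p(x) = \lim_n U_p(\rho(\gamma_n))$ from Proposition~\ref{p.bdry} yields the bound $d(\xi^p(x),U_p(\rho(\alpha))) \leq \tfrac{C_0 c}{1-e^{-\mu}}\, e^{-\mu|\alpha|}$, which is the desired estimate with $C := C_0c/(1-e^{-\mu})$ and the same $\mu$.

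For the ``in particular'' clause, I would choose $L$ large enough that $Ce^{-\mu L} < \eps$, and observe that for $\gamma$ with $|\gamma|\geq L$ (in the non-elementary case, which is the only case of interest for Anosov representations) there exists some $x \in \gamma\bordcone(\gamma)$: concretely, any infinite geodesic ray from $e$ through $\gamma$ limits to such an $x$, and the existence of such extensions follows from the recurrent geodesic automaton structure together with compactness of $\bord\G$. Applying the first part to $\gamma$ and any such $x$ gives that $U_p(\rho(\gamma))$ lies within $\eps$ of $\xi^p(x) \in \xi^p(\bord\G)$; taking the closure gives the claimed containment.

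The main subtlety, which I expect to be the only real obstacle, is the non-Archimedean case, where the Cartan attractor $U_p(\rho(\gamma_n))$ is not uniquely defined (Remark~\ref{rem.2.2}). I would absorb this ambiguity using Remark~\ref{r.2.4}: any two choices of $U_p(\rho(\gamma_n))$ differ by at most $\tfrac{\sigma_{p+1}}{\sigma_p}(\rho(\gamma_n)) \leq c\,e^{-\mu n}$, which is of the same order as the telescoping increments and therefore harmless at the cost of enlarging $C$ by a bounded factor. The argument otherwise runs in parallel to the Archimedean case, since all the tools invoked (Lemma~\ref{l.qg}, Proposition~\ref{p.bdry}) have been stated uniformly for local fields.
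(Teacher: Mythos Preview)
Your argument for the first inequality is essentially identical to the paper's: telescope along a geodesic ray through $\alpha$ using Lemma~\ref{l.qg}(\ref{l.qg1}) and sum the resulting geometric series. The treatment of the non-Archimedean ambiguity via Remark~\ref{r.2.4} is also in the same spirit.

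For the ``in particular'' clause there is a small but genuine gap. You assume that for every $\gamma$ there exists $x\in\gamma\bordcone(\gamma)$, i.e.\ that some geodesic from $e$ to $\gamma$ extends to an infinite ray. Your justification via the recurrent geodesic automaton does not establish this: the cone type $\cone(\gamma)$ need not lie in the recurrent subgraph, and in general hyperbolic groups (with a given generating set) can have \emph{dead ends}, elements $\gamma$ with $|\gamma s|\leq|\gamma|$ for every generator $s$, for which $\bordcone(\gamma)$ is empty. The paper avoids this issue by using instead the standard fact that in a word-hyperbolic group there is a constant $D$ such that every $\gamma$ lies within distance $D$ of some point $\alpha_k$ on a geodesic ray from $e$; writing $\gamma=\alpha_k h$ with $|h|\leq D$, one more application of Lemma~\ref{l.qg}(\ref{l.qg1}) bounds $d(U_p(\rho(\gamma)),U_p(\rho(\alpha_k)))$ uniformly, and the first part controls $d(U_p(\rho(\alpha_k)),\xi^p(x))$. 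This is a routine patch, but your stated justification does not cover it.
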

\begin{proof}
If $x\in\alpha\bordcone(\alpha)$ there exists a geodesic ray $(\alpha_i)_{i\in\N}$ through $\alpha$ with endpoint $x$. In particular we get 
$$d(U_p\big(\rho(\alpha)\big),\xi_\rho^p(x))\leq \sum_{i\geq |\alpha|}d(U_p\big(\rho(\alpha_i)\big),U_p\big(\rho(\alpha_{i+1})\big))\leq c\sum_{i\geq |\alpha|} e^{-\mu i}.
$$
Here the first inequality is a consequence of the triangular inequality, the second follows from Lemma \ref{l.qg} (\ref{l.qg1}).

The second statement follows since, as $\G$ is word hyperbolic, there is a constant $D$ such that, for every $\g\in\G$ we can choose  a geodesic ray $(\alpha_i)_{i\in\N}$, and $k\in\N$ such that $d(\gamma,\alpha_k)\leq D$. This implies that $\gamma=\alpha_k h$ with $|h|\leq D$. Let $x$ be the endpoint of the geodesic $(\alpha_i)_{i\in\N}$.
 Then 
$$d(U_p\big(\rho(\g)\big),\xi_\rho^p(x))\leq d(U_p\big(\rho(\alpha_kh)\big),U_p\big(\rho(\alpha_k)\big))+d(U_p(\rho(\alpha_i)),\xi^p(x))).$$

\end{proof}

Bochi-Potrie-S. \cite{BPS} observed that the boundary map has an explicit characterization in term of the linear cocycle $A_\rho$ over the sofic shift $\Lambda_\G$ (described in the proof of Proposition \ref{p.open}). Recall from Definition \ref{d.domination} that whenever a cocycle $A_\rho:\Lambda_\G\times \K^d\to \Lambda_\G\times \K^d$ has a dominated splitting we denote by $E$ (resp. $F$) the stable (resp. unstable) bundle. 

 \begin{prop}[{ \cite[Proposition 5.2]{BPS}}]\label{cocycle.automaton} Let $\rho:\G\to\PGL_d(\K)$ be  $\{\sroot_p\}$-Anosov. Let $x,y\in\bord\G$ and  $(\over\alpha_i)_{-\infty}^\infty\in\L_\G$ a geodesic from $y$ to $x$. Then one has 
$$\xi^{p}(x)=E_{(\rho(\over\alpha_i))}^p\textrm{ and  }\;\xi^{d-p}(y)=F_{(\rho(\over\alpha_i))}^{d-p}.$$
 \end{prop}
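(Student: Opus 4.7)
The plan is that the identification will follow directly from combining Proposition \ref{p.bdry}, which characterizes $\xi^p(x)$ and $\xi^{d-p}(y)$ as limits of Cartan attractors of $\rho(\gamma_n)$ along geodesic rays $(\gamma_n)$ in $\G$ converging to the respective boundary points, with Proposition \ref{prop:BPS2.5}, which characterizes the stable and unstable bundles of a dominated splitting as limits of Cartan attractors of forward and backward partial products of the associated matrix sequence. Once these two characterizations are matched, the proof reduces to an algebraic telescoping.

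First I would lift the bi-infinite automaton sequence $(\over\alpha_i)_{i\in\Z}\in\L_\G$ to a bi-infinite geodesic $(\alpha_i)_{i\in\Z}$ in $\G$ by setting $\alpha_0=e$ and $\alpha_{i+1}=\alpha_i\over\alpha_i^{-1}$. This produces a genuine bi-infinite geodesic through $e$, since $(\over\alpha_i)$ comes from a bi-infinite walk in the recurrent geodesic automaton, and it is compatible with the convention of Remark \ref{Anosov-dom} because $\alpha_{i+1}^{-1}\alpha_i=\over\alpha_i$. Under the convention that the sequence runs from $y$ to $x$, one has $\alpha_n\to x$ and $\alpha_{-n}\to y$ as $n\to\infty$.

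Next I would carry out the telescoping. Writing $g_i:=\rho(\over\alpha_i)$, induction on $n$ together with $\alpha_0=e$ yields
\[
g_0^{-1}g_1^{-1}\cdots g_{n-1}^{-1}=\rho(\alpha_n),\qquad g_{-1}g_{-2}\cdots g_{-n}=\rho(\alpha_{-n}).
\]
Applying Proposition \ref{prop:BPS2.5} to the dominated splitting $E^p\oplus F^{d-p}$ of index $d-p$ provided by Remark \ref{Anosov-dom} then gives
\[
E^p_{(\rho(\over\alpha_i))}=\lim_{n\to\infty}U_p(\rho(\alpha_n)),\qquad F^{d-p}_{(\rho(\over\alpha_i))}=\lim_{n\to\infty}U_{d-p}(\rho(\alpha_{-n})),
\]
and Proposition \ref{p.bdry}, applied to the geodesic rays $(\alpha_n)_{n\geq 0}$ and $(\alpha_{-n})_{n\geq 0}$, identifies these two limits with $\xi^p(x)$ and $\xi^{d-p}(y)$ respectively.

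The only point that really needs care, and is what I consider the main obstacle, is bookkeeping: matching the two distinct dominated splittings of Remark \ref{Anosov-dom} with the two identifications we want. Both equalities in the statement come from the \emph{same} splitting, namely $E^p\oplus F^{d-p}$ of index $d-p$, in which $E^p$ is the stable bundle of dimension $p$ and $F^{d-p}$ the unstable bundle of dimension $d-p$. Once this is settled, no further analytic input is required beyond the two cited propositions.
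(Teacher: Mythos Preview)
Your proof is correct and follows exactly the approach implicit in the paper: the paper cites this result from \cite{BPS} without proof, but Remark~\ref{Anosov-dom} already records precisely the telescoping identities $U_k(\rho(\alpha_m))=U_k(\over\alpha_0^{-1}\cdots\over\alpha_m^{-1})\to E^k$ and $U_k(\rho(\alpha_{-m}))=U_k(\over\alpha_{-1}\cdots\over\alpha_{-m})\to F^k$ that you derive, so combining these with Proposition~\ref{p.bdry} is exactly what the paper intends. Your care with the bookkeeping (both equalities coming from the single splitting $E^p\oplus F^{d-p}$ of index $d-p$) is well placed and correct.
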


As a corollary we can follow \cite{BPS} and generalize to the non-Archimedean case the following important fact originally proved by Labourie \cite{Labourie-Anosov} and Guichard-W. \cite{GW-Domains}. 

\begin{cor}\label{bdry.continuous}Let $\rho:\G\to\PGL_d(\K)$ be  $\{\sroot_p\}$-Anosov. The boundary maps $\xi^{p}:\partial\G\to\Gr_p(\K^d)$, $\xi^{d-p}:\partial\G\to\Gr_{d-p}(\K^d)$
vary continuously with the representation.
\end{cor}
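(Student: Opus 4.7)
The plan is to exploit Proposition \ref{cocycle.automaton}, which identifies $\xi^p_\rho$ with the stable bundle of the cocycle $A_\rho$ on $\Lambda_\G$, and to use the uniform approximation given in Lemma \ref{l.uniformbdry}. Fix an $\{\sroot_p\}$-Anosov representation $\rho_0$ and $\eps>0$; I will exhibit a neighborhood $\cal U$ of $\rho_0$ in $\hom(\G,\PGL_d(\K))$ such that every $\rho\in\cal U$ is still $\{\sroot_p\}$-Anosov (which is automatic from Proposition \ref{p.open}) and satisfies $d(\xi^p_\rho(x),\xi^p_{\rho_0}(x))<\eps$ for every $x\in\bord\G$. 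The argument for $\xi^{d-p}$ is identical after replacing $p$ by $d-p$.

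The first key point is that the constants $c,\mu$ governing the domination can be taken uniform on a sufficiently small neighborhood $\cal U$ of $\rho_0$. This is exactly the content of the openness proof (Proposition \ref{p.open}) via the cone field criterion (Proposition \ref{prop_conefields}): the cone field invariant under $A_{\rho_0}$ is, by compactness of $\Lambda_\G$ and continuity of $\rho\mapsto A_\rho$ in the uniform topology, still invariant under $A_\rho$ for every $\rho\in\cal U$, with the same contraction exponent. Consequently, Lemma \ref{l.uniformbdry} applies with common constants $C,\mu$: for every $\rho\in\cal U$, every $\alpha\in\G$ and every $x\in\alpha\bordcone(\alpha)$,
$$d\bigl(\xi^p_\rho(x),U_p(\rho(\alpha))\bigr)\leq Ce^{-\mu|\alpha|}.$$

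Now choose $N$ so large that $Ce^{-\mu N}<\eps/3$, and note that the finite set $F_N=\{\g\in\G:|\g|=N\}$ is finite and independent of $\rho$. For each $\alpha\in F_N$ the element $\rho(\alpha)\in\PGL_d(\K)$ depends continuously on $\rho$, so for $\rho$ sufficiently close to $\rho_0$ we have $\rho(\alpha)$ close to $\rho_0(\alpha)$ in $\PGL_d(\K)$. Both $\rho_0(\alpha)$ and $\rho(\alpha)$ have a gap of index $p$ (by the uniform domination, once $N$ is chosen large enough), so by Lemma \ref{l.qg} any choice of $U_p(\rho(\alpha))$ lies within $\eps/3$ of the (possibly set-valued) $U_p(\rho_0(\alpha))$ provided $\cal U$ is small enough; the finiteness of $F_N$ lets us shrink $\cal U$ once for all $\alpha\in F_N$.

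Finally, given $x\in\bord\G$, by Section \ref{s.covering} we may choose some $\alpha\in F_N$ with $x\in\alpha\bordcone(\alpha)$. The triangle inequality, combined with the two estimates of the preceding paragraphs applied to $\rho$ and $\rho_0$, yields
$$d\bigl(\xi^p_\rho(x),\xi^p_{\rho_0}(x)\bigr)\leq d\bigl(\xi^p_\rho(x),U_p(\rho(\alpha))\bigr)+d\bigl(U_p(\rho(\alpha)),U_p(\rho_0(\alpha))\bigr)+d\bigl(U_p(\rho_0(\alpha)),\xi^p_{\rho_0}(x)\bigr)<\eps.$$
Since the bound is independent of $x$, this proves uniform convergence of $\xi^p_\rho$ to $\xi^p_{\rho_0}$ as $\rho\to\rho_0$. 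The main subtle point in the argument is the uniformity of the domination constants on $\cal U$; once this is granted, the rest is a straightforward triangle inequality, and the non-uniqueness of Cartan attractors in the non-Archimedean setting causes no trouble thanks to Remark \ref{r.2.4}.
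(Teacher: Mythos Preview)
Your argument is correct. It differs from the paper's proof in its level of abstraction: the paper simply observes that, via Proposition \ref{cocycle.automaton}, the maps $\xi^p_\rho$ and $\xi^{d-p}_\rho$ are the stable and unstable bundles of the cocycle $A_\rho$ over $\Lambda_\G$, and then cites the standard fact (Proposition \ref{prop_conefields}, see also \cite[Theorem A.15]{BPS}) that dominated splittings vary continuously with the cocycle. You instead unpack this abstract continuity by hand: you use the uniform approximation of $\xi^p_\rho(x)$ by Cartan attractors $U_p(\rho(\alpha))$ from Lemma \ref{l.uniformbdry}, observe that the constants in that lemma can be taken uniform on a neighborhood of $\rho_0$ (this is where the cone-field openness argument enters), and then compare $U_p(\rho(\alpha))$ with $U_p(\rho_0(\alpha))$ for the finitely many $\alpha$ of fixed length $N$ via Lemma \ref{l.qg}. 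Both approaches rest on the same underlying ingredient, namely that the domination constants can be chosen uniformly on a neighborhood of $\rho_0$; the paper packages this as ``continuity of splittings'', while you exhibit the $\eps/3$ argument explicitly. One minor remark: although you announce in your opening sentence that you will exploit Proposition \ref{cocycle.automaton}, your actual argument never uses it---Lemma \ref{l.uniformbdry} (which rests on Proposition \ref{p.bdry}) does all the work.
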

\begin{proof}
This follows at once from the arguments in the proof of Propositions \ref{p.open} and \ref{cocycle.automaton}, as splittings vary continuously with the cocycles by Proposition \ref{prop_conefields} {(this is a standard argument, see \cite[Theorem A.15]{BPS} for a proof).} 
\end{proof}

The boundary map, which is unique, gives a realization of the boundary $\bord\G$ in $\Gr_p(\K^d)$, a  space where the dynamics is governed by ratios of semi-homothecy ratios of  elements in the projective linear group. To stress this fact and the dependence on $\rho$ we introduce the following notation, which will be heavily used in the rest of the paper: 
if $\rho:\G\to\GL_d(\K)$ is a $\{\sroot_p\}$-Anosov representation with equivariant boundary map $\xi^p$, and $x\in\bord\G$, we will write 
$$x^p_\rho:=\xi^p(x).$$We noticed that this notation improves readability of many formulas and conveniently stresses the dependence of $\xi$ on $\rho$.

\subsection{Geometric estimates}
We conclude  the section on Anosov representations by collecting a number of geometric lemmas that will be useful later on. The first result provides the quantification we will need of the following geometric principle: endpoints of a geodesics through the origin are uniformly far in the visual boundary, the same  holds for their image under the boundary map associated to an Anosov representation. 

\begin{lemma}\label{appart} Let $\rho:\G\to\PGL_d(\K)$ be  $\{\sroot_p\}$-Anosov representation. Then there exists $\nu>0,$ depending only on $\rho$ such that: if $\{\alpha_i\}_{i\in\Z}\subset\G$ is a geodesic through $\id$ with endpoints $x,z\in\bord\G,$ then for all $i\in\Z$ one has $$\angle(\rho\big(\alpha^{-1}_i\big)x^p_\rho,\rho\big(\alpha^{-1}_i\big)z^{d-p}_\rho)>\nu.$$
\end{lemma}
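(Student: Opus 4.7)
The plan is to reduce to the case $i=0$ by $\rho$-equivariance of the boundary maps, then to approximate the bi-infinite geodesic by finite subsegments and apply the uniform angle estimate of Proposition~\ref{weakmorselemma}.

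First I would exploit equivariance: one has $\rho(\alpha_i^{-1})x^p_\rho=\xi^p(\alpha_i^{-1}\!\cdot\! x)$ and $\rho(\alpha_i^{-1})z^{d-p}_\rho=\xi^{d-p}(\alpha_i^{-1}\!\cdot\! z)$. The translated sequence $\{\alpha_i^{-1}\alpha_j\}_{j\in\Z}$ is again a geodesic in $\G$ passing through $\id$ (at index $j=i$) with endpoints $\alpha_i^{-1}\!\cdot\! x$ and $\alpha_i^{-1}\!\cdot\! z$. It therefore suffices to exhibit a single $\nu>0$, depending only on $\rho$, such that for every bi-infinite geodesic $\{\beta_j\}_{j\in\Z}\subset\G$ with $\beta_0=\id$ and endpoints $y,w\in\bord\G$ one has $\angle(y^p_\rho,w^{d-p}_\rho)>\nu$.

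Now fix such a geodesic $\{\beta_j\}$, parametrised so that $\beta_n\to y$ as $n\to+\infty$ and $\beta_{-m}\to w$ as $m\to+\infty$. By Proposition~\ref{p.bdry} one has
\[
y^p_\rho=\lim_{n\to\infty}U_p(\rho(\beta_n)),\qquad w^{d-p}_\rho=\lim_{m\to\infty}U_{d-p}(\rho(\beta_{-m})).
\]
Let $L,\delta>0$ be the constants furnished by Proposition~\ref{weakmorselemma}. For every $n,m\geq L$, the segment $(\beta_{-m},\beta_{-m+1},\ldots,\beta_n)$ is a geodesic segment through $\id$ whose two extreme points have word-length at least $L$, so Proposition~\ref{weakmorselemma} yields
\[
\angle\bigl(U_p(\rho(\beta_n)),\,U_{d-p}(\rho(\beta_{-m}))\bigr)>\delta.
\]

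Finally I would let $n,m\to\infty$. The function $\angle$ on $\Gr_p(\K^d)\times\Gr_{d-p}(\K^d)$ is continuous: it is defined as a minimum of the continuous map $(v,w)\mapsto\angle(v,w)$ over pairs of unit vectors, and the minimum is attained by compactness, so a standard subsequential argument shows $\angle(P_n,Q_n)\to\angle(P,Q)$ whenever $P_n\to P$ and $Q_n\to Q$. Passing to the limit in the displayed inequality therefore gives $\angle(y^p_\rho,w^{d-p}_\rho)\geq\delta$, and the lemma holds with $\nu:=\delta/2$ (or any positive number below $\delta$). There is no serious obstacle here: the content is a direct combination of equivariance, the uniform Morse-type estimate along geodesics through the origin, and continuity of the angle; the only point needing a word of justification is this continuity, which is elementary.
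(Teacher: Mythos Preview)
Your proof is correct. It takes a somewhat different route from the paper's own argument, and the comparison is worth recording.

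The paper argues directly in the compact space $\cal D(C,d-p,c,\mu,\Z)$ of dominated sequences: the dominated splitting $E^p\oplus F^{d-p}$ over that space has, by compactness and continuity of the bundles, a uniform positive lower bound $\nu$ on the angle $\angle(E^p,F^{d-p})$. It then identifies $x^p_\rho$ and $z^{d-p}_\rho$ with the fibres of $E^p$ and $F^{d-p}$ over the sequence $(\rho(\alpha_{i+1}^{-1}\alpha_i))_{i\in\Z}$, and observes that applying $\rho(\alpha_n^{-1})$ corresponds to shifting the sequence; since the angle bound holds at every point of the compact base, the conclusion follows.

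Your approach instead uses $\rho$-equivariance of $\xi^p,\xi^{d-p}$ to reduce immediately to the case $i=0$, and then invokes the already-packaged Proposition~\ref{weakmorselemma} on finite geodesic segments, passing to the limit via Proposition~\ref{p.bdry} and continuity of the angle. This is more elementary in that it stays entirely on the group side and does not revisit the dominated-splitting machinery; on the other hand Proposition~\ref{weakmorselemma} is itself derived from the same compactness (Lemma~\ref{l.seq_splitting}), so the underlying content is the same. Your route is arguably cleaner given the tools already available, while the paper's proof makes the dynamical mechanism (shift-invariance of the splitting) more explicit.
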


\begin{proof}
Recall that we denote by $\cal D(C,d-p,c,\mu,\Z)$ the compact, shift invariant space of dominated sequences (cfr. Definition \ref{dominatedsequence}). The bundle $\cal D(C,d-p,c,\mu,\Z)\times \K^d$ admits a dominated splitting $E^p\oplus F^{d-p}$ and, by compactness,
 we get 
  $$\nu=\inf_{\{g_i\}\in\cal  D(C,d-p,c,\mu,\Z)}\angle( E_{(g_i)}^{p},F_{(g_i)}^{d-p})>0.$$ 

Remark \ref{Anosov-dom} implies that, since $\rho$ is $\{\sroot_p\}$-Anosov, $(\rho(\alpha^{-1}_{i+1}\alpha_i))_{i\in\Z}\in\cal D(C,d-p,c,\mu,\Z)$, furthermore one directly computes that 
 $$\psi^n((\rho(\alpha^{-1}_{i+1}\alpha_i))_{i\in\Z},v)=((\rho(\alpha^{-1}_{i+1}\alpha_i))_{i-n\in\Z},\rho\big(\alpha_n^{-1}\big)v).$$

As we know from Proposition \ref{prop:BPS2.5}  that 
$$x^p_\rho=\lim_{i\to\infty} U_p(\rho(\alpha_i))=E_{(\rho(\over\alpha)_i)}^p$$
 and 
 $$z^{d-p}_\rho=\lim_{i\to\infty} U_{d-p}(\rho(\alpha_{-i}))=F_{(\rho(\over\alpha)_i)}^{d-p},$$
 we deduce that for all $i\in\Z$ one has 
  $$\angle(\rho\big(\alpha_i^{-1}\big)x^p_\rho,\rho\big(\alpha_i^{-1}\big)z^{d-p}_\rho)>\nu.$$

\end{proof}
 
The next lemma will be crucial in Section \ref{section:FrenetReps}. It quantifies how the inverse of elements in a geodesic expand the distances exponentially in neighbourhoods of their Cartan attractors; this should be compared with \cite[Corollary A.14]{BPS}:  

\begin{lemma}\label{expand2}
Let $\rho:\G\to\PGL_d(\K)$ be  $\{\sroot_p\}$-Anosov. There exist positive constants  $\overline c,\overline\mu,\delta$ depending only on $\rho$, and $L\in\N$ such that, for every geodesic ray $\{\alpha_i\}_{i\in\N}\subset\G,$ with $\alpha_0=\id$ and endpoint $x$, every $i\geq L$, and every $z,w\in\partial\G$ satisfying $z^p_\rho,w^p_\rho\in B_\delta(x^p_\rho)$, and $\rho(\alpha_i^{-1})\{z^p_\rho,w^p_\rho\}\subset B_\delta(\rho(\alpha_i^{-1})x^p_\rho)$, we have 
$$d(\rho(\alpha_i^{-1})w^p_\rho,\rho(\alpha_i^{-1})z^p_\rho)\geq \overline ce^{\overline\mu i}d(w^p_\rho,z^p_\rho).$$
\end{lemma}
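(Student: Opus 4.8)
The plan is to combine the domination-to-expansion estimate of Lemma~\ref{l.expand} (or rather its corollary Corollary~\ref{l.expandold}) with the two key geometric inputs already established for Anosov representations: the transversality statement of Lemma~\ref{appart}, and the quantitative control of $\xi^p(x)$ by the Cartan attractors $U_p(\rho(\alpha_i))$ from Lemma~\ref{l.uniformbdry}. The overall idea is that $\rho(\alpha_i^{-1})$ sends the Cartan attractor $U_p(\rho(\alpha_i))$ to $U_p(\rho(\id))$ up to controlled error, while simultaneously the ratio of semi-homothecy ratios $\sigma_p/\sigma_{p+1}\bigl(\rho(\alpha_i^{-1})\bigr) = \sigma_p/\sigma_{p+1}\bigl(\rho(\alpha_i)\bigr)$ blows up exponentially by the Anosov property; feeding this into the min-expansion inequality yields the claim.

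First I would set up the norms: fix $\alpha > 0$ to be the transversality constant $\nu$ from Lemma~\ref{appart} (applied to geodesics through the origin), so that for every $i$ one has $\angle\bigl(\rho(\alpha_i^{-1})x^p_\rho, \rho(\alpha_i^{-1})z^{d-p}_\rho\bigr) > \nu$ for the endpoints $x,z$ of a bi-infinite geodesic. More to the point, I want to apply Corollary~\ref{l.expandold} with $g = \rho(\alpha_i^{-1})$, with $P = U_p(\rho(\alpha_i))$ — the Cartan attractor of $\rho(\alpha_i)$, so that $g$ maps it close to $U_p(\rho(\id))$ — and with $Q = U_{d-p}(\rho(\alpha_i^{-1}))^{\perp}$-type complement chosen so that $gP \perp gQ$ after applying $g$. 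The separation hypothesis \eqref{e.bi_separation} of Corollary~\ref{l.expandold}, namely $\min\{\angle(P,Q),\angle(gP,gQ)\} \ge \alpha$, is exactly guaranteed by Proposition~\ref{weakmorselemma} (angle between $U_p(\rho(\alpha_i))$ and $U_{d-p}(\rho(\alpha_0))$ uniformly bounded below for $i \ge L$) together with the orthogonality remarks after Definition~\ref{d.gap} and Remark~\ref{r.2.4}. The conclusion of the corollary then gives $\delta_0 > 0$ and $b > 0$ with
$$d(\rho(\alpha_i^{-1})P_1, \rho(\alpha_i^{-1})P_2) \ge b\,\frac{\mm(\rho(\alpha_i^{-1})|_Q)}{\|\rho(\alpha_i^{-1})|_P\|}\,d(P_1,P_2)$$
whenever $d(\rho(\alpha_i^{-1})P_j, \rho(\alpha_i^{-1})P) < \delta_0$. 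Here the key point is that, by the choice of $P$ and $Q$ as Cartan attractor/repeller directions, $\mm(\rho(\alpha_i^{-1})|_Q) = \sigma_p(\rho(\alpha_i^{-1})) = 1/\sigma_{p+1}(\rho(\alpha_i))$ and $\|\rho(\alpha_i^{-1})|_P\| = \sigma_{p+1}(\rho(\alpha_i^{-1})) = 1/\sigma_p(\rho(\alpha_i))$ — modulo projectivization normalizations, the ratio is $\frac{\sigma_p}{\sigma_{p+1}}(\rho(\alpha_i)) \ge (1/c) e^{\mu i}$ by the $\{\sroot_p\}$-Anosov condition \eqref{def1}. This is the source of the exponential factor $e^{\overline\mu i}$ with, say, $\overline\mu = \mu$ and $\overline c = b/c$.

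The remaining work is to replace the "nearby points must be close to the Cartan attractor $P$" hypothesis by the hypothesis actually stated in the lemma, which is phrased in terms of $\xi^p$ and a single ball $B_\delta(x^p_\rho)$. Here I would use Lemma~\ref{l.uniformbdry}: there are $C,\mu$ with $d(\xi^p(x), U_p(\rho(\alpha_i))) \le Ce^{-\mu i}$, so for $i$ large (at least $L$, enlarging $L$ if necessary) the point $x^p_\rho$ is within, say, $\delta_0/100$ of $P = U_p(\rho(\alpha_i))$; hence choosing $\delta = \delta_0/100$ we get that $z^p_\rho, w^p_\rho \in B_\delta(x^p_\rho)$ forces $d(z^p_\rho, P), d(w^p_\rho, P) < \delta_0/50$ and, since the hypothesis also demands $\rho(\alpha_i^{-1})\{z^p_\rho,w^p_\rho\} \subset B_\delta(\rho(\alpha_i^{-1})x^p_\rho)$ and $\rho(\alpha_i^{-1})x^p_\rho$ is within $\delta_0/100$ of $\rho(\alpha_i^{-1})P = U_p(\rho(\id))$ (again Lemma~\ref{l.uniformbdry}, or directly Lemma~\ref{l.qg}\eqref{l.qg1}), the images $\rho(\alpha_i^{-1})z^p_\rho, \rho(\alpha_i^{-1})w^p_\rho$ land within $\delta_0$ of $\rho(\alpha_i^{-1})P$. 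Thus Corollary~\ref{l.expandold} applies with $P_1 = z^p_\rho$, $P_2 = w^p_\rho$, and the transversality hypothesis $P_j \cap Q = \{0\}$ is guaranteed by Lemma~\ref{appart} (the points $z^p_\rho, w^p_\rho$ are, being close to $x^p_\rho$, uniformly transverse to $z^{d-p}_\rho$ and hence to the relevant $Q$), finishing the proof.

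The main obstacle I anticipate is the bookkeeping around the non-uniqueness of Cartan attractors in the non-Archimedean case and the precise identification of $\mm(\rho(\alpha_i^{-1})|_Q)$ and $\|\rho(\alpha_i^{-1})|_P\|$ with the semi-homothecy ratios: one must be careful that the decomposition $U_{d-p}(\rho(\alpha_i^{-1})) \oplus \rho(\alpha_i^{-1})^{-1}U_p(\rho(\alpha_i^{-1}))$ is orthogonal (Remark~\ref{r.2.4}) and that $g = \rho(\alpha_i^{-1})$ genuinely acts as a semi-homothecy of the right ratios on the chosen $P$ and $Q$ — equivalently, one should take $P = u_2(\cdots)$-type spans coming from the singular-value decomposition of $\rho(\alpha_i^{-1})$ itself rather than of $\rho(\alpha_i)$, and then check via Lemma~\ref{l.qg} that these agree up to the exponentially small error $\tfrac{\sigma_{p+1}}{\sigma_p}(\rho(\alpha_i))$ with the spans one wants. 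Since that error is itself exponentially small, it is absorbed into the constants, but making this absorption clean (and uniform in $i$) is the delicate point; the rest is a routine chain of triangle inequalities.
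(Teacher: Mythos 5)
Your overall skeleton (expansion via Corollary~\ref{l.expandold}, transversality via Lemma~\ref{appart}, completion by the Anosov exponential rate) is the right one, but your choice of reference subspaces creates a gap that the cited lemmas cannot close. You take $P=U_p(\rho(\alpha_i))$ and $Q$ essentially its orthogonal complement, so that $\mm(g|_Q)/\|g|_P\|$ is literally the singular value gap of $\rho(\alpha_i)$. The price is that Corollary~\ref{l.expandold} then requires $d\bigl(\rho(\alpha_i^{-1})z^p_\rho,\ \rho(\alpha_i^{-1})U_p(\rho(\alpha_i))\bigr)<\delta_0$, whereas the hypothesis of the lemma only gives closeness of the images to $\rho(\alpha_i^{-1})x^p_\rho$. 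Your bridge --- ``$\rho(\alpha_i^{-1})x^p_\rho$ is within $\delta_0/100$ of $\rho(\alpha_i^{-1})P=U_p(\rho(\id))$ by Lemma~\ref{l.uniformbdry} or Lemma~\ref{l.qg}'' --- does not work: first, $U_p(\rho(\id))$ is not defined (the identity has no gap), and more seriously, those lemmas bound $d(x^p_\rho,U_p(\rho(\alpha_i)))\le Ce^{-\mu i}$ \emph{before} applying $\rho(\alpha_i^{-1})$, and $\rho(\alpha_i^{-1})$ can magnify distances in $\P(\K^d)$ by a factor as large as $\tfrac{\sigma_1}{\sigma_d}(\rho(\alpha_i))$, which is not controlled by the $\{\sroot_p\}$-Anosov condition (only $\sigma_{p+1}/\sigma_p$ is). So the product $\tfrac{\sigma_1}{\sigma_d}(\rho(\alpha_i))\,e^{-\mu i}$ need not be small, and indeed $\rho(\alpha_i^{-1})x^p_\rho$ is only known to be uniformly \emph{transverse} to $U_{d-p}(\rho(\alpha_i^{-1}))$, not close to its orthocomplement $\rho(\alpha_i^{-1})U_p(\rho(\alpha_i))$. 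The same issue sinks the implicit identification $\|\rho(\alpha_i^{-1})|_{x^p_\rho}\|\approx 1/\sigma_p(\rho(\alpha_i))$: exponential closeness of $x^p_\rho$ to the Cartan attractor does not give this, because the error component can lie in the most expanded direction. This is not a bookkeeping issue absorbable into constants, as your last paragraph hopes; it is the crux.

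The paper avoids this by making the opposite choice: complete the ray to a bi-infinite geodesic with second endpoint $y$ and apply Corollary~\ref{l.expandold} with $P=x^p_\rho$ and $Q=y^{d-p}_\rho$ themselves, and $g=\rho(\alpha_i^{-1})$. Then the hypothesis $d(gP_j,gP)<\delta$ of the corollary is \emph{verbatim} the hypothesis $\rho(\alpha_i^{-1})\{z^p_\rho,w^p_\rho\}\subset B_\delta(\rho(\alpha_i^{-1})x^p_\rho)$ of the lemma, and both angle conditions are exactly Lemma~\ref{appart}. The exponential factor is then obtained not from raw singular values but from the dominated splitting of the sequence $\{\rho(\alpha_{i+1}^{-1}\alpha_i)\}$ (Proposition~\ref{prop:BPS2.5} together with Proposition~\ref{p.bdry}), which gives precisely the restriction-norm estimate
\[
\frac{\mm\bigl(\rho(\alpha_i^{-1})|_{y^{d-p}_\rho}\bigr)}{\bigl\|\rho(\alpha_i^{-1})|_{x^{p}_\rho}\bigr\|}\ \geq\ c_1e^{\overline\mu i}
\]
on the exact limit subspaces. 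If you want to keep your structure, this is the repair: replace your Cartan-axis reference planes by $x^p_\rho$ and $y^{d-p}_\rho$ and invoke the domination estimate on these bundles instead of the singular value gap of $\rho(\alpha_i)$.
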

\begin{proof}
We complete the ray $\{\alpha_i\}_{i\in\N}$ to a biinfinite geodesic $\{\alpha_i\}_{i\in\Z}$ with second endpoint $y$. The sequence $s=\{\rho(\alpha_{i+1}^{-1}\alpha_i)\}_{i\in\Z}$ belongs to $\cal D(C,p,c,\mu,\Z)$. It follows from Propositions \ref{prop:BPS2.5} and \ref{p.bdry} that the sequence $s$ has the dominated splitting $E\oplus F$ where $F_{s}=x^p_\rho$ and $E_s=y^{d-p}_\rho$. So there exist constants $\overline\mu, c_1$ such that
$$\frac{\mm(\rho(\alpha_i^{-1})|_{y^{d-p}_\rho})}{\|\rho(\alpha_i^{-1})|_{x^{p}_\rho} \|}\geq c_1e^{\overline \mu i}.$$

Since, by Lemma \ref{appart}, the angles  $\angle( x^p_\rho,y^{d-p}_\rho)$ and $\angle(\rho(\alpha_i^{-1})x^p_\rho,\rho(\alpha_i^{-1})y^{d-p}_\rho)$ are bounded below by a uniform constant $\nu$, we can apply Corollary \ref{l.expandold} with $P=x^{p}_\rho,$ $Q=y^{d-p}_\rho$ and $g=\rho(\alpha_i)^{-1}$ 
and get
$$d(\rho(\alpha_i^{-1})w^p_\rho,\rho(\alpha_i^{-1})z^p_\rho)\geq b \, \frac{\mm(\rho(\alpha_i^{-1})|_{y^{d-p}_\rho})}{\|\rho(\alpha_i^{-1})|_{x^{p}_\rho} \|} \, d(w^p_\rho,z^p_\rho).$$
\end{proof}

\section{An upper bound on the Hausdorff dimension of the limit set}\label{s.4}

In this section we will prove the following upper bound, this result is independently obtained by Glorieux-Monclair-Tholozan \cite{GMT} for Archimedean $\K.$ Recall from the introduction that if $\rho:\G\to\PGL_d(\K)$ is projective Anosov then $h^{\sroot_1}_\rho$ is the critical exponent of the Dirchlet series $$s\mapsto\sum_{\g\in\G}\left(\frac{\sigma_2}{\sigma_1}\big(\rho(\g)\big)\right)^s.$$ We denote by $\Hff(A)$ the Hausdorff dimension of a subset $A\subset\P(\K^d)$ for the metric induced by a good norm on $\K^d$.

\begin{prop}\label{Hffand1st} Let $\rho:\G\to\PGL_d(\K)$ be $\{\sroot_{1}\}$-Anosov, then $\Hff(\xi^1(\partial\G))\leq h^{\sroot_1}_\rho.$
\end{prop}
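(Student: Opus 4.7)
My plan is to run a Sullivan-style shadow covering argument, using the finite covering of $\bord\G$ produced by Lemma \ref{SullivanShadows} and controlling the diameters of its image under $\xi^1$ via the Cartan-attractor estimates of Section \ref{s.2.1}.

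The starting point is the covering $\cal U_T=\{\g\bordcone(\g)\st|\g|\geq T\}$ of $\bord\G,$ which gives the covering $\{\xi^1(\g\bordcone(\g))\st|\g|\geq T\}$ of $\xi^1(\bord\G).$ I would like to show that for $T$ large enough and every $\g$ with $|\g|\geq T,$
\begin{equation}\label{e.diam}
\diam\bigl(\xi^1(\g\bordcone(\g))\bigr)\leq K\,\frac{\sigma_2}{\sigma_1}\bigl(\rho(\g)\bigr)
\end{equation}
for some constant $K$ independent of $\g.$ Since $\sigma_2/\sigma_1(\rho(\g))\to 0$ uniformly in $|\g|$ by the Anosov property, the meshes of these coverings go to $0$ as $T\to\infty$, so once \eqref{e.diam} is in place, for any $s>h^{\sroot_1}_\rho$ we bound
\[
\cal H^s\bigl(\xi^1(\bord\G)\bigr)\leq\limsup_{T\to\infty}\sum_{|\g|\geq T}K^s\Bigl(\tfrac{\sigma_2}{\sigma_1}(\rho(\g))\Bigr)^{s}<\infty,
\]
yielding $\Hff(\xi^1(\bord\G))\leq s$ and then, letting $s\downarrow h^{\sroot_1}_\rho$, the desired inequality.

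The content of \eqref{e.diam} is equivariance plus a contraction estimate. By $\rho$-equivariance, $\xi^1(\g\bordcone(\g))=\rho(\g)\,\xi^1(\bordcone(\g)),$ so for $y\in\bordcone(\g)$ I apply Lemma \ref{l.dominationattractor} with $P=\xi^1(y)$ and $g=\rho(\g)$ to get
\[
d\bigl(\xi^1(\g y),U_1(\rho(\g))\bigr)=d\bigl(\rho(\g)\xi^1(y),U_1(\rho(\g))\bigr)\leq \frac{\sigma_2}{\sigma_1}\bigl(\rho(\g)\bigr)\cdot\frac{1}{\sin\angle\bigl(\xi^1(y),U_{d-1}(\rho(\g)^{-1})\bigr)}.
\]
Provided the angle in the denominator is bounded below by some $\delta>0$ uniformly in $\g$ and $y\in\bordcone(\g),$ this gives $\diam(\xi^1(\g\bordcone(\g)))\leq (2/\sin\delta)(\sigma_2/\sigma_1)(\rho(\g)),$ which is \eqref{e.diam}.

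The uniform angle lower bound is the real step, and it follows from Proposition \ref{weakmorselemma} together with the convergence $U_1(\rho(\alpha_n))\to\xi^1(y)$ of Proposition \ref{p.bdry}. Indeed, if $y\in\bordcone(\g)$ then, by the cone-type description of \S\ref{conetypes}, there is a geodesic ray $(\alpha_i)_{i\geq 0}$ with $\alpha_0=e$ lying in $\cone(\g),$ and $e$ lies on a geodesic segment with endpoints $\g^{-1}$ and $\alpha_n$ for every $n.$ Choose $L$ and $\delta$ as in Proposition \ref{weakmorselemma} and take $T\geq L$; then for every $n\geq L$
\[
\angle\bigl(U_1(\rho(\alpha_n)),\,U_{d-1}(\rho(\g^{-1}))\bigr)>\delta,
\]
and letting $n\to\infty$ yields $\angle(\xi^1(y),U_{d-1}(\rho(\g^{-1})))\geq\delta,$ as required (note the angle bound is independent of the possibly non-unique choice of $U_{d-1}(\rho(\g^{-1}))$ in the non-Archimedean case, by Remark \ref{r.2.4}). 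The only subtle point is this uniform transversality; once it is in hand, assembling the three estimates gives the proposition. I do not expect any serious obstacle beyond verifying that the quantifiers in Proposition \ref{weakmorselemma} and Lemma \ref{l.dominationattractor} line up correctly.
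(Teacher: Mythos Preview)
Your proof is correct and follows essentially the same approach as the paper: cover $\xi^1(\bord\G)$ by the sets $\xi^1(\g\bordcone(\g))$ with $|\g|\geq T$, obtain the diameter bound from Lemma \ref{l.dominationattractor} together with a uniform lower bound on $\angle(\xi^1(y),U_{d-1}(\rho(\g^{-1})))$ for $y\in\bordcone(\g)$, and conclude via convergence of the Dirichlet series for $s>h^{\sroot_1}_\rho$. The only cosmetic difference is that the paper isolates the angle estimate as a separate lemma (Lemma \ref{UniformAngle}), whereas you derive it inline from Proposition \ref{weakmorselemma} and Proposition \ref{p.bdry}; the content is identical.
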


Recall that for a metric space $(\L,d)$ and for $s>0$ its  \emph{$s$-capacity} is defined as
\begin{equation}\label{e.capacity}\cal H^s(\L)=\inf_{\eps}\left\{\sum_{U\in\cal U}\diam U^s: \cal U\textrm{ is a covering of $\L$ with }\sup_{U\in\cal U}\diam U<\eps\right\}\end{equation} 
and that 
\begin{equation}\label{HffDef}
\Hff(\L)=\inf\{s:\cal H^s(\L)=0\}=\sup\{s:\cal H^s(\L)=\infty\}.
\end{equation}

In order to prove Proposition \ref{Hffand1st} we will analyze the image, under the bounary map $\xi^1$ of the covering $\cal U_T$ described in subsection \ref{s.covering}, whose elements consist of images of cone types at infinity under sufficiently big group elements. 
The following crucial lemma will allow us to show that the images of the boundaries of cone types transform as expected under group elements: 
\begin{lemma}\label{UniformAngle} 
Let $\rho:\G\to\PGL_d(\K)$ be a projective Anosov representation. Then there exist $\delta>0$, $L\in\N$ such that for all $\g\in\G$ with $|\g|>L$ and every $x\in\bordcone(\g)$ one has 
$$\angle(x^1_\rho,U_{d-1}(\rho(\g^{-1})))>\delta.$$
\end{lemma}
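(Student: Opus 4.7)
\textbf{Proof plan for Lemma~\ref{UniformAngle}.}

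The plan is to combine the uniform angle estimate along geodesic segments through the identity (Proposition~\ref{weakmorselemma}) with the description of the boundary map as a limit of Cartan attractors (Proposition~\ref{p.bdry}). First I would apply Proposition~\ref{weakmorselemma} with $p=1$ to obtain constants $\delta_0>0$ and $L\in\N$ such that whenever $(\alpha_i)_{i=0}^k$ is a geodesic segment through $e$ with $|\alpha_0|,|\alpha_k|\geq L$, one has $\angle(U_1(\rho(\alpha_k)),U_{d-1}(\rho(\alpha_0)))>\delta_0$. The claim will follow with $\delta=\delta_0/2$ (or any $\delta<\delta_0$) provided we choose the same $L$.

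Next, fix $\g\in\G$ with $|\g|\geq L$ and $x\in\bordcone(\g)$. By definition of $\bordcone(\g)$ (see \S~\ref{s.covering}), there is a geodesic ray $\beta_0=e,\beta_1,\beta_2,\ldots$ with endpoint $x$ and $\beta_i\in\cone(\g)$ for every $i$. The condition $\beta_i\in\cone(\g)$ is exactly $|\g\beta_i|=|\g|+|\beta_i|$, which means that concatenating a geodesic from $\g^{-1}$ to $e$ with the initial piece $\beta_0,\dots,\beta_i$ produces a genuine geodesic segment through $e$ with endpoints $\g^{-1}$ and $\beta_i$. For each $i\geq L$ we may therefore apply the estimate of the first paragraph to this segment to obtain
\[
\angle\bigl(U_1(\rho(\beta_i)),\,U_{d-1}(\rho(\g^{-1}))\bigr)>\delta_0.
\]

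Finally, Proposition~\ref{p.bdry} gives $U_1(\rho(\beta_i))\to x^1_\rho$ as $i\to\infty$, so passing to the limit (and using continuity of the angle) yields $\angle(x^1_\rho,U_{d-1}(\rho(\g^{-1})))\geq\delta_0>\delta$, which is the desired estimate. The argument is uniform in $\g$ and $x$, as required. The only mild subtlety is that, when $\K$ is non-Archimedean, the Cartan attractor $U_{d-1}(\rho(\g^{-1}))$ is not uniquely defined; but by Remark~\ref{r.2.4} any two choices are at distance $\leq\sigma_d/\sigma_{d-1}(\rho(\g^{-1}))$, which by $\{\sroot_1\}$-Anosov (and hence $\{\sroot_{d-1}\}$-Anosov by duality) tends to $0$ as $|\g|\to\infty$, so the conclusion is unaffected by this ambiguity after possibly enlarging $L$.
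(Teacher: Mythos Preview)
Your proof is correct and follows essentially the same approach as the paper: the paper's proof simply observes that for $x\in\bordcone(\g)$ there is a geodesic ray starting at $\g^{-1}$, passing through $e$, and converging to $x$, and then appeals to Proposition~\ref{weakmorselemma} and Proposition~\ref{p.bdry}. Your argument is a more detailed version of exactly this, including the explicit passage to the limit and the harmless remark on the non-Archimedean ambiguity.
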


\begin{proof} By definition of $\cone(\gamma),$ for all $x\in\bordcone(\g)$ there exists a geodesic ray $\{\alpha_i\}_0^\infty$ in $\G$ with $\alpha_0=\g^{-1}$ and $\alpha_i\to x$ as $i\to\infty.$ The lemma then follows combining Proposition \ref{weakmorselemma} and Proposition \ref{p.bdry}.
\end{proof}

\subsection{Proof of Proposition \ref{Hffand1st}}

For each $T>0$ consider the covering $\cal U_T$ of $\bord\G$ given by Lemma \ref{SullivanShadows}. By definition, $U=U_\g\in\cal U_T$ is of the form $\g\bordcone(\g)$ for some $\g\in\G$ with $|\g|\geq T.$

Lemma \ref{UniformAngle} implies that there exists $\delta$ such that for every  $x\in \bordcone(\g)$ one has 
$$d\Big(x^1_\rho,U_{d-1}\big(\rho(\g^{-1})\big)\Big)\geq\delta$$
 and thus Lemma \ref{l.dominationattractor} applied to $\rho(\g)$ implies that 
$$d\Big(\rho(\g) x^1_\rho,U_1\big(\rho(\g)\big)\Big)\leq \frac1{\delta}\frac {\sigma_2}{\sigma_1}\big(\rho(\g)\big)$$ 
which implies that 
$$\diam \xi(U_\g)\leq\frac2{\delta}\frac{\sigma_2}{\sigma_1}\big(\rho(\g)\big)\leq Ce^{-\mu T}.$$ 
In particular $\sup_{U\in \cal U_T}\diam U$ is arbitrarily small as $T\to\infty.$ Hence, 
$$\cal H^s\big(\xi^{1}(\bord\G)\big)\leq
\inf_T\sum_{U\in\cal U_T}\big(\diam \xi^{1}(U)\big)^s\leq \left(\frac2{\delta}\right)^s\inf_T\sum_{\g:|\g|\geq T}\left(\frac{\sigma_2}{\sigma_1}\big(\rho(\g)\big)\right)^s.$$

By definition, if $s>h^{\sroot_1}_\rho$ then the Dirichlet series $\Phi^{\sroot_1}_\rho(s)$ is convergent (recall eq. (\ref{DirichletRoot})). Hence, for every $s>h^{\sroot_1}_\rho$ one has $$\lim_{T\to\infty}\sum_{\g:|\g|\geq T}\left(\frac{\sigma_2}{\sigma_1}\big(\rho(\g)\big)\right)^s =0,$$ which implies that the $s$-capacity $\cal H^s(\xi^{1}(\bord\G))$ vanishes and thus $\Hff(\xi^{1}(\bord\G))\leq h^{\sroot_1}_\rho.$

\section{Local conformality and Hausdorff dimension}\label{s.5}
The goal of this section is to find a class of representations for which the equality in Proposition \ref{Hffand1st} holds. This happens in three steps. \begin{itemize}

\item[-] In Section \ref{s.lc1} we study the \emph{thickened cone types} $X_\infty(\alpha),$ these are a thickening, in $\P(\K^d),$ of the image by $\xi^1$ of $\bordcone(\alpha)$ for a given $\alpha,$ and define the \emph{locally conformal points}. 

\item[-] In Section \ref{coarselyball}  we prove that if $x$ is a locally conformal point, then there is a geodesic ray $\alpha_i\to x$ such that the sets $\rho(\alpha_i)X_\infty(\alpha_i)$ behave coarsely like balls around $x;$ the harder inequality is the lower containment, which is achieved in Corollary \ref{c.conetypeBall}. 

\item[-] In Section \ref{Patterson-Sullivan}  we define a measure that behaves like an Ahlfors regular measure for the sets $\alpha X_\infty(\alpha).$ Putting this together with the previous section, arguments coming from Sullivan's original paper allow us to conclude the desired equality, provided we can guarantee existence of many locally conformal points, this is the purpose of Section \ref{s.manyLC}. 
\end{itemize}

\subsection{Thickened cone types at infinity and locally conformal points}\label{s.lc1}
Let $\rho:\G\to\PGL_d(\K)$ be projective Anosov, it follows from Proposition \ref{weakmorselemma} that there is a positive lower bound on the distance of Cartan attractors and repellers of geodesic rays through the origin. Such number will play an important role in our study:

\begin{defi}\label{def:leastangle} Let  $\rho:\G\to\PGL_d(\K)$ be projective Anosov, and let $L$ be fixed and big enough. The \emph{least angle} $\delta_\rho$ is
$$\delta_\rho=\inf \sin\Big(\angle\Big(U_1\big(\rho(\alpha_k)\big),U_{d-1}\big(\rho(\alpha_{-m})\big)\Big)$$  
where $(\alpha_i)_{i\in\Z}$ ranges among biinfinite geodesics through the origin, and $k,m>L$.
\end{defi}
We consider coverings of $\xi(\bord\G)$ obtained by translating  \emph{thickened cone types at infinity}:
$$X_\infty(\alpha):=\mathcal N_{\delta_\rho/2}(\xi^1(\bordcone(\alpha)))\cap\xi^{1}(\bord\G).$$
By construction the sets $X_\infty(\alpha)$ are coarsely balls of $\xi^1(\bord\G)$ centered at points in $\xi^1(\bordcone(\alpha))$:
\begin{obs}
For every $\alpha$ in $\G$, and every $x\in\bordcone(\alpha)$, the thickened cone type  at infinity $X_\infty(\alpha)$ contains a ball centered at the point $x^1_\rho$ of uniform radius:
$$B(x^1_\rho,\delta_\rho/2)\cap\xi^{1}(\bord\G)\subset X_\infty(\alpha)$$
\end{obs}

Thanks to Proposition \ref{weakmorselemma} we can control how thickened cone types shrink under the action of group elements:
\begin{lemma}\label{r.5.4}
Let $\rho:\G\to\PGL_d(\K)$ be projective Anosov. Then there exist $K,L$ such that, for every geodesic ray $(\alpha_i)_{i=0}^\infty$, for every $i>L$, and every $z^1_\rho,w^1_\rho\in X_\infty(\alpha_i)$,
$$d(\alpha_iz^1_\rho,\alpha_iw^1_\rho)\leq K \frac{\sigma_2}{\sigma_1}(\rho(\alpha_i))\;d(z^1_\rho,w^1_\rho).$$ 
\end{lemma}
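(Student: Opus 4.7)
The plan is to reduce to a two-dimensional computation on the plane $W=\mathrm{span}(z^1_\rho,w^1_\rho)\subset\K^d$ and compare the singular-value gap of $\rho(\alpha_i)|_W$ with that of $\rho(\alpha_i)$. The case $z^1_\rho=w^1_\rho$ is trivial, so I assume $W$ is two-dimensional; note that the ambient distance on $\P(\K^d)$ restricted to $\P W$ coincides with the distance on $\P W$ induced by the restricted norm, so the computation is honestly two-dimensional.

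First, for $L$ large enough, every $z^1_\rho\in X_\infty(\alpha_i)$ satisfies $\angle(z^1_\rho,U_{d-1}(\rho(\alpha_i^{-1})))>\delta_\rho/2$: this follows from Lemma~\ref{UniformAngle} applied to $\alpha_i$ (controlling the core $\xi^1(\bordcone(\alpha_i))$), Definition~\ref{def:leastangle} (calibrating $\delta_\rho$), and the triangle inequality applied across the $\delta_\rho/2$-thickening defining $X_\infty(\alpha_i)$.

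Set $g=\rho(\alpha_i)$. The next step is to compare the singular values of $g|_W:W\to gW$ with those of $g$. By dimension count $\dim(W\cap U_{d-1}(g^{-1}))\ge 1$, so there is a unit $v\in W$ lying in $U_{d-1}(g^{-1})$; since $\|g|_{U_{d-1}(g^{-1})}\|=\sigma_2(g)$, this gives $\sigma_2(g|_W)\le\sigma_2(g)$. For the other singular value, decompose a unit representative $z^1_\rho=a\hat v_0+v_\perp$ with $\hat v_0\in g^{-1}U_1(g)$ a unit vector and $v_\perp\in U_{d-1}(g^{-1})$. The orthogonal relation $g^{-1}U_1(g)\perp U_{d-1}(g^{-1})$ converts the transversality above into $|a|\ge\sin(\delta_\rho/2)$; combining with $\|g\hat v_0\|=\sigma_1(g)$ and the second orthogonal relation $U_1(g)\perp gU_{d-1}(g^{-1})$ (cf.\ Remark~\ref{r.2.4}) yields $\|gz^1_\rho\|\ge\sin(\delta_\rho/2)\sigma_1(g)$, whence $\sigma_1(g|_W)\ge\sin(\delta_\rho/2)\sigma_1(g)$. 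Together, $\sigma_2(g|_W)/\sigma_1(g|_W)\le\sin(\delta_\rho/2)^{-1}\cdot\sigma_2(g)/\sigma_1(g)$.

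Finally I apply the two-dimensional version of Corollary~\ref{c.least contraction} to $g|_W$ with $p=1$: the relevant contracted direction in $W$ is $L=W\cap U_{d-1}(g^{-1})$, and the required angle condition $\angle(P_i,L)>\delta_\rho/2$ follows from the inclusion $L\subset U_{d-1}(g^{-1})$ together with the transversality above. The corollary gives $d(gz^1_\rho,gw^1_\rho)\le b\,(\sigma_2(g|_W)/\sigma_1(g|_W))\,d(z^1_\rho,w^1_\rho)$ for a constant $b$ depending only on $\delta_\rho$, and combining with the comparison of gaps yields the claim with $K=b\sin(\delta_\rho/2)^{-1}$. The conceptual point---and the main technical obstacle---is precisely this reduction to $W$ rather than a direct application of Corollary~\ref{c.least contraction} in $\K^d$: the latter would demand control over $\angle(z^1_\rho,g^{-1}U_{d-1}(g^{-1}))$, whereas Lemma~\ref{l.qg} only bounds $d(g^{-1}U_{d-1}(g^{-1}),U_{d-1}(g^{-1}))$ by $2\|g\|\|g^{-1}\|\sigma_2(g)/\sigma_1(g)$, a quantity which need not tend to zero under the projective Anosov hypothesis alone. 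Restricting to $W$ bypasses this obstruction, since there the contracted direction $L$ automatically sits inside the Cartan repeller $U_{d-1}(g^{-1})$, to which $z^1_\rho$ is transverse by construction.
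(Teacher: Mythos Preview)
Your detour through the two-plane $W$ is correct in spirit, and your singular-value comparisons $\sigma_2(g|_W)\le\sigma_2(g)$ and $\sigma_1(g|_W)\ge\sin(\delta_\rho/2)\,\sigma_1(g)$ are both valid. The gap is in the last step. You claim that the ``relevant contracted direction'' for $g|_W$ is $L=W\cap U_{d-1}(g^{-1})$ and then verify $\angle(z^1_\rho,L)>\delta_\rho/2$. But the hypothesis of the two-dimensional Corollary~\ref{c.least contraction} concerns the Cartan repeller $U_1\big((g|_W)^{-1}\big)$ of $g|_W$, and there is no reason this line should equal $L$. (Take for instance $g=\mathrm{diag}(100,99,1)$ and $W=\mathrm{span}(e_1+10e_3,\,e_2)$: then $L=\mathrm{span}(e_2)$, on which $g$ stretches by $99$, whereas $g$ stretches $e_1+10e_3$ only by $10$; the repeller of $g|_W$ is not $L$.) So the angle condition you checked is not the one the corollary asks for.

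The fix is immediate and makes your argument cleaner: bypass the corollary and use the exact two-dimensional identity
\[
\frac{d(gz^1_\rho,gw^1_\rho)}{d(z^1_\rho,w^1_\rho)}
=\frac{\|gz\wedge gw\|}{\|gz\|\,\|gw\|}\cdot\frac{\|z\|\,\|w\|}{\|z\wedge w\|}
=\frac{\sigma_1(g|_W)\,\sigma_2(g|_W)}{(\|gz\|/\|z\|)(\|gw\|/\|w\|)}.
\]
Your own estimates give numerator $\le\sigma_1(g)\sigma_2(g)$ and denominator $\ge\sin^2(\delta_\rho/2)\,\sigma_1(g)^2$, so $K=\sin^{-2}(\delta_\rho/2)$ works.

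As for the paper's route: its proof is the one-line application of Corollary~\ref{c.least contraction} in the ambient space that you dismiss in your last paragraph. Your objection rests on reading the hypothesis of that corollary literally, with the factor $g^{-1}$ in front of $U_{d-p}(g^{-1})$. That factor is a typo: both the proof sketch given there (which uses the orthogonal pair $g^{-1}U_p(g)\oplus U_{d-p}(g^{-1})$ and its image $U_p(g)\oplus gU_{d-p}(g^{-1})$) and the way the corollary is invoked in Lemma~\ref{r.5.4} make clear that the intended hypothesis is $\angle\big(P_i,\,U_{d-p}(g^{-1})\big)>\alpha$. With that reading, Lemma~\ref{UniformAngle} supplies exactly this transversality, and the lemma follows in one line. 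So your two-plane reduction, while a perfectly good alternative, is not needed to circumvent a genuine obstruction.
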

\begin{proof}
As $\rho$ is projective Anosov we have $d(z^1_\rho,U_{d-1}(\rho(\alpha_i)^{-1}))>\delta_\rho/2$ (Lemma \ref{UniformAngle}). The result is then a direct consequence of Corollary \ref{c.least contraction}.
%The proof is analogue to the one of Lemma \ref{l.expand}, we borrow some notation from that section.
%As $\rho$ is projective Anosov we have $d(z^1_\rho,U_{d-1}(\rho(\alpha_i)^{-1}))>\delta_\rho/2$ (Lemma \ref{UniformAngle}). In particular, if we choose $P=\rho(\alpha_i)^{-1}U_1(\rho(\alpha_i))$, $Q=U_{d-1}(\rho(\alpha_i)^{-1})$ and denote by $L_z:=L_{z^{1}_\rho,P}$ and $M_z:=L_{\rho(\alpha_i)z^{1}_\rho,\rho(\alpha_i)P}$, we have 
%$$d(\rho(\alpha_i)z^{1}_\rho,\rho(\alpha_i)w^{1}_\rho)\leq \|M_w-M_z\|=\|(\rho(\alpha_i)|_Q)\|\|L_w-L_z\|\|\rho(\alpha_i)^{-1}|_{\rho(\alpha_i)P}\|.$$
%The estimate follows since, as $d(z^1_\rho,U_{d-1}(\rho(\alpha_i)^{-1}))>\delta_\rho/2$ there exists $K$ such that $\|L_w-L_z\|\leq Kd(z^1_\rho,w^1_\rho)$. The existence of such a $K$, which depends on $\delta_\rho$ only, is guaranteed by Lemma \ref{l.d_and_norm}.
\end{proof}

\begin{cor} If $\rho:\G\to\PGL_d(\K)$ is projective Anosov, and $x\in\alpha_i\bordcone(\alpha_i)$, then
$$\rho(\alpha_i)X_\infty(\alpha_i)\subset B\left(x^1_\rho,K\frac{\sigma_2}{\sigma_1}(\rho(\alpha_i))\right)\cap\xi(\bord\G).$$
\end{cor}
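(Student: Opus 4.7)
The statement is an immediate corollary of Lemma~\ref{r.5.4} once we exploit $\rho$-equivariance of the boundary map and observe that $X_\infty(\alpha_i)$ has uniformly bounded diameter. My plan is as follows.

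First, I would unpack the hypothesis $x\in\alpha_i\bordcone(\alpha_i)$: this is equivalent to $\alpha_i^{-1}x\in\bordcone(\alpha_i)$, so by definition of the thickened cone type at infinity
$$(\alpha_i^{-1}x)^1_\rho\;=\;\xi^1(\alpha_i^{-1}x)\;\in\;\xi^1(\bordcone(\alpha_i))\;\subset\; X_\infty(\alpha_i).$$
By $\rho$-equivariance of $\xi^1$ (Proposition~\ref{p.bdry}) we also have $\rho(\alpha_i)\,(\alpha_i^{-1}x)^1_\rho=x^1_\rho$, so $x^1_\rho$ lies in $\rho(\alpha_i)X_\infty(\alpha_i)$ and can serve as the centre of the target ball.

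Next, for an arbitrary $w^1_\rho\in X_\infty(\alpha_i)$, I would apply Lemma~\ref{r.5.4} with $z^1_\rho:=(\alpha_i^{-1}x)^1_\rho$ to obtain
$$d\bigl(\rho(\alpha_i)w^1_\rho,\;x^1_\rho\bigr)
\;=\;d\bigl(\rho(\alpha_i)w^1_\rho,\;\rho(\alpha_i)(\alpha_i^{-1}x)^1_\rho\bigr)
\;\le\; K\,\frac{\sigma_2}{\sigma_1}(\rho(\alpha_i))\;d\bigl(w^1_\rho,(\alpha_i^{-1}x)^1_\rho\bigr).$$
Since the metric on $\P(\K^d)$ induced by a good norm is bounded by $1$ (it is a sine of an angle), the diameter of $X_\infty(\alpha_i)$ is uniformly bounded by a constant depending only on the ambient projective space, which can be absorbed into the constant $K$ (up to relabeling). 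Finally, $\rho(\alpha_i)w^1_\rho=(\alpha_iw)^1_\rho$ clearly lies in $\xi^1(\bord\G)$ by equivariance, giving the desired inclusion into $B(x^1_\rho,K\tfrac{\sigma_2}{\sigma_1}(\rho(\alpha_i)))\cap\xi(\bord\G)$.

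There is no real obstacle here: the content sits entirely in Lemma~\ref{r.5.4}, which in turn rests on the contraction estimate of Corollary~\ref{c.least contraction} combined with the uniform transversality Lemma~\ref{UniformAngle}. The corollary is just the observation that, since $x^1_\rho$ is itself the $\rho(\alpha_i)$-image of a point of $X_\infty(\alpha_i)$, the Lipschitz bound of the lemma centred at that point immediately gives the claimed ball containment.
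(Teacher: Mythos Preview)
Your proposal is correct and is precisely the intended argument: the paper does not even write out a proof for this corollary, as it follows immediately from Lemma~\ref{r.5.4} by taking $z=\alpha_i^{-1}x$ (which lies in $X_\infty(\alpha_i)$ since $x\in\alpha_i\bordcone(\alpha_i)$) and absorbing the uniform diameter bound into $K$.
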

In particular, if $\{\alpha_i\}_1^\infty$ is a geodesic ray with endpoint $x$, the sets $\rho(\alpha)X_\infty(\alpha)$ form a fundamental system of open neighbourhoods of $x$ in $\xi(\bord\G)$ (cfr. Figure \ref{figure:alphaXalpha}).

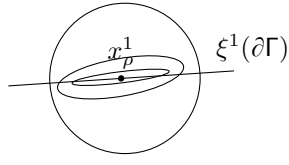
\begin{figure}[hh]
	\centering 
	\begin{tikzpicture}[scale = 0.5]
	
	\filldraw (0,0) circle [radius=2pt] node [above]{$x^1_\rho$};	
\draw (0.1,0) circle [radius= 2];
\begin{scope}[rotate=10];
\draw (-0.01,0.03) ellipse (1.7 and .5);
\end{scope}	
\begin{scope}[rotate=7];
\draw (-0.01,0.03) ellipse (1.3 and .15);
\end{scope}	
\draw (-3,-0.2) to (3,0.2);
 \node at (3.5,0.8) {$\xi^1(\bord\G)$};

	\end{tikzpicture}
	\caption{The sets of the form $\rho(\alpha_i)X_\infty(\alpha_i)$ for a geodesic ray $\{\alpha_i\}$ with endpoint $x$ are the intersections of thinner and thinner ellipses with the limit curve.}\label{figure:alphaXalpha}
\end{figure}
Definition \ref{LocallyConformal} gives conditions guaranteeing that the sets  $\rho(\alpha)X_\infty(\alpha)$ are coarsely balls whose sizes we can precisely estimate.
Given $g\in\GL_d(\K)$ we denote by $$1\leq p_1(g)<\ldots<p_{k(g)}(g)<d$$the indices of the gaps of $g$ (as in Definition \ref{d.gap}).

\begin{defi}\label{LocallyConformal}
Let $\rho:\G\to\PGL_d(\K)$ be projective Anosov. We say that $x\in\bord\G$ is a \emph{$(\epsilon,L)$-locally conformal point} for $\rho$ if there exists a geodesic ray $\{\alpha_i\}_0^\infty$ in $\G$ based at the identity and with endpoint $x$ such that the following conditions hold: 
\begin{enumerate}
\item for all big enough $i$ one has $p_2(\alpha_i)=p_2$ does not depend on $i,$
\item for every $i>L$, and for every $z\in {(\xi^1_\rho)^{-1}}\big(X_\infty(\alpha_i)\big)$ one has 
$$\sin\Big(\angle \big(z^1_\rho\oplus \rho(\alpha_i^{-1})x^1_\rho,U_{d-p_2}\big(\rho(\alpha_i^{-1})\big)\big)\Big)>\epsilon.$$
\end{enumerate}

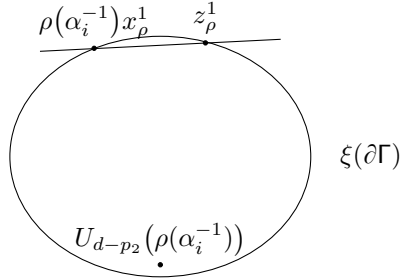
\begin{figure}[hh]
	\centering 
	\begin{tikzpicture}[scale=.8]
	\draw (0,0) ellipse (2.5 and 2);
	\filldraw(0,-1.8) circle [radius=1pt] node[above]{$U_{d-p_2}\big(\rho(\alpha_i^{-1})\big)$};
    \filldraw(.75,1.89) circle [radius=1pt] node[above]{$z^1_\rho$}; 
    \filldraw(-1.1,1.8) circle [radius=1pt] node[above]{$\rho\big(\alpha_i^{-1}\big)x^1_\rho$};
\draw (-2, 1.75)to (2, 1.95);  
\node at (3.5,0) {$\xi(\bord\G)$};  
    %\draw (.72,1.88) to (-.84,1.8);
	\end{tikzpicture}
	\caption{The second condition in Definition \ref{LocallyConformal}.}\label{figure:locally conformal}
\end{figure}
\end{defi}
Note that, in general, the index $p_2$ might depend on  the point $x$ and we do not require that the representation $\rho$ is $\{\sroot_{p_2}\}$-Anosov. In the special case when $\rho$ is $\{\sroot_{1},\sroot_{2}\}$-Anosov, the condition $(i)$ is automatically satisfied with $p_2=2$, but $(ii)$ can only hold if the dimension of $\bord\G$ is very small (cfr. Corollary \ref{r.obstruction}).
\begin{remark}
A generic element $g\in\PGL(V)$ has $p_2(g)=2$. Nevertheless there are many interesting geometric situations in which condition $(i)$ holds for $p_2>2$. For example if $g$ is a generic element in $\SO(m,n)$, we have that $\Wedge^mg\in\SL(V)$ has $p_2(\Wedge^mg)=n-m+1$, so one can enforce $p_2>2$ by considering representations in smaller subgroups. In Section \ref{sec:POK} we will describe another interesting class of examples.
\end{remark}

\subsection{Neighborhoods of locally conformal points that are coarsely balls}\label{coarselyball}
We will now show that if $x$ is a locally conformal point for $\rho,$ and $\alpha_i\to x$ is a geodesic ray, then the sets $\rho(\alpha_i)X_\infty(\alpha_i)$  are coarsely balls  centered at $x^1_\rho$ 
of radius 
${\sigma_2}/{\sigma_1}(\rho(\alpha_i))$
 for the distance on $\xi^1_\rho(\bord\G)$ induced by $d,$ this will be achieved in Corollary \ref{c.conetypeBall}, and motivated the terminology \emph{locally conformal}.

\begin{prop} \label{p.conetypeBall}
Let $\rho:\G\to\PGL_d(\K)$ be projective Anosov.  There exist $ \overline L$ such that, for every $(\epsilon, L)$-locally conformal point $x$, there exists a geodesic ray $\{\alpha_i\}_0^\infty$  from the identity with endpoint $x$, such that for every $i>\overline L$ and every $z\in X_\infty(\alpha_i)$ it holds
$$\frac{\epsilon}4\frac{\sigma_2}{\sigma_1}\big(\rho(\alpha_i)\big)\cdot d\big(z^1_\rho,(\alpha_i^{-1}x)^1_\rho\big)\leq d\big((\alpha_iz)^1_\rho,x^1_\rho\big).$$
\end{prop}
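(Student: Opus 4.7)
The strategy is to apply Corollary \ref{c.expand} with $g = \rho(\alpha_i)$ and the 2-dimensional subspace $W = z^1_\rho \oplus \rho(\alpha_i^{-1}) x^1_\rho$, taking $P_1 = z^1_\rho$ and $P_2 = (\alpha_i^{-1} x)^1_\rho$ so that $gP_1 = (\alpha_i z)^1_\rho$ and $gP_2 = x^1_\rho$. The output, after the trivial bound $\|\rho(\alpha_i)|_W\| \leq \sigma_1(\rho(\alpha_i))$, is the inequality
$$d\bigl((\alpha_i z)^1_\rho,\, x^1_\rho\bigr) \;\geq\; \frac{\sigma_2(\rho(\alpha_i)|_W)}{4\,\sigma_1(\rho(\alpha_i))}\; d\bigl(z^1_\rho,\, (\alpha_i^{-1} x)^1_\rho\bigr),$$
so the whole proof reduces to the key estimate $\sigma_2(\rho(\alpha_i)|_W) \geq \epsilon\,\sigma_2(\rho(\alpha_i))$.

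This is where the locally conformal condition enters. I would use the $\rho(\alpha_i)$-invariant orthogonal decomposition
$$\K^d = \rho(\alpha_i^{-1}) U_{p_2}(\rho(\alpha_i)) \;\oplus\; U_{d-p_2}(\rho(\alpha_i^{-1})),$$
on which $\rho(\alpha_i)$ acts as a semi-homothecy with ratios $\{\sigma_1,\ldots,\sigma_{p_2}\}$ on the first summand and $\{\sigma_{p_2+1},\ldots,\sigma_d\}$ on the second (this is orthogonal in both the Archimedean and non-Archimedean senses, as noted after Definition \ref{d.gap}). Decomposing a unit vector $v \in W$ as $v = v^+ + v^-$ along this splitting, condition $(ii)$ of Definition \ref{LocallyConformal} forces $\|v^+\| \geq \epsilon$, because the sine of the angle between $W$ and the contracting summand is bounded below by $\epsilon$. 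Orthogonality of the decomposition then yields $\|\rho(\alpha_i) v\| \geq \|\rho(\alpha_i) v^+\| \geq \sigma_{p_2}(\rho(\alpha_i))\,\|v^+\| \geq \epsilon\, \sigma_{p_2}(\rho(\alpha_i))$, and condition $(i)$ identifies $\sigma_{p_2}(\rho(\alpha_i)) = \sigma_2(\rho(\alpha_i))$, giving the desired estimate.

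It remains to verify the two hypotheses of Corollary \ref{c.expand}, namely transversality $P_i \cap U_{d-1}(\rho(\alpha_i^{-1})) = \{0\}$ and closeness $d(gP_i, u_1(\rho(\alpha_i)|_W)) < 1/\sqrt 2$. The first is immediate: Lemma \ref{UniformAngle} applied to $\alpha_i^{-1} x \in \bordcone(\alpha_i)$ gives $\sin\angle((\alpha_i^{-1} x)^1_\rho, U_{d-1}(\rho(\alpha_i^{-1}))) > \delta_\rho$, and since $X_\infty(\alpha_i)$ is a $\delta_\rho/2$-thickening of $\xi^1(\bordcone(\alpha_i))$, the triangle inequality gives $\sin\angle(z^1_\rho, U_{d-1}(\rho(\alpha_i^{-1}))) > \delta_\rho/2$. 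For the second, Lemma \ref{l.dominationattractor} combined with the angle bounds just obtained yields $d(gP_j, U_1(\rho(\alpha_i))) \leq (2/\delta_\rho)\,(\sigma_2/\sigma_1)(\rho(\alpha_i))$, which tends to $0$ uniformly in $z$ as $|\alpha_i| \to \infty$ by the Anosov property. An analogous application of Lemma \ref{l.dominationattractor} to the restricted operator $\rho(\alpha_i)|_W\colon W \to gW$, together with the already established lower bound on $\sigma_2(\rho(\alpha_i)|_W)/\sigma_1(\rho(\alpha_i)|_W)$, shows that $u_1(\rho(\alpha_i)|_W)$ also approaches $U_1(\rho(\alpha_i))$, so for $\overline L$ large enough all three points lie pairwise within distance $1/\sqrt 2$.

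The main obstacle, in my view, is precisely this last step: controlling the location of $u_1(\rho(\alpha_i)|_W)$ uniformly as $W$ varies. The geometric intuition is that, since $\rho(\alpha_i)$ strongly contracts off its top direction, any $2$-dimensional $W$ transverse to the dominant hyperplane $U_{d-1}(\rho(\alpha_i^{-1}))$ has its most-expanded direction pushed toward $U_1(\rho(\alpha_i))$; however, making this rigorous in the non-Archimedean case, where good-norm orthogonal decompositions are not unique, requires some care. A compactness argument on the space of $2$-dimensional subspaces with angle at least $\arcsin\epsilon$ away from $U_{d-p_2}(\rho(\alpha_i^{-1}))$ should be sufficient to absorb this into the choice of $\overline L$.
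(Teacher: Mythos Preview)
Your proposal is correct and follows the same architecture as the paper: apply Corollary~\ref{c.expand} to the plane $W_i = z^1_\rho \oplus (\alpha_i^{-1}x)^1_\rho$, establish the key estimate $\sigma_2(\rho(\alpha_i)|_{W_i}) \geq \epsilon\,\sigma_2(\rho(\alpha_i))$ from the locally conformal condition, and verify the closeness hypothesis $d(gP_j, u_1(g|_{W_i})) < 1/\sqrt{2}$ via the triangle inequality through $U_1(\rho(\alpha_i))$. The only notable difference is in how the key estimate is obtained: the paper first invokes Lemma~\ref{l.dominationattractor} to show $\rho(\alpha_i)W_i$ converges to $U_{p_2}(\rho(\alpha_i))$ and deduces the ratio bound from that, whereas you argue directly via the orthogonal splitting $\rho(\alpha_i^{-1})U_{p_2}(\rho(\alpha_i)) \oplus U_{d-p_2}(\rho(\alpha_i^{-1}))$ and a componentwise norm estimate --- your route is arguably more transparent and avoids an intermediate step. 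For the closeness of $u_1(g|_{W_i})$ to $U_1(g)$, the paper handles this by observing that $\rho(\alpha_i^{-1})u_2(\rho(\alpha_i)|_{W_i})$ lies in $W_i \cap U_{d-1}(\rho(\alpha_i^{-1}))$ and is therefore bounded away from $z^1_\rho$; your sketch via Lemma~\ref{l.dominationattractor} works too once you note that $\sigma_1(g|_{W_i}) \geq \delta_\rho\,\sigma_1(g)$ (since $(\alpha_i^{-1}x)^1_\rho$ has angle $>\delta_\rho$ from $U_{d-1}(g^{-1})$), which gives the needed uniform lower bound on $\sin\angle(g^{-1}u_1(g|_{W_i}), U_{d-1}(g^{-1}))$.
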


\begin{proof} 
Let $W_i:=z^1_\rho\oplus \big(\alpha_i^{-1}x\big)^1_\rho$. As $x$ is $(\epsilon,L)$-locally conformal, for every $i>L$, we have $d (W_i,U_{d-p_2}(\rho(\alpha_i^{-1})))>\epsilon.$ {From Lemma \ref{l.dominationattractor} one concludes that $$\angle\Big(\rho(\alpha_i)W_i,U_{p_2}\big(\rho(\alpha_i)\big)\Big)\to0$$
 as $i\to\infty$ at a speed only depending on $\eps$ and the Anosov constants of $\rho,$ and thus, possibly increasing $L,$ one concludes that }for every $i>L$ it holds 
$$\frac{\sigma_2}{\sigma_1}\big(\rho(\alpha_i)|_{W_i}\big)\geq \epsilon\frac{\sigma_{p_2}}{\sigma_1}\big(\rho(\alpha_i)\big)=\epsilon\frac{\sigma_2}{\sigma_1}\big(\rho(\alpha_i)\big).$$
Here the last equality is due to the fact that $p_2$ is the first gap for $\rho(\alpha_i)$ and thus $\sigma_{p_2}\big(\rho(\alpha_i)\big)=\sigma_2\big(\rho(\alpha_i)\big)$.

Furthermore $\rho\big(\alpha_i^{-1}\big)u_2\big(\rho(\alpha_i)|_{W_i}\big)\in W_i\cap U_{d-1}\big(\rho(\alpha_i^{-1})\big)$, and then, since $\rho$ is projective Anosov and $z^1_\rho\in X_\infty(\alpha)$, we have $d \big(z^1_\rho,\rho\big(\alpha_i^{-1}\big)u_2\big(\rho(\alpha_i)|_{W_i}\big)\big)>\delta_\rho/2$, where $\delta_\rho$ is the constant from Definition \ref{def:leastangle}. This implies that we can find $\overline L$ depending on $\rho$ and $\epsilon$ only such that for every $i>\overline L$
$$d\Big(\rho(\alpha_i)z^1_\rho, U_1\big(\rho(\alpha_i)|_{W_i}\big)\Big)<d\Big(\rho(\alpha_i)z^1_\rho, U_1\big(\rho(\alpha_i)\big)\Big)+d\Big(U_1\big(\rho(\alpha_i)\big), U_1\big(\rho(\alpha_i)|_{W_i}\big)\Big)<1/\sqrt 2,$$ {since both quantities converge to $0$ as $i\to \infty$ at a speed only depending on the Anosov constants of $\rho.$} The proposition then follows from Corollary \ref{c.expand}.
\end{proof}

Recall from Definition \ref{d.nested} that we say that a pair of cone types $(\cone(\alpha_1),\cone(\alpha_2))$ of $\G$ are $k$-nested if there exists a path in the geodesic automaton of lenght $k$ between $\cone(\alpha_1)$ and $\cone(\alpha_2)$. In this case we say that $\beta\in\G$ is a \emph{nesting word} if $\beta$ labels one such path.
\begin{lemma}\label{p.lowerconetype}
For every $\overline L$ big enough (depending only on $\rho$) there exists a constant $c$ (depending on $\rho$ and $\overline L$) such that for every $\overline L$-nested pair $(\cone(\alpha_1),\cone(\alpha_2))$ and any nesting word $\beta$ it holds 
\begin{enumerate}
\item $\rho(\beta)X_\infty(\alpha_2)\subset X_\infty(\alpha_1)$
\item for every $z^1_\rho\in\xi(\beta\bordcone(\alpha_2))$ and every  $w^1_\rho\in X_\infty(\alpha_1)\setminus\rho(\beta)X_\infty(\alpha_2)$, it holds 
$$d(z^1_\rho,w^1_\rho)>c.$$
\end{enumerate}
\end{lemma}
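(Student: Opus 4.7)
The plan is to handle the two claims separately, using the geodesic automaton to transport $\bordcone(\alpha_2)$ inside $\bordcone(\alpha_1)$ and the Anosov contraction of $\rho(\beta)$ to control the thickening.

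For claim (1), I would first unfold the $\overline L$-nesting: a labelled path of length $\overline L$ from $\cone(\alpha_1)$ to $\cone(\alpha_2)$ produces a word $\beta=a_1\cdots a_{\overline L}$ with $\beta\cone(\alpha_2)\subset\cone(\alpha_1)$ such that each partial product $a_1\cdots a_i$ lies in $\cone(\alpha_1)$ and has word length $i$, so $(e,a_1,\ldots,\beta)$ is a geodesic based at $e$ inside $\cone(\alpha_1)$. For any $u\in\bordcone(\alpha_2)$ realized as the limit of a geodesic ray $(\eta_n)$ from $e$ inside $\cone(\alpha_2)$, the identity $|\alpha_1\beta\eta_n|=|\alpha_1|+|\beta|+n$ forces $|\beta\eta_n|=|\beta|+n$, so the concatenation $(e,a_1,\ldots,\beta,\beta\eta_1,\beta\eta_2,\ldots)$ is a geodesic from $e$ contained in $\cone(\alpha_1)$ converging to $\beta u$. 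This simultaneously yields $\beta u\in\bordcone(\alpha_1)$ and $u\in\bordcone(\beta)$. The second fact feeds into Lemma~\ref{UniformAngle} applied to $\beta$, valid once $\overline L$ exceeds the constant $L$ of that lemma, producing $\sin\angle(u^1_\rho,U_{d-1}(\rho(\beta^{-1})))\ge\delta_\rho$. Since by definition of $X_\infty(\alpha_2)$ every $y\in X_\infty(\alpha_2)$ satisfies $d(y,u^1_\rho)<\delta_\rho/2$ for some such $u$, the triangle inequality promotes the angle bound to $d(y,U_{d-1}(\rho(\beta^{-1})))\ge\delta_\rho/2$. Corollary~\ref{c.least contraction} applied to $\rho(\beta)$ then gives
\[
d(\rho(\beta)y,\rho(\beta)u^1_\rho)\le b\,\tfrac{\sigma_2}{\sigma_1}(\rho(\beta))\,d(y,u^1_\rho),
\]
and the $\{\sroot_1\}$-Anosov property forces $\tfrac{\sigma_2}{\sigma_1}(\rho(\beta))$ to decay exponentially in $\overline L$. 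Hence for $\overline L$ sufficiently large the right hand side is strictly less than $\delta_\rho/2$, so $\rho(\beta)y$ lies in the $\delta_\rho/2$-neighborhood of $(\beta u)^1_\rho\in\xi^1(\bordcone(\alpha_1))$, giving $\rho(\beta)y\in X_\infty(\alpha_1)$.

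Claim (2) is the dual observation. For $z\in\beta\bordcone(\alpha_2)$ and $w^1_\rho\in X_\infty(\alpha_1)\setminus\rho(\beta)X_\infty(\alpha_2)$, equivariance of $\xi^1$ places $\rho(\beta)^{-1}z^1_\rho\in\xi^1(\bordcone(\alpha_2))$, while by hypothesis $\rho(\beta)^{-1}w^1_\rho\in\xi^1(\bord\G)\setminus X_\infty(\alpha_2)$; by definition of the thickened cone type this forces
\[
d(\rho(\beta)^{-1}z^1_\rho,\rho(\beta)^{-1}w^1_\rho)\ge\delta_\rho/2.
\]
I would then pull this separation back using the elementary fact that any $g\in\GL_d(\K)$ acts on $\P(\K^d)$ as a Lipschitz map whose constant is controlled by $\|g\|\,\|g^{-1}\|$. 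With $g=\rho(\beta)^{-1}$ and $\|\rho(\beta)^{\pm 1}\|\le C^{\overline L}$, where $C=\max_{a\in S}\max(\|\rho(a)\|,\|\rho(a)^{-1}\|)$, inverting the Lipschitz estimate produces a uniform lower bound $d(z^1_\rho,w^1_\rho)\ge c$ with $c$ an explicit function of $\delta_\rho$, $C$, and $\overline L$ alone.

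The main calibration issue sits inside claim (1): $\overline L$ must be simultaneously larger than $L$ from Lemma~\ref{UniformAngle} and large enough that the Anosov contraction beats the constant $b$ from Corollary~\ref{c.least contraction}. Both thresholds depend only on $\rho$, and since for each fixed $\overline L$ there are only finitely many $\overline L$-nested pairs, the resulting constants are automatically uniform over all admissible $\beta$, matching the statement.
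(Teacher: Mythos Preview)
Your proof is correct and follows essentially the same line as the paper's. For part~(i) both you and the paper establish the angle bound $d(y,U_{d-1}(\rho(\beta^{-1})))\ge\delta_\rho/2$ for $y\in X_\infty(\alpha_2)$ and then invoke Corollary~\ref{c.least contraction} to force $\rho(\beta)$ to contract on $X_\infty(\alpha_2)$, landing the image inside the $\delta_\rho/2$-neighborhood of $\xi^1(\beta\bordcone(\alpha_2))\subset\xi^1(\bordcone(\alpha_1))$; your treatment is in fact more explicit than the paper's about why $\beta\bordcone(\alpha_2)\subset\bordcone(\alpha_1)$ and $\bordcone(\alpha_2)\subset\bordcone(\beta)$. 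For part~(ii) the paper argues forward, noting that the minimal contraction factor of $\rho(\beta)$ on $\P(\K^d)$ is $\sigma_d/\sigma_1(\rho(\beta))$, so $\rho(\beta)X_\infty(\alpha_2)$ contains a ball of radius $\tfrac{\delta_\rho}{2}\tfrac{\sigma_d}{\sigma_1}(\rho(\beta))$ around each $z^1_\rho$; you argue backward via the global Lipschitz bound for $\rho(\beta)^{-1}$, which is the same estimate read in reverse since $\|\rho(\beta)\|\,\|\rho(\beta)^{-1}\|=\sigma_1/\sigma_d(\rho(\beta))$.
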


\begin{proof}\item\begin{itemize}\item[(i)]
{By definition of $\delta_\rho$ and $X_\infty(\alpha_2)$, whenever $|\beta|\geq L$ and $\beta$ is a nesting word, then $d(x, U_{d-1}\big(\rho(\beta^{-1})\big))\geq \delta_\rho/2$ for every point $x$ in $X_\infty(\alpha_2)$. Here $L$ is as in Definition \ref{def:leastangle}.}
% we can find $L$ big enough so that, if $|\beta|>L$,  every point in $X_\infty(\alpha_2)$ is far from $U_{d-1}\big(\rho(\beta^{-1})\big).$ 
Up to possibly enlarging $L$ we can assume, by Corollary \ref{c.least contraction}, that $\rho(\beta)$ contracts distances on $X_\infty(\alpha_2)$ so that 
$$\rho(\beta)X_\infty(\alpha_2)\subseteq \mathcal N_{\delta_\rho/2}(\rho(\beta)\xi^1_\rho(\bordcone(\alpha_2)))\cap\xi(\bord\G)\subseteq X_\infty(\alpha_1).$$

\item[(ii)] Since, by construction, $X_\infty(\alpha_2)$ contains the intersection of $\xi^1_\rho(\bord\G)$ with a ball around any point $z^1_\rho\in\xi^1_\rho(\bordcone(\alpha_2))$ of radius $\delta_\rho/2$, the set $\rho(\beta)X_\infty(\alpha_2)$ contains the intersection of $\xi^1_\rho(\bord\G)$ with the ball around any point $z^1_\rho\in\rho(\beta)\xi^1_\rho(\bordcone(\alpha_2))$ of radius 
$\frac{\delta_\rho}2\frac{\sigma_d}{\sigma_1}\big(\rho(\beta)\big):$
 $\frac{\sigma_d}{\sigma_1}(g)$ is the smallest contraction for the action of $g\in\SL(d,\K)$ on $\P(\K^d)$.  
{Recall that only finitely many $\beta$ can occur, as, by construction, $|\beta|=\overline L$. The result follows taking $$c=\min_{|\beta|=\overline L}\frac{\delta_\rho}2\frac{\sigma_d}{\sigma_1}\big(\rho(\beta)\big)$$}
\end{itemize}
\end{proof}

Combining Proposition \ref{p.conetypeBall} and Lemma \ref{p.lowerconetype} we obtain:

\begin{prop}\label{2p.conetypeBall}
There exists $c_1$ depending only on $\rho$ such that, if $L$ is as in Lemma \ref{p.lowerconetype} and  $\{\alpha_i\}\subset\G$ is a geodesic ray with endpoint $x$, for every $y$ with $y^1_\rho\in\rho(\alpha_n)X_\infty(\alpha_n)\setminus \rho(\alpha_{n+L})X_\infty(\alpha_{n+L})$, it holds 
$$d(y^1_\rho,x^1_\rho)\geq c_1\frac{\sigma_2}{\sigma_1}\big(\rho(\alpha_{n+L})\big).$$
\end{prop}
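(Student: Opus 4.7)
My plan is to combine Proposition~\ref{p.conetypeBall} (an essentially matching lower bound at locally conformal points) with Lemma~\ref{p.lowerconetype}(ii) (a uniform separation between translates of thickened cone types). Implicit in the statement, from the section structure, is that $x$ is a locally conformal point; let $\{\alpha_i\}_{i\geq 0}\subset\G$ be the geodesic ray based at $\id$ and ending at $x$ furnished by Proposition~\ref{p.conetypeBall}. I would take $L$ large enough that both Lemma~\ref{p.lowerconetype} and Proposition~\ref{p.conetypeBall} apply. By Lemma~\ref{l.nested}, for every $n\geq 0$ the pair $\bigl(\cone(\alpha_n),\cone(\alpha_{n+L})\bigr)$ is then $L$-nested, with nesting word $\beta := \alpha_n^{-1}\alpha_{n+L}$.

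The key step is to translate the hypothesis on $y$ into the coordinates of $X_\infty(\alpha_n)$. Set $z:=\alpha_n^{-1}y\in\bord\G$, so that $z^1_\rho=\rho(\alpha_n^{-1})y^1_\rho$. Using $\rho(\alpha_{n+L})=\rho(\alpha_n)\rho(\beta)$, the hypothesis $y^1_\rho\in \rho(\alpha_n)X_\infty(\alpha_n)\setminus\rho(\alpha_{n+L})X_\infty(\alpha_{n+L})$ becomes $z^1_\rho\in X_\infty(\alpha_n)\setminus\rho(\beta)X_\infty(\alpha_{n+L})$. Since $x$ is the endpoint of the geodesic sub-ray $\{\alpha_i\}_{i\geq n+L}$ based at $\alpha_{n+L}$, one has $x\in\alpha_{n+L}\bordcone(\alpha_{n+L})$; hence $\alpha_n^{-1}x\in\beta\bordcone(\alpha_{n+L})$ and, by equivariance of the boundary map, $\rho(\alpha_n^{-1})x^1_\rho\in \xi^1(\beta\bordcone(\alpha_{n+L}))$. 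Lemma~\ref{p.lowerconetype}(ii) then produces a constant $c>0$, depending only on $\rho$ and $L$, with
$$d\bigl(z^1_\rho,\rho(\alpha_n^{-1})x^1_\rho\bigr)\geq c.$$
Substituting this into the conclusion of Proposition~\ref{p.conetypeBall} applied to $z$ gives
$$d(y^1_\rho,x^1_\rho)=d\bigl((\alpha_n z)^1_\rho,x^1_\rho\bigr)\geq \frac{\epsilon c}{4}\,\frac{\sigma_2}{\sigma_1}\bigl(\rho(\alpha_n)\bigr).$$

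To finish I would upgrade the right-hand side from $\alpha_n$ to $\alpha_{n+L}$. Since $\alpha_{n+L}=\alpha_n\beta$ with $|\beta|=L$, there are only finitely many admissible $\beta\in\G$, and both $\|\rho(\beta)\|$ and $\|\rho(\beta)^{-1}\|$ are uniformly bounded. The submultiplicative estimates $\sigma_1(AB)\leq\|A\|\sigma_1(B)$ and $\sigma_2(AB)\leq\sigma_1(A)\sigma_2(B)$, together with their mirror versions obtained by writing $A=(AB)B^{-1}$, then force $(\sigma_2/\sigma_1)(\rho(\alpha_n))$ and $(\sigma_2/\sigma_1)(\rho(\alpha_{n+L}))$ to be comparable up to a multiplicative constant depending only on $L$ and $\rho$. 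Absorbing this constant into the previous one provides the desired $c_1$. I do not foresee any serious obstacle beyond book-keeping: the only care needed is to choose the single threshold $L$ simultaneously compatible with both cited results, and to match the nesting word $\beta=\alpha_n^{-1}\alpha_{n+L}$ with the left-translation by $\rho(\alpha_n)$ so that the conclusions of the two lemmas refer to the same points.
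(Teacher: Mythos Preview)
Your proposal is correct and follows essentially the same approach as the paper: pull back by $\rho(\alpha_n)$, apply Lemma~\ref{p.lowerconetype}(ii) to the $L$-nested pair $(\cone(\alpha_n),\cone(\alpha_{n+L}))$ to get a uniform lower bound on $d\bigl((\alpha_n^{-1}y)^1_\rho,(\alpha_n^{-1}x)^1_\rho\bigr)$, feed this into Proposition~\ref{p.conetypeBall}, and then pass from $\alpha_n$ to $\alpha_{n+L}$ using that $|\beta|=L$ is fixed. The only cosmetic difference is that the paper names $z:=\alpha_n^{-1}x$ and $w:=\alpha_n^{-1}y$, whereas you use $z$ for $\alpha_n^{-1}y$; your identification of which variable plays which role in Lemma~\ref{p.lowerconetype}(ii) is nonetheless correct.
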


\begin{proof}
It follows from Lemma \ref{l.nested} that for every $n,L$  the pair $(\bordcone(\alpha_n),\bordcone(\alpha_{n+L}))$ is $L$-nested. Furthermore, up to choosing $L$ large enough, we can apply Lemma \ref{p.lowerconetype} to the pair  $(\bordcone(\alpha_n),\bordcone(\alpha_{n+L}))$. If we denote by $z:=\alpha_n^{-1}x$ and $w:=\alpha_n^{-1}y$ we deduce that $d(z^1_\rho, w^1_\rho)>c$. Proposition \ref{p.conetypeBall}
 implies then that  
$$d(y^1_\rho,x^1_\rho)\geq \frac{c\epsilon}{4}\frac{\sigma_2}{\sigma_1}(\alpha_{n})\geq c_1\frac{\sigma_2}{\sigma_1}(\alpha_{n+L})$$
Where in the last inequality we used that, as $L$ is fixed the homothecy ratio gap of $\alpha_n$ is uniformly comparable to the one of $\alpha_{n+L}$.
\end{proof}
As a corollary of Proposition \ref{p.conetypeBall} we can finally get the main result of the section (cfr. Figure \ref{figure:alphaXalpha}):

\begin{cor}\label{c.conetypeBall} 
Let $\rho:\G\to\PGL_d(\K)$ be a projective Anosov; then for every locally conformal point $x\in\bord\G$ there exists a geodesic ray $\alpha_i\to x$ with 
$$B\left(x^1_\rho,c_1\frac{\sigma_2}{\sigma_1}\big(\rho(\alpha_i)\big)\right)\cap\xi(\partial\G)\subset\rho(\alpha_i)X_\infty(\alpha_i).$$ 
\end{cor}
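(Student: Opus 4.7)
The plan is to iterate Proposition \ref{2p.conetypeBall} along the geodesic ray $\{\alpha_i\}$ supplied by the definition of locally conformal point, in steps of size exactly $L$, after offsetting the starting index so that the iteration terminates precisely at $\alpha_i$.

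A preliminary ingredient is a quasi-monotonicity of the singular value gap along the ray: writing $\rho(\alpha_n)=\rho(\alpha_m)\rho(\alpha_m^{-1}\alpha_n)$, applying Lemma \ref{l.no_cancellation}, and invoking Proposition \ref{weakmorselemma} on the shifted sub-geodesic through $\alpha_m$ to bound the relevant angle from below, one obtains a constant $K>0$ depending only on $\rho$ such that
$$\frac{\sigma_2}{\sigma_1}\bigl(\rho(\alpha_m)\bigr)\geq K\,\frac{\sigma_2}{\sigma_1}\bigl(\rho(\alpha_n)\bigr)\qquad\text{whenever }m\leq n.$$
Next, setting $r=i\bmod L$ and noting that $|\alpha_r|<L$ is bounded, the map $\rho(\alpha_r)$ has uniformly bounded projective distortion; combining this with the inclusion $B(z^1_\rho,\delta_\rho/2)\cap\xi^1(\bord\G)\subset X_\infty(\alpha_r)$ applied at $z=\alpha_r^{-1}x$, I conclude that $\rho(\alpha_r)X_\infty(\alpha_r)$ contains a neighbourhood of $x^1_\rho$ of radius $\eta>0$ independent of $i$. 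In particular, for $i$ large enough every $y^1_\rho$ in the ball asserted by the corollary automatically lies in $\rho(\alpha_r)X_\infty(\alpha_r)$.

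The core of the proof is then a finite induction on $k=0,1,\ldots,(i-r)/L$ proving that $y^1_\rho\in\rho(\alpha_{r+kL})X_\infty(\alpha_{r+kL})$. If the inclusion held at step $k$ but failed at $k+1$, Proposition \ref{2p.conetypeBall} would give
$$d(y^1_\rho,x^1_\rho)\geq c_1\,\frac{\sigma_2}{\sigma_1}\bigl(\rho(\alpha_{r+(k+1)L})\bigr)\geq c_1K\,\frac{\sigma_2}{\sigma_1}\bigl(\rho(\alpha_i)\bigr),$$
where the second inequality uses the quasi-monotonicity. Choosing the corollary's constant to be at most $c_1K$ (and also small enough for the base step) yields a contradiction with the hypothesis on $y$. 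At $k=(i-r)/L$ the index $r+kL$ is exactly $i$, completing the proof. The one delicate point is to arrange that the iteration lands on $\alpha_i$ rather than overshooting to the next multiple of $L$; this is precisely what the offset by $r$, together with the quasi-monotonicity estimate, absorbs into a single uniform constant.
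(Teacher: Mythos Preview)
Your proof is correct and follows essentially the same route as the paper, which deduces the corollary in one line from Proposition~\ref{2p.conetypeBall} together with the fact that the sets $\rho(\alpha_i)X_\infty(\alpha_i)$ form a fundamental system of neighbourhoods of $x^1_\rho$. You have simply made explicit the details the paper glosses over---the quasi-monotonicity of $\sigma_2/\sigma_1$ along the ray, the base of the induction, and the offset by $r=i\bmod L$ so that the $L$-step iteration terminates exactly at $\alpha_i$---so there is no genuinely different idea here, just a more careful write-up.
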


\begin{proof}
This follows from the above proposition by observing that the sets $\rho(\alpha_i)X_\infty(\alpha_i)$ form a fundamental system of neighborhoods of $x^1_\rho$ in $\xi(\bord\G)$.
\end{proof}

\subsection{A regular measure for conformal points}\label{Patterson-Sullivan} The goal of this section is to construct, following Patterson's original idea, a measure, supported on $\xi^1_\rho(\bord \G)$, for which we can get good estimates on the measure of the cone types. This will be used in Section \ref{s.manyLC} to obtain the desired lower bound on the Hausdorff dimension of the limit set\footnote{See Remark \ref{r.Quint} for a comparison with the work of Quint \cite{quint1}}.

Let $\rho:\G\to\PGL_d(\K)$ be a projective Anosov representation. {Recall from the introduction that we have defined $$\Phi_\rho^{\sroot_1}(s)=\sum_{\g\in\G}\left(\frac{\sigma_2}{\sigma_1}\big(\rho(\g)\big)\right)^{s}.$$}We can assume that 
$\Phi_\rho^{\sroot_1}(h^{\sroot_1}_\rho)=\infty:$
otherwise, as it is standard in Patterson-Sullivan theory, we would carry out the same construction with the aid of the modified Poincar\'e series
$$\Phi_\rho^{\sroot_1}(s)=\sum_{\g\in\G} f\Big(\sroot_1\big(\rho(\g)\big)\Big)\left(\frac{\sigma_2}{\sigma_1}(\rho(\g))\right)^{h^{\sroot_1}_\rho},$$
where $f(s)$ is the function constructed  (for example) in Quint \cite[Lemma 8.5]{quint1}.

We will therefore assume from now on that the Poincar\'e series diverges at its critical exponent; for every $s>h^{\sroot_1}_\rho$, we define $$\mu^s_\rho=\frac1{\Phi_\rho^{\sroot_1}(s)}\sum_{\g\in\G}\left(\frac{\sigma_2}{\sigma_1}\big(\rho(\g)\big)\right)^s\delta_{U_1(\rho(\g))}.$$ 
Recall from Section \ref{s.2.1} that, for every element $\g\in\G$ we chose a Cartan decomposition of $\rho(\g)$ and therefore a 1-dimensional subspace $U_1\big(\rho(\g)\big).$

One easily checks that for every $s>h^{\sroot_1}_\rho$ the functional $f\mapsto\int fd\mu^s_\rho$ is continuous on $C(\P(\K^d),\R)$ with the uniform topology and hence one can take a weak* accumulation point of $\mu^s_\rho,$ as $s\to h^{\sroot_1}_\rho,$ in the space of Radon probability measures on $\P(\K^d).$ We will denote such Radon measure by $\mu^{\sroot_1}_\rho,$ (note that we do not show, nor require, that $\mu^{\sroot_1}_\rho$ is the only accumulation point of $\mu^s_\rho$).

\begin{lemma}\label{hU(g)} For any $\eta\in\G$  the (signed) measure
$$\epsilon(\eta,s):=\eta_*\mu^s_\rho-\frac1{\Phi_\rho^{\sroot_1}(s)}\sum_{\g\in\G}\left(\frac{\sigma_2}{\sigma_1}(\rho(\g))\right)^s\delta_{U_1(\rho(\eta\g))}$$ weakly* converges to zero as $s\to h^{\sroot_1}_\rho.$
\end{lemma}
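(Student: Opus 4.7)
The plan is to test the signed measure against an arbitrary continuous function $f\in C(\P(\K^d),\R)$ and show that $\int f\,d\epsilon(\eta,s)\to 0$ as $s\to h^{\sroot_1}_\rho$. Unpacking the definition, pushing forward by $\eta$ moves the atom $\delta_{U_1(\rho(\g))}$ to $\delta_{\rho(\eta)U_1(\rho(\g))}$, so that
$$\int f\,d\epsilon(\eta,s) = \frac{1}{\Phi_\rho^{\sroot_1}(s)}\sum_{\g\in\G}\left(\frac{\sigma_2}{\sigma_1}(\rho(\g))\right)^{s}\Bigl[f\bigl(\rho(\eta)U_1(\rho(\g))\bigr) - f\bigl(U_1(\rho(\eta\g))\bigr)\Bigr].$$
The whole game is to compare $\rho(\eta)U_1(\rho(\g))$ with $U_1(\rho(\eta\g))$.

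The key input is Lemma~\ref{l.qg}~\eqref{l.qg2}, which yields, uniformly in $\g$,
$$d\bigl(U_1(\rho(\eta\g)),\,\rho(\eta)U_1(\rho(\g))\bigr)\;\leq\;\|\rho(\eta)\|\,\|\rho(\eta)^{-1}\|\,\frac{\sigma_2}{\sigma_1}(\rho(\g)).$$
Since $\rho$ is $\{\sroot_1\}$-Anosov, $\frac{\sigma_2}{\sigma_1}(\rho(\g))\leq c e^{-\mu|\g|}$, so this distance tends to zero as $|\g|\to\infty$. Because $\P(\K^d)$ is compact, $f$ is uniformly continuous; given $\varepsilon>0$, pick $N$ large enough that $|\g|\geq N$ forces the above distance to be so small that the bracketed difference is at most $\varepsilon$.

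Now split the sum at $|\g|=N$. The tail $|\g|\geq N$ contributes at most
$$\frac{\varepsilon}{\Phi_\rho^{\sroot_1}(s)}\sum_{\g\in\G}\left(\frac{\sigma_2}{\sigma_1}(\rho(\g))\right)^{s}=\varepsilon,$$
while the head $|\g|<N$ is a \emph{finite} sum (the word ball in $\G$ is finite) whose terms are uniformly bounded by $2\|f\|_\infty$ times a quantity that stays bounded as $s\to h^{\sroot_1}_\rho$; since we are in the divergence-type regime where $\Phi_\rho^{\sroot_1}(s)\to\infty$ at the critical exponent (the standing assumption preceding the lemma, ensured, if need be, by the modification via Quint's function $f$), the head contribution is $O(1/\Phi_\rho^{\sroot_1}(s))\to 0$. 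Thus $\limsup_{s\to h^{\sroot_1}_\rho}\bigl|\int f\,d\epsilon(\eta,s)\bigr|\leq \varepsilon$, and since $\varepsilon$ and $f$ were arbitrary, weak$^*$ convergence to zero follows.

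The only point requiring a bit of care is the divergence of $\Phi_\rho^{\sroot_1}$ at $s=h^{\sroot_1}_\rho$, but this is precisely the standing hypothesis made just before the statement (and otherwise handled by Patterson's trick). Everything else is the standard Patterson-Sullivan quasi-invariance argument, whose only nontrivial ingredient in this non-conformal setting is the quantitative comparison of Cartan attractors provided by Lemma~\ref{l.qg}.
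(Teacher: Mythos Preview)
Your proof is correct and follows essentially the same approach as the paper: both test $\epsilon(\eta,s)$ against a continuous $f$, invoke Lemma~\ref{l.qg}~\eqref{l.qg2} for the distance estimate $d\bigl(\rho(\eta)U_1(\rho(\g)),U_1(\rho(\eta\g))\bigr)\leq \|\rho(\eta)\|\|\rho(\eta)^{-1}\|\frac{\sigma_2}{\sigma_1}(\rho(\g))$, use uniform continuity of $f$, and then argue that the finitely many ``bad'' terms contribute negligibly because the normalizing factor $\Phi_\rho^{\sroot_1}(s)$ diverges. The only cosmetic difference is that you split the sum by word length $|\g|<N$ while the paper splits by the threshold $\sigma_1/\sigma_2(\rho(\g))<\|\rho(\eta)\|\|\rho(\eta)^{-1}\|/\delta$; these are equivalent by the Anosov inequality.
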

\begin{proof} 
Indeed by definition 
$$\eta_*\mu^s_\rho=\frac1{\Phi_\rho^{\sroot_1}(s)}\sum_{\g\in\G}\left(\frac{\sigma_2}{\sigma_1}(\rho(\g))\right)^s\delta_{\rho(\eta)U_1(\rho(\g))}.$$
Furthermore, Lemma \ref{l.qg} (\ref{l.qg2}) implies that 
$$d(\rho(\eta)U_1(\rho(\g)),U_1(\rho(\eta\g)))\leq \|\eta\|\|\eta^{-1}\|\frac{\sigma_2}{\sigma_1}\rho(\g).$$
In order to show that $\epsilon(\eta,s)$ converges to zero, it is enough to show that for every continuous function $f:\P(V)\to\R$ the integral of $f$ on $\epsilon(\eta,s)$ tends to zero as $s$ converges to $h^{\sroot_1}_\rho$. However  every such function $f$ is uniformly continuous, and therefore for every $\epsilon$ we can find $\delta$ such that $|f(x)-f(y)|<\epsilon/2$ if $d(x,y)<\delta$. It is then enough to choose $s$ close enough to  $ h^{\sroot_1}_\rho$ so that the mass of $\mu^s_\rho$ of the elements $\g$ such that  $\sigma_1/\sigma_2(\g)<\|\eta\|\|\eta^{-1}\|/\delta$ is smaller than $\frac{\epsilon}{2\|f\|}$.
\end{proof}

One has the following proposition (compare with Sullivan's shadow Lemma \cite{sullivan}).

\begin{prop}\label{measure.conetype1} Let $\rho:\G\to\PGL_d(\K)$ be a projective Anosov representation, then for all $\eta\in \G$ one has 
$$\left(\frac{\sigma_d}{\sigma_1}(\rho(\eta))\right)^{h_\rho^{\sroot_1}}\leq\frac{\mu^{\sroot_1}_\rho(\rho(\eta) X_\infty(\eta))}{\mu^{\sroot_1}_\rho(X_\infty(\eta))}\leq \frac4{\delta_\rho^2}\left(\frac{\sigma_2}{\sigma_1}(\rho(\eta))\right)^{h_\rho^{\sroot_1}}.$$
\end{prop}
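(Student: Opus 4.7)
The plan is to establish the inequalities on the approximating measures $\mu^s_\rho$ (for $s>h_\rho^{\sroot_1}$) and then pass to the weak$^*$ limit. I first rewrite $\mu^{\sroot_1}_\rho(\rho(\eta)X_\infty(\eta))=(\rho(\eta^{-1})_*\mu^{\sroot_1}_\rho)(X_\infty(\eta))$ and apply Lemma~\ref{hU(g)} with $\eta^{-1}$ in place of $\eta$. Performing the change of variable $\alpha=\eta^{-1}\gamma$ in the resulting sum, this identifies $\rho(\eta^{-1})_*\mu^s_\rho$, up to a signed measure converging weakly to $0$, with
$$\frac{1}{\Phi_\rho^{\sroot_1}(s)}\sum_{\alpha\in\G}\left(\frac{\sigma_2}{\sigma_1}(\rho(\eta\alpha))\right)^s\delta_{U_1(\rho(\alpha))}.$$
The point of this rewriting is that both $\mu^s_\rho(X_\infty(\eta))$ and $(\rho(\eta^{-1})_*\mu^s_\rho)(X_\infty(\eta))$ are now (essentially) sums over the same index set $\{\alpha\st U_1(\rho(\alpha))\in X_\infty(\eta)\}$, with weights $(\sigma_2/\sigma_1(\rho(\alpha)))^s$ and $(\sigma_2/\sigma_1(\rho(\eta\alpha)))^s$ respectively, so the ratio is a weighted average of termwise ratios of weights.

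The core of the proof is then the termwise comparison. For the upper bound, Lemma~\ref{l.no_cancellation} applied to $\rho(\eta)\rho(\alpha)$ with $p=1$ gives
$$\frac{\sigma_2}{\sigma_1}(\rho(\eta\alpha))\le(\sin\angle)^{-2}\frac{\sigma_2}{\sigma_1}(\rho(\eta))\frac{\sigma_2}{\sigma_1}(\rho(\alpha)),$$
where $\angle=\angle(U_1(\rho(\alpha)),U_{d-1}(\rho(\eta^{-1})))$. The assumption $U_1(\rho(\alpha))\in X_\infty(\eta)$, combined with the defining $\delta_\rho/2$-neighbourhood of $X_\infty(\eta)$ and the angle bound for cone types (Lemma~\ref{UniformAngle} and Definition~\ref{def:leastangle}), forces $\sin\angle\ge\delta_\rho/2$, producing the constant $4/\delta_\rho^2$. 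For the lower bound, the standard Horn-type singular value inequalities $\sigma_2(AB)\ge\sigma_d(A)\,\sigma_2(B)$ and $\sigma_1(AB)\le\sigma_1(A)\,\sigma_1(B)$ give, with no hypothesis on $\alpha$,
$$\frac{\sigma_2}{\sigma_1}(\rho(\eta\alpha))\ge\frac{\sigma_d}{\sigma_1}(\rho(\eta))\frac{\sigma_2}{\sigma_1}(\rho(\alpha)).$$

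Summing these termwise estimates over $\alpha$ with $U_1(\rho(\alpha))\in X_\infty(\eta)$ and dividing by $\Phi_\rho^{\sroot_1}(s)$ yields the desired chain of inequalities for $\mu^s_\rho$, up to an error term tending to zero. Passing to the weak$^*$ limit $s\searrow h_\rho^{\sroot_1}$ is the main technical obstacle: weak$^*$ convergence does not immediately imply convergence on characteristic functions of Borel sets whose boundary may carry positive $\mu^{\sroot_1}_\rho$-mass. To handle this I will sandwich $\chi_{X_\infty(\eta)}$ between continuous functions supported on a slightly smaller open set and on a slightly larger neighbourhood; the angle estimate above still holds after a mild reduction of $\delta_\rho/2$, so applying the inequalities to each continuous approximation and taking a monotone limit delivers the stated bounds for $\mu^{\sroot_1}_\rho$.
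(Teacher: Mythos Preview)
Your proposal is correct and follows essentially the same route as the paper: rewrite $\rho(\eta^{-1})_*\mu^s_\rho$ via Lemma~\ref{hU(g)}, compare term by term using Lemma~\ref{l.no_cancellation}, and pass to the weak$^*$ limit through continuous approximations of $\chi_{X_\infty(\eta)}$. Your justification of the angle bound (triangle inequality from the $\delta_\rho/2$-neighbourhood defining $X_\infty(\eta)$, combined with Lemma~\ref{UniformAngle}) is a slightly more direct variant of the paper's argument, which instead observes that the relevant $\gamma$'s lie on geodesics through $e$ from $\eta^{-1}$ and invokes Proposition~\ref{weakmorselemma}; the paper also makes explicit that only the tail of the sum matters since $\Phi_\rho^{\sroot_1}(s)\to\infty$, which is implicit in your treatment of the error term.
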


Recall that there are finitely many cone types, so the number $\mu^{\sroot_1}_\rho(X_\infty(\eta))$ is an irrelevant constant.

\begin{proof}
Consider $s>h^\sigma_\rho,$ $\eta\in\G$ and a continuous function $f:\P(\K^d)\to\R.$ One has \begin{alignat}{2} 
\mu^s_\rho(f\circ\rho(\eta)^{-1} )& = \frac1{\Phi_\rho^{\sroot_1}(s)}\sum_{\g\in\G}\left(\frac{\sigma_2}{\sigma_1}(\rho(\g))\right)^sf\big(\rho(\eta^{-1})U_1(\rho(\g))\big)\nonumber\\ 
& =\eps(\eta^{-1},s)(f)+ \frac1{\Phi_\rho{\sroot_1}(s)}\sum_{\g\in\G}\left(\frac{\sigma_2}{\sigma_1}(\rho(\eta\g))\right)^sf(U_1(\rho(\g))) \label{tutti} \\ 
& =\eps(\eta^{-1},s)(f)+ \frac1{\Phi_\rho^{\sroot_1}(s)}\sum_{\g\in\G}\left(\frac{\sigma_2}{\sigma_1}(\rho(\eta\g))\frac{\sigma_1}{\sigma_2}(\rho(\g))\right)^s\left(\frac{\sigma_2}{\sigma_1}(\rho(\g))\right)^sf(U_1(\rho(\g))), \nonumber 
\end{alignat}
where $\epsilon(\eta^{-1},s)$ is the term estimated in Lemma \ref{hU(g)}, so that $\eps(\eta^{-1},s)(f)$ converges to zero when $s\to h^{\sroot_1}_\rho.$

Assume that the support of $f$ contains $X_\infty(\eta)$ in its interior and $s$ is close enough to $h^{\sroot_1}_\rho$ {so that $\Phi_\rho^{\sroot_1}(s)$ is arbitrary large}. Then only {the tail of the sum involved in $\mu^s_\rho(f\circ\rho(\eta)^{-1} )$ is relevant, this is to say: \begin{itemize}\item[-] only $\g$'s for which $|\g|$ is large matter, \item[-] since we are integrating $f,$ $U_1(\rho(\g))$ has to be near to $X_\infty(\eta),$ so that there is a geodesic segment from $\eta^{-1}$ to $\g$ passing through the identity.\end{itemize}}\noindent This, together with Proposition \ref{weakmorselemma}, implies that such $\g$'s one has $$\sin\big(\angle(U_1(\rho(\g)),U_{d-1}(\rho(\eta)))\big)>\eps_f,$$ for some $\eps_f$ depending on the support of $f.$ Note that $\eps_f$ approaches $\delta_\rho/2$ {(recall Definition \ref{def:leastangle})} as $\supp f\to X_{\infty}(\eta).$ Choosing a sequence $s_k\to h^{\sroot_1}_\rho$ such that $\mu^{s_k}_\rho\to\mu^{\sroot_1}_\rho$ one has, using Lemma \ref{l.no_cancellation} and equation (\ref{tutti}), that, for any such $f$
$$\mu^{\sroot_1}_\rho(f\circ\rho(\eta)^{-1})=\lim_{s_k\to h^\sigma_\rho}\mu^{s_k}_\rho(f\circ\rho(\eta)^{-1})\leq\frac4{\delta_\rho^2} \left(\frac{\sigma_2}{\sigma_1}(\rho(\eta))\right)^{h^{\sroot_1}_\rho}\mu^{\sroot_1}_\rho(f).$$
By continuity of $f\mapsto \mu^{\sroot_1}_\rho(f),$ one concludes the desired upper bound. The lower bound follows similarly. 
\end{proof}

Since cone types shrink to any given point of $\partial\G$ one has the following consequences of Proposition \ref{measure.conetype1}.

\begin{cor} The measure $\mu^{\sroot_1}_\rho$ has total support and no atoms.  
\end{cor}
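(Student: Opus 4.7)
The plan is to treat the two claims separately, both as consequences of Proposition \ref{measure.conetype1} combined with the fact that, for any $\hat x\in\bord\G$ and any geodesic ray $\{\alpha_i\}$ from $e$ to $\hat x$, the sets $\rho(\alpha_i)X_\infty(\alpha_i)$ form a neighborhood basis of $\xi^1_\rho(\hat x)$ inside $\xi^1_\rho(\bord\G)$. A useful preliminary observation is that $\supp(\mu^{\sroot_1}_\rho)\subset \xi^1_\rho(\bord\G)$: if $y$ lies at positive distance $\eps$ from $\xi^1_\rho(\bord\G)$, Lemma \ref{l.uniformbdry} provides $L$ with $U_1(\rho(\g))\notin B(y,\eps/2)$ for all $|\g|\geq L$, so $\mu^s_\rho(B(y,\eps/2))\leq \#\{|\g|<L\}/\Phi_\rho^{\sroot_1}(s)$, which vanishes as $s\to h^{\sroot_1}_\rho$ because $\Phi_\rho^{\sroot_1}$ diverges at $h^{\sroot_1}_\rho$.

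For the absence of atoms I argue by contradiction. Suppose $\mu^{\sroot_1}_\rho(\{x\})>0$; by the previous paragraph $x=\xi^1_\rho(\hat x)$ for some $\hat x\in\bord\G$, and we pick a geodesic ray $\{\alpha_i\}$ from $e$ to $\hat x$. Since $\hat x\in\alpha_i\bordcone(\alpha_i)$ for every $i$, one has $x\in\rho(\alpha_i)X_\infty(\alpha_i)$, and the upper bound of Proposition \ref{measure.conetype1} yields
\[
0<\mu^{\sroot_1}_\rho(\{x\})\leq \mu^{\sroot_1}_\rho\bigl(\rho(\alpha_i)X_\infty(\alpha_i)\bigr)\leq \frac{4}{\delta_\rho^2}\left(\frac{\sigma_2}{\sigma_1}(\rho(\alpha_i))\right)^{h^{\sroot_1}_\rho}.
\]
Since $\rho$ is $\{\sroot_1\}$-Anosov the ratio $\sigma_2/\sigma_1(\rho(\alpha_i))$ decays exponentially, and $h^{\sroot_1}_\rho>0$ because $\G$ is infinite, so the right-hand side tends to zero, a contradiction.

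For total support the plan is to show that $\mu^{\sroot_1}_\rho$ is quasi-invariant under the $\rho(\G)$-action and then to invoke minimality of the $\G$-action on $\bord\G$. Using Lemma \ref{hU(g)} and the substitution $\g'=\eta\g$ one writes
\[
\eta_*\mu^s_\rho=\eps(\eta,s)+\frac{1}{\Phi_\rho^{\sroot_1}(s)}\sum_{\g'\in\G}\left(\frac{\sigma_2}{\sigma_1}(\rho(\eta^{-1}\g'))\right)^s\delta_{U_1(\rho(\g'))}
\]
with $\eps(\eta,s)$ tending to zero in the weak* topology. Submultiplicativity of singular values yields, with $C_\eta:=\|\rho(\eta)\|\cdot\|\rho(\eta^{-1})\|$, the uniform estimate
\[
C_\eta^{-1}\frac{\sigma_2}{\sigma_1}(\rho(\g'))\leq \frac{\sigma_2}{\sigma_1}(\rho(\eta^{-1}\g'))\leq C_\eta\,\frac{\sigma_2}{\sigma_1}(\rho(\g')).
\]
Raising to the $s$-th power and passing to the weak* limit produces the two-sided bound
\[
C_\eta^{-h^{\sroot_1}_\rho}\mu^{\sroot_1}_\rho\leq \eta_*\mu^{\sroot_1}_\rho\leq C_\eta^{h^{\sroot_1}_\rho}\mu^{\sroot_1}_\rho,
\]
so $\supp(\mu^{\sroot_1}_\rho)$ is $\rho(\G)$-invariant. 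Its preimage under $\xi^1_\rho$ is a nonempty closed $\G$-invariant subset of $\bord\G$; minimality of the $\G$-action on $\bord\G$ (valid for non-elementary hyperbolic $\G$) forces this preimage to equal $\bord\G$, whence $\supp(\mu^{\sroot_1}_\rho)=\xi^1_\rho(\bord\G)$.

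The main technical subtlety lies in the quasi-invariance step: one must verify that the singular value comparison is uniform in $\g'$ and that passing to the weak* limit transfers the pointwise ratio bound into a genuine Radon–Nikodym bound; granted Proposition \ref{measure.conetype1} and Lemma \ref{hU(g)}, the remainder of the argument is largely routine.
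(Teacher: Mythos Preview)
Your argument for the absence of atoms is correct and is exactly the paper's approach: both use the upper bound of Proposition~\ref{measure.conetype1} applied along a geodesic ray converging to the putative atom.

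For total support your route is correct but different from the paper's. You establish quasi-invariance of $\mu^{\sroot_1}_\rho$ (via Lemma~\ref{hU(g)} and the uniform bound $\sigma_d(g)\sigma_k(h)\leq\sigma_k(gh)\leq\sigma_1(g)\sigma_k(h)$, which indeed gives your constant $C_\eta$) and then appeal to minimality of the $\G$-action on $\bord\G$. The paper instead uses the \emph{lower} bound of Proposition~\ref{measure.conetype1} directly: for any open $A$ meeting $\xi^1_\rho(\bord\G)$, pick $\alpha$ with $\rho(\alpha)X_\infty(\alpha)\subset A$ and conclude
\[
\mu^{\sroot_1}_\rho(A)\geq \mu^{\sroot_1}_\rho\bigl(\rho(\alpha)X_\infty(\alpha)\bigr)\geq \left(\frac{\sigma_d}{\sigma_1}(\rho(\alpha))\right)^{h^{\sroot_1}_\rho}\mu^{\sroot_1}_\rho(X_\infty(\alpha))>0.
\]
The paper's argument is more economical since the needed two-sided estimate is already on the table; your argument has the advantage of yielding quasi-invariance as a byproduct, which is of independent interest even though the paper does not use it later.
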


\begin{proof} If $\alpha_i$ is a geodesic ray converging to $x$ then $\rho(\alpha_i)X_\infty(\alpha_i)$ is a family of open neighborhoods decreasing to $x,$ and since $\rho$ is projective Anosov one has $(\sigma_2/\sigma_1)(\rho(\alpha_i))\to0$ as $i\to\infty.$ As $\mu_\rho^{\sroot_1}$ is a Radon measure we have on the one hand 
$$\mu_\rho^{\sroot_1}(\{x\})=\inf\{\mu_\rho^{\sroot_1}(\rho(\alpha_i)X_\infty(\alpha_i))\}\leq\frac4{\delta_\rho^2}\left(\frac{\sigma_2}{\sigma_1}(\rho(\eta))\right)^{h_\rho^{\sroot_1}},$$
on the other hand for every open set $A$ intersecting $\xi(\bord\G)$ we can find $\alpha$ such that $\rho(\alpha_i)X_\infty(\alpha_i)$ is contained in  $A$, and thus $$\mu_\rho^{\sroot_1}(A)\geq \mu_\rho^{\sroot_1}(\rho(\alpha_i)X_\infty(\alpha_i)) )\geq\left(\frac{\sigma_d}{\sigma_1}(\rho(\eta))\right)^{h_\rho^{\sroot_1}}.$$\end{proof}

\subsection{When conformal points are abundant}\label{s.manyLC}

Denote by $$\LC(\rho)=\{x\in\bord\G:x \textrm{ is locally conformal for }\rho \}.$$ We can now prove the following.

\begin{thm}\label{thm:eqifLC}Let $\rho:\G\to\PGL_d(\K)$ be a projective Anosov representation. If $\mu^{\sroot_1}_\rho(\LC(\rho))>0,$ then $$\Hff(\xi^1_\rho(\bord\G))=h^{\sroot_1}_\rho.$$
\end{thm}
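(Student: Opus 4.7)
The upper bound $\Hff(\xi^1_\rho(\bord\G))\le h^{\sroot_1}_\rho$ is already Proposition~\ref{Hffand1st}, so the plan is to establish the reverse inequality by exhibiting a Frostman-type lower bound on $\mu^{\sroot_1}_\rho$ at every locally conformal point, and then invoking the mass distribution principle. The essential ingredients are already in place: Corollary~\ref{c.conetypeBall} produces shadows that are \emph{balls from below}, while Proposition~\ref{measure.conetype1} controls the $\mu^{\sroot_1}_\rho$-mass of shadows \emph{from above} by a power $h^{\sroot_1}_\rho$ of the singular value gap.

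Concretely, I would fix $x\in\LC(\rho)$ and let $\{\alpha_i\}\subset\G$ be a geodesic ray based at $\id$ and converging to $x$ as in Corollary~\ref{c.conetypeBall}, so that for every sufficiently large $i$
\[
B\!\left(x^1_\rho,\;c_1\tfrac{\sigma_2}{\sigma_1}(\rho(\alpha_i))\right)\cap\xi^1_\rho(\bord\G)\;\subset\;\rho(\alpha_i)X_\infty(\alpha_i).
\]
Since $\mu^{\sroot_1}_\rho$ is supported on $\xi^1_\rho(\bord\G)$ and there are only finitely many cone types (so $\mu^{\sroot_1}_\rho(X_\infty(\alpha_i))$ is bounded above uniformly in $i$), Proposition~\ref{measure.conetype1} yields a uniform constant $C>0$ with
\[
\mu^{\sroot_1}_\rho\!\left(B(x^1_\rho,r_i)\right)\;\le\;C\,r_i^{\,h^{\sroot_1}_\rho},\qquad r_i\coloneqq c_1\tfrac{\sigma_2}{\sigma_1}(\rho(\alpha_i)),
\]
for the whole sequence $r_i\to 0$. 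In particular the lower pointwise dimension of $\mu^{\sroot_1}_\rho$ at $x^1_\rho$ satisfies $\liminf_{r\to 0}\log\mu^{\sroot_1}_\rho(B(x^1_\rho,r))/\log r\ge h^{\sroot_1}_\rho$ for every $x\in\LC(\rho)$.

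To conclude, I would apply the classical mass distribution principle (Frostman's lemma in its pointwise form, see e.g.\ Falconer): if a finite Borel measure $\nu$ on a metric space has positive mass on a Borel set $E$ and its lower pointwise dimension is $\ge s$ at $\nu$-a.e.\ point of $E$, then $\Hff(E)\ge s$. By assumption $\mu^{\sroot_1}_\rho(\LC(\rho))>0$, and $\xi^1_\rho$ is a homeomorphism onto its image, so $\mu^{\sroot_1}_\rho\bigl(\xi^1_\rho(\LC(\rho))\bigr)>0$; the previous paragraph supplies the lower-pointwise-dimension hypothesis with $s=h^{\sroot_1}_\rho$. Therefore $\Hff\bigl(\xi^1_\rho(\LC(\rho))\bigr)\ge h^{\sroot_1}_\rho$, and since $\xi^1_\rho(\LC(\rho))\subset\xi^1_\rho(\bord\G)$ we obtain the desired lower bound $\Hff\bigl(\xi^1_\rho(\bord\G)\bigr)\ge h^{\sroot_1}_\rho$.

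I do not expect any substantial obstacle: the deep technical work has already been done in Proposition~\ref{p.conetypeBall}/Corollary~\ref{c.conetypeBall} (the locally conformal condition forces shadows to contain genuine balls of comparable radius) and in Proposition~\ref{measure.conetype1} (Sullivan-type shadow lemma). The only subtlety is that the Frostman-type bound is obtained along a specific sequence $r_i\to 0$ rather than for all small $r$; this suffices for the mass distribution principle applied to $\liminf$, which is the standard statement and requires no extra argument.
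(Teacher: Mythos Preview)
Your approach is essentially the paper's: Corollary~\ref{c.conetypeBall} plus Proposition~\ref{measure.conetype1} feed into a Frostman/mass-distribution argument. The paper carries this out by hand (restricting to a set $X_\epsilon\subset\LC(\rho)$ on which the initial radius is bounded below, then bounding $\mathcal H^{h^{\sroot_1}_\rho}$ directly from a cover), while you invoke the pointwise mass-distribution principle; these are equivalent.

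There is, however, one genuine slip in your final remark. Knowing $\mu^{\sroot_1}_\rho(B(x^1_\rho,r_i))\le C r_i^{\,h^{\sroot_1}_\rho}$ along a sequence $r_i\to 0$ does \emph{not} by itself give the $\liminf$ bound you claim: if the ratios $r_i/r_{i+1}$ were unbounded, the intermediate radii could produce arbitrarily small values of $\log\mu(B(x,r))/\log r$. What saves you here---and the paper spells this out explicitly---is that $\alpha_{i+1}=\alpha_i a$ for a generator $a$, so $r_i/r_{i+1}\le K_\rho$ is uniformly bounded; sandwiching any $r\in(r_{i+1},r_i]$ then gives $\mu^{\sroot_1}_\rho(B(x^1_\rho,r))\le C r_i^{\,h^{\sroot_1}_\rho}\le C K_\rho^{\,h^{\sroot_1}_\rho} r^{\,h^{\sroot_1}_\rho}$, which is the estimate you actually need. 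With this one-line fix your proposal is correct.
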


\begin{proof} 
As we already established in Proposition \ref{Hffand1st}, $\Hff(\xi^1_\rho(\bord\G))\leq h^{\sroot_1}_\rho,$ so we only need to show the reverse inequality. The proof will follow the main ideas in Sullivan's original work \cite{sullivan}, using Corollary \ref{c.conetypeBall} and Proposition \ref{measure.conetype1} as key replacement for the conformality of a Kleinian group action on its boundary, and Sullivan's shadow lemma. 

Given $x\in\LC(\rho)$ and a geodesic ray $\{\alpha_i\}$ on $\G$ converging to $x,$ Corollary \ref{c.conetypeBall} implies that for all $i\geq N_0(x)$ the set $\rho(\alpha_i)X_\infty(\alpha_i)$ is coarsely (with constants independent of $x$) a ball of radius $$r_i(x)=\frac{\sigma_2}{\sigma_1} \big(\rho(\alpha_i)\big)$$ about $x^1_\rho$ (for the induced metric on $\xi_\rho^1(\bord\G)$).

Proposition \ref{measure.conetype1} then states that for all {$i\geq N_0(x)$}
\begin{equation}\label{aux}
\mu^{\sroot_1}_\rho\big(B(x,r_i(x))\big)\leq  c r_i(x)^{h^{\sroot_1}_\rho}.\end{equation} 

Observe that we can extend equation (\ref{aux}) for any $0<r\leq r_{N_0}(x)$, up to possibly worsening the constant $c$: Since $\rho$ is projective Anosov, the word length of $\g\in\G$ is coarsely $\log\sigma_2/\sigma_1 (\rho(\g)),$ thus $$r_i(x)/r_{i+1}(x)\leq K_\rho$$ for some constant $K_\rho$ only depending on $\rho;$ given $r$ it suffices to consider $r_{i+1}(x)\leq r\leq r_i(x)$ and thus 
$$\mu^{\sroot_1}_\rho(B(x,r))\leq c\Big(\frac{r_i(x)}{r_{i+1}(x)}\Big)^{h^{\sroot_1}_\rho}r_{i+1}(x)^{h^{\sroot_1}_\rho}\leq L_\rho r^{h^{\sroot_1}_\rho}. $$ 

 Furthermore, there exists $\eps$ such that the set $X_\epsilon=\{x\in\LC(\rho): r_{N_0}(x)\geq\eps\}$ has positive $\mu^{\sroot_1}_\rho$-mass: this follows from the general fact that countable union of sets with measure 0 has measure 0, since we assumed $\mu^{\sroot_1}_\rho(\LC(\rho))>0,$ 

The remainder arguments are verbatim as in Ha\"issinsky \cite[Th\'eor\`eme F.4]{CoursH}. We include them for completeness: as $X_\epsilon$ is a subset of $\xi(\bord\G)$, it is enough to verify that $\Hff(\xi(X_\epsilon))\geq h^{\sroot_1}_\rho$; we will show that, denoting by $\sigma:=h^{\sroot_1}_\rho$, we have $\cal H^\sigma(X_\epsilon)>0$. 
Indeed let us denote by $d:=\frac{\mu_\rho^{\sroot_1}(X_\epsilon)}{2L_\rho}$. By definition of $\sigma$-capacity we can find an open covering $\cal B=\{B(x_i,r_i)\}$ of $X_\epsilon$ consisting of balls of radius $r_i<\epsilon$ and such that 
$$\sum r_i^\sigma\leq \cal H^\sigma(X_\epsilon)+d.$$
Recall from (\ref{e.capacity}) at the beginning of Section \ref{s.4} that we denote by  $\cal H^\sigma(X_\epsilon)$ the $\sigma$-capacity of the set $X_\epsilon$.
On the other hand we have
$$\mu^{\sroot_1}_\rho(X_\epsilon)\leq \mu^{\sroot_1}_\rho\Big(\bigcup_i B(x_i,r_i)\Big)\leq \sum_i  \mu^{\sroot_1}_\rho\big(B(x_i,r_i)\big)\leq \sum_i L_\rho r_i^\sigma.$$
This shows that $\cal H^s(X_\epsilon)$ is positive, and concludes the proof. \end{proof}

\begin{remark}\label{r.Quint}
Patterson-Sullivan measures in a setup close to ours were extensively studied by Quint \cite{quint1}. For our geometric applications it is crucial to have an Ahlfors regular measure of exponent $h^{\sroot_1}_\rho$. Let us denote by $G$ the Zariski closure of $\rho(\G)$,  assume that $G$ is reductive (despite this is not always the case in the examples we have in mind), and let $\cal F_G$ denote the full flag space associated to $G$.  Quint \cite[Theorem 8.4]{quint1} provides a quasi-invariant measure $\mu$ on  $\cal F_G$ called a  $(\rho(\G),h^{\sroot_1}_\rho\sroot_1)$-Patterson-Sullivan, with the desired transformation rule, as long as a tecnical condition is satisfied, namely that the form $h^{\sroot_1}_\rho\sroot_1$ is tangent to the growth indicator function $\psi_{\rho(\G)}$. In order to guarantee that this is the case we would have to further assume that the representation $\rho$ is $\{\sroot_p\}$-Anosov, and that $p_2(\rho(\g))=p$ for every $\g\in\G.$ The measure $\mu$ could then be pushed forward via the projection $\cal F_G\to\P(\K^d)$ and the fact that $\rho$ is $\{\sroot_1,\sroot_p\}$-Anosov would imply that the new measure on $\P(\K^d)$ would still be quasi-invariant. However deducing the analogue of Proposition \ref{measure.conetype1} in that setting would require some work as our representations are, in most interesting cases, not Zariski dense. \end{remark}

\section{$(p,q,r)$-hyperconvexity}\label{section:FrenetReps}
In this section we introduce $(p,q,r)$-hyperconvex representations, establish geometric properties and provide the link with local conformality.
\subsection{Hyperconvex representations}
The following definition is inspired from Labourie \cite{Labourie-Anosov} for surface groups. Let $\G$ be a word-hyperbolic group and denote by $$\bord^{(3)}\G=\{(x,y,z)\in(\bord\G)^3:\textrm{ pairwise distinct}\}.$$

\begin{defi} Consider $p,q,r\in\lb1,d-1\rb$ such that $p+q\leq d.$  We say that a representation $\rho:\G\to\PGL_d(\K)$ is \emph{$(p,q,r)$-hyperconvex} if it is $\{\sroot_p,\sroot_q,\sroot_r\}$-Anosov and for every triple $(x,y,z) \in\bord^{(3)}\G$ one has$$( x^p_\rho\oplus y^q_\rho)\cap z^{d-r}_\rho=\{0\}.$$
\end{defi}

Note that, since $p+q<d$ and the representation is $\{\sroot_p,\sroot_q\}$-Anosov, the sum $x^p_\rho+ y^q_\rho$ is necessarily direct. Hence, hyperconvexity implies that $p+q\leq r.$  We will observe in Corollary \ref{r.obstruction} that $\rho$ can only be $(p,q,r)$-hyperconvex if $r-p-q\geq \dim(\bord\G)-1$. Note that we do not require $p$ and $q$ to be different.

\begin{prop}\label{FrenetOpen} The space of $(p,q,r)$-hyperconvex representations is open in $$\hom(\G,\PGL_d(\K)).$$\end{prop}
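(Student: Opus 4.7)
The plan is to combine openness of the Anosov condition (Proposition~\ref{p.open}) with a compactness argument exploiting cocompactness of the $\G$-action on $\bord^{(3)}\G$. Let $\rho_0$ be $(p,q,r)$-hyperconvex. Applying Proposition~\ref{p.open} three times, there is an open neighborhood $U_1 \subset \hom(\G,\PGL_d(\K))$ of $\rho_0$ on which every representation is simultaneously $\{\sroot_p\}$-, $\{\sroot_q\}$- and $\{\sroot_r\}$-Anosov; on $U_1$, Corollary~\ref{bdry.continuous} guarantees that the boundary maps $\xi^p_\rho$, $\xi^q_\rho$, $\xi^{d-r}_\rho$ are well defined and depend continuously on $(\rho,x)$. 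Consequently the function
$$F \colon U_1 \times \bord^{(3)}\G \to [0,1], \qquad F\big(\rho,(x,y,z)\big) = \sin\angle\bigl(\xi^p_\rho(x)\oplus\xi^q_\rho(y),\,\xi^{d-r}_\rho(z)\bigr)$$
is continuous, and $F(\rho,\cdot)>0$ at $(x,y,z)$ is exactly the transversality condition $(x^p_\rho\oplus y^q_\rho)\cap z^{d-r}_\rho=\{0\}$. Thus hyperconvexity of $\rho\in U_1$ is equivalent to $F(\rho,\cdot)>0$ on all of $\bord^{(3)}\G$.

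At this point the main obstacle appears: $\bord^{(3)}\G$ is not compact, so continuity of $F$ together with positivity at $\rho_0$ does not directly yield positivity in a neighborhood of $\rho_0$. To get around this we exploit $\G$-equivariance. For any $\g\in\G$, the element $\rho(\g)\in\PGL_d(\K)$ is a linear isomorphism, so it preserves the transversality of $x^p_\rho\oplus y^q_\rho$ with $z^{d-r}_\rho$; by equivariance of the boundary maps we obtain
$$F(\rho,(x,y,z))>0 \iff F(\rho,\g\cdot(x,y,z))>0.$$
We may assume $\G$ is non-elementary, for otherwise $|\bord\G|\leq 2$ and $\bord^{(3)}\G=\emptyset$, making the statement vacuous. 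For non-elementary word-hyperbolic $\G$, the diagonal action on $\bord^{(3)}\G$ is properly discontinuous and cocompact, so we may fix a compact set $K\subset\bord^{(3)}\G$ with $\G\cdot K=\bord^{(3)}\G$.

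Since $F(\rho_0,\cdot)$ is positive on the compact set $K$, there is $\epsilon>0$ with $F(\rho_0,\cdot)\geq 2\epsilon$ on $K$. By uniform continuity of $F$ on a compact neighborhood of $\{\rho_0\}\times K$, one finds an open neighborhood $U_2\subset U_1$ of $\rho_0$ such that $F(\rho,\cdot)\geq\epsilon>0$ on $K$ for every $\rho\in U_2$. The $\G$-invariance described above then upgrades this to $F(\rho,\cdot)>0$ on all of $\bord^{(3)}\G$, showing that every $\rho\in U_2$ is $(p,q,r)$-hyperconvex. The only non-formal input beyond the already established continuity and openness statements is the cocompactness of $\G\curvearrowright\bord^{(3)}\G$, which is a classical property of non-elementary word-hyperbolic groups.
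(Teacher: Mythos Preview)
Your proof is correct and follows essentially the same approach as the paper: both use openness of the Anosov condition together with continuity of the boundary maps (Corollary~\ref{bdry.continuous}) and cocompactness of the $\G$-action on $\bord^{(3)}\G$ to reduce the transversality check to a compact fundamental domain. Your write-up is somewhat more explicit (defining the function $F$ and spelling out the uniform-continuity step), while the paper sketches the argument more briefly, but the underlying structure is the same.
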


 \begin{proof} The proof follows the same lines as Labourie \cite[Proposition 8.2]{Labourie-Anosov}. Since the action of $\G$ on $\bord^{(3)}\G$ is properly discontinuous and co-compact, given a triple $(x,y,z)\in\bord^{(3)}\G$ there exists $\g\in\G$ such that the the points $\g x,\g y$ and $\g z$ are pairwise far apart. Considering a $(p,q,r)$-hyperconvex representation $\rho,$ one concludes that the angles between any pair of the spaces $(\g x)^p_\rho,(\g y)^q_\rho$ and $(\g z)^{d-r}_\rho$ are bounded away from zero. Corollary \ref{bdry.continuous} states that the Anosov condition is open and that equivariant maps vary continuously with the representation, hence, since the map $\bord^{(2)}\G\to\Gr_{p+q}(\K^d)$ $$(a,b)\mapsto a^p_\rho\oplus b^q_\rho$$ is continuous away from the diagonal the result follows.
\end{proof}

Since hyperconvexity is an open property, one can provide interesting examples of hyperconvex representations by looking at representations of the form $\G\to G\to\GL_d(\K),$ where the first arrow is convex co-compact (see Section \ref{repdef}) furthermore hyperconvexity behaves well with field extensions:
\begin{lemma}\label{FieldExt}Let $\K\subset\F$ be a field extension, if $\rho:\G\to\PGL_d(\K)$ is $(p,q,r)$-hyperconvex then so is $\rho:\G\to\PGL_d(\F).$
\end{lemma}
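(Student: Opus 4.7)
The plan is to check each ingredient of $(p,q,r)$-hyperconvexity over $\F$ separately, by showing that the passage from $\K$ to $\F$ is essentially a formal scalar-extension. The only care needed is to set up norms compatibly so that Cartan decompositions and semi-homothecy ratios of elements of $\GL_d(\K) \subset \GL_d(\F)$ coincide with their $\F$-versions.

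First I would fix a good norm on $\K^d$ as in \S\ref{ss.dom_sing} coming from a basis $\{e_1,\dots,e_d\}$ on which a maximal diagonal torus $A_\K$ acts by semi-homothecies, and extend it to a good norm on $\F^d$ using the same basis; then $A_\K\subset A_\F$ and the maximal compact subgroup $K_\K$ (either the orthogonal/unitary group or $\GL_d(\calO_\K)$) embeds into $K_\F$. Consequently, any $\K$-Cartan decomposition $g=k a l$ of $g\in\GL_d(\K)$ remains a Cartan decomposition over $\F$, and the semi-homothecy ratios $\sigma_i\bigl(\rho(\g)\bigr)$ are the same whether computed in $\GL_d(\K)$ or in $\GL_d(\F)$. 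In particular the defining inequality $\sigma_{p+1}/\sigma_p(\rho(\g))\leq ce^{-\mu|\g|}$ of Definition \ref{defAnosov} persists over $\F$, for all three indices $p,q,r$, so $\rho:\G\to\PGL_d(\F)$ is $\{\sroot_p,\sroot_q,\sroot_r\}$-Anosov.

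Next I would identify the boundary maps. With the above choice of norms, a Cartan attractor $U_p\bigl(\rho(\g)\bigr)$ computed over $\F$ may be taken to be the $\F$-span $U_p^{\K}\bigl(\rho(\g)\bigr)\otimes_\K\F$ of a $\K$-Cartan attractor. The inclusion $\iota_p\colon\Gr_p(\K^d)\hookrightarrow \Gr_p(\F^d)$ sending $V\mapsto V\otimes_\K\F$ is continuous (it is induced by the $\F$-linear extension of a basis map), and so Proposition \ref{p.bdry}, which characterizes boundary maps as uniform limits of Cartan attractors along geodesic rays, gives $\xi^p_{\rho,\F}=\iota_p\circ\xi^p_{\rho,\K}$ (and analogously for $q,r$, and the complementary indices $d-p,d-q,d-r$).

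Finally I would verify hyperconvexity by a direct linear-algebra argument. Given pairwise distinct $x,y,z\in\bord\G$, by the $\K$-hyperconvexity assumption one has $(x^p_{\rho,\K}\oplus y^q_{\rho,\K})\cap z^{d-r}_{\rho,\K}=\{0\}$. Since $\F$ is flat over $\K$, scalar extension commutes with finite sums and intersections of subspaces and preserves dimensions, hence
\[
\bigl(x^p_{\rho,\F}\oplus y^q_{\rho,\F}\bigr)\cap z^{d-r}_{\rho,\F}
\;=\;\bigl((x^p_{\rho,\K}\oplus y^q_{\rho,\K})\cap z^{d-r}_{\rho,\K}\bigr)\otimes_\K\F
\;=\;\{0\}.
\]
Combined with the two preceding steps this is exactly $(p,q,r)$-hyperconvexity of $\rho$ over $\F$. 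I do not expect any genuine obstacle; the only delicate point is arranging the norms so that the $\K$-Cartan decomposition is simultaneously an $\F$-Cartan decomposition, after which everything reduces to the flatness of $\F/\K$.
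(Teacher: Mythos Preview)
Your argument is correct. The paper does not actually give a proof of this lemma: it is stated immediately after Proposition~\ref{FrenetOpen} with no argument, and the introduction simply remarks that the case $\R\subset\C$ is ``direct from the definition.'' Your write-up is therefore a careful unpacking of what the authors regard as immediate, and each of the three steps (invariance of the singular-value gaps under scalar extension, identification of the $\F$-boundary maps with the $\K$-ones tensored up, and preservation of the transversality condition by flatness) is exactly what is needed.
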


We conclude the subsection providing obstructions to the existence of  $(1,1,r)$-hyperconvex representations. A useful tool for this is the stereographic projection:
\begin{defi}\label{stereo}
Let $\rho:\G\to\PGL_d(\K)$ be $\{\sroot_1,\sroot_r\}$-Anosov.  Given $z\in\bord\G,$ the \emph{stereographic projection} defined by $z$ (and $\rho$) is the continuous map 
$$\stp_{z,\rho}:\bord\G-\{z\}\to\P(\K^d/z^{d-r}_\rho)$$
defined as follows: since $\rho$ is $\{\sroot_1\}$-Anosov, for every point $x\in\bord\G$ different from $z$, the vector space $x^1_\rho\oplus z^{d-r}_\rho$ has dimension $d-r+1$ and projects to a line in the quotient space $\K^d/z^{d-r}_\rho;$ we define $\stp_{z,\rho}(x)\in\P(\K^d/z^{d-r}_\rho)$ to be the projectivisation of this line.
\end{defi}
The following is immediate from the definitions:
\begin{lemma}
If the representation $\rho$ is $(1,1,r)$-hyperconvex then for every $z\in\bord\G$ the map $\stp_{z,\rho}$ is continuous and injective. 
\end{lemma}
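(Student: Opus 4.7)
The plan is to verify each assertion by directly unpacking the definitions, using the transversality of the boundary maps from Proposition~\ref{p.bdry} as the key input.

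For continuity, I first observe that $\stp_{z,\rho}$ is well-defined: whenever $x\neq z$, one has $x^1_\rho\cap z^{d-r}_\rho=\{0\}$. Indeed $\rho$ is $\{\sroot_1,\sroot_r\}$-Anosov, so by Proposition~\ref{p.splitting compatibility} one has $x^1_\rho\subset x^r_\rho$, and the transversality $x^r_\rho\oplus z^{d-r}_\rho=\K^d$ provided by Proposition~\ref{p.bdry} yields the claim. Consequently the image of $x^1_\rho$ under the quotient map $\K^d\to\K^d/z^{d-r}_\rho$ is a nonzero line, whose projective class depends continuously on $x$ by continuity of $\xi^1$.

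For injectivity, assume $\stp_{z,\rho}(x)=\stp_{z,\rho}(y)$ with $x,y\in\bord\G\setminus\{z\}$ distinct; equivalently, $x^1_\rho+z^{d-r}_\rho=y^1_\rho+z^{d-r}_\rho$ as subspaces of $\K^d$. Pick generators $v\in x^1_\rho$ and $w\in y^1_\rho$ and write $v=\lambda w+u$ with $u\in z^{d-r}_\rho$. If $\lambda=0$ then $v\in x^1_\rho\cap z^{d-r}_\rho$, contradicting the transversality used in the previous paragraph. If $\lambda\neq 0$, then $u=v-\lambda w$ is a nonzero element of $(x^1_\rho\oplus y^1_\rho)\cap z^{d-r}_\rho$, directly violating the $(1,1,r)$-hyperconvexity condition applied to the triple $(x,y,z)\in\bord^{(3)}\G$.

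The only subtle point is that in the second case $x^1_\rho\oplus y^1_\rho$ must genuinely be a direct sum, i.e.\ $x^1_\rho\neq y^1_\rho$; this is implicit in the statement of hyperconvexity (the notation $\oplus$ presumes directness for every triple of distinct boundary points), and is in any case a standard consequence of the dynamics-preserving property of $\xi^1$ for projective Anosov representations. Beyond this, the argument is a direct bookkeeping from the definitions, and I do not anticipate any further obstacle.
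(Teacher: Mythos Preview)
Your proof is correct and follows essentially the same approach as the paper's, which simply notes that $\stp_{z,\rho}$ is the composition of the continuous boundary map $\xi^1$ with the algebraic projection $\P(\K^d\setminus z^{d-r}_\rho)\to\P(\K^d/z^{d-r}_\rho)$, and that injectivity ``follows directly from the definition of hyperconvexity.'' Your argument spells out both parts in more detail than the paper does, including the case analysis on $\lambda$ and the justification that $x^1_\rho\neq y^1_\rho$, but there is no substantive difference in strategy.
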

\begin{proof}
The stereographic projection $\pi_{z,\rho}$ is the composition of the boundary map $\xi^1:\bord\G\setminus \{z\}\to \P(\K^d)$ with the  projection $\P(\K^d\setminus z_\rho^{d-r})\to \P(\K^d/ z_\rho^{d-r})$, which is algebraic outside the $(d-r)$-dimensional subspace $z_\rho^{d-r}$; it is well defined as $\rho$ is $\sroot_1$-Anosov, and is therefore continuous.  Injectivity follows directly from the definition of hyperconvexity.
\end{proof}
\begin{cor}\label{r.obstruction}
If there is no continuous injective map $\bord\G-\{z\}\to \P(\K^r)$, then there is no $(1,1,r)$-hyperconvex representation $\rho:\G\to\PGL_d(\K)$.
\end{cor}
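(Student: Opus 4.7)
The corollary is the contrapositive of a statement that follows almost immediately from the lemma preceding it, so the plan is very short. Assume for contradiction that a $(1,1,r)$-hyperconvex representation $\rho:\G\to\PGL_d(\K)$ exists, and fix any $z\in\bord\G$. By the preceding lemma, the stereographic projection
\[
\stp_{z,\rho}:\bord\G-\{z\}\to\P(\K^d/z^{d-r}_\rho)
\]
from Definition~\ref{stereo} is continuous and injective.

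Next I would note that $z^{d-r}_\rho$ is a $(d-r)$-dimensional subspace of $\K^d$, so the quotient $\K^d/z^{d-r}_\rho$ is an $r$-dimensional $\K$-vector space, and any choice of $\K$-linear isomorphism $\K^d/z^{d-r}_\rho\cong\K^r$ induces a homeomorphism $\P(\K^d/z^{d-r}_\rho)\cong\P(\K^r)$. Composing $\stp_{z,\rho}$ with this homeomorphism yields a continuous injective map $\bord\G-\{z\}\to\P(\K^r)$, contradicting the hypothesis of the corollary.

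The only minor point to verify is that $\bord\G-\{z\}$ is nonempty (otherwise the statement is vacuous): this is automatic because if $\rho$ is Anosov on an infinite hyperbolic group then $\bord\G$ has at least two points, and if $\G$ is finite there are no Anosov representations to discuss. There is no real obstacle here; the work has already been done in constructing $\stp_{z,\rho}$ and proving its continuity and injectivity, and the corollary is essentially a bookkeeping translation of those properties into an obstruction on the topology of $\bord\G$.
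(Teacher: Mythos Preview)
Your proof is correct and follows exactly the approach of the paper: the corollary is stated immediately after the lemma establishing continuity and injectivity of $\stp_{z,\rho}$, and the paper treats it as an immediate consequence without further argument. Your only addition is the identification $\P(\K^d/z^{d-r}_\rho)\cong\P(\K^r)$, which is the obvious missing link and is handled correctly.
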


\subsection{From hyperconvexity to local conformality}
We now find a link between hyperconvexity and local conformality.  The following statement is the main technical result of Section \ref{section:FrenetReps}, and will be crucial in the proof of Theorem \ref{FRep->Fpoint}.

Recall from Section \ref{s.lc1} that we defined, for every projective Anosov representation $\rho:\G\to\PGL_d(\K)$, the thickened cone type at infinity $X_\infty(\alpha)$ as the intersection of the $\delta_\rho/2$-neighbourhood of $\xi^1_\rho(\bordcone(\alpha))$ with the image of the boundary map. In a similar way, if $\rho$ is $\{\sroot_p\}$-Anosov, we set   
$$X_\infty^p(\alpha):=\cal N_{{\delta_{p,\rho}}/2}\xi^p_\rho\big(\bordcone(\alpha)\big)\cap\xi^p_\rho(\bord\G),$$
where ${\delta_{p,\rho}}$ is the number $\delta$ from Proposition \ref{weakmorselemma}.
\begin{prop}\label{p.main}
Let $\rho:\G\to\PGL_d(\K)$ be $(p,q,r)$-hyperconvex. Then there exist constants $L,\epsilon$ such that for every $\alpha\in\G$ with $|\alpha|>L$, for every $x\in\bordcone(\alpha)$ and every $y\in(\xi^q_\rho)^{-1}X^q_\infty(\alpha)$, it holds
\begin{equation}\label{e.lc}
\sin\angle \Big(x^p_\rho\oplus y^q_\rho,U_{d-r}\big(\rho(\alpha^{-1})\big)\Big)>\epsilon.
\end{equation}
\end{prop}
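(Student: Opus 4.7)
\emph{Strategy.} The plan is to argue by contradiction and extract a limit configuration by compactness. Suppose the conclusion fails: take sequences $\alpha_n \in \G$ with $|\alpha_n| \to \infty$, $x_n \in \bordcone(\alpha_n)$ and $y_n$ with $y_n^q \in X^q_\infty(\alpha_n)$ such that $\sin\angle(x_n^p \oplus y_n^q, U_{d-r}(\rho(\alpha_n^{-1}))) \to 0$. Finiteness of cone types lets us fix $\cone(\alpha_n)=\cone$, and compactness of $\bord\G$ and of the Grassmannians gives, along a subsequence, $x_n \to x \in \overline{\bordcone}$, $y_n \to y \in \bord\G$, $\alpha_n^{-1} \to z \in \bord\G$, and $U_{d-r}(\rho(\alpha_n^{-1})) \to W \in \Gr_{d-r}(\K^d)$. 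A hyperbolicity-plus-Lemma~\ref{l.qg} argument (choose, for each $n$, $\eta_n$ on a geodesic ray from $e$ to $z$ with $d_\G(\alpha_n^{-1},\eta_n)=O(1)$ and $|\eta_n|\to\infty$, and compare Cartan attractors) identifies $W=\xi^{d-r}(z)$. One has $x\neq z$ because the biinfinite geodesics through $e$ connecting $\alpha_n^{-1}$ to $x_n$ (passing through $e$ by definition of $\bordcone(\alpha_n)$) limit to a biinfinite geodesic from $z$ to $x$. Proposition~\ref{weakmorselemma} at index $q$ gives $\sin\angle(\xi^q(w), U_{d-q}(\rho(\alpha_n^{-1})))\geq \delta_{q,\rho}$ for $w\in\bordcone$, so $y^q$ — being in the $\delta_{q,\rho}/2$-thickening $X^q_\infty$ — stays bounded away from $\xi^{d-q}(z)$; since $\xi^q(z)\subset \xi^{d-q}(z)$ by Proposition~\ref{p.splitting compatibility}, also $y\neq z$.

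\emph{Applying hyperconvexity.} Assume WLOG $p\leq q$ (the condition is symmetric in $p,q$), so $\xi^p(w)\subset\xi^q(w)\subset\xi^r(w)$ for all $w$. If $y_n=x_n$ then $x_n^p\oplus y_n^q=x_n^q\subset \xi^r(x_n)$, and Proposition~\ref{weakmorselemma} at index $r$ gives $\sin\angle(\xi^r(x_n),U_{d-r}(\rho(\alpha_n^{-1})))\geq \delta_{r,\rho}$ directly; this rules out $y_n=x_n$ in the contradiction. So assume $y_n\neq x_n$; by $\{\sroot_p,\sroot_q\}$-Anosov with $p+q\leq d$, $x_n^p\cap y_n^q=\{0\}$, and the decomposition $v_n=a_n+b_n$ with $a_n\in x_n^p$, $b_n\in y_n^q$ is unique. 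Pick unit $v_n\in x_n^p\oplus y_n^q$ realizing the shrinking angle, so $v_n\to v\in \xi^{d-r}(z)$, $\|v\|=1$. When the decomposition stays bounded, pass to limits $a\in x^p$, $b\in y^q$, $v=a+b$: if $x\neq y$, $(p,q,r)$-hyperconvexity gives $(x^p\oplus y^q)\cap \xi^{d-r}(z)=\{0\}$, contradicting $v\neq 0$; if $x=y$, then $v\in x^p+x^q=x^q$, and $\xi^q(x)\cap \xi^{d-r}(z)\subset \xi^q(x)\cap \xi^{d-q}(z)=\{0\}$ by $\{\sroot_q\}$-Anosov together with $x\neq z$, again contradicting $v\neq 0$.

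\emph{Main obstacle.} The hard case is $x=y$ together with an \emph{unbounded} decomposition: since $x_n,y_n\to x$ forces $\angle(x_n^p,y_n^q)\to\angle(x^p,x^q)=0$, the components $\|a_n\|$ and $\|b_n\|$ may diverge, and the naive passage to the limit breaks down. My plan is to rescale by $c_n:=\max(\|a_n\|,\|b_n\|)$: the limits $a'=\lim a_n/c_n\in x^p$ and $b'=\lim b_n/c_n\in x^q$ satisfy $a'+b'=0$, so $a'=-b'\in x^p$. The direction of $v$ is then the limit of the normalized discrepancy $(a_n+b_n)/\|a_n+b_n\|$, which records a tangent direction at $x$ to the pair of boundary maps $\xi^p,\xi^q$, and ruling out that this tangent direction lies inside $\xi^{d-r}(z)$ is the crux of the argument. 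This is precisely where the full strength of $(p,q,r)$-hyperconvexity (as opposed to just $\{\sroot_p,\sroot_q,\sroot_r\}$-Anosov) must be exploited: one applies hyperconvexity to auxiliary triples $(x_n,y_n,w_n)$ for a carefully chosen third point $w_n\in\bord\G$ approximating a direction transverse to $z$, and extracts a uniform quantitative transversality bound from the rescaled geometry.
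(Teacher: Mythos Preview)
Your contradiction-and-compactness approach is natural and different from the paper's direct argument, and the easy cases ($x\neq y$ at the limit, or $x=y$ with bounded decomposition) are handled correctly. The genuine gap is exactly where you flag it: the case $x_n,y_n\to x$ with $\|a_n\|,\|b_n\|\to\infty$.

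What you need there is that every subsequential limit of $x_n^p\oplus y_n^q$ in $\Gr_{p+q}(\K^d)$ is contained in $x^r$; then $v\in x^r\cap z^{d-r}=\{0\}$ finishes as in your bounded case. But this containment is precisely Theorem~\ref{FRep->Fpoint}, which the paper derives \emph{from} Proposition~\ref{p.main}, so you cannot invoke it. Your proposed fix---apply hyperconvexity to an auxiliary triple $(x_n,y_n,w_n)$---only yields $(x_n^p\oplus y_n^q)\cap w_n^{d-r}=\{0\}$ for each $n$, and transversality of this type is not closed under limits: the intersection can jump from $\{0\}$ to something positive-dimensional, which is exactly the phenomenon you are trying to rule out. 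No static choice of $w_n$ gives a uniform angle bound as the configuration degenerates.

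The paper's argument avoids this degeneration entirely by a dynamical renormalisation. Fix a bi-infinite geodesic $(\alpha_i)_{i\in\Z}$ through $e$ with $\alpha_i\to x$ and $\alpha_{-i}\to z$. Lemma~\ref{separated} (the convergence-group property) says there is a uniform $\nu_0>0$ such that for every $y$ close to $x$ one can find $N$ with $\alpha_N^{-1}x,\alpha_N^{-1}y,\alpha_N^{-1}z$ pairwise $\nu_0$-separated. At this separated triple, hyperconvexity gives a \emph{uniform} lower bound $\epsilon_0$ on $\sin\angle\big((\alpha_N^{-1}x)^p\oplus(\alpha_N^{-1}y)^q,(\alpha_N^{-1}z)^{d-r}\big)$, hence (via Lemma~\ref{l.dominationattractor} and equation~(\ref{nearZ})) on $\sin\angle\big((\alpha_N^{-1}x)^p\oplus(\alpha_N^{-1}y)^q,U_{d-r}(\rho(\alpha_N^{-1}))\big)$. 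Pushing forward by $\rho(\alpha_N)$ and using $\{\sroot_r\}$-Anosov contraction lands $x^p\oplus y^q$ uniformly close to $U_r(\rho(\alpha_N))$, which in turn makes a definite angle with $U_{d-r}(\rho(\alpha^{-1}))$ by Proposition~\ref{weakmorselemma}. The point is that the pull-back by $\alpha_N^{-1}$ converts the degenerate configuration $(x,y)$ near the diagonal into a uniformly separated one where hyperconvexity bites; this is the idea your outline is missing.
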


{Observe that the conclusion of the proposition is the second condition required for a locally conformal point (Definition \ref{LocallyConformal}).}

{Before proving the Proposition let us fix a distance $d$ on $\bord\G$ inducing its topology and for $\nu>0$ define a triple of points $x,y,z\in\bord\G$ is $\nu$-\emph{separated}, if all distances $d(x,y),$ $d(y,z)$ and $d(x,z)$ are bounded below by $\nu.$ The following lemma follows from the convergence property of hyperbolic groups, see for example Tukia \cite{tukia}.}

{
\begin{lemma}\label{separated} Let $(\alpha_i)_{i\in\Z}$ be a bi-infinite geodesic through $e\in\G$ with $\alpha_i\to x$ and $\alpha_{-i}\to z$ say, as $i\to+\infty.$ Then the function $y\mapsto d(\alpha_i^{-1}y,\alpha_i^{-1}z)$ converges to 0 uniformly on compact sets of $\bord\G-\{x\}$ is $i\to+\infty.$ Consequently, for fixed $\nu,$ the positive integers $n$ such that the triple $\alpha_n^{-1}x,\,\alpha_n^{-1}y,\,\alpha_n^{-1}z$ is $\nu$-separated is bounded above uniformly on compact sets of $\bord\G-\{x\}.$ Finally, there exists $\nu_0>0$ such that for every $0<\eps<\nu_0$ and $y\in\bord\G-\{x\}$ with $d(x,y)<\eps$ there exists $n\in\N$ such that $\alpha_n^{-1}x,\,\alpha_n^{-1}y,\,\alpha_n^{-1}z$ are $\nu_0$-separated.
\end{lemma}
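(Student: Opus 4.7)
The plan is to derive all three assertions from the convergence property of the action $\G\curvearrowright\bord\G$: any sequence $\{g_n\}\subset\G$ going to infinity admits, after extraction, a pair $(a,r)\in\bord\G\times\bord\G$ of attracting/repelling points with $g_{n_k}^{-1}\to r$ and $g_{n_k}(w)\to a$ uniformly on compacta of $\bord\G\setminus\{r\}$ (see Tukia \cite{tukia}).

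For the first statement I would apply this to $g_n=\alpha_n^{-1};$ since its inverse sequence $\{\alpha_n\}$ converges to $x,$ one has $r=x$ for every accumulation point arising along a subsequence. It follows that along any such subsequence both $\alpha_{n_k}^{-1}y$ and $\alpha_{n_k}^{-1}z$ converge to the common attracting point, uniformly for $y$ in compacta of $\bord\G\setminus\{x\};$ consequently $d(\alpha_{n_k}^{-1}y,\alpha_{n_k}^{-1}z)\to 0,$ and since this holds along every subsequence the full sequence satisfies the first claim. The second assertion is then immediate: $\nu$-separation of the triple forces $d(\alpha_n^{-1}y,\alpha_n^{-1}z)\geq\nu,$ which by the first assertion confines the admissible $n$'s to a range that is uniform for $y$ in compacta of $\bord\G\setminus\{x\}.$

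For the third statement, which is the core of the lemma, I would invoke the geometry of hyperbolic groups. By compactness of $\bord\G$ any two metrics inducing the topology are uniformly equivalent, so we may work with a visual metric $d_\sigma$ at $e$ of parameter $\sigma<1,$ namely $d_\sigma(a,b)\asymp\sigma^{(a|b)_e}$ with $(\cdot|\cdot)_e$ the Gromov product at $e.$ Given $y$ with $d(x,y)<\eps$ small, I would set $n:=\lfloor(x|y)_e\rfloor,$ which is large; the point $\alpha_n$ then lies (up to the hyperbolicity constant $\delta$) at the divergence of geodesics from $e$ to $x$ and from $e$ to $y,$ while still sitting on the bi-infinite geodesic from $z$ to $x.$ Standard fellow-travelling estimates then give $(x|y)_{\alpha_n},\,(x|z)_{\alpha_n},\,(y|z)_{\alpha_n}=O(\delta).$ Using the equivariance $(\alpha_n^{-1}a|\alpha_n^{-1}b)_e=(a|b)_{\alpha_n},$ the three pairwise $d_\sigma$-distances of the triple $(\alpha_n^{-1}x,\alpha_n^{-1}y,\alpha_n^{-1}z)$ are therefore all $\gtrsim\sigma^{O(\delta)};$ choosing $\nu_0$ as this uniform lower bound yields the claimed $\nu_0$-separation.

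The main obstacle is the third part: the convergence property alone yields the first two assertions abstractly, but the third requires the quantitative geometric picture — the identification of the correct time $n=(x|y)_e$ and control on Gromov products via fellow-travelling — to pinpoint an $n$ at which the triple is genuinely spread out. Once this geometric set-up is fixed, the estimates are routine.
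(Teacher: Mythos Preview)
Your proposal is correct and takes a genuinely different route from the paper's own argument. The paper does not prove the lemma intrinsically in the hyperbolic group; instead it gives a sketch \emph{in the specific situation at hand}, namely when $\G$ carries a projective Anosov representation $\rho$. It pulls back the metric on $\P(\K^d)$ via $\xi^1_\rho$, obtains the uniform lower bound on $d(\alpha_n^{-1}x,\alpha_n^{-1}z)$ from Lemma~\ref{appart} (endpoints of geodesics through the origin stay uniformly transverse under the orbit), and produces the required $n$ for the third assertion via Lemma~\ref{expand2} (exponential expansion of $\rho(\alpha_i^{-1})$ near $x^1_\rho$) together with the uniform Lipschitz constant of the generators on $\P(\K^d)$. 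Your argument, by contrast, is purely intrinsic: the convergence property handles the first two assertions, and for the third you work in a visual metric, choose $n=\lfloor(x|y)_e\rfloor$, and bound all three Gromov products $(\cdot|\cdot)_{\alpha_n}$ by $O(\delta)$ via thin-triangle/fellow-travelling estimates. Your approach is more general (it does not use the Anosov hypothesis at all) and closer in spirit to the reference the paper cites (Tukia); the paper's approach has the advantage of reusing estimates already established for the representation, which is all that is needed downstream.
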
}

{\begin{proof} Let us give an idea of the proof in our situation, i.e. assuming that $\G$ admits a projective Anosov representation $\rho.$ We focus on finding $\nu_0$ and $n\in\N$ so that the last sentence of the statement holds. 

Consider the distance $d$ induced by our chosen distance on $\P(\K^d)$ through the boundary map $\xi^1_\rho.$ The fact that there is a lower bound on the values  $d(\alpha_n^{-1}x,\alpha_n^{-1}z)$ for all $n$ follows from Lemma \ref{appart}, and the fact that we can find a suitable $n$, such that both $d(\alpha_n^{-1}z,\alpha_n^{-1}y)>\nu_0$ and $d(\alpha_n^{-1}x,\alpha_n^{-1}y)>\nu_0$ is a consequence of Lemma \ref{expand2} combined with the fact that the action of the images of the generators on $\P(\K^d)$ is uniformly Lipschitz.
\end{proof}
}

\begin{proof}[Proof of Proposition \ref{p.main}] Since the representation $\rho:\G\to \PGL_d(\K)$ is $(p,q,r)$-hyperconvex we can find $\epsilon_0$ such that if $s,w,t\in\bord\G$ are $\nu_0$-separated one has 
\begin{equation}\label{hyper}\sin\angle(s^p_\rho\oplus t^q_\rho,w^{d-r}_\rho)>\epsilon_0:\end{equation}
this is guaranteed since the set of $\nu_0$-separated triples is precompact as the group is hyperbolic.

Let us first show that if $y$ is close enough to $x$ (depending on $\epsilon_0$, as well as the representation $\rho$), we can find $\epsilon_1,L_1$ for which Equation (\ref{e.lc}) holds.

In order to do so, observe that, since the group is hyperbolic, and thus the cone-type graph is finite, there exists $K$ smaller than the diameter of the cone-type graph such that, if $x\in\bordcone(\alpha)$, there exists a bi-infinite geodesic $(\alpha_i)_{i\in\Z}$ passing through the identity, and an integer $M$ such that $d(\alpha_{-M},\alpha^{-1})<K$; of course in this case $||\alpha|-M|<K$. We denote by $z$ be the second endpoint of such geodesic. 

\begin{figure}[hh]
	\centering 
	\begin{tikzpicture}[scale=.8]
	\draw (0,0) ellipse (2.5 and 2);
\draw(-2.5, 0) to (2.5,0);
	\filldraw(-2.5,0) circle [radius=1pt] node[left]{$z$};
    \filldraw(2.5,0) circle [radius=1pt] node[right]{$x$}; 
    \filldraw(-1.5,0) circle [radius=1pt] node[above]{$\alpha_{-M}$};
   \filldraw(-1.5,-0.3) circle [radius=1pt] node[below]{$\alpha^{-1}$};
   \filldraw(0,0) circle [radius=1pt] node[above]{$e$};
   \filldraw(1.8,1.35) circle [radius=1pt] node[above]{$y$};
   \filldraw(1.4,0) circle [radius=1pt] node[below]{$\alpha_{N}$};
\draw (1.4,0) to(1.8,1.35);
\node at (0,2)[above] {$\bord\G$};
	\end{tikzpicture}
	\caption{The first step in the proof of Proposition \ref{p.main} }
\end{figure}
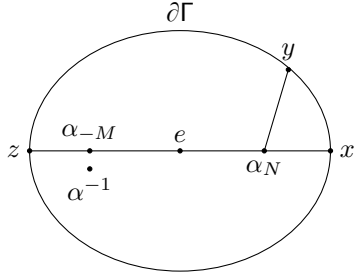
{By Lemma \ref{separated}} we can choose $N\in\N$ such that   
{$\alpha_N^{-1}x,\alpha_N^{-1}y,\alpha_N^{-1}z$} are $\nu_0$-separated. The size of $N$ measures how close $y$ is to $x$. Using the triangular inequality we get 
$$
\sin\angle \big(x^p_\rho\oplus y^q_\rho,U_{d-r}(\rho(\alpha^{-1}))\big)\geq \sin\angle \big(U_r(\rho(\alpha_N)),U_{d-r}(\rho(\alpha_{-M}))\big)$$
$$-d\Big(U_{d-r}\big(\rho(\alpha_{-M})\big),U_{d-r}\big(\rho\big(\alpha^{{-1}}\big)\big)\Big)- d \Big(\rho(\alpha_N)\big((\alpha_N^{-1}x)^p_\rho\oplus (\alpha_N^{-1}y)^q_\rho\big),U_{r}\big(\rho(\alpha_N)\big)\Big).
$$
The first term of the expression is bigger than $\delta_{r,\rho}$ provided $|\alpha|$ is big enough, by Lemma \ref{weakmorselemma}. The second term is smaller than $\delta_{r,\rho}/3$ if $|\alpha|$ is big enough by Lemma \ref{l.qg} (2): indeed $\alpha^{-1}=\alpha_{-M}a$ for some $a\in\G$ with $|a|<K$. We chose $L_1$ so that these two conditions are satisfied. In order to prove our claim it is enough to verify that we can find $N_0$ big enough, depending on the representation only, such that for every $N\geq N_0$, it holds 
$$d (\rho(\alpha_N)((\alpha_N^{-1}x)^p\oplus (\alpha_N^{-1}y)^q,U_{r}(\rho(\alpha_N)))<\delta_\rho/3.$$ 
{Since $z\neq x$ are fixed, the subspaces} $z^{d-r}_\rho$ and $x^r_\rho$ {have a positive angle} and thus,  since $U_r(\rho(\alpha_N))\to x^r_\rho$ as $N\to\infty$ uniformly in $N$, { the angle between $z^{d-r}_\rho$ and $U_r(\rho(\alpha_N))$ is bounded below for all positive big enough $N$ depending only on the representation $\rho$.} Using Lemma \ref{l.dominationattractor} we deduce that 
\begin{equation}\label{nearZ}d\Big(U_{d-r}\big(\rho(\alpha^{-1}_N)\big),\rho(\alpha^{-1}_N) z^{d-r}_\rho\Big)\leq \frac{\sigma_{d-r+1}}{\sigma_{d-r}}(\rho(\alpha_N^{-1}))\frac1{\sin\angle(z^{d-r}_\rho,U_r(\rho(\alpha_N)))}.\end{equation} Since the representation is $\{\sroot_{d-r}\}$-Anosov,{$\big((\sigma_{d-r+1})/(\sigma_{d-r})\big)(\rho(\alpha_N^{-1}))$} is smaller than $\epsilon_0/2$ for big enough positive $N.$ { By hyperconvexity (equation (\ref{hyper})) we know that $\big(\alpha_N^{-1}x\big)^p_\rho\oplus \big(\alpha_N^{-1}y\big)^q_\rho$ has a definite angle with $\big(\alpha_N^{-1}z\big)^{d-r}_\rho,$ consequently, by equation (\ref{nearZ}) we deduce that $$\angle \Big(\big(\alpha_N^{-1}x\big)^p_\rho\oplus \big(\alpha_N^{-1}y\big)^q_\rho,U_{d-r}\big(\rho(\alpha_N^{-1})\big)\Big)>\eps_0/2.$$ Thus,  by Lemma \ref{l.dominationattractor}}
$$d \Big(\rho(\alpha_N)\big((\alpha_N^{-1}x)^p_\rho\oplus (\alpha_N^{-1}y)^q_\rho\big),U_{r}\big(\rho(\alpha_N)\big)\Big)<\frac{\sigma_{r+1}}{\sigma_{r}}\big(\rho(\alpha_N)\big)\frac{2}{\epsilon_0}.$$
This concludes the first step, we can chose $\epsilon_1=\delta_{r,\rho}/3.$% but the required $N$ might be big.

We are thus left to verify that, up to possibly shrinking $\epsilon_1$ and enlarging $L_1$, Equation (\ref{e.lc}) is also verified in the case  $n$ for which $\alpha_n^{-1}(x,y,z)$ is $\nu_0$-far and smaller than{ a fixed }$N.$ Observe that, since the group $\G$ is finitely generated{ and $N$ is fixed,} we can find $C$, depending on $\rho$, such that $d(\alpha_n^{-1}x,\alpha_n^{-1}y)\leq C^n d(x,y)$, and therefore  we can find $\nu_1$ depending on $N$ only such that $d(y,x)>\nu_1$. Since furthermore $y^q_\rho\in X^q_\rho(\alpha)$, and thus we have a lower bound on $d(y,z)$, we deduce, up to further shrinking $\nu_1$,  that the triple $(x,y,z)$ is $\nu_1$-far. The same argument as above let us deduce that there exists $\epsilon_2$ such that     
$$\sin\angle(x^p_\rho\oplus y^q_\rho,z^{d-r}_\rho)>\epsilon_2.$$
It is then enough to chose $L_2$ big enough so that $d(z^{d-r}_\rho, U_{d-r}\big(\rho(\alpha^{-1})\big)\big)<\epsilon_2/2$. The Proposition holds with $L=\max\{L_1,L_2\}$ and $\epsilon=\min\{\epsilon_1,\epsilon_2\}$.
\end{proof}

Proposition \ref{p.main} combined with Theorem \ref{thm:eqifLC} yields the following Hausdorff dimension computations.

\begin{cor}\label{2FrenetHffDim} Let $\rho:\G\to\PGL_d(\K)$ be $(1,1,2)$-hyperconvex, then $$\Hff(\xi^1(\bord\G))=h^{\sroot_1}_\rho.$$
\end{cor}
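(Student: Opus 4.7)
The plan is to combine Proposition~\ref{Hffand1st}, which already supplies the upper bound $\Hff(\xi^1(\bord\G))\le h^{\sroot_1}_\rho$, with Theorem~\ref{thm:eqifLC}, which reduces the reverse inequality to showing that the set $\LC(\rho)$ of locally conformal points carries positive $\mu^{\sroot_1}_\rho$-mass. I will in fact argue the strongest possible statement: \emph{every} point of $\bord\G$ is $(\epsilon,L)$-locally conformal, for the constants $\epsilon,L$ coming from Proposition~\ref{p.main} applied with $(p,q,r)=(1,1,2)$.

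Fix $\tilde x\in\bord\G$ and choose a geodesic ray $\{\alpha_i\}_{i=0}^\infty\subset\G$ with $\alpha_0=e$ and $\alpha_i\to\tilde x$. Since $(1,1,2)$-hyperconvexity of $\rho$ entails that $\rho$ is both $\{\sroot_1\}$- and $\{\sroot_2\}$-Anosov, both $\sigma_2/\sigma_1(\rho(\alpha_i))$ and $\sigma_3/\sigma_2(\rho(\alpha_i))$ tend to zero exponentially; hence for all sufficiently large $i$, the two lowest gap-indices of $\rho(\alpha_i)$ are precisely $p_1=1$ and $p_2=2$, giving condition (i) of Definition~\ref{LocallyConformal}.

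For condition (ii), the key observation is that $\alpha_i^{-1}\tilde x\in\bordcone(\alpha_i)$: indeed the sequence $(\alpha_i^{-1}\alpha_{i+k})_{k\ge 0}$ is a geodesic ray from $e$ lying entirely in $\cone(\alpha_i)$ and converging to $\alpha_i^{-1}\tilde x$. I can therefore invoke Proposition~\ref{p.main} with $\alpha=\alpha_i$, $x=\alpha_i^{-1}\tilde x$, and $y=z$ for any $z\in(\xi^1_\rho)^{-1}(X_\infty(\alpha_i))$, obtaining
$$\sin\angle\bigl((\alpha_i^{-1}\tilde x)^1_\rho\oplus z^1_\rho,\; U_{d-2}\bigl(\rho(\alpha_i^{-1})\bigr)\bigr)>\epsilon.$$
Using the equivariance identity $(\alpha_i^{-1}\tilde x)^1_\rho=\rho(\alpha_i^{-1})\tilde x^1_\rho$, this is precisely condition (ii) of Definition~\ref{LocallyConformal}. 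Thus $\tilde x\in\LC(\rho)$.

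Since $\LC(\rho)=\bord\G$, the Radon measure $\mu^{\sroot_1}_\rho$ (which has full support by the results preceding Theorem~\ref{thm:eqifLC}) gives $\mu^{\sroot_1}_\rho(\LC(\rho))>0$, and Theorem~\ref{thm:eqifLC} yields $\Hff(\xi^1_\rho(\bord\G))=h^{\sroot_1}_\rho$. There is no real obstacle left in this final step since Proposition~\ref{p.main} carries all the geometric content; the only subtle point is matching the slightly unusual formulation of condition (ii), in which the base point appears as $\rho(\alpha_i^{-1})\tilde x^1_\rho$ rather than $\tilde x^1_\rho$, to the conclusion of Proposition~\ref{p.main}—this is exactly achieved by taking $x$ in Proposition~\ref{p.main} to be the \emph{translate} $\alpha_i^{-1}\tilde x$, which by the above remark lies in the (untranslated) cone type at infinity.
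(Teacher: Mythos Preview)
Your proof is correct and follows exactly the route the paper intends: Proposition~\ref{p.main} with $(p,q,r)=(1,1,2)$ shows every boundary point is locally conformal (with $p_2=2$, guaranteed by the $\{\sroot_1,\sroot_2\}$-Anosov condition), and Theorem~\ref{thm:eqifLC} then gives the equality. The careful matching you do---taking $x=\alpha_i^{-1}\tilde x\in\bordcone(\alpha_i)$ in Proposition~\ref{p.main} to recover condition~(ii) of Definition~\ref{LocallyConformal}---is precisely the verification the paper leaves implicit.
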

\begin{cor}\label{11pLC} Let $\rho:\G\to\PGL_d(\K)$ be $(1,1,r)$-hyperconvex. Assume moreover that for every $\g\in\G$ one has $\sigma_2\big(\rho(\g)\big)=\sigma_r\big(\rho(\g)\big),$ then every point of $\bord\G$ is locally conformal for $\rho$ and thus $$h^{\sroot_1}_\rho=\Hff\big(\xi^1(\bord\G)\big).$$
\end{cor}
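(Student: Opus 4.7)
The plan is to show that every point $x\in\bord\G$ is locally conformal for $\rho$ in the sense of Definition \ref{LocallyConformal}; once this is established, Theorem \ref{thm:eqifLC} closes the argument, because the measure $\mu^{\sroot_1}_\rho$ has total support on $\bord\G$, so $\mu^{\sroot_1}_\rho(\LC(\rho))=\mu^{\sroot_1}_\rho(\bord\G)>0$ and Proposition \ref{Hffand1st} provides the matching upper bound.

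I would first dispatch condition (i) of Definition \ref{LocallyConformal}. Since $\rho$ is $(1,1,r)$-hyperconvex it is, in particular, $\{\sroot_1,\sroot_r\}$-Anosov, so for any geodesic ray $\{\alpha_i\}_{i\geq0}$ based at $e$ with $\alpha_0=e$, the element $\rho(\alpha_i)$ has gaps at indices $1$ and $r$ once $|\alpha_i|$ is large enough. The extra assumption $\sigma_2(\rho(\g))=\sigma_r(\rho(\g))$ forces $\sigma_2=\sigma_3=\cdots=\sigma_r$, so there are no gaps at indices $2,\dots,r-1$. Consequently $p_2(\rho(\alpha_i))=r$ is independent of $i$, giving (i) with $p_2=r$.

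For condition (ii), the key combinatorial observation is that the sequence $\{\alpha_i^{-1}\alpha_j\}_{j\geq i}$ is a geodesic ray from $e$ lying in $\cone(\alpha_i)$: indeed $|\alpha_i\cdot\alpha_i^{-1}\alpha_j|=|\alpha_j|=j=(j-i)+i=|\alpha_i^{-1}\alpha_j|+|\alpha_i|$, so $\alpha_i^{-1}\alpha_j\in\cone(\alpha_i)$. Passing to the limit $j\to\infty$ gives $\alpha_i^{-1}x\in\bordcone(\alpha_i)$. Now I would apply Proposition \ref{p.main} with $\alpha=\alpha_i$ and $(p,q,r)=(1,1,r)$, taking the role of the Proposition's ``$x$'' to be $\alpha_i^{-1}x\in\bordcone(\alpha_i)$ and the role of its ``$y$'' to be an arbitrary $z\in(\xi^1_\rho)^{-1}(X_\infty(\alpha_i))=(\xi^1_\rho)^{-1}(X^1_\infty(\alpha_i))$. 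The Proposition yields uniform constants $L,\epsilon>0$ such that, for $|\alpha_i|>L$,
$$\sin\angle\big(\xi^1(\alpha_i^{-1}x)\oplus z^1_\rho,\,U_{d-r}(\rho(\alpha_i^{-1}))\big)>\epsilon,$$
and $\rho$-equivariance of $\xi^1$ rewrites the first summand as $\rho(\alpha_i^{-1})x^1_\rho$. This is exactly condition (ii) of Definition \ref{LocallyConformal} with $p_2=r$, so $x$ is locally conformal.

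The main obstacle is really just the bookkeeping step of identifying $\alpha_i^{-1}x$ as an element of $\bordcone(\alpha_i)$ so that Proposition \ref{p.main} applies directly with the right target subspace $U_{d-r}(\rho(\alpha_i^{-1}))$; once that matching is done, the two non-trivial ingredients (the gap structure forced by $\sigma_2=\sigma_r$ and the uniform transversality estimate of Proposition \ref{p.main}) plug in verbatim. Since $x\in\bord\G$ was arbitrary, $\LC(\rho)=\bord\G$, and Theorem \ref{thm:eqifLC} gives $h^{\sroot_1}_\rho=\Hff(\xi^1(\bord\G))$.
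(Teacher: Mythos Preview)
Your proof is correct and follows exactly the approach the paper intends: the paper states this corollary without proof, indicating only that ``Proposition \ref{p.main} combined with Theorem \ref{thm:eqifLC} yields the following Hausdorff dimension computations.'' You have correctly filled in the implicit details, namely that the hypothesis $\sigma_2=\sigma_r$ forces $p_2(\rho(\alpha_i))=r$ for all large $i$, and that the translated endpoint $\alpha_i^{-1}x$ lies in $\bordcone(\alpha_i)$ so that Proposition \ref{p.main} applies verbatim to produce condition (ii) of Definition \ref{LocallyConformal}.
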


\subsection{Examples: (ir)reducible $\SL_2$}\label{SL2}
The easiest examples of hyperconvex representations are induced from representations of $\SL_2(\K)$ (see for example Humphreys's book \cite{james} for standard basic facts on the representation theory of $\SL_2$). 

Recall that for every $d\in\N-\{0,1\}$ there is a (unique up to conjugation) irreducible representation $\iota_d:\SL_2(\K)\to\SL_d(\K).$  This representation is given by the action of $\SL_2(\K)$ on the symmetric powers $\sym^{d-1}(\K^2),$ which can be identified with the space of homogenous polynomials on two variables of degree $d-1$ with coefficients in $\K.$ If we denote by $\EE^*_{\SL_2(\K)}$ the weight space, the representation $\iota_d$ has highest weight $\chi_{\iota_d}\in\EE^*_{\SL_2(\K)}$ given by $\chi_{\iota_d}(x)=(d-1)x.$

Let $\cal F(\sym^{d-1}(\K^2))$ denote the full flag space associated to $\SL(\sym^{d-1}(\K^2))$. The \emph{Veronese map} $\zeta:\P(\K^2)\to\cal F(\sym^{d-1}(\K^2))$ is defined by $$\zeta(x)=\{\zeta^k(x)\}_{k=1}^{d-1}$$ where $\zeta^k(\ell)$ is the $k$-dimensional vector subspace of $\sym^{d-1}(\K^2)$ consisting of polynomials that have $x^{d-k}$ as a factor. It is easy to check that $\zeta$ is $\iota_d$-equivariant and the image of an attractor in $\P(\K^2)$ is an attractor in $\cal F(\sym^{d-1}(\K^2)).$

\begin{obs}Note that for every pair of distinct points $x\neq y$ in $\P(\K^2)$ the flags $\z(x)$ and $\z(y)$ are in general position, i.e. for every $k\in\lb1,d-1\rb$, it holds $\z^k(x)\cap\z^{d-k}(y)=\{0\}.$
\end{obs}

Moreover, using the transitivity of the $\SL_2(\K)$-action on transverse pairs, it is easy to check the following:
\begin{prop}\label{VeronesseHyperconvex}Let $\zeta=\{\zeta^i\}_{i=1}^{d-1}$ be the Veronese embedding of $\P(\K^2)$ into $\cal F(\sf{S}_{d-1}(\K^2)),$ then for every triple $p+q+r=d $ and pairwise distinct $x,y,z\in\P(\K^2)$ one has $$\z^p(x)\oplus\z^q(y)\oplus\z^r(z)=\K^d.$$
\end{prop}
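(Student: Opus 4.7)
The plan is to use the equivariance of $\z$ under the natural $\PGL_2(\K)$-action (obtained by projectivising $\iota_d$) together with the 3-transitivity of $\PGL_2(\K)$ on $\P(\K^2)$ to reduce without loss of generality to the canonical triple $(x,y,z)=([1{:}0],[0{:}1],[1{:}1])$. Identifying $\sym^{d-1}(\K^2)\cong\K^d$ with the space $\K[X,Y]_{d-1}$ of homogeneous polynomials of degree $d-1$ in two variables, for any point $w\in\P(\K^2)$ with representative $v_w\in\K^2$ the subspace $\z^k(w)=v_w^{d-k}\cdot\sym^{k-1}(\K^2)$ corresponds to $L_w^{d-k}\cdot\K[X,Y]_{k-1}$, where $L_w\in\K[X,Y]_1$ is the linear form whose coefficients are the coordinates of $v_w$. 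Under our normalisation the three forms $L_x=X$, $L_y=Y$ and $L_z=X+Y$ are pairwise coprime in the UFD $\K[X,Y]$.

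Since the three dimensions sum to $p+q+r=d=\dim\sym^{d-1}(\K^2)$, the direct-sum assertion is equivalent to the statement that the three subspaces together span the whole ambient space, which I would prove by dualising. Identifying $\sym^{d-1}(\K^2)^*$ with $\K[\xi,\eta]_{d-1}$ via the apolarity pairing $\langle P,Q\rangle=Q(\partial_X,\partial_Y)P$, a direct computation (verifiable on monomials) shows that the annihilator of $L_w^{d-k}\cdot\K[X,Y]_{k-1}$ is precisely $(L_w^{*})^{k}\cdot\K[\xi,\eta]_{d-1-k}$, where $L_w^*\in\K[\xi,\eta]_1$ denotes the dual linear form vanishing at $v_w$. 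Hence a nonzero element in the common annihilator of $\z^p(x)$, $\z^q(y)$ and $\z^r(z)$ would be a homogeneous polynomial of degree $d-1$ simultaneously divisible by each of $(L_x^*)^p$, $(L_y^*)^q$ and $(L_z^*)^r$; the pairwise coprimeness of the three dual linear forms would then force it to be divisible by their product, of total degree $p+q+r=d$, contradicting its degree $d-1$. The common annihilator therefore vanishes and the proposition follows.

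The only delicate point is the non-degeneracy of the apolarity pairing, which is automatic in characteristic $0$ (the setting implicit in the surrounding representation-theoretic discussion, since irreducibility of the symmetric-power representation $\iota_d$ likewise requires some control on $\operatorname{char}(\K)$ relative to $d$); in positive characteristic the pairing is repaired by replacing ordinary partial derivatives with Hasse derivatives $D^{(k)}$ satisfying $D^{(k)}X^n=\binom{n}{k}X^{n-k}$, or equivalently by working with the divided-power module $\Gamma^{d-1}(\K^{2*})$ on the dual side. The annihilator computation is unchanged, so the argument goes through verbatim. A variant that avoids duality entirely would be to dehomogenise to $\K[s]$ and invoke the Chinese Remainder Theorem applied to the three pairwise coprime principal ideals generated by appropriate powers of the dehomogenised linear forms, reducing the claim to a combinatorial determinant identity.
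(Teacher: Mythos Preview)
Your argument is correct in characteristic $0$ (which, as you observe, is already implicit in the paper's assertion that $\iota_d$ is irreducible). The paper itself does not give a proof: it only remarks that one can use the transitivity of $\SL_2(\K)$ on \emph{pairs} and then check directly. Your route is genuinely different and more conceptual: you use $3$-transitivity of $\PGL_2(\K)$ to normalise the whole triple, and then the apolarity duality turns the spanning statement into the clean coprimality/degree-count obstruction. This avoids the explicit binomial-determinant computation that a bare ``direct check'' after normalising only two points would entail.

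One correction to your side remark on positive characteristic: the Hasse-derivative/divided-power fix does \emph{not} rescue the argument, because the proposition itself is false without a characteristic hypothesis. For instance in characteristic $2$ with $d=3$ and $p=q=r=1$, one has $(X+Y)^2=X^2+Y^2$, so $\zeta^1([1{:}1])\subset\zeta^1([1{:}0])\oplus\zeta^1([0{:}1])$. Concretely, under the perfect pairing $\sym^{d-1}(V)\times\Gamma^{d-1}(V^*)\to\K$ the annihilator of $L_w^{d-k}\cdot\sym^{k-1}(V)$ is the span of the divided-power monomials with the appropriate index range, but this is \emph{not} $(L_w^*)^{[k]}\cdot\Gamma^{d-1-k}(V^*)$ in general, since multiplication of divided powers introduces binomial coefficients that may vanish. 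So the correct resolution is exactly the one you already hint at: the ambient hypothesis that $\iota_d$ is irreducible forces $\operatorname{char}\K=0$ or $\operatorname{char}\K>d-1$, and in that range ordinary apolarity is nondegenerate and your argument runs as written. I would simply delete the Hasse-derivative sentence.
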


\begin{cor}
For every convex cocompact\footnote{For non-Archimedean fields $\K$, in analogy with the Archimedean case, we say that a representation is \emph{convex cocompact} if it is Anosov, as in Definition \ref{defAnosov}.
} subgroup $\G<\SL_2(\K)$, the representation $\iota_d|_\G:\G\to\SL_d(\K)$ is $(p,q,r)$-hyperconvex for every $(p,q,r)$ such that $r\geq p+q$. The same holds for small deformations.
\end{cor}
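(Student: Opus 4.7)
The plan is to first establish that $\iota_d|_\G$ is $\{\sroot_k\}$-Anosov for every $k \in \lb1, d-1\rb$, next to deduce the transversality condition by invoking Proposition \ref{VeronesseHyperconvex} on the Veronese image of the limit curve, and finally to handle small deformations via Proposition \ref{FrenetOpen}.

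First I would exploit the weight structure of the representation $\iota_d$. For $g \in \SL_2(\K)$ with semi-homothecy ratios $\sigma_1(g) \geq \sigma_2(g) = 1/\sigma_1(g)$, the operator $\iota_d(g)$ has semi-homothecy ratios $\sigma_1(g)^{d-1}, \sigma_1(g)^{d-3}, \dots, \sigma_1(g)^{-(d-1)}$, since the weights of $\iota_d$ on $\sym^{d-1}(\K^2)$ are $d-1, d-3, \dots, -(d-1)$. A direct computation yields
\[
\frac{\sigma_{k+1}}{\sigma_k}\bigl(\iota_d(g)\bigr) \;=\; \frac{\sigma_2}{\sigma_1}(g), \qquad k \in \lb1, d-1\rb.
\]
Since the inclusion $\G \hookrightarrow \SL_2(\K)$ is projective Anosov by the convex cocompactness assumption, the right-hand side decays exponentially in $|\g|$, so $\iota_d|_\G$ is $\{\sroot_k\}$-Anosov for every $k$. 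The $\iota_d$-equivariance of the Veronese map $\zeta$, together with the fact that $\zeta$ sends attractors in $\P(\K^2)$ to attractors in the full flag space, forces the boundary maps of $\iota_d|_\G$ to factor as $\xi^k = \zeta^k \circ \xi_0$, where $\xi_0:\bord\G \to \P(\K^2)$ is the limit map of the inclusion of $\G$ into $\PGL_2(\K)$; this is consistent with the characterization of boundary maps as limits of Cartan attractors in Proposition \ref{p.bdry}.

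For the transversality, given a triple of pairwise distinct points $x,y,z \in \bord\G$, the images $\xi_0(x),\xi_0(y),\xi_0(z)$ remain pairwise distinct in $\P(\K^2)$, as boundary maps of Anosov representations are injective. Since $p + q + (d-p-q) = d$, Proposition \ref{VeronesseHyperconvex} applied to the triple $(p,q,d-p-q)$ gives
\[
\zeta^p\bigl(\xi_0(x)\bigr) \oplus \zeta^q\bigl(\xi_0(y)\bigr) \oplus \zeta^{d-p-q}\bigl(\xi_0(z)\bigr) = \K^d.
\]
The hypothesis $r \geq p+q$ translates to $d-r \leq d-p-q$, and the nested structure of the Veronese flag gives $\zeta^{d-r}(\xi_0(z)) \subseteq \zeta^{d-p-q}(\xi_0(z))$. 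Intersecting with $\zeta^p(\xi_0(x)) \oplus \zeta^q(\xi_0(y))$ produces only the zero vector, which, together with the Anosov conditions established above, gives $(p,q,r)$-hyperconvexity.

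The assertion about small deformations is an immediate consequence of Proposition \ref{FrenetOpen}, which asserts that the locus of $(p,q,r)$-hyperconvex representations is open in $\hom(\G,\PGL_d(\K))$. The only conceptually non-trivial step is the identification of the boundary maps of $\iota_d|_\G$ with the composition $\zeta^k \circ \xi_0$; everything else reduces to representation-theoretic facts about $\SL_2$ and a direct application of the openness theorem.
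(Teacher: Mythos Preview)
Your proof is correct and follows exactly the route the paper intends: the corollary is stated without proof as an immediate consequence of Proposition~\ref{VeronesseHyperconvex} (for the transversality of the Veronese flags) together with Proposition~\ref{FrenetOpen} (for openness), and you have simply written out the details the paper leaves implicit, including the verification that $\iota_d|_\G$ is $\{\sroot_k\}$-Anosov for all $k$ and the identification of its boundary maps with $\zeta^k\circ\xi_0$.
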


\medskip

We can obtain many more examples of hyperconvex representations by considering direct sums of irreducible representations.
A representation $\pi:\SL_2(\K)\to\SL(V,\K)$ decomposes in irreducible modules $$\pi=\bigoplus_1^k\iota_{d_i},$$ where we have ordered $d_1\geq\cdots\geq d_k.$ The highest weight $\chi_\pi\in\EE^*_{\SL_2(\K)}$ is $\chi_\pi(x)=(d_1-1)x.$ Let us denote by $$\chi^{(2)}_\pi\geq\cdots\geq \chi^{(\dim V)}_\pi$$ the remaining weights in decreasing order. 

\begin{defi}\label{coherentDef} Given $k\in\lb2,\dim V\rb,$ we say that $\pi$ is $k$-\emph{coherent} if $\chi^{(k)}_\pi>d_2-1,$ equivalently if $d_1>d_2+2(k-1)$. 
\end{defi}
Observe that a representation $\pi$ is $k$-coherent if and only if the representation has a gap of index $k$ and the  top $k$ eigenspaces are  eigenlines of a diagonalizable element in $\pi(\SL_2(\K))$ and belong to the top irreducible factor. 
An important example of 2-coherent representations are exterior powers:

\begin{ex}\label{Principal2coherent}For every $p\in\lb1,d-1\rb$ the representation $$\wedge^p\iota_d:\SL_2(\K)\to\SL(\wedge^p\K^d)$$ is $2$-coherent.
\end{ex}

\begin{proof} Considering a diagonalizable element in $\SL_2(\K)$ one explicitly checks that the top $3$ weights of $\wedge^p\iota_d$ are 
\begin{itemize}
\item[-]$\chi_{\wedge^p\iota_d}=d-1+\ldots+ d+1-2p=p(d-p),$\item[-]$\chi_{\wedge^p\iota_d}^{(2)}=\chi_{\wedge^p\iota_d}-2,$\item[-]$\chi_{\wedge^p\iota_d}^{(3)}=\chi_{\wedge^p\iota_d}^{(4)}=\chi_{\wedge^p\iota_d}-4.$
\end{itemize}
\end{proof}

Definition \ref{coherentDef} guarantees some hyperconvexity:

\begin{prop}\label{2-coherent} Let $\rho:\G\to\SL_2(\K)$ be convex co-compact. If $\pi:\SL_2(\K)\to\SL(V,\K)$ is $k$-coherent, then $\pi\circ\rho$ is $(p,q,k)$-hyperconvex for every $p,q$ with $ p+q\leq k.$
\end{prop}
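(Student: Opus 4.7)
The plan is to identify the boundary maps of $\pi\circ\rho$ explicitly in terms of the Veronese embedding of the highest weight summand of $\pi$, and then reduce hyperconvexity to Proposition \ref{VeronesseHyperconvex}. Decompose $V=V_1\oplus V_\perp$ with $V_1=\sym^{d_1-1}(\K^2)$ the top irreducible factor, so that $\pi|_{V_\perp}=\iota_{d_2}\oplus\cdots\oplus\iota_{d_k}$; both summands are $\pi(\SL_2)$-invariant. The $k$-coherence hypothesis $d_1>d_2+2(k-1)$ says exactly that the top $k$ weights of $\pi$ all belong to $V_1$ and coincide with the top $k$ weights of $\iota_{d_1}$; by the symmetry of $\SL_2$-weights around zero, the same holds for the bottom $k$.

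First I would verify that $\pi\circ\rho$ is $\{\sroot_j\}$-Anosov for every $j\leq k$. As $\rho$ is convex cocompact, $\log\sigma_1(\rho(\g))$ grows linearly in $|\g|$; since the weight space of $\SL_2(\K)$ is one-dimensional, the $j$-th singular-value gap of $\pi\rho(\g)$ equals $\sigma_1(\rho\g)^{m_{j+1}-m_j}$, where $m_1>m_2>\cdots$ are the weights of $\pi$ in decreasing order. By $k$-coherence, $m_j>m_{j+1}$ for every $j\leq k$ (the critical case $j=k$ being exactly $d_1-2k+1>d_2-1$), giving the required exponential decay. In particular $\pi\circ\rho$ is $\{\sroot_p,\sroot_q,\sroot_k\}$-Anosov, and also $\{\sroot_{d-k}\}$-Anosov.

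The heart of the argument is the explicit description of the boundary maps. Choose a good norm on $V$ so that $V_1\oplus V_\perp$ is orthogonal and $\pi$ sends a maximal compact $K<\SL_2(\K)$ into $K_V$; a Cartan decomposition $g=kak'$ in $\SL_2(\K)$ then yields a Cartan decomposition $\pi(g)=\pi(k)\pi(a)\pi(k')$. If $x,y\in\K^2$ are $a$-eigenvectors, the weight basis of $\iota_{d_1}(a)$ is $\{x^{d_1-j}y^{j-1}\}_{j=1}^{d_1}$, whence $U_p(\iota_{d_1}(g))=\zeta^p(U_1(g))$ for every $p\leq d_1-1$. By $k$-coherence, for $p\leq k$ the top $p$ weight spaces of $\pi(a)$ lie entirely in $V_1$ and coincide with those of $\iota_{d_1}(a)$, while the bottom $k$ weight spaces equal $\zeta^k(U_1(a^{-1}))$. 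Since $V_\perp$ is $\pi(\SL_2)$-invariant, this gives
$$U_p(\pi g)=\zeta^p(U_1(g))\ \text{ for }p\leq k,\qquad U_{d-k}(\pi g)=\zeta^{d_1-k}(U_1(g))\oplus V_\perp,$$
and passing to the limit along a geodesic ray (Proposition \ref{p.bdry}) yields
$$\xi^p_{\pi\rho}(x)=\zeta^p(\xi_\rho(x)),\qquad \xi^{d-k}_{\pi\rho}(z)=\zeta^{d_1-k}(\xi_\rho(z))\oplus V_\perp.$$

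Hyperconvexity now follows easily. For pairwise distinct $x,y,z\in\bord\G$, the points $\xi_\rho(x),\xi_\rho(y),\xi_\rho(z)\in\P(\K^2)$ are pairwise distinct by injectivity of $\xi_\rho$. Since $\zeta^p(\xi_\rho(x))\oplus\zeta^q(\xi_\rho(y))\subset V_1$ and $V_1\cap V_\perp=\{0\}$,
$$\bigl(\xi^p_{\pi\rho}(x)\oplus\xi^q_{\pi\rho}(y)\bigr)\cap\xi^{d-k}_{\pi\rho}(z)=\bigl(\zeta^p(\xi_\rho(x))\oplus\zeta^q(\xi_\rho(y))\bigr)\cap\zeta^{d_1-k}(\xi_\rho(z)).$$
As $p+q\leq k$, we have $\zeta^{d_1-k}(\xi_\rho(z))\subset\zeta^{d_1-p-q}(\xi_\rho(z))$, and Proposition \ref{VeronesseHyperconvex} applied to the triple $(p,q,d_1-p-q)$ (summing to $d_1$) gives $\zeta^p(\xi_\rho(x))\oplus\zeta^q(\xi_\rho(y))\oplus\zeta^{d_1-p-q}(\xi_\rho(z))=V_1$, forcing the intersection above to vanish. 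The only delicate step is the identification $U_p(\pi g)=\zeta^p(U_1(g))$, which hinges on choosing a good norm on $V$ so that $\pi$ sends the maximal compact of $\SL_2(\K)$ into that of $\SL(V,\K)$; this is precisely Quint's construction recalled in Section \ref{s.2.1}.
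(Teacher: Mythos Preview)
Your proof is correct and follows essentially the same route as the paper's: first check the Anosov condition from the weight gaps guaranteed by $k$-coherence, then identify the boundary maps $\xi^p_{\pi\rho}=\zeta^p\circ\xi_\rho$ for $p\leq k$ and $\xi^{d-k}_{\pi\rho}=\zeta^{d_1-k}\circ\xi_\rho\oplus V_\perp$, and finally reduce hyperconvexity to Proposition~\ref{VeronesseHyperconvex} inside $V_1$. Your write-up is more explicit than the paper's on two points --- the identification of Cartan attractors via a compatible choice of norm, and the inclusion $\zeta^{d_1-k}\subset\zeta^{d_1-p-q}$ needed to invoke the Veronese proposition --- but these are exactly the details the paper leaves implicit.
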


\begin{proof} Since $\chi^{(k)}>d_2-1$ one has that $\chi^{(k)}>\chi^{(k+1)}$ and thus $\pi\circ\rho$ is $\{\sroot_k\}$-Anosov. Coherence implies thus that $\chi^{(l)}>\chi^{(l+1)}$ for every $l\in\lb1,k\rb$ and thus $\pi\circ\rho$ is also $\{\sroot_p,\sroot_q\}$-Anosov since both $p$ and $q$ are smaller than $k.$ The remainder of the statement follows from Lemma \ref{VeronesseHyperconvex}: if $N$ denotes the dimension of $V$, $\zeta^p_{d_1}:\bord\G\to \Gr_p(\sym^{d-1}(\K^2))\subset\Gr_p(V)$ is the $\iota_d$-equivariant  map induced by $\zeta$, and $\xi_\pi^l:\bord\G\to\Gr_l(V)$ denotes the boundary map associated to $\pi$, we have,  for every $l\leq k$, that $\xi_\pi^l=\zeta^l_{d_1}$ and $$\xi_\pi^{N-l}= \zeta^{d_1-l}_{d_1}\oplus \bigoplus_{i=2}^k \sym^{d_i-1}(\K^2).$$
\end{proof}

In particular Proposition \ref{2-coherent} can be used to construct example of representations of Kleinian groups satisfying the assumptions of Theorem \ref{thm:eqifLC}.
\section{Differentiability properties}\label{s.diff}
\subsection{Convergence on pairs and bounds on the Hausdorff dimension}
The following result, which follows from Proposition \ref{p.main} is inspired by Guichard \cite[Proposition 21]{guichard}, however,  Guichard's proof relies heavily on the fact that $\bord\G$ is a circle, and that the representation is $(p,q,r)$-hyperconvex for \emph{every} triple $p,q,r$ with $p+q=r.$

\begin{thm}\label{FRep->Fpoint}
Let $\rho:\G\to\PGL_d(\K)$ be $(p,q,r)$-hyperconvex then for every $(w,y)\in\bord^{(2)}\G$ one has $$\lim_{(w,y)\to (x,x)}d(w^p_\rho\oplus y^q_\rho,x^r_\rho)=0.$$
More precisely there exist constants $C,\mu$ such that, if $\{\alpha_i\}$ is a geodesic ray with endpoint $x$, for every $w,y\in\alpha_i\bordcone(\alpha_i)$ it holds
$$d(w^p_\rho\oplus y^q_\rho,x^r_\rho)\leq C e^{-\mu i}.$$
\end{thm}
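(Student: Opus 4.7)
The plan is to prove the quantitative bound first; the qualitative convergence then follows since the sets $\alpha_i\bordcone(\alpha_i)$ form a nested family of open neighbourhoods of $x$ in $\bord\G$ whose intersection is $\{x\}$, so any sequence $(w_n,y_n)\to(x,x)$ in $\bord^{(2)}\G$ eventually enters $\alpha_i\bordcone(\alpha_i)\times\alpha_i\bordcone(\alpha_i)$ for every fixed $i$.

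For the quantitative estimate, fix a geodesic ray $(\alpha_i)_{i\ge 0}$ with $\alpha_0=e$ converging to $x$ and take $w,y\in\alpha_i\bordcone(\alpha_i)$; set $w':=\alpha_i^{-1}w$ and $y':=\alpha_i^{-1}y$, both lying in $\bordcone(\alpha_i)$. Since $\xi^q_\rho(\bordcone(\alpha_i))\subset X^q_\infty(\alpha_i)$ by definition, the hypotheses of Proposition~\ref{p.main} are fulfilled by the pair $(w',y')$, yielding constants $L,\epsilon>0$ depending only on $\rho$ such that, for every $i\ge L$,
\[
\sin\angle\Bigl((w')^p_\rho\oplus(y')^q_\rho,\;U_{d-r}\bigl(\rho(\alpha_i^{-1})\bigr)\Bigr)>\epsilon.
\]

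The next step is to transport this estimate through $\rho(\alpha_i)$. The proof of Lemma~\ref{l.dominationattractor} extends to any subspace $P\subset\K^d$ of dimension at most the gap index $r$: decompose each $v\in P$ as $v=v_1+v_2$ along the orthogonal splitting $\K^d=g^{-1}U_r(g)\oplus U_{d-r}(g^{-1})$, use the bounds $\|gv_1\|\ge\sigma_r(g)\|v_1\|$ and $\|gv_2\|\le\sigma_{r+1}(g)\|v_2\|$, and observe that $\sin\angle(v,U_{d-r}(g^{-1}))\ge\sin\angle(P,U_{d-r}(g^{-1}))$ for every $v\in P^\times$, to conclude
\[
d\bigl(gP,\;U_r(g)\bigr)\le \frac{1}{\sin\angle(P,U_{d-r}(g^{-1}))}\,\frac{\sigma_{r+1}}{\sigma_r}(g).
\]
Applied to $g=\rho(\alpha_i)$ and $P=(w')^p_\rho\oplus(y')^q_\rho$, and using $\rho$-equivariance of the boundary maps to rewrite $\rho(\alpha_i)P=w^p_\rho\oplus y^q_\rho$, this produces
\[
d\bigl(w^p_\rho\oplus y^q_\rho,\;U_r(\rho(\alpha_i))\bigr)\le \frac{1}{\epsilon}\,\frac{\sigma_{r+1}}{\sigma_r}\bigl(\rho(\alpha_i)\bigr)\le C_1 e^{-\mu_1 i},
\]
where the second inequality uses that $\rho$ is $\{\sroot_r\}$-Anosov and $|\alpha_i|=i$.

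To conclude, observe that $x\in\alpha_i\bordcone(\alpha_i)$, so Lemma~\ref{l.uniformbdry} yields $d(U_r(\rho(\alpha_i)),x^r_\rho)\le C_2 e^{-\mu_2 i}$; the triangle inequality then produces the asserted exponential decay. The only non-routine point I anticipate is the extension of Lemma~\ref{l.dominationattractor} to subspaces of sub-maximal dimension, a technical but standard verification; the genuine new ingredient is Proposition~\ref{p.main}, which converts hyperconvexity into the uniform angle estimate that drives the whole argument.
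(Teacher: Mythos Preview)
Your proof is correct and follows essentially the same route as the paper: apply Proposition~\ref{p.main} to $(\alpha_i^{-1}w,\alpha_i^{-1}y)$ to get the uniform angle against $U_{d-r}(\rho(\alpha_i^{-1}))$, push forward via Lemma~\ref{l.dominationattractor} and the $\{\sroot_r\}$-Anosov gap, then close with Lemma~\ref{l.uniformbdry} and the triangle inequality. The only difference is cosmetic: the paper invokes Lemma~\ref{l.dominationattractor} directly for the $(p+q)$-dimensional subspace without comment, implicitly using the extended distance between subspaces of different dimensions, whereas you spell out the (indeed routine) extension to subspaces of dimension at most $r$.
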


\begin{proof}
The first claim is a direct consequence of the second, as the sets $\alpha_i\bordcone(\alpha_i)$ form a fundamental system of neighbourhoods of the point $x$. 

As the representation $\rho$ is $(p,q,r)$-hyperconvex, and $w,y\in\alpha_i\bordcone(\alpha_i)$, we deduce from Proposition \ref{p.main} that 
$$\sin\angle ((\alpha_i^{-1} w)^p_\rho\oplus (\alpha_i^{-1} y)^q_\rho,U_{d-r}(\rho(\alpha_i^{-1})))>\epsilon.$$
In particular Lemma \ref{l.dominationattractor} implies
$$d(w^p_\rho\oplus y^q_\rho, U_r(\alpha_i))\leq \frac{\sigma_{r+1}}{\sigma_{r}}(\rho(\alpha_i))\frac{1}{\epsilon}\leq \frac{C_1}{\epsilon}e^{-\mu_1 i}.$$
Where $C_1,\mu_1$ are the constants provided by the fact that $\rho$ is $\{\sroot_r\}$-Anosov.
The result now follows, via triangular inequality, from Lemma \ref{l.uniformbdry}, which guarantees that 
$$d(x^r_\rho, U_r(\alpha_i))\leq C_2e^{-\mu_2 i}.$$
\end{proof}

The following easy converse is useful for applications:

\begin{prop}\label{easyConverse}Consider $p,q,r\in\lb1,d-1\rb$ with $p+q\leq r.$ If $\rho:\G\to\PGL_d(\K)$ is $\{\sroot_p,\sroot_q,\sroot_r\}$-Anosov and for every $x\in\bord\G$ one has 
\begin{equation}\label{e.conv}
\lim_{(w,y)\to(x,x)}d(w^p_\rho\oplus y^q_\rho,x^r_\rho)=0,
\end{equation} then $\rho$ is $(p,q,r)$-hyperconvex.
\end{prop}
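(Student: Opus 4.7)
The plan is to argue by contradiction: suppose $(x,y,z)\in\bord^{(3)}\G$ is pairwise distinct with $V:=(x^p_\rho\oplus y^q_\rho)\cap z^{d-r}_\rho$ of dimension $k\geq 1$ (note $\G$ is necessarily non-elementary since $\bord^{(3)}\G\neq\emptyset$). The first step is to produce a sequence $h_n\in\G$ such that $h_nx, h_ny\to v$ for some $v\in\bord\G$ with $v\neq z$, while $h_nz\to z$. For this, pick a hyperbolic $\g_0\in\G$ whose fixed points avoid $z$ and, using the minimality of the diagonal $\G$-action on $\bord^{(2)}\G$, choose $g_n\in\G$ with $(g_n\eta^-(\g_0),g_n\eta^+(\g_0))\to(z,v)$ for a prescribed $v\neq z$. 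The conjugates $\g_n:=g_n\g_0 g_n^{-1}$ are hyperbolic with common translation length $L=L(\g_0)$ and with $\eta^-(\g_n)\to z$, $\eta^+(\g_n)\to v$; a standard attractor-repeller scaling, taking $h_n:=\g_n^{k(n)}$ with $k(n)\sim \log(1/\epsilon_n)/(2L)$ where $\epsilon_n:=d(z,\eta^-(\g_n))$, keeps $h_nz$ within distance $\sim\sqrt{\epsilon_n}$ of $\eta^-(\g_n)$ so that $h_nz\to z$, while the fact that $d(x,\eta^-(\g_n))$ and $d(y,\eta^-(\g_n))$ remain bounded below ensures $h_nx,h_ny\to v$.

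With the sequence in hand, $\rho$-equivariance gives $V_n:=\rho(h_n)V=\bigl((h_nx)^p_\rho\oplus(h_ny)^q_\rho\bigr)\cap(h_nz)^{d-r}_\rho$, a $k$-dimensional subspace of $\K^d$ for every $n$. Passing to a subsequence, $V_n\to V_\infty$ in $\Gr_k(\K^d)$ with $\dim V_\infty=k$. The convergence hypothesis at $v$, applied to $(h_nx,h_ny)\to(v,v)$, forces every accumulation point of $(h_nx)^p_\rho\oplus(h_ny)^q_\rho$ in $\Gr_{p+q}(\K^d)$ to lie inside $v^r_\rho$; since $V_n$ is contained in this sum, the inclusion passes to the limit and $V_\infty\subset v^r_\rho$. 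Continuity of $\xi^{d-r}$ at $z$, together with $V_n\subset(h_nz)^{d-r}_\rho$, similarly gives $V_\infty\subset z^{d-r}_\rho$. But $\rho$ is $\{\sroot_r\}$-Anosov and $v\neq z$, so $v^r_\rho\oplus z^{d-r}_\rho=\K^d$; thus $V_\infty=\{0\}$, contradicting $\dim V_\infty=k\geq 1$.

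The main obstacle is the dynamical first step: producing $h_n$ with the required attractor/repeller behaviour at an arbitrary boundary point $z$. When $z$ is the repelling fixed point of some hyperbolic element of $\G$, one may simply take powers of that element; the general case is handled by the conjugation-and-scaling scheme above, whose correctness rests on the density of $\G$-orbits in $\bord^{(2)}\G$. Once the sequence is available the linear-algebraic closing is formal.
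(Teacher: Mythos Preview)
Your argument is correct, but it takes a considerably more laborious route than the paper's. Both proofs rest on the same two ingredients---the convergence hypothesis \eqref{e.conv} and the transversality $v^r_\rho\oplus z^{d-r}_\rho=\K^d$ coming from the $\{\sroot_r\}$-Anosov condition---and both exploit the $\G$-action to move a given triple into a configuration where these ingredients bite. The difference is in how this last step is executed. The paper observes directly that, by \eqref{e.conv} together with the uniform angle bound from Lemma~\ref{appart}, there exist $\epsilon,\delta>0$ such that hyperconvexity holds for every triple with $d(x,y)<\epsilon$ and $d(x,z)>\delta$; since the $\G$-action on $\bord^{(3)}\G$ is cocompact, every triple can be moved into this regime by some group element, and the proof is finished. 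Your contradiction argument instead manufactures an explicit sequence $h_n$ with carefully calibrated north--south dynamics (repeller near $z$, attractor near some $v\neq z$, power chosen so that $z$ has not yet escaped), which requires the density of $\G$-orbits in $\bord^{(2)}\G$ and quantitative estimates on the rate of convergence under iteration of a hyperbolic element; these estimates are standard for convergence groups but are left implicit in your sketch. The payoff of the paper's approach is brevity and the avoidance of these rate estimates; your approach has the virtue of being entirely constructive, but here that buys nothing extra since the conclusion is a universal statement about all triples.
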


\begin{proof} Since $\rho$ is $\{\sroot_p,\sroot_q\}$-Anosov and $p+q\leq r\leq d-1,$ for every pair of distinct points $w,y$ the sum $w^p_\rho+y^q_\rho$ is direct. Since $\rho$ is $\{\sroot_r\}$-Anosov there is a lower bound on $\sin\angle (x^r_\rho, z_\rho^{d-r})$ if $x,z$ are the endpoints of a geodesic through the origin. Combining this fact with (\ref{e.conv}) we can find $\epsilon,\delta$ such that $$(x^p_\rho\oplus y^q_\rho)\cap z^{d-r}=\{0\}$$ for every triple with $d(x,y)<\eps$ and $d(x,z)>\delta>\eps.$ Any triple in $\bord^{(3)}\G$ can be transformed in such a triple by an element of $\G$ and thus the claim follows.
\end{proof}

Using the stereographic projection (see Definition \ref{stereo}) combined with Theorem \ref{FRep->Fpoint} it is possible to deduce the following estimate on Hausdorff dimension:

\begin{prop}\label{inequality} Let $\rho:\G\to\PGL_d(\K)$ be $(1,1,r)$-hyperconvex, then 
$$\Hff\big(\xi^1(\bord\G)\big)\leq \Hff\big(\P(\K^r)\big).$$
\end{prop}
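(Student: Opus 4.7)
The plan is to exploit the stereographic projection $\stp_{z,\rho}$ of Definition \ref{stereo} and show, via Theorem \ref{FRep->Fpoint}, that it realizes a locally bi-Lipschitz embedding of $\xi^1(\bord\G \setminus \{z\})$ into $\P(\K^d / z^{d-r}_\rho) \cong \P(\K^r)$. Since bi-Lipschitz maps preserve Hausdorff dimension, this will yield the desired bound.

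I would fix $z \in \bord\G$, set $V := z^{d-r}_\rho$, and consider the linear projection $\pi_V \colon \P(\K^d) \setminus \P(V) \to \P(\K^d / V)$. By $\{\sroot_1, \sroot_r\}$-Anosov transversality together with the inclusion $\xi^1(y) \subset \xi^r(y)$ (the Anosov compatibility of boundary maps, Proposition \ref{p.splitting compatibility}), one has $\xi^1(y) \cap V = \{0\}$ for every $y \neq z$, so $\pi_V$ is defined on $\xi^1(\bord\G \setminus \{z\})$ and there agrees, via the identification $\P(\K^d/V) \cong \P(\K^r)$, with $\stp_{z,\rho} \circ (\xi^1)^{-1}$; it is injective by $(1,1,r)$-hyperconvexity (Corollary \ref{r.obstruction}).

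The key step is to prove that on every compact $K \subset \bord\G \setminus \{z\}$ the restriction of $\pi_V$ to $\xi^1(K)$ is bi-Lipschitz. The upper Lipschitz bound is immediate, since $\pi_V$ is algebraic and regular on the complement of $\P(V)$. For the lower bound, given distinct $y_1, y_2 \in \bord\G$, the restriction of $\pi_V$ to the projective line $\P(W_{y_1, y_2})$, with $W_{y_1, y_2} := \xi^1(y_1) \oplus \xi^1(y_2)$, is a projective linear isomorphism onto its image, whose bi-Lipschitz constants depend only on a lower bound for $\sin\angle(W_{y_1, y_2}, V)$. By Theorem \ref{FRep->Fpoint}, as $y_1, y_2 \to x$ the 2-plane $W_{y_1, y_2}$ converges in the Grassmannian to $\xi^r(x)$, which is transverse to $V$ by $\{\sroot_r\}$-Anosov transversality; this transversality angle is bounded below uniformly for $x \in K$ by continuity of the boundary map and compactness. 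Locally near every $x \in K$ we thus obtain a uniform bi-Lipschitz estimate, and a standard Lebesgue number argument (together with the fact that $\pi_V$ is injective and continuous on the compact $\xi^1(K)$, which trivially provides a lower bound for pairs at bounded-below distance) upgrades this to a global bi-Lipschitz estimate on $\xi^1(K)$.

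Finally, exhausting $\bord\G \setminus \{z\}$ by the compacts $K_n := \{y \in \bord\G : d(y, z) \geq 1/n\}$ and noting that $\Hff(\{\xi^1(z)\}) = 0$, we conclude
$$\Hff\bigl(\xi^1(\bord\G)\bigr) = \sup_n \Hff\bigl(\xi^1(K_n)\bigr) = \sup_n \Hff\bigl(\pi_V(\xi^1(K_n))\bigr) \leq \Hff\bigl(\P(\K^r)\bigr).$$
The principal technical point is quantifying the bi-Lipschitz constant of $\pi_V|_{\P(W_{y_1,y_2})}$ in terms of the transversality angle $\angle(W_{y_1, y_2}, V)$; this is precisely where Theorem \ref{FRep->Fpoint} does the real work, replacing the differentiability one would have in a smooth setting.
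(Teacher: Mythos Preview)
Your proof is correct and follows essentially the same route as the paper's: both use the stereographic projection $\pi_{z,\rho}$ together with Theorem~\ref{FRep->Fpoint} to control the angle between $\xi^1(y_1)\oplus\xi^1(y_2)$ and $z^{d-r}_\rho$, which yields the needed lower Lipschitz bound on the projection. The only organizational difference is that the paper varies $z$ with the basepoint $x$ (choosing $z$ so that $x^r_\rho$ and $z^{d-r}_\rho$ make a definite angle) and then covers $\xi^1(\bord\G)$ by finitely many neighbourhoods, whereas you fix a single $z$ and exhaust $\bord\G\setminus\{z\}$ by compacts; both strategies are standard and equivalent in effect. Note also that only the lower Lipschitz (co-Lipschitz) bound is actually needed for the inequality $\Hff(\xi^1(\bord\G))\leq\Hff(\P(\K^r))$, so your bi-Lipschitz statement is slightly more than required, though of course harmless.
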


\begin{proof}
{ We first claim that if $\rho:\G\to\PGL_d(\K)$ is $(1,1,r)$-hyperconvex, then  for every $x$ we can find a point $z$ an open neighbourhood $\cal U_x$ of $x$ in $\xi^1(\bord\G)$ such that  the stereographic projection $\pi_{z,\rho}$ is Lipschitz on $\cal U_x$. Indeed as $\rho$ is $\{\sroot_r\}$-Anosov, we can choose $z$ so that the subspaces $x_\rho^r$ and $z_\rho^{d-r}$ make a definite angle. The claim is then a consequence of Theorem \ref{FRep->Fpoint}: Indeed, it implies we can find an open neighbourhood $\cal U_x$ of $x$ such that for every pair $w, y\in \cal U_x$ the angle that $w^1_\rho\oplus y^1_\rho$ makes with $z_\rho^{d-r}$ is bigger than a fixed constant. This is enough to guarantee that the stereographic projection doesn't distort distances too much.

In particular, as Lipschitz maps  preserve the Hausdorff dimension, it follows that $\Hff(\cal U_x)\leq \Hff\big(\P(\K^r)\big)$.
Since the Hausdorff dimension of a compact set is the maximum of the Hausdorff dimensions of the sets in a finite open cover, the result follows.}
\end{proof}

\subsection{When $\bord\G$ is a manifold and $\K=\R$}\label{repdef} A classical result of Benoist \cite{convexes1} states that if a word hyperbolic group of projective transformations divides a convex set, then the boundary of this set has to be $\class^1.$ These, together with Hitchin representations, have become the paradigm of Zariski-dense projective Anosov representations whose limit set is a regular manifold. The purpose of this section is to provide new examples of such phenomena. Sharper results of similar nature have recently been obtained independently by Zhang-Zimmer \cite{ZZ}.

We begin by observing that Theorem \ref{FRep->Fpoint} has the following interesting consequence.

\begin{prop}\label{diff} Let $\rho:\G\to\PGL_d(\R)$ be a $(1,1,r)$-hyperconvex representation and assume that $\bord\G$ is topologically a sphere of dimension $r-1,$ then $\xi^1_\rho(\bord\G) $ is a $\class^1$ manifold with ${\sf{T}}_{x^1_\rho}\xi^1_\rho(\bord\G)={\sf{T}}_{x^1_\rho}\P( x^r_\rho).$ 
\end{prop}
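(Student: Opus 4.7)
The plan is to establish, near each point of $\bord\G$, a local $\class^1$ graph parameterization of $\xi^1(\bord\G)$ with the asserted tangent plane; the $\class^1$ regularity will come from continuity of $\xi^r$.

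First, $\xi^1$ is injective: for distinct $x,y\in\bord\G$ hyperconvexity forces $x^1_\rho+y^1_\rho$ to be direct of dimension $2$, so $x^1_\rho\ne y^1_\rho$. Being continuous (Proposition \ref{p.bdry}) and injective on the compact sphere $\bord\G$, it is a homeomorphism onto its image; thus $\xi^1(\bord\G)$ is a topological $(r-1)$-sphere in $\P(\R^d)$.

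Next, fix $x\in\bord\G$ and a chart for $\P(\R^d)$ near $x^1_\rho$: choose a hyperplane $H\subset\R^d$ complementary to $x^1_\rho$ and $v_0\in x^1_\rho\setminus\{0\}$, and identify $H$ with the affine chart $h\mapsto\R(v_0+h)$. Decompose $H=W\oplus N$ with $W:=H\cap x^r_\rho$ (of dimension $r-1$); under the canonical identification ${\sf{T}}_{x^1_\rho}\P(\R^d)\cong H$, the subspace $W$ is exactly ${\sf{T}}_{x^1_\rho}\P(x^r_\rho)$. For $w\in\bord\G$ close to $x$ write $\xi^1(w)=\R(v_0+h(w))$ with $h(w)=t(w)+n(w)\in W\oplus N$. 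The 2-plane $w^1_\rho\oplus y^1_\rho$ is spanned by $v_0+h(w)$ and $h(y)-h(w)$, so, in the Grassmannian, its distance to $x^r_\rho=\R v_0\oplus W$ is controlled by the size of $h(w)$ and the $N$-component of the unit vector along $h(y)-h(w)$. Theorem \ref{FRep->Fpoint} therefore yields, for every $\epsilon>0$, an open neighborhood $U$ of $x^1_\rho$ such that
$$\|n(y)-n(w)\|\le\epsilon\,\|h(y)-h(w)\|\qquad\text{whenever } \xi^1(w),\xi^1(y)\in U\text{ and }w\ne y.$$

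The projection $\pi_W\colon\xi^1(\bord\G)\cap U\to W$, $\R(v_0+h)\mapsto t(h)$, is then locally injective at $x^1_\rho$: equality $t(h(w))=t(h(y))$ would force $h(w)=h(y)$ by the displayed estimate. Invariance of domain, applied to the topological $(r-1)$-manifold $\xi^1(\bord\G)$ and the $(r-1)$-dimensional $W$, promotes $\pi_W$ to a local homeomorphism. Hence near $x^1_\rho$, $\xi^1(\bord\G)$ is the graph of a continuous function $g\colon V\to N$ on an open neighborhood $V\subset W$ of $0$ with $g(0)=0$, and the displayed estimate translates into $\|g(t_1)-g(t_2)\|\le\epsilon(1-\epsilon^2)^{-1/2}\|t_1-t_2\|$ for $t_1,t_2$ near $0$. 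Since $\epsilon$ can be made arbitrarily small by shrinking $U$, $g$ is differentiable at $0$ with $dg_0=0$; equivalently, $\xi^1(\bord\G)$ has tangent plane ${\sf{T}}_{x^1_\rho}\P(x^r_\rho)$ at $x^1_\rho$.

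Finally, applying the preceding to every $y$ near $x$ in the \emph{same} chart shows that the graph of $g$ admits at each of its points $\xi^1(y)=(t_0,g(t_0))$ a tangent plane equal to ${\sf{T}}_{y^1_\rho}\P(y^r_\rho)$, which (being close to $W$ and hence non-vertical) uniquely determines $dg_{t_0}$ as the linear map from $W$ to $N$ whose graph is this tangent plane. By continuity of the boundary map $\xi^r$ (Proposition \ref{p.bdry}), $y\mapsto y^r_\rho$ is continuous, so $t_0\mapsto dg_{t_0}$ is continuous, and $g\in\class^1$. The main obstacle is the third step: rigorously turning the Grassmannian convergence provided by Theorem \ref{FRep->Fpoint} into the displayed estimate and then invoking invariance of domain to produce the local graph; once this is in place, the $\class^1$ upgrade is the standard observation that pointwise differentiability together with a continuously varying tangent plane is $\class^1$.
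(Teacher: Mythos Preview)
Your argument is correct and follows the same two-step strategy as the paper: Theorem~\ref{FRep->Fpoint} forces the tangent plane at each point to be ${\sf T}_{x^1_\rho}\P(x^r_\rho)$, and continuity of $\xi^r$ then upgrades pointwise differentiability to $\class^1$. The paper's proof is a two-sentence sketch of precisely this; you have supplied the details it omits (the graph parameterization via invariance of domain, and the explicit Lipschitz estimate extracted from the Grassmannian convergence).
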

\begin{proof} 
Theorem \ref{FRep->Fpoint} implies that the set $\xi^1_\rho(\bord\G)$ is differentiable at $x^1_\rho$ with tangent space ${\sf{T}}_{x^1_\rho}\P( x^r_\rho)$. The continuity of $x\mapsto x^r_\rho$ completes the proof. 

\end{proof} Proposition \ref{diff} can be applied to many different situations to produce interesting examples through the \emph{represent and deform} method, we now explain how this works in a specific situation. Denote by 
$$\sym^k: \PGL_{d+1}(\R)\to\PGL(\sym^k(\R^{d+1}))$$ 
the $k$-symmetric power.

Note that in $\PGL_{d+1}(\K)$ a $(1,1,d)$-hyperconvex representation is a projective Anosov representation $\rho$ such that for each triple $(x,y,z)\in\bord^{(2)}\G$ the sum $x_\rho^1+y_\rho^1+z_\rho^1$ is direct.

\begin{prop}\label{symm}
Let $\rho:\G\to\PGL_{d+1}(\R)$ be a $(1,1,d)$-hyperconvex representation and assume that there exist $c>0,\mu>1$ such that, for every $\gamma\in\G$, 
\begin{equation}\label{eqn.gaps}\frac{\sigma_1(\rho(\gamma))\sigma_d(\rho(\gamma))}{\sigma_2(\rho(\gamma))^2}>ce^{\mu|\gamma|}.\end{equation}  Then the composition 
$$
\sym^k\circ\rho:\G\to\PGL(\sym^k(\R^{d+1}))
$$
is $(1,1,d)$-hyperconvex.
\end{prop}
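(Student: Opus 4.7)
The plan is to apply Proposition~\ref{easyConverse} to $\sym^k\circ\rho$ with $p=q=1$ and $r=d$; note that $p+q=2\leq d$ since $(1,1,d)$-hyperconvexity of $\rho$ forces $d\geq 2$. This reduces the task to two verifications: (a) $\sym^k\circ\rho$ is $\{\sroot_1,\sroot_d\}$-Anosov, and (b) for every $x\in\bord\G$,
\[
d\bigl(\xi^1_{\sym^k\rho}(w)\oplus\xi^1_{\sym^k\rho}(y),\,\xi^d_{\sym^k\rho}(x)\bigr)\to 0\qquad\text{as }(w,y)\to(x,x).
\]

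For (a) together with the identification of the boundary maps, I would fix $\gamma\in\G$ and a Cartan decomposition of $\rho(\gamma)$ with ordered singular values $\sigma_1\geq\cdots\geq\sigma_{d+1}$ and singular basis $e_1,\ldots,e_{d+1}$. The singular values of $\sym^k\rho(\gamma)$ are the monomials $\sigma_1^{i_1}\cdots\sigma_{d+1}^{i_{d+1}}$ with $\sum i_j=k$, and the corresponding singular vectors are $e_1^{i_1}\cdots e_{d+1}^{i_{d+1}}$. Under the $\{\sroot_1,\sroot_d\}$-Anosov hypothesis on $\rho$ combined with \eqref{eqn.gaps}, the top $d$ singular values of $\sym^k\rho(\gamma)$ are exactly $\sigma_1^{k-1}\sigma_j$ for $j=1,\ldots,d$: the only plausible competitors for the $(d+1)$-th slot are $\sigma_1^{k-1}\sigma_{d+1}$ (killed by the $\{\sroot_d\}$-Anosov gap $\sigma_{d+1}/\sigma_d\to 0$) and $\sigma_1^{k-2}\sigma_2^2$ (killed by \eqref{eqn.gaps}, which gives $\sigma_2^2/(\sigma_1\sigma_d)\to 0$ exponentially). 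Hence $\sym^k\circ\rho$ is $\{\sroot_d\}$-Anosov; it is also $\{\sroot_1\}$-Anosov because the $1$-gap equals $\sigma_1/\sigma_2$ and is inherited from $\rho$. The same analysis identifies the relevant boundary maps: $U_1(\sym^k\rho(\gamma))=\langle e_1^k\rangle$ limits, as $\gamma$ moves along a geodesic ray to $x$, to $\xi^1_{\sym^k\rho}(x)=\sym^k\xi^1_\rho(x)$, while $U_d(\sym^k\rho(\gamma))=U_1(\rho(\gamma))^{k-1}\cdot U_d(\rho(\gamma))$ limits to $\xi^d_{\sym^k\rho}(x)=\xi^1_\rho(x)^{k-1}\cdot\xi^d_\rho(x)$, a well-defined $d$-dimensional subspace of $\sym^k(\R^{d+1})$ because $\xi^1_\rho(x)\subset\xi^d_\rho(x)$.

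For (b), I would fix a unit vector $v_x\in\xi^1_\rho(x)$ and, for $w\neq y$ near $x$, unit vectors $v_w\in\xi^1_\rho(w)$ and $v_y\in\xi^1_\rho(y)$ with $v_w,v_y\to v_x$. A first-order Taylor expansion of the Veronese embedding $v\mapsto v^k$ in $\sym^k(\R^{d+1})$ gives
\[
v_y^k-v_w^k=k\,v_x^{k-1}\cdot(v_y-v_w)+O\bigl(\|v_w-v_x\|^2+\|v_y-v_x\|^2\bigr),
\]
so the $2$-plane $\sym^k\xi^1_\rho(w)\oplus\sym^k\xi^1_\rho(y)=\langle v_w^k,v_y^k\rangle$ admits, away from the diagonal, the generators $v_w^k\to v_x^k$ and the unit direction of $v_y^k-v_w^k$, which to leading order equals the unit direction of $v_x^{k-1}\cdot u$, where $u:=(v_y-v_w)/\|v_y-v_w\|$ is a unit vector in $P_{w,y}:=\xi^1_\rho(w)\oplus\xi^1_\rho(y)$. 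Theorem~\ref{FRep->Fpoint} applied to the $(1,1,d)$-hyperconvex representation $\rho$ ensures $d(P_{w,y},\xi^d_\rho(x))\to 0$, so any subsequential limit of $u$ lies in $\xi^d_\rho(x)$. Consequently every subsequential limit of $\langle v_w^k,v_y^k\rangle$ is contained in $v_x^{k-1}\cdot\xi^d_\rho(x)=\xi^d_{\sym^k\rho}(x)$, and by compactness this yields (b). Proposition~\ref{easyConverse} then concludes that $\sym^k\circ\rho$ is $(1,1,d)$-hyperconvex.

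The main obstacle will be the singular value bookkeeping in the first step: the hypothesis \eqref{eqn.gaps} is needed precisely to keep the monomial $\sigma_1^{k-2}\sigma_2^2$ out of the top $d$ singular values of $\sym^k\rho(\gamma)$, and thereby to preserve the Veronese-type identification $\xi^d_{\sym^k\rho}(x)=\xi^1_\rho(x)^{k-1}\cdot\xi^d_\rho(x)$ on which the Taylor-expansion argument in the third step rests.
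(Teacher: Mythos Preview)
Your approach is essentially the same as the paper's: verify $\{\sroot_1,\sroot_d\}$-Anosov via the singular-value bookkeeping (where \eqref{eqn.gaps} kills the competitor $\sigma_1^{k-2}\sigma_2^2$), identify the boundary maps as $(\xi^1_\rho)^{\odot k}$ and $(\xi^1_\rho)^{\odot k-1}\odot\xi^d_\rho$, transfer the convergence property of Theorem~\ref{FRep->Fpoint} through the Veronese map, and conclude by Proposition~\ref{easyConverse}. The paper compresses your Taylor computation into the single phrase ``differentiability of the Veronese map'', but the content is identical.

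One small technical point: your expansion $v_y^k-v_w^k=k\,v_x^{k-1}(v_y-v_w)+O(\|v_w-v_x\|^2+\|v_y-v_x\|^2)$ does not by itself justify that the remainder is $o(\|v_y-v_w\|)$, since $w,y$ could approach $x$ along nearly the same direction with $\|v_y-v_w\|\ll\|v_y-v_x\|$. Expanding around $v_w$ instead gives $v_y^k-v_w^k=k\,v_w^{k-1}(v_y-v_w)+O(\|v_y-v_w\|^2)$, and then $v_w^{k-1}\to v_x^{k-1}$ together with $u\in P_{w,y}\to\xi^d_\rho(x)$ yields the desired limit cleanly.
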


\begin{proof} We endow $\sym^k(\R^{d+1})$ with the norm induced by our choice of norm on $\R^{d+1}$. For this choice, and for every $g\in \PGL_{d+1}(\R)$, the semi-homotecy ratios of $\sym^kg$ are just the products of $k$-tuples of semihomotecy ratios of $g$. Assumption (\ref{eqn.gaps}) then gives that for all $\gamma$ apart from possibly finitely many exceptions
\begin{itemize}
\item[-]$\sroot_1(\nu(\sym^k\rho(\g)))=\sroot_1(\nu(\rho(\g))),$
\item[-] $\sroot_d(\nu(\sym^k\rho(\g)))= \min\{\sroot_d(\nu(\rho(\gamma))),\log\frac{\sigma_1(\rho(\gamma))\sigma_d(\rho(\gamma))}{\sigma_2(\rho(\gamma))^2}\} $
\end{itemize}
Since $\rho$ is $\{\sroot_1,\sroot_d\}$-Anosov, we deduce from Definition \ref{defAnosov}
that $ \sym^k\rho$ is also $\{\sroot_1,\sroot_d\}$-Anosov.

Observe that the map $\sym^k$ is equivariant with respect to the map between the partial flags 
$$\sym^k:\{\textrm{line}\subset\textrm{hyperplane}\}\to\{\textrm{line}\subset\textrm{$d$-dimensional subspace}\}$$ defined by 
$$\sym^k(l,H)=(l^{\odot k}, l^{\odot k-1}\odot H\rangle).$$
Here we denote by $\odot$ the symmetric tensors.

It is immediate to verify that Assumption (\ref{eqn.gaps}) also implies that $\sym^k\circ\xi$ sends attractors to attractors, therefore, by continuity of $\sym^k\circ\xi$, we have, for every $x\in\bord\G$,  
$${ \sym^k}(x^1_\rho,x^d_\rho)=(x^1_{ \sym^k\rho},x^d_{ \sym^k\rho}).$$ 

Finally, the convergence property (Theorem \ref{FRep->Fpoint}) for $\rho,$ together with the differentiability of $\sym^2:\P(\R^{d+1})\to\P\big(\sym^2(\R^{d+1})\big) $ implies that $$\lim_{(w,y)\to(x,x)} \angle\big(w^1_{ \sym^2\rho}\oplus y^1_{ \sym^2\rho},x^d_{ \sym^2\rho}\big)=0.$$ Proposition \ref{easyConverse} yields the result.
\end{proof}
As a direct corollary we get:
\begin{cor}
If $\rho:\G\to\PSO(d,1)$ is cocompact, every small deformation 
$$\eta:\G\to\PGL(\sym^k(\R^{d+1}))$$
of $\sym^k\rho$ is $(1,1,d)$-hyperconvex. Any such $\eta$ will have a $\class^1$-sphere as limit set in $\P(\sym^k(\R^{d+1})).$
\end{cor}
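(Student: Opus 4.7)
The plan is to verify that $\rho$ itself meets the assumptions of Proposition~\ref{symm}; then $\sym^k\rho$ is already $(1,1,d)$-hyperconvex in $\PGL(\sym^k(\R^{d+1}))$, the openness result Proposition~\ref{FrenetOpen} propagates the property to every small deformation $\eta$, and Proposition~\ref{diff} will yield the $\class^1$ conclusion, since $\rho$ cocompact in $\PSO(d,1)$ forces $\bord\G\cong S^{d-1}$.

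To show $\rho:\G\to\PSO(d,1)<\PGL_{d+1}(\R)$ is $(1,1,d)$-hyperconvex, I will first observe that, in a basis adapted to both the quadratic form of signature $(d,1)$ and a Cartan involution, every $g\in\PSO(d,1)$ has semi-homothecy ratios $(e^{t(g)},1,\dots,1,e^{-t(g)})$ with $t(g)=d_{\HH^d}(o,g\cdot o)$. Cocompactness of $\rho$ makes the orbit map a quasi-isometry, so $t(\rho(\g))\asymp|\g|$, and both $\sigma_2/\sigma_1$ and $\sigma_{d+1}/\sigma_d$ decay exponentially in $|\g|$; thus $\rho$ is $\{\sroot_1,\sroot_d\}$-Anosov. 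The transversality property is equally direct: three distinct points on a quadric of signature $(d,1)$ lift to three linearly independent lines of $\R^{d+1}$, because any $2$-plane contains at most two isotropic lines.

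The gap assumption~(\ref{eqn.gaps}) reduces, in view of $\sigma_2(\rho(\g))=\sigma_d(\rho(\g))=1$, to the exponential growth of $\sigma_1(\rho(\g))=e^{t(\rho(\g))}$ in $|\g|$, which again follows from cocompactness. Hence Proposition~\ref{symm} applies and $\sym^k\rho$ is $(1,1,d)$-hyperconvex; Proposition~\ref{FrenetOpen} transfers the property to every nearby $\eta\in\hom(\G,\PGL(\sym^k(\R^{d+1})))$. Finally Proposition~\ref{diff}, applied with $r=d$ and $\bord\G\cong S^{d-1}$, yields that $\xi^1_\eta(\bord\G)\subset\P(\sym^k(\R^{d+1}))$ is a $\class^1$ sphere of dimension $d-1$. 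No step is particularly delicate; the main item calling for a moment of attention is the explicit Cartan projection of elements of $\PSO(d,1)$, which is classical.
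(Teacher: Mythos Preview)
Your argument is correct and follows exactly the route the paper intends: verify that a cocompact $\rho:\G\to\PSO(d,1)$ satisfies the hypotheses of Proposition~\ref{symm} (both $(1,1,d)$-hyperconvexity and the gap estimate~(\ref{eqn.gaps}) reduce to the fact that the Cartan projection of $g\in\PSO(d,1)$ is $(e^{t},1,\dots,1,e^{-t})$ with $t\asymp|\g|$), then invoke Proposition~\ref{FrenetOpen} for small deformations and Proposition~\ref{diff} for the $\class^1$ conclusion. The paper states this as a direct corollary without spelling out the details you have supplied.
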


Applying Johnson-Millson's \cite{JM} bending technique we obtain the announced Zariski dense subgroups whose limit set is a $\class^1$ sphere:
\begin{cor}\label{Zdense}
There exists a Zariski dense subgroup $\G<\PGL(\sym^2(\R^{d+1}))$ whose limit set is a $\class^1$ sphere.
\end{cor}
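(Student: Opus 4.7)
The plan is to combine the preceding corollary with a Johnson--Millson bending construction. Start with a cocompact representation $\rho_0\colon \G \to \PSO(d,1)$ realising $\G$ as the fundamental group of a closed hyperbolic $d$-manifold $M$ containing an embedded totally geodesic hypersurface $N$ (the existence of such $M$ is due to Millson), and bend $\sym^2\circ\rho_0$ along $\pi_1(N)$ inside $\PGL(V)$, where $V := \sym^2(\R^{d+1})$.

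Since $\rho_0(\pi_1(N))$ stabilizes a codimension-one subspace $\R^d\subset\R^{d+1}$ together with a transverse line $\R v$, the symmetric square decomposes as $V = \sym^2(\R^d)\oplus (\R^d\otimes\R v)\oplus \R v^{\odot 2}$, each summand being invariant under $(\sym^2\rho_0)(\pi_1(N))$. Borel density shows that $\rho_0(\pi_1(N))$ is Zariski dense in $\PSO(d-1,1)$, so Schur's lemma implies that the centralizer $C := Z_{\PGL(V)}((\sym^2\rho_0)(\pi_1(N)))$ has positive dimension. Splitting $\G$ as an amalgamated free product (or HNN extension) along $\pi_1(N)$ and choosing $c\in C$, define the bent representation $\eta_c\colon \G\to \PGL(V)$ in the standard way, by conjugating one of the two factors by $c$; this is well-defined precisely because $c$ centralizes $(\sym^2\rho_0)(\pi_1(N))$.

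As $c\to \id$ in $C$, the family $\eta_c$ converges to $\sym^2\rho_0$ in $\hom(\G,\PGL(V))$. The corollary immediately preceding Corollary~\ref{Zdense} therefore guarantees that for every $c$ sufficiently close to the identity $\eta_c$ is $(1,1,d)$-hyperconvex and its limit set in $\P(V)$ is a $\class^1$ sphere. This settles the $\class^1$-sphere part of the statement for all such $c$; the only remaining task is to exhibit a single $c$ near $\id$ for which $\eta_c(\G)$ is additionally Zariski dense in $\PGL(V)$.

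The main obstacle is establishing this Zariski density. The Zariski closure $H_c$ of $\eta_c(\G)$ contains both $\sym^2(\PSO(d,1))$ and its conjugate $c\,\sym^2(\PSO(d,1))\,c^{-1}$, by Borel density applied to the two finite-index subgroups of $\G$ coming from the splitting. To preclude $H_c$ from lying in a common proper algebraic subgroup of $\PGL(V)$ for all $c$ in a neighbourhood of the identity, I would examine the first-order expansion $\mathfrak{h}_c \supseteq \mathfrak{g} + \operatorname{Ad}(c)\,\mathfrak{g}$, with $\mathfrak{g}:=\sym^2(\mathfrak{so}(d,1))$: a tangent vector $X\in T_{\id} C$ contributes $[X,\mathfrak{g}]$ to $\mathfrak{h}_c$ to first order, and decomposing $\mathfrak{sl}(V)$ as a $\mathfrak{g}$-module via the adjoint action (via the standard branching rules for $\sym^2$ restricted to $\SO(d,1)$) a generic such $X$ will produce components in every $\mathfrak{g}$-isotypic summand of $\mathfrak{sl}(V)/\mathfrak{g}$. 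Iterated bracketing with $\mathfrak{g}$ then fills out the whole of $\mathfrak{sl}(V)$ by irreducibility of each isotypic component, forcing $H_c=\PGL(V)$ for generic small $c$ by lower semicontinuity of the dimension of the Zariski closure. Carrying out this representation-theoretic verification is the principal technical step.
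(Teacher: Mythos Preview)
Your approach matches the paper's: Johnson--Millson bending of $\sym^2\rho_0$ along a totally geodesic hypersurface, followed by the preceding corollary to retain $(1,1,d)$-hyperconvexity and the $\class^1$ limit sphere. The divergence is only in how you handle Zariski density.

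The paper's argument there is considerably more direct than your proposed first-order analysis of $\mathfrak{sl}(V)$ as a $\mathfrak g$-module. Instead of branching the adjoint representation, the paper just computes centralizers of the relevant subgroups acting on $V$ itself. As an $\SO(d,1)$-module, $\sym^2(\R^{d+1})$ splits as $\mathbb S_{[2]}(\R^{d,1})\oplus\R$ (traceless symmetric tensors plus the trace line), so the centralizer of $\sym^2\SO(d,1)$ in $\GL(V)$ is reduced to $\R^*$. Restricting to $\SO(d-1,1)$, your decomposition $\sym^2(\R^d)\oplus(\R^d\otimes\R v)\oplus\R v^{\odot 2}$ refines further: $\sym^2(\R^d)$ itself splits as $\mathbb S_{[2]}(\R^{d-1,1})\oplus\R$, so the trivial isotypic component is now two-dimensional and the centralizer of $\sym^2\SO(d-1,1)$ contains $\GL(2,\R)$. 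The paper then simply bends by some $c\in\GL(2,\R)$ that does \emph{not} preserve the one-dimensional trivial $\SO(d,1)$-summand, and asserts this suffices.

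Your route via $[X,\mathfrak g]$ and isotypic components of $\mathfrak{sl}(V)/\mathfrak g$ would also work, and has the virtue of being a template applicable to other targets; but it is heavier machinery than the situation demands. The paper's shortcut is that once you know the centralizer of the big group is scalar while that of the subgroup is not, a bend by a non-normalizing element immediately escapes $\sym^2\SO(d,1)$; the passage from there to full Zariski density in $\PGL(V)$ is admittedly asserted rather than spelled out in the paper as well, so your instinct that this is the step requiring care is correct.
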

\begin{proof}
Let $M$ be a $d$-dimensional closed hyperbolic manifold that has a totally geodesic, co-dimension one, closed submanifold $N$. The inclusion $\G=\pi_1M\subset\SO(d,1)\to\SL\big(\sym^2(\R^{d+1})\big)$ satisfies the hypothesis of Proposition \ref{symm}. Without loss of generality we can assume that $\pi_1N\subset \SO(d-1,1)$. Observe that the centralizer of $$\sym^2(\SO(d-1,1))\subset\SL\big(\sym^2(\R^{d+1})\big)$$ 
is non-trivial and strictly contains that of $\sym^2\big(\SO(d,1)\big)$:  as an $\sym^2\big(\SO(d,1)\big)$- module, $\sym^2(\R^{d+1})$ splits as a direct sum of an irreducible representation (usually denoted $\mathbb S_{[2]}(\R^{d,1})$) and a trivial representation, its centralizer is thus reduced to $\R^*$. The decomposition as a  $\sym^2(\SO(d-1,1))$-module splits as the sum $\mathbb S_{[2]}(\R^{d-1,1})\oplus \R^{d-1,1}\oplus \R^2$ where the action on the second factor is the standard action, while the action on $\R^2$ is trivial. In particular the centralizer of $\sym^2(\SO(n-1,1))$ is $\GL(2,\R)\times\R^*$. By bending the representation along $N$ with a nontrivial element in $\GL(2,\R)$ which doesn't leave invariant the factor $\R$, we obtain the desired representation.
\end{proof}

\section{Examples of locally conformal representations}\label{s.exLC}
The purpose of this section is to discuss some of the many examples in which restricting the Zariski closure of a representation to a non-split real form of $\SL_d(\K)$ gives room for $(1,1,p)$ hyperconvex representations for which we can also guarantee that the second gap $p_2$ is strictly bigger than 2.
\subsection{Hyperconvex representations in $\PU(1,d)$ and $\PSp(1,d)$}\label{sec:POK}
The first interesting setting in which Theorem \ref{thm:eqifLC} applies for large classes of representations is given by considering representations in the rank one groups $\PU(1,d)$ or $\PSp(1,d)$. 
To unify the treatment we will write $\POK(1,d)$ for either $\PU(1,d)$ if $\K=\C$ or $\PSp(1,d)$ if $\K=\HH$ and regard $\POK(1,d)$ as a subgroup of $\PGL(d+1,\K)$.
\begin{obs}
Unfortunately, as $\HH$ is non-commutative, we don't have the setup of Section \ref{sec:2.2} at our disposal (as the exterior algebra over a non-commutative field is not well defined), however the issue can be easily solved by considering $\SL(d+1,\HH)$ as a subgroup of $\SL(2d+2,\C)$. Given an element $g\in \SL(d+1,\HH)$ we denote by $g^\C$ the corresponding element in  $\SL(2d+2,\C)$; it is then immediate to verify that we can choose a Cartan decomposition of $g^\C$ so that, for every $p$, the subspace $U_{2p}(g^\C)$ is a quaternionic vector space, and we thus set $U_p(g):=U_{2p}(g^\C)$. Similarly we say that a sequence $(\alpha_i)_{i\in\Z}$ in  $\SL(d+1,\HH)$ is $p$-dominated if $(\alpha_i^\C)_{i\in\Z}$ is $2p$-dominated in  $\SL(2d+2,\C)$, and that a representation $\rho:\G\to\SL(d+1,\HH)$ is $(p,q,r)$-hyperconvex if the induced representation $\rho:\G\to\SL(2d+2,\C)$ is $(2p,2q,2r)$-hyperconvex. With this at hand it is easy to verify that Theorem \ref{thm:eqifLC} holds for representations with values in $\SL(d+1,\HH)$.
\end{obs}

Recall that $\POK(1,d)$ has rank one, therefore we have at our disposal a good notion of convex co-compactness: a representation $\rho:\G\to\POK(1,d)$ is convex co-compact if and only if there is a convex $\rho(\G)$-invariant subspace of $\mathbb H^d_\K$ whose quotient is compact. The induced representation $\rho:\G\to\PGL(d+1,\K)$ is $\{\sroot_1\}$-Anosov if and only if $\rho$ is convex co-compact,{ see for example Guichard-W. \cite[Section 6.1]{GW-Domains}}. 

 Observe that $\POK(1,d)$ preserves the closed codimension 1 submanifold $\bord\HdK\subset \P(\K^{d+1})$, furthermore one has the following.
\begin{lemma}\label{lem:POK}
For every $g\in\POK(1,d)$, we have $U_1(g)\in\bord\HdK\subset \P(\K^{d+1})$ and $U_d(g)=U_1(g)^\bot$, where the orthogonal is defined with respect to the Hermitian form defining the group $\POK(1,d)$. 
\end{lemma}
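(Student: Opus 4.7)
The plan is to use the $KAK$-decomposition of $\POK(1,d)$ in a basis of $\K^{d+1}$ that is simultaneously well-behaved for $Q$ (the Hermitian form defining $\POK(1,d)$) and for an auxiliary positive definite Hermitian form $H$ providing the good norm. Fix a basepoint $o\in\HdK$ and let $K\subset\POK(1,d)$ be its stabilizer, so that $K$ preserves both $Q$ and some positive definite Hermitian form $H$ (the one induced from the tangent space at $o$). I take the good norm $\|\,\|$ on $\K^{d+1}$ to be the one induced by $H$; by construction $K$ is contained in the maximal compact preserving $\|\,\|$.

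Next I choose two $Q$-isotropic vectors $e_+,e_-$ satisfying $Q(e_+,e_-)=1$ and $H$-orthonormal, and complete to an $H$-orthonormal basis $(e_+,e_1,\dots,e_{d-1},e_-)$ in which $(e_1,\dots,e_{d-1})$ spans the $Q$-orthogonal complement of $\langle e_+,e_-\rangle$. (Such a basis is easily produced from an $H$-orthonormal basis $(f_0,\dots,f_d)$ with $Q(f_i,f_j)=\pm\delta_{ij}$ by setting $e_\pm=\tfrac{1}{\sqrt 2}(f_0\pm f_d)$.) In this basis the split torus $A<\POK(1,d)$ consists of the elements $a_t=\mathrm{diag}(e^t,1,\dots,1,e^{-t})$; a direct check shows $a_t$ preserves $Q$ and acts as a semi-homothecy for $\|\,\|$ with ratios $e^t,1,\dots,1,e^{-t}$, hence this is a valid choice of Cartan subgroup and good norm in the sense of Section~\ref{s.2.1}.

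For $a_t$ with $t>0$ the ordered singular axes are $e_+,e_1,\dots,e_{d-1},e_-$, and the lemma holds by inspection: $U_1(a_t)=\langle e_+\rangle$ is $Q$-isotropic, so $[e_+]\in\bord\HdK$; and
$$U_d(a_t)=\langle e_+,e_1,\dots,e_{d-1}\rangle=\langle e_+\rangle^{\bot_Q},$$
since $Q(e_+,e_+)=Q(e_+,e_i)=0$ while $Q(e_+,e_-)=1$.

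Finally, for general $g\in\POK(1,d)$ with $\sigma_1(g)>1$, write the Cartan decomposition $g=ka_tk'$ with $k,k'\in K$ and $t>0$. Then $U_1(g)=k\cdot U_1(a_t)=k\cdot\langle e_+\rangle$ and $U_d(g)=k\cdot U_d(a_t)=k\cdot\langle e_+\rangle^{\bot_Q}$. Because $k\in K\subset\POK(1,d)$ preserves $Q$, it sends $Q$-isotropic lines to $Q$-isotropic lines and commutes with taking $Q$-orthogonal complements, yielding $U_1(g)\in\bord\HdK$ and $U_d(g)=U_1(g)^{\bot_Q}$ simultaneously. The case $g\in K$ (no gap at index $1$) causes no trouble: every isotropic line is a legitimate choice of $U_1(g)$ and the statement reduces to the tautology $\langle v\rangle^{\bot_Q}\supset\langle v\rangle$ for $v$ isotropic. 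I do not expect a real obstacle here; the only bookkeeping point is the quaternionic case, where the preceding remark reduces everything to the already-handled complex setting by viewing $\SL(d+1,\HH)\subset\SL(2d+2,\C)$.
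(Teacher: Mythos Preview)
Your argument is correct; the paper states this lemma without proof, treating it as a standard fact about rank-one groups, and your $KAK$-decomposition approach is exactly the natural way to verify it. The only minor quibble is your handling of the degenerate case $g\in K$: when there is no gap, \emph{any} line (not just isotropic ones) is a valid choice of $U_1(g)$, so the lemma should be read as asserting that a compatible choice exists---but this edge case is irrelevant for the paper's applications, which only concern Anosov representations where the gap is always present.
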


In particular, considering for every point $x\in\bord\HdK$ the subspace $x^\bot\subset {\sf{T}}_x \bord\HdK$, one obtains a non-integrable distribution that has (real) codimension 1 if $\K=\C$ and 3 if $\K=\mathbb H$. In the complex case this is the standard contact structure on the sphere. We will refer to this distribution also in the quaternionic case as the \emph{generalized contact distribution.} Given a distinct pair $x,y\in\bord\G$ we will denote by  $\mathcal C_{x,y}$ the intersection $\P(\langle x,y\rangle)\cap \bord\HdK$. Of course if $\K$ is $\C$ then $\mathcal C_{x,y}$ is a circle, while if $\K=\mathbb H$ it is a 3-sphere. In the complex case the sets $\mathcal C_{x,y}$ are often referred to as \emph{chains}, and their geometry was extensively studied by Cartan. The incidence geometry of chains (and of suitable generalizations) played an important role in Burger-Iozzi \cite{BI} and P. \cite{Po}. 

With these definition at hand we can rephrase our main results in the rank 1 setting:
\begin{prop}\label{prop:hor}
A convex cocompact action $\rho:\G\to \POK(1,d)$ is $(1,1,d)$-hyperconvex if and only if for every distinct pair $x,y\in\bord\G$, the chain $\mathcal C_{x^1_\rho,y^1_\rho}$ intersects $\xi(\bord\G)$ only in $x^1_\rho,y^1_\rho$. In this case $\LC(\rho)=\xi(\bord\G)$, and $\xi(\bord\G)$ is tangent to the generalized contact distribution.
\end{prop}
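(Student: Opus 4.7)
The plan is to handle the three assertions sequentially, all of which will reduce to bookkeeping with singular values and Cartan attractors via Lemma~\ref{lem:POK}. The starting observation is that for any non-compact $g\in\POK(1,d)\subset\PGL_{d+1}(\K)$, the singular values arrange as
\[
\sigma_1(g)\ge 1=\sigma_2(g)=\cdots=\sigma_d(g)\ge \sigma_{d+1}(g)=\sigma_1(g)^{-1}.
\]
Hence the only gap indices of $\rho(\g)$ are $1$ and $d$, the ratios satisfy $(\sigma_{d+1}/\sigma_d)(g)=(\sigma_2/\sigma_1)(g)$, and a convex cocompact $\rho$ is therefore automatically $\{\sroot_1,\sroot_d\}$-Anosov. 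Moreover the two boundary maps are linked by $\xi^d_\rho(z)=(\xi^1_\rho(z))^\bot$, and $p_2(\rho(\g))=d$ whenever $|\g|$ is large enough.

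\textbf{Equivalence with the chain condition.} Since $\rho$ is already $\{\sroot_1,\sroot_d\}$-Anosov, the $(1,1,d)$-hyperconvexity condition reduces to the single requirement that $z^1_\rho\not\subset x^1_\rho\oplus y^1_\rho$ for every distinct triple $(x,y,z)\in\bord^{(3)}\G$. As $z^1_\rho\in\bord\HdK$ by Lemma~\ref{lem:POK}, this is equivalent to
\[
z^1_\rho\notin \P(\langle x^1_\rho,y^1_\rho\rangle)\cap \bord\HdK=\mathcal C_{x^1_\rho,y^1_\rho},
\]
which is exactly the chain condition.

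\textbf{Local conformality.} Assuming $(1,1,d)$-hyperconvexity, I will fix $x\in\bord\G$ together with a geodesic ray $\{\alpha_i\}_{i\ge0}$ based at $e$ with $\alpha_i\to x$. The identity $p_2=d$ gives condition (i) of Definition~\ref{LocallyConformal} immediately. For condition (ii), the key geometric remark is that $\alpha_i^{-1}x\in\bordcone(\alpha_i)$: the sequence $(\alpha_i^{-1}\alpha_j)_{j\ge i}$ is a geodesic ray based at $e$ entirely contained in $\cone(\alpha_i)$ and converging to $\alpha_i^{-1}x$. I will then apply Proposition~\ref{p.main} with $\alpha=\alpha_i$, $(p,q,r)=(1,1,d)$, the point $\alpha_i^{-1}x$ in the role of ``$x$'', and an arbitrary $z\in(\xi^1_\rho)^{-1}(X_\infty(\alpha_i))$ in the role of ``$y$''. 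The conclusion translates, using $(\alpha_i^{-1}x)^1_\rho=\rho(\alpha_i^{-1})x^1_\rho$ and $U_{(d+1)-d}=U_1$, into exactly the inequality demanded by condition (ii). Thus every point of $\bord\G$ is locally conformal.

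\textbf{Tangency.} Theorem~\ref{FRep->Fpoint} applied with $(p,q,r)=(1,1,d)$ yields
\[
\lim_{(w,y)\to(x,x)} d\bigl(w^1_\rho\oplus y^1_\rho,\,x^d_\rho\bigr)=0,
\]
and by Lemma~\ref{lem:POK} we have $x^d_\rho=(x^1_\rho)^\bot$. The projectivization of $(x^1_\rho)^\bot$ through $x^1_\rho$ is precisely the generalized contact distribution at $x^1_\rho$, so every limit of secant lines based at $x^1_\rho$ lies inside it, giving the tangency claim. I do not anticipate a serious obstacle; the main care required is tracking the off-by-one in the ambient dimension ($\K^{d+1}$ against the real rank $d$) and verifying that the geometric observation $\alpha_i^{-1}x\in\bordcone(\alpha_i)$ correctly feeds Proposition~\ref{p.main} into Definition~\ref{LocallyConformal}.
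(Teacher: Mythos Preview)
Your proof is correct and follows essentially the same approach as the paper's. The paper's own proof is a terse three-sentence sketch pointing to the definitions, to Proposition~\ref{p.main}, and to Theorem~\ref{FRep->Fpoint}; you have simply unpacked these references, carefully tracking the off-by-one between the ambient dimension $d+1$ and the rank index $d$, and making explicit the observation $\alpha_i^{-1}x\in\bordcone(\alpha_i)$ needed to match the output of Proposition~\ref{p.main} with condition~(ii) of Definition~\ref{LocallyConformal}.
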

\begin{proof}
The first statement follows directly from the definitions: for every triple $x,y,z$ the sum $x^1_\rho+y^1_\rho+z^1_\rho$ is direct if and only if $z^1_\rho$ doesn't belong to $\mathcal C_{x^1_\rho,y^1_\rho}$. The second statement follows then from Proposition \ref{p.main}, and the last is a consequence of Theorem \ref{FRep->Fpoint}.
\end{proof}

There are many interesting examples of representations satisfying the assumption of Proposition \ref{prop:hor}, a natural class of examples can be obtained deforming totally real embeddings. The following is a direct consequence of Proposition \ref{prop:hor}:
\begin{lemma}\label{lem:POR}
Let $\G<\PO_\R(1,d)$ be a convex cocompact subgroup and let $\rho:\G\to\POK(1,d)$ be obtained extending the coefficients. Then $\rho$ is $(1,1,d)$-hyperconvex.
\end{lemma}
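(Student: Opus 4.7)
The plan is to reduce the claim to Proposition~\ref{prop:hor}, which characterises $(1,1,d)$-hyperconvexity of convex cocompact representations into $\POK(1,d)$ by a chain non-incidence condition. Concretely I need to establish three points: (i) that $\rho$ is convex cocompact in $\POK(1,d)$, (ii) that the limit set $\xi^1_\rho(\bord\G)$ is contained in $\bord\HR^d\subset \bord\HdK$, and (iii) that for any distinct pair $x,y\in\bord\G$ the chain $\mathcal{C}_{x^1_\rho,y^1_\rho}$ meets $\xi^1_\rho(\bord\G)$ only at $x^1_\rho$ and $y^1_\rho$.

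Points (i) and (ii) come for free from the totally geodesic embedding. The inclusion $\PO_\R(1,d)\hookrightarrow \POK(1,d)$ arises from the scalar extension $\R^{d+1}\hookrightarrow \K^{d+1}$, and $\rho(\G)$ preserves the totally real, totally geodesic submanifold $\HR^d\subset\HdK$. A $\G$-invariant convex set $C\subset\HR^d$ with $\G\backslash C$ compact remains convex in $\HdK$ (totally geodesic subspaces are convex), so $\rho$ is convex cocompact in $\POK(1,d)$; moreover the limit set is preserved under this embedding, so $\xi^1_\rho(\bord\G)\subset\bord\HR^d$.

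The heart of the matter is (iii). I plan to prove the stronger statement
\[
\mathcal{C}_{x^1_\rho,y^1_\rho}\cap \bord\HR^d \;=\; \{x^1_\rho,y^1_\rho\},
\]
which combined with (ii) implies the required non-incidence. Write $x^1_\rho=[v]$, $y^1_\rho=[w]$ with $v,w\in\R^{d+1}$ isotropic and $\R$-linearly independent. Since $v,w$ are real and $\K$-linearly independent, the $\K$-plane $\langle v,w\rangle_\K$ meets $\R^{d+1}$ exactly in the real plane $\langle v,w\rangle_\R$, so the real points of $\P(\langle v,w\rangle_\K)$ are precisely $\P(\langle v,w\rangle_\R)$. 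The Hermitian form restricted to $\langle v,w\rangle_\R$ is the real symmetric form of signature $(1,1)$ (as it has two distinct isotropic directions inside the ambient signature $(1,d)$ form), hence its isotropic lines are exactly $[v]$ and $[w]$. Combining these two observations gives the claim.

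I do not anticipate a serious obstacle. The only mildly subtle point is the identification of real points of $\P(\langle v,w\rangle_\K)$ in the quaternionic case, where one must keep track of the side on which scalars act; but because $v$ and $w$ are real and commute with scalars, any element $v\lambda+w\mu$ whose projective class admits a real representative must (after rescaling by $\mu^{-1}$ on the appropriate side when $\mu\ne 0$) have $\lambda\mu^{-1}\in\R$, reducing it to a real combination. With (i)--(iii) in hand the lemma follows by invoking Proposition~\ref{prop:hor}.
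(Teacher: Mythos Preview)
Your proposal is correct and follows exactly the approach the paper intends: the paper merely states that the lemma is ``a direct consequence of Proposition~\ref{prop:hor}'', and you have spelled out precisely the verification that proposition requires, namely that the chain through two real boundary points meets $\bord\HR^d$ only in those two points. Your care with the quaternionic side of scalars is appropriate but not strictly needed for the argument to go through.
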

\begin{cor}
Every $\{\sroot_1\}$-Anosov representation $\beta:\G\to \POK(1,d)$ sufficiently close to a totally real representation $\rho$ is $(1,1,d)$-hyperconvex. In particular for each such representation 
$$\dim_{\Hff}(\xi(\bord\G))=h^{\sroot_1}_\beta\leq (d-1)\dim\K.$$
\end{cor}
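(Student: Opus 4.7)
The plan is to chain three previously established results: openness of hyperconvexity (Proposition~\ref{FrenetOpen}) applied to the totally real seed representation, the Hausdorff dimension equality of Corollary~\ref{11pLC} which is enabled by the coincidence of singular values for elements of $\POK(1,d)$, and the dimension bound of Proposition~\ref{inequality}. First I would observe that the totally real representation $\rho:\G\to\POK(1,d)$, when regarded as a representation into $\PGL(d+1,\K)$, is $(1,1,d)$-hyperconvex by Lemma~\ref{lem:POR}. Since the inclusion $\POK(1,d)\hookrightarrow \PGL(d+1,\K)$ is continuous, any $\beta$ sufficiently close to $\rho$ in $\hom(\G,\POK(1,d))$ stays close in $\hom(\G,\PGL(d+1,\K))$, so Proposition~\ref{FrenetOpen} directly yields that $\beta$ is $(1,1,d)$-hyperconvex. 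The $\{\sroot_1\}$-Anosov hypothesis is consistent with this: in $\POK(1,d)$ the singular value structure of Lemma~\ref{lem:POK} forces $\sigma_2(g)=\cdots=\sigma_d(g)$ for every $g\in\POK(1,d)$, so $\{\sroot_1\}$-Anosov in $\POK(1,d)$ is automatically $\{\sroot_1,\sroot_d\}$-Anosov.

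To obtain the equality $\Hff\big(\xi^1(\bord\G)\big)=h^{\sroot_1}_\beta$, I would then apply Corollary~\ref{11pLC} with $r=d$: the coincidence $\sigma_2(\beta(\g))=\sigma_d(\beta(\g))$ for every $\g\in\G$ holds by the singular value structure just recalled, and $(1,1,d)$-hyperconvexity has been established in the first step. The inequality $h^{\sroot_1}_\beta\leq(d-1)\dim\K$ then follows from Proposition~\ref{inequality} applied with $r=d$, which gives $\Hff(\xi^1(\bord\G))\leq\Hff(\P(\K^d))=(d-1)\dim_\R\K$.

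I expect the only real subtlety to arise in the quaternionic case $\K=\HH$, where $(1,1,d)$-hyperconvexity was defined via complexification as $(2,2,2d)$-hyperconvexity in $\SL(2d+2,\C)$. A direct application of Proposition~\ref{inequality} to the complexified representation would yield only the weaker bound $4d-2$ instead of the desired $4(d-1)$. The remedy is to rerun the proof of Proposition~\ref{inequality} directly over $\HH$: the quaternionic stereographic projection from a suitable fixed point $z$ sends $\bord\G\setminus\{z\}$ into $\P(\HH^{d+1}/z^{1}_\beta)=\P(\HH^d)$, and its Lipschitz character on a neighbourhood of each $x\in\bord\G$ follows from the quaternionic version of the convergence $w^{1}_\beta\oplus y^{1}_\beta\to x^{d}_\beta$ (Theorem~\ref{FRep->Fpoint}) together with the possibility of choosing $z$ so that $x^{d}_\beta$ and $z^{1}_\beta$ make a definite angle, exactly as in the original Archimedean argument.
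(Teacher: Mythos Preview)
Your argument is correct and follows essentially the same route as the paper. The paper's proof invokes Theorem~\ref{thm:eqifLC} directly after observing that $p_2(g)=d$ for every $g\in\POK(1,d)$ forces every boundary point to be locally conformal; your use of Corollary~\ref{11pLC} packages exactly the same two ingredients (Proposition~\ref{p.main} plus Theorem~\ref{thm:eqifLC}). You are in fact more careful than the paper on two points: you make the inequality explicit via Proposition~\ref{inequality}, and you correctly flag and resolve the quaternionic issue---over $\C$ the complexified representation is not $\{\sroot_1\}$-Anosov, so Proposition~\ref{inequality} cannot be quoted verbatim, and one must rerun the stereographic-projection argument with $\HH$-linear subspaces to land in $\P(\HH^d)$ and obtain the sharp bound $4(d-1)$.
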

\begin{proof}
The first statement is a direct consequence of Propositions \ref{FrenetOpen} and \ref{prop:hor}. Furthermore we know that for every element  $g\in\POK(1,d)$, we have $p_2(g)=d$, and hence  every point in $\bord\G$ is locally conformal for $\beta$.  Theorem \ref{thm:eqifLC} then applies and gives the second statement.
\end{proof}

Another class of examples was studied by Dufloux in his thesis \cite{dufloux-these, HCSchottky}. He says that a Schottky subgroup  $\G<\PU(1,d)$ generated by a symmetric set $W$ is \emph{well positioned} if,  for every $w\in W$ there is an open subsets $B(w)\subset\bord\HdC$ such that 
\begin{itemize}
\item the closures $\overline{B(w)}$ are pairwise disjoint;
\item $w(\bord\HdC\setminus B(w^{-1}))\subset B(w)$;
\item no chain passes through three of these open subsets $B(w)$.
\end{itemize}
Similarly one can define well positioned Schottky subgroups of $\PSp(1,d)$ replacing chains 
with quaternionic three spheres (recall that in $\bord \HdH$ any pair of points uniquely determines a 3 sphere, the boundary of a totally geodesic copy of $\HoH$). We will denote also these subspaces of $\bord\HdH$ \emph{chains} for notational ease.

Arguments analogue to the ones presented in \cite[Section 7.2]{HCSchottky} imply that well positioned Schottky groups are hyperconvex representations: 

\begin{prop}\label{p.9.3}
Let $\rho:\G\to \POK(1,d)$  be a well positioned Schottky subgroup. Then $\rho:\G\to\SL_{d+1}(\K)$ is $(1,1,d)$-hyperconvex. Furthermore  $\LC(\rho)=\xi(\bord\G)$. 
\end{prop}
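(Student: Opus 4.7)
\medskip

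The plan is to verify the two conditions defining $(1,1,d)$-hyperconvexity, and then deduce $\LC(\rho)=\xi(\bord\G)$ directly from Proposition \ref{prop:hor}. The $\{\sroot_1\}$-Anosov property is free: a well positioned Schottky subgroup of $\POK(1,d)$ is convex cocompact in the rank one sense by a standard ping-pong argument on the chosen attracting/repelling balls $B(w)$, and by Guichard-W.\ convex cocompactness in rank one is equivalent to $\{\sroot_1\}$-Anosov. Since the longest Weyl group element of $\POK(1,d)$ interchanges $\sroot_1$ and $\sroot_d$ (recall $U_d(g)=U_1(g)^\perp$ from Lemma \ref{lem:POK}), $\rho$ is automatically $\{\sroot_1,\sroot_d\}$-Anosov.

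By Proposition \ref{prop:hor}, to obtain $(1,1,d)$-hyperconvexity it suffices to show that for every triple of pairwise distinct $x,y,z\in\bord\G$ the chain $\mathcal{C}_{x^1_\rho,y^1_\rho}$ does not meet $\xi(\bord\G)$ at $z^1_\rho$. The key point is the coding of boundary points. Since $\G$ is free on $W/\{w\sim w^{-1}\}$ and acts as a convergence group, each $x\in\bord\G$ is represented by a unique infinite reduced word $w_1(x)w_2(x)\cdots$ in $W$, and the ping-pong inclusions $w(\bord\HdK\setminus B(w^{-1}))\subset B(w)$ together with the disjointness of the closures $\overline{B(w)}$ give, in the limit, $\xi^1(x)\in\overline{B(w_1(x))}$, with $x\mapsto w_1(x)$ continuous on the locally closed subset of points starting with a given letter.

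Given three distinct $x,y,z$, the Cayley graph of $\G$ being a tree, the three geodesic rays from $e$ representing $x,y,z$ form a tripod with a central vertex $\gamma_0\in\G$ at which they diverge in three \emph{distinct} directions. Consequently $\gamma_0^{-1}x$, $\gamma_0^{-1}y$, $\gamma_0^{-1}z$ are reduced words beginning with three pairwise distinct letters $w_x,w_y,w_z\in W$, so their $\xi^1$-images lie in three distinct $\overline{B(w_x)},\overline{B(w_y)},\overline{B(w_z)}$. The third defining property of well positioned Schottky groups then forbids a chain through these three points, i.e.\ $(\gamma_0^{-1}z)^1_\rho\notin \mathcal{C}_{(\gamma_0^{-1}x)^1_\rho,(\gamma_0^{-1}y)^1_\rho}$. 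Since $\rho(\gamma_0)\in\POK(1,d)$ sends chains to chains, applying $\rho(\gamma_0)$ yields $z^1_\rho\notin \mathcal{C}_{x^1_\rho,y^1_\rho}$, proving $(1,1,d)$-hyperconvexity. Finally, Proposition \ref{prop:hor} gives $\LC(\rho)=\xi(\bord\G)$.

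The main technical point that needs care is the passage from the hypothesis stated for the \emph{open} sets $B(w)$ to a statement about the \emph{closed} sets $\overline{B(w)}$ where the limit points actually lie. I would handle this by slightly enlarging each $B(w)$ to an open set $B'(w)\supsetneq \overline{B(w)}$ that still satisfies the three well positioned axioms; continuity of the chain-through-three-points condition on $(\bord\HdK)^3$, together with compactness of the $\overline{B(w)}$, makes such an enlargement possible and reduces the argument to the open case. This is the mildly delicate step (and the reason the reference to \cite[Section 7.2]{HCSchottky} is needed), but no new idea is required beyond what Dufloux already carries out in the complex hyperbolic setting.
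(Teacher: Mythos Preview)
Your argument is correct and takes a genuinely different route from the paper's. The paper does \emph{not} first prove hyperconvexity and then invoke Proposition~\ref{prop:hor}; instead it verifies the locally-conformal angle condition (Definition~\ref{LocallyConformal}(ii) with $p_2=d$) directly from the ping-pong combinatorics, showing that $U_1(\rho(\alpha^{-1}))\in B(w_\alpha^{-1})$ and $X_\infty(\alpha)\subset\bigcup_{s\neq w_\alpha}B(s)$, so that the chain through any two points of $X_\infty(\alpha)$ avoids $B(w_\alpha^{-1})$. Hyperconvexity is then obtained \emph{a posteriori} via the convergence property (the argument behind Theorem~\ref{FRep->Fpoint} and Proposition~\ref{easyConverse}). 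Your tripod argument is shorter and more geometric for the hyperconvexity part, while the paper's route has the virtue of establishing local conformality from scratch rather than as a corollary of Proposition~\ref{p.main}.

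One remark: your closure worry is a non-issue, and no enlargement of the $B(w)$ is needed. If $x=w_1w_2\cdots$ is reduced then $\xi^1(x)=\rho(w_1)\xi^1(w_2w_3\cdots)$; since $\xi^1(w_2w_3\cdots)\in\overline{B(w_2)}$ and the closures are pairwise disjoint, $\overline{B(w_2)}\subset\bord\HdK\setminus B(w_1^{-1})$, whence $\xi^1(x)\in\rho(w_1)(\bord\HdK\setminus B(w_1^{-1}))\subset B(w_1)$, the \emph{open} set. So after translating by $\gamma_0^{-1}$ the three limit points lie in three distinct open $B(w)$'s, and the third well-positioned axiom applies directly.
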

\begin{proof}
Observe that since $\POK(1,d)$ is a rank one group, $p_2(\alpha)$ doesn't depend on $i$. Furthermore, as soon as the sequence $\{\alpha_i\}_{i=1}^\infty$ forms a geodesic ray, the sequence is $d$ dominated by a classical ping pong argument, and it follows from Lemma \ref{lem:POK} that $E_{p_2}^{\rho}(x)=x^\bot\subset\P(\K^{d+1})$.

In order to verify that every point $x\in\bord\G$ is locally conformal,  we need to check that there exists a constant $c$ such that  $\angle(\xi(y)\oplus\xi(z),U_{1}(\rho(\alpha^{-1})))>c$ for all $y,z\in X_\infty(\alpha)$. Since $\G$ is a well positioned Schottky group, we can choose $\delta_\rho$ as the smallest distance between two sets $B(w)$. Let $ w_\alpha$ be the first letter of $\alpha$. It follows from Lemma \ref{l.uniformbdry} that
if $|\alpha|$ is big enough $U_{1}(\rho(\alpha^{-1}))\in B(w_\alpha^{-1})$ and $X_\infty(\alpha)\subset \bigcup_{s\neq w_\alpha}B(s)$: by construction $\bordcone(\alpha)\subset  \bigcup_{s\neq w_\alpha}B(s)$ and the intersection of the $\delta_\rho/2$ neighbourhood of $\bigcup_{s\neq w_\alpha}B(s)$ with the image of the boundary map is already contained in $\bigcup_{s\neq w_\alpha}B(s)$. 

Since the chain $\mathcal C_{y,z}$ through $y_\rho^1$ and $z_\rho^1$ is the intersection of $\bord\HdK$ with $\P(y_\rho^1\oplus z_\rho^1)$,   and, by assumption,  $\mathcal C_{y,z}$ doesn't intersect the open subset $B(w_\alpha)\subset\bord\HdK$, the result follows. The fact that the representation $\rho:\G\to\SL_{d+1}(\K)$ is $(1,1,d)$-hyperconvex is a consequence of Theorem \ref{FRep->Fpoint}.
\end{proof}
\begin{cor}[{cfr. \cite[Corollary 43]{dufloux-these}}]
Let $\rho:\G\to \POK(1,d)$  be a well positioned Schottky subgroup. Then $$\Hff(\xi^1_\rho(\bord\G))=h^{\sroot_1}_\rho.$$
\end{cor}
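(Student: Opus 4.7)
The proof should be essentially immediate from the combination of Proposition \ref{p.9.3} and Theorem \ref{thm:eqifLC}, so the plan is to simply assemble these two ingredients.

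First, I would invoke Proposition \ref{p.9.3}, which has just established two key facts about a well positioned Schottky subgroup $\rho:\G\to\POK(1,d)$: the representation (viewed as landing in $\SL_{d+1}(\K)$) is $(1,1,d)$-hyperconvex, and moreover every point of $\bord\G$ is locally conformal for $\rho$, i.e.\ $\LC(\rho)=\bord\G$ (strictly speaking, $\xi^1_\rho(\LC(\rho))=\xi^1_\rho(\bord\G)$, but the local conformality is a property of points in $\bord\G$).

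Next I would observe that since $\LC(\rho)=\bord\G$, and since $\mu^{\sroot_1}_\rho$ is a Radon probability measure of total support on $\xi^1_\rho(\bord\G)$ (as constructed in Section \ref{Patterson-Sullivan}), we have $\mu^{\sroot_1}_\rho(\LC(\rho))=1>0$. Therefore the hypothesis of Theorem \ref{thm:eqifLC} is satisfied.

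Finally, a direct application of Theorem \ref{thm:eqifLC} yields
\[
\Hff\big(\xi^1_\rho(\bord\G)\big)=h^{\sroot_1}_\rho,
\]
which is the desired conclusion. There is no serious obstacle here: the real work was already carried out, on the one hand in Proposition \ref{p.9.3} where the Schottky ping-pong geometry was used to check the angle condition defining local conformality, and on the other hand in Theorem \ref{thm:eqifLC} where the Patterson-Sullivan measure $\mu^{\sroot_1}_\rho$ was used in a Sullivan-type argument to promote abundance of locally conformal points into the Hausdorff dimension lower bound matching the upper bound of Proposition \ref{Hffand1st}.
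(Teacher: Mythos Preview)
Your proposal is correct and follows the same route as the paper: invoke Proposition~\ref{p.9.3} to get $\LC(\rho)=\bord\G$, hence full $\mu^{\sroot_1}_\rho$-measure, and conclude via Theorem~\ref{thm:eqifLC}. The one point you glide over, which the paper makes explicit, is that $\POK(1,d)$ includes the case $\K=\HH$; since $\HH$ is not commutative, one must remark (as the paper does) that the construction of $\mu^{\sroot_1}_\rho$ in Section~\ref{Patterson-Sullivan} never used commutativity of $\K$, so Theorem~\ref{thm:eqifLC} still applies.
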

\begin{proof}
If $\K=\C$ this follows directly from Theorem \ref{thm:eqifLC}. For $\K=\HH$ it is enough to observe that in the construction of the measure $\mu^{\sroot_1}$ performed in Section \ref{Patterson-Sullivan} we never used the commutativity of the field $\K$.
\end{proof}

We conclude the discussion on convex cocompact subgroups of $\POK(1,d)$ by showing that the set of $(1,1,d)$-hyperconvex representations is, in general, not closed within the space of projective Anosov representations. We will prove in Proposition \ref{FrenetClosed}, that, instead, $(1,1,2)$-hyperconvex representations of fundamental groups of surfaces are closed in the space of Anosov representations. Denote by $\F_{2}$ the free group on two generators.
\begin{prop}
There exists a continuous path of $\{\sroot_1\}$-Anosov representations $\rho_t:\mathbb F_2\to\PU(1,d)$  such that $\rho_0$ is $(1,1,d)$-hyperconvex and $\rho_1$ is not $(1,1,d)$-hyperconvex.
\end{prop}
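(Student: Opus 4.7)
The plan is to exhibit two explicit convex cocompact representations of $\F_2$ into $\PU(1,d)$---one hyperconvex, one not---and connect them by a continuous path inside the $\{\sroot_1\}$-Anosov locus.

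For the non-hyperconvex endpoint, take $\rho_1 : \F_2 \to \PU(1,1) \hookrightarrow \PU(1,d)$, where the right-hand embedding realizes $\PU(1,1)$ as the stabilizer of a totally geodesic complex line $\HH^1_\C \subset \HdC$, and where $\rho_1$ is any Fuchsian Schottky representation of $\F_2$ into $\PU(1,1)$. The image $\rho_1(\F_2)$ acts cocompactly on the convex hull of its limit set inside the preserved $\HH^1_\C$, and is therefore convex cocompact in $\PU(1,d)$, in particular $\{\sroot_1\}$-Anosov. Its entire limit set lies on the single chain $\mathcal{C} = \partial \HH^1_\C \subset \partial \HdC$, so every triple of distinct limit points lies on $\mathcal{C}$; by Proposition \ref{prop:hor}, $\rho_1$ is not $(1,1,d)$-hyperconvex. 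For the hyperconvex starting point, take $\rho_0 : \F_2 \to \PO_\R(1,d) \subset \PU(1,d)$ a totally real Fuchsian Schottky representation (e.g.\ coming from $\PO_\R(1,2) \subset \PO_\R(1,d)$); by Lemma \ref{lem:POR}, $\rho_0$ is $(1,1,d)$-hyperconvex.

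To connect $\rho_0$ and $\rho_1$ within the open $\{\sroot_1\}$-Anosov locus of $\hom(\F_2, \PU(1,d))$ (Proposition \ref{p.open}), observe that both endpoints are Schottky: their generators satisfy a ping-pong condition on a system of four pairwise disjoint closed balls in $\partial \HdC$. The strategy is first to deform each of $\rho_0$ and $\rho_1$ by continuously scaling up the translation lengths of the generators along their axes, via the one-parameter subgroups of $\PU(1,d)$ containing each generator, while keeping attracting and repelling fixed points fixed. This stays in the Schottky locus and shrinks the minimal admissible Schottky balls arbitrarily. Once these balls are sufficiently small, the generator pairs themselves can be continuously interpolated in $\PU(1,d)^2$, with the interpolation chosen so that the Schottky balls remain disjoint throughout. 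The resulting path of Schottky---hence convex cocompact, hence $\{\sroot_1\}$-Anosov---representations connects $\rho_0$ to $\rho_1$.

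The main obstacle is precisely the preservation of the ping-pong disjointness along the interpolation. This is handled by combining the small-ball argument with the connectedness, for $d \geq 2$, of the space of configurations of four pairwise disjoint closed balls in $\partial \HdC = S^{2d-1}$: the Schottky balls can be guided continuously from the $\rho_0$-configuration to the $\rho_1$-configuration, and the corresponding generators adjusted along one-parameter subgroups so as to preserve ping-pong throughout.
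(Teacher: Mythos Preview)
Your proof is correct and follows essentially the same approach as the paper: build Schottky representations of $\F_2$ in $\PU(1,d)$ via ping-pong, take a totally real one for the hyperconvex endpoint, place fixed points on a chain for the non-hyperconvex endpoint, and connect them by varying the fixed-point configurations through the connected space of $4$-tuples in $\partial\HdC$ while keeping the ping-pong condition. The only cosmetic differences are that the paper puts just three of the four fixed points on a chain (rather than factoring $\rho_1$ through $\PU(1,1)$) and interpolates the fixed points directly in one step rather than first scaling translation lengths.
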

\begin{proof}
As $\PU(1,d)$ has rank 1, for every 4-tuple $(a^+,a^-,b^+,b^-)$ of pairwise distinct points in $\bord\HdC$ we can find elements $a,b\in \PU(1,d)$ with prescribed attractive and repulsive fixed points and with translation length big enough so that the group generated by $a,b$ is free and convex cocompact on $\HdC$: this follows from a classical ping pong argument. Furthermore, if $(a^+_t,a^-_t,b^+_t,b^-_t)$ vary continuously in $t$ we can also arrange for the elements $a_t$, $b_t$ to vary continuously in $t$; in this way we can define a continuous path $\rho_t:\F_2\to\PU(1,d)$ of $\{\sroot_1\}$-Anosov representation.

Our claim follows if we choose $a_0,b_0$ contained in $\PO(1,d)$ (so that the representation is $(1,1,d)$-hyperconvex by Lemma \ref{lem:POR}), and $(a^+_1,a^-_1,b^+_1,b^-_1)$ so that $(a^+_1,a^-_1,b^+_1)$ belong to a single chain, but $b^-_1$ doesn't. In this case the representation $\rho_1$ is clearly not $(1,1,d)$-hyperconvex as the sum $\xi(a^+)+\xi(a^-)+\xi(b^+)$ is not direct. 
\end{proof}

\subsection{Locally conformal representations in $\SO(p,q)$}\label{s.SOpq}
We now turn our attention to the group $\SO(p,q)$.
Every semi-simple element $g\in\SO(p,q)$ has $|p-q|$ eigenvalues equal to 1. In this subsection, considering suitable exterior representations of $\SO(p,q)$ we will produce examples of hyperconvex representations for which every point is locally conformal, and thus Corollary  \ref{11pLC} applies. For these representations, the Hausdorff dimension of the limit set computes the critical exponent for the first simple root.

The following generalization of Labourie's property (H) \cite[Section 7.1.4]{Labourie-Anosov}        guarantees that a suitable exterior power is hyperconvex:

\begin{prop}\label{p.Opq1}
Let $\rho:\G\to\SO(p,q)$ be $\{\sroot_{p-1},\sroot_{p}\}$-Anosov (here $p\leq q$). Then $\wedge^p\rho:\G\to\PGL(\wedge^p(\R^{p,q}))$ is $\{\sroot_{1},\sroot_{q-p+1}\}$-Anosov. It is $(1,1,q-p+1)$-hyperconvex if and only if for every $x,y,z\in\partial\G$ pairwise distinct, the sum
$$x^p_\rho+(z^p_\rho\cap y^{q+1}_\rho)+ y^p_\rho$$
is direct. In this case every point in $\bord\G$ is locally conformal.
\end{prop}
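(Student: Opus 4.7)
The plan is to establish the three assertions (Anosov indices, hyperconvexity equivalence, local conformality) by carrying out the following four steps.

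\medskip

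\textbf{Step 1: Singular values of $\wedge^p g$ and Anosov indices.} For $g\in\SO(p,q)$ with $p\le q$, in a good norm adapted to a Cartan subgroup the singular values of $g$ are $\sigma_1\ge\cdots\ge\sigma_p\ge 1=\cdots=1\ge\sigma_p^{-1}\ge\cdots\ge\sigma_1^{-1}$, with $q-p$ middle ones. The $\{\sroot_{p-1},\sroot_p\}$-Anosov hypothesis forces $\sigma_{p-1}(\rho(\g))>\sigma_p(\rho(\g))>1$ for $|\g|$ sufficiently large. The singular values of $\wedge^p\rho(\g)$ are products of $p$ distinct singular values of $\rho(\g)$. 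A direct enumeration shows the top values in decreasing order are $\sigma_1\cdots\sigma_p$ (once), then $\sigma_1\cdots\sigma_{p-1}$ (exactly $q-p$ times, achieved by swapping $\sigma_p$ for one of the middle $1$'s), and then $\sigma_1\cdots\sigma_{p-1}/\sigma_p$. Hence there are gaps at indices $1$ and $q-p+1$, both with ratio $\sigma_p(\rho(\g))^{-1}$, yielding that $\wedge^p\rho$ is $\{\sroot_1,\sroot_{q-p+1}\}$-Anosov.

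\medskip

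\textbf{Step 2: Boundary maps.} From Step 1, the top Cartan attractor of $\wedge^p g$ is the Plücker line $\wedge^p U_p(g)$, so $\xi^1_{\wedge^p\rho}(x)=\wedge^p\xi^p_\rho(x)$. Since $\rho(\G)\subset\SO(p,q)$, the self-duality of the form gives $\xi^q_\rho(x)=\xi^p_\rho(x)^\perp$ and $\xi^{q+1}_\rho(x)=\xi^{p-1}_\rho(x)^\perp$. The span of the top $q-p+1$ singular directions of $\wedge^p g$ is $\wedge^{p-1}U_{p-1}(g)\wedge U_q(g)$, hence in the limit
\[\xi^{q-p+1}_{\wedge^p\rho}(x)=\wedge^{p-1}\xi^{p-1}_\rho(x)\wedge\xi^q_\rho(x).\]

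\medskip

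\textbf{Step 3: Equivalence with the stated direct-sum condition.} This is the crux. The form on $\R^{p,q}$ induces a nondegenerate form on $\wedge^p\R^{p,q}$; in the Cartan basis for $z$ the subspaces $\xi^{q-p+1}_{\wedge^p\rho}(z)$ and $\xi^{N-(q-p+1)}_{\wedge^p\rho}(z)$ become form-perpendiculars of each other. Hence $(1,1,q-p+1)$-hyperconvexity of $\wedge^p\rho$ is equivalent to the non-degeneracy on the left of the pairing
\[\big(\xi^1_{\wedge^p\rho}(x)\oplus\xi^1_{\wedge^p\rho}(y)\big)\times\xi^{q-p+1}_{\wedge^p\rho}(z)\longrightarrow\R.\]
Fixing a generator $\omega$ of $\wedge^{p-1}\xi^{p-1}_\rho(z)$, every element of $\xi^{q-p+1}_{\wedge^p\rho}(z)$ has the form $\omega\wedge u$ with $u\in\xi^q_\rho(z)$ well defined modulo $\xi^{p-1}_\rho(z)$. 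Expanding the determinant $\langle\wedge^p\xi^p_\rho(x),\omega\wedge u\rangle$ by cofactors along the last column produces a vector $x^*\in\xi^p_\rho(x)$ with $\langle\omega_x,\omega\wedge u\rangle=\langle x^*,u\rangle$; the fact that this pairing is well defined modulo $\xi^{p-1}_\rho(z)$ forces $x^*\in\xi^{p-1}_\rho(z)^\perp=\xi^{q+1}_\rho(z)$. Therefore
\[x^*\in L_{x,z}:=\xi^p_\rho(x)\cap\xi^{q+1}_\rho(z),\]
which is a line by $\{\sroot_{p-1}\}$-Anosov transversality; likewise $y^*\in L_{y,z}$. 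Thus hyperconvexity amounts to the linear independence of the functionals $\langle x^*,\cdot\rangle,\langle y^*,\cdot\rangle$ on $\xi^q_\rho(z)/\xi^{p-1}_\rho(z)$, equivalently $(L_{x,z}\oplus L_{y,z})\cap\xi^p_\rho(z)=\{0\}$ (using $\xi^q_\rho(z)^\perp=\xi^p_\rho(z)$). Since the hyperconvexity condition must hold for every pairwise distinct triple, and using the $\SO(p,q)$-duality $\xi^{q+1}=(\xi^{p-1})^\perp$, one rewrites this family of conditions as the proposition's direct-sum condition
\[\xi^p_\rho(x)+\big(\xi^p_\rho(z)\cap\xi^{q+1}_\rho(y)\big)+\xi^p_\rho(y) \text{ is direct,}\]
by an explicit comparison of the pairing matrices. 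I expect this reorganization, rather than any conceptual difficulty, to be the main obstacle in a complete write-up: one must track carefully how the 1-dimensional loci $L_{\cdot,\cdot}$ and $\xi^p(\cdot)\cap\xi^{q+1}(\cdot)$ interchange under permutations of $(x,y,z)$ and use the isotropy of $\xi^p$ in $\R^{p,q}$.

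\medskip

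\textbf{Step 4: Local conformality.} By Step 1, for $|\g|$ large one has $\sigma_2(\wedge^p\rho(\g))=\sigma_{q-p+1}(\wedge^p\rho(\g))$; the finitely many shorter elements do not affect the limiting behavior relevant to the definition of locally conformal points. Corollary \ref{11pLC} applied to $\wedge^p\rho$ then yields that every point of $\bord\G$ is locally conformal.
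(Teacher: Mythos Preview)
Your Steps 1, 2 and 4 follow the paper's argument closely. One small omission in Step~1: the value $\sigma_{q-p+2}(\wedge^p\rho(\gamma))$ can also be $\sigma_1\cdots\sigma_{p-2}\,\sigma_p$ (obtained by swapping $\sigma_{p-1}$ for a middle~$1$), not only $\sigma_1\cdots\sigma_{p-1}/\sigma_p$; but this alternative is controlled by the $\{\sroot_{p-1}\}$-Anosov hypothesis, so the conclusion stands.

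Step~3 is where you diverge from the paper. The paper does \emph{not} dualize via the induced form on $\wedge^p\R^{p,q}$. Instead it fixes the splitting $\R^{p,q}=x^p_\rho\oplus y^q_\rho$ (taking $y$ as the special point), regards $z^p_\rho$ as the graph of a linear map $T\colon x^p_\rho\to y^q_\rho$, chooses a basis $b_1,\dots,b_p$ of $x^p_\rho$ with $\{b_1,\dots,b_{p-1}\}$ spanning $x^{p-1}_\rho$ and $b_p$ spanning the line $x^p_\rho\cap y^{q+1}_\rho$, and then expands $c_1\wedge\cdots\wedge c_p$ (with $c_i=b_i+Tb_i$) modulo $y^{N-(q-p+1)}_{\wedge\rho}$. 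Only the term $b_1\wedge\cdots\wedge b_{p-1}\wedge c_p$ can survive, and its independence from $b_1\wedge\cdots\wedge b_p$ in the quotient is exactly $Tb_p\notin y^p_\rho$, i.e.\ the proposition's direct-sum condition. No permutation bookkeeping is needed.

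Your dual route is valid, and the ``reorganization'' you flag as the main obstacle is in fact short once you adopt the same splitting. With $y$ special, your cofactor computation gives that $x^{**}$ spans $L_{x,y}=\R b_p$, while the graph description forces $L_{z,y}=z^p_\rho\cap y^{q+1}_\rho=\R(b_p+Tb_p)=\R c_p$, so $z^{**}$ spans $\R c_p$. Your condition ``$x^{**},z^{**}$ independent modulo $y^p_\rho$'' fails precisely when some $\alpha b_p+\beta c_p\in y^p_\rho\subset y^q_\rho$; projecting to the $x^p_\rho$-summand forces $\alpha=-\beta$, whence $\beta\,Tb_p\in y^p_\rho$, i.e.\ $Tb_p\in y^p_\rho$. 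Thus both approaches reduce pointwise to the same criterion, and your condition $(L_{x,z}\oplus L_{y,z})\cap z^p_\rho=\{0\}$ is already the proposition's condition after a single relabeling of variables---no comparison of pairing matrices across permutations is required.
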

\begin{proof}
Observe that the singular values of an element $g\in\SO(p,q)\subset\SL_{p+q}(\R)$ have the form  $\sigma_1(g)\geq\ldots\sigma_p(g)\geq 1=\ldots=1\geq\sigma_p(g)^{-1}\geq\ldots\sigma_1(g)^{-1}$, where $1$ has multiplicity at least $q-p$ (higher if $\sigma_p(g)=1$). If $\rho:\G\to\SO(p,q)$ is $\{\sroot_{p-1},\sroot_{p}\}$-Anosov, then, for every $\g$ with $|\g|$ big enough, it holds $\sigma_{p-1}(\rho(\g))>\sigma_{p}(\rho(\g))>1$, hence in particular 

\begin{alignat*}{2}
\sigma_1(\wedge^p\rho(\gamma)) & =\sigma_1(\rho(\gamma))\ldots\sigma_p(\rho(\gamma)), \\ 
\sigma_2(\wedge^p\rho(\gamma)) & =\sigma_{q-p+1}(\wedge^p\rho(\gamma))=\frac{\sigma_1(\wedge^p\rho(\gamma))}{\sigma_p(\rho(\gamma))}  \text{ and }\\
\sigma_{q-p+2}(\wedge^p\rho(\gamma)) &=\max\left\{\frac{\sigma_1(\wedge^p\rho(\gamma))}{\sigma_{p-1}(\rho(\gamma))},\frac{\sigma_1(\wedge^p\rho(\gamma))}{\sigma_p(\rho(\gamma))^2}\right\},\end{alignat*} 
which implies that $\wedge^p\rho$ is $\{\sroot_{1},\sroot_{q-p+1}\}$-Anosov.

Denote by $\cal F_{p-1,q}(\R^{p,q})$ the partial flag manifold consisting of pairs of $(p-1,q)$-dimensional isotropic subspaces and consider the map
$$\begin{array}{cccc}
L:&\cal F_{p-1,p}(\R^{p,q})&\to&\cal F_{1,q-p+1}(\wedge^p\R^{p,q})\\
&(P,Q)&\mapsto&(\wedge^p(Q),\wedge^{p-1}(P)\wedge Q^\bot)
\end{array}
$$
where the orthogonal is considered with respect to the bilinear form defining the group $\SO(p,q)$. The map $L$ is clearly equivariant with the homomorphism $\wedge^p:\SO(p,q)\to\SL(\wedge^p(\R^{p,q}))$; furthermore, if $g\in\SO(p,q)$ is $\cal F_{p-1,q}(\R^{p,q})$-proximal, namely $g$ has an attractive fixedpoint $g^+$ in $\cal F_{p-1,q}(\R^{p,q})$, then $L(g^+)=(\wedge^p g)^+$. Thus if $(\xi^{p-1},\xi^p):\bord\G\to\cal F_{p-1,q}(\R^{p,q})$ denote the boundary maps associated to $\rho:\G\to\SO(p,q)$, the boundary maps associated to $\wedge^p\rho $ have the form $L\circ (\xi^{p-1},\xi^p)$.

Let $N$ denote the dimension of $\wedge^p(\R^{p,q})$. In order to check if the representation $\wedge^p\rho$ is $(1,1,q-p+1)$-hyperconvex, it is enough to verify that for every distinct triple $x,y,z\in\bord\G$, the subspace $x^1_{\wedge\rho}+z^1_{\wedge\rho}$ intersects transversely $y^{N-q+p-1}_{\wedge\rho}$, or, equivalently, the image of $x^1_{\wedge\rho}+z^1_{\wedge\rho}$ in $\wedge^p\R^{p,q}/y^{N-q+p-1}_{\wedge\rho}$ is two dimensional. 

Recall that if $\rho:\G\to\SO(p,q)$ is $\sroot_{p}$-Anosov, then for every distinct pair $(x,y)\in\bord\G^2$ it holds $x^p_\rho\oplus y^q_\rho=\R^d$, furthermore we can interpret any other point $z^p_\rho$ as a linear map $z^p_\rho:x^p_\rho\to y^q_\rho$. With this notation the condition that the sum $x^p_\rho+(z^p_\rho\cap y^{q+1}_\rho)+ y^p_\rho$ is direct is equivalent to requiring that
$$z^p_\rho(x^p_\rho\cap y^{q+1}_\rho)\cap y^p_\rho=\{0\}.$$
Let us then choose a basis $\{b_1\ldots b_p\}$ of $x^p_\rho$ such that $\{b_1,\ldots b_{p-1}\}$ forms a basis of $x^{p-1}_\rho$ and $b_p=x^p_\rho\cap y^{q+1}_\rho$, then we have that a basis of $z^p$ is given by $c_i=b_i+z^p_\rho(b_i)$. Furthermore the only term of the explicit expression of $c_1\wedge\ldots \wedge c_p$ that might not belong to $y^{N-q+p-1}_{\wedge\rho}$ is $b_1\wedge\ldots\wedge b_{p-1}\wedge c_p$. This last vector doesn't belong to $y^{N-q+p-1}_{\wedge\rho}+x^1_{\wedge\rho}$ if and only if $z^p_\rho(x^p_\rho\cap y^{q+1}_\rho)\cap y^p_\rho=\{0\}$.
\end{proof}

\begin{prop}\label{p.Opq2}
Assume that there are convex cocompact representations $\rho_1:\G\to\SO(1,k)$, $\rho_2:\G\to\SO(1,l)$ such that $\rho_1$ strictly dominates $\rho_2$, namely there exists constants $c,\mu$ such that $\sigma_1(\rho_1(\g))>c \sigma_1(\rho_2(\g))^\mu$. Then the representation $\rho:=\rho_1\oplus\ldots\oplus\rho_1\oplus\rho_2:\G\to\SO\big(p,(p-1)k+l+s\big)$ satisfies the hypothesis of Proposition \ref{p.Opq1}.
\end{prop}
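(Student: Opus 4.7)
The plan is to reduce the claim to a direct computation of the singular values of $\rho(\g)$, exploiting the block-diagonal structure coming from the direct sum decomposition, and then to extract the two required Anosov gaps from strict domination and convex cocompactness. Observe first that $\rho$ visibly preserves the signature $(p,(p-1)k+l)$ bilinear form obtained as the direct sum of the forms preserved by each factor, so the target group is as claimed (up to a possible trivial negative-definite summand of dimension $s$, which does not affect the singular-value analysis below).

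The key observation is that, since each $\rho_i$ takes values in $\SO(1,k_i)$, the singular values of $\rho_i(\g)$ are $\sigma_1(\rho_i(\g))$, many $1$'s, and $\sigma_1(\rho_i(\g))^{-1}$. Consequently the multiset of singular values of $\rho(\g)$ is the disjoint union of those of the individual blocks. Convex cocompactness of $\rho_1$ and $\rho_2$ (equivalently, their $\{\sroot_1\}$-Anosov property in $\SO(1,k_i)$) gives $\log\sigma_1(\rho_i(\g))\asymp|\g|$ with strictly positive growth rate, and strict domination --- interpreted as the existence of $c>0$, $\mu>1$ with $\sigma_1(\rho_1(\g))>c\,\sigma_1(\rho_2(\g))^\mu$ --- forces $\sigma_1(\rho_1(\g))>\sigma_1(\rho_2(\g))$ for all $\g$ of sufficiently large word length. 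Thus, for such $\g$, the top singular values of $\rho(\g)$, in decreasing order, consist of $p-1$ copies of $\sigma_1(\rho_1(\g))$, followed by $\sigma_1(\rho_2(\g))$, followed by a block of $1$'s.

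The two Anosov gaps then follow immediately. For the $\{\sroot_{p-1}\}$-gap, strict domination combined with convex cocompactness of $\rho_2$ yields
$$\frac{\sigma_p(\rho(\g))}{\sigma_{p-1}(\rho(\g))}=\frac{\sigma_1(\rho_2(\g))}{\sigma_1(\rho_1(\g))}\leq\frac{1}{c\,\sigma_1(\rho_2(\g))^{\mu-1}}\leq Ce^{-\nu|\g|};$$
for the $\{\sroot_p\}$-gap, one simply notes that $\sigma_{p+1}(\rho(\g))/\sigma_p(\rho(\g))=1/\sigma_1(\rho_2(\g))$ decays exponentially by convex cocompactness of $\rho_2$. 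Hence $\rho$ is $\{\sroot_{p-1},\sroot_p\}$-Anosov, which is exactly the hypothesis of Proposition \ref{p.Opq1}. I anticipate no serious obstacle here: the argument is bookkeeping around the singular-value structure of block-diagonal elements. The only subtlety is the correct interpretation of ``strictly dominates'' as requiring $\mu>1$, without which the first gap would fail to close exponentially.
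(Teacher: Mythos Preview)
Your argument for the $\{\sroot_{p-1},\sroot_p\}$-Anosov property is correct and matches the paper's reasoning. However, you have missed half of the proof. ``The hypothesis of Proposition~\ref{p.Opq1}'' does not consist only of the Anosov condition: the point of Proposition~\ref{p.Opq1} is its \emph{if and only if} clause, and the whole purpose of Proposition~\ref{p.Opq2} is to produce examples to which the hyperconvexity conclusion applies. That requires verifying the transversality condition: for every pairwise distinct triple $x,y,z\in\bord\G$, the sum
\[
x^p_\rho+(z^p_\rho\cap y^{q+1}_\rho)+y^p_\rho
\]
is direct. You have not addressed this at all.

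The paper handles this by writing out the boundary map $\xi^p_\rho$ explicitly in terms of the boundary maps $\xi^1_{\rho_1}$ and $\xi^1_{\rho_2}$ of the factors. Because of the block-diagonal structure, $x^p_\rho$ is the span of the $p-1$ copies of $x^1_{\rho_1}$ together with $x^1_{\rho_2}$, and similarly for the other maps. One then checks that the intersection $z^p_\rho(x^p_\rho\cap y^{q+1}_\rho)\cap y^p_\rho$ (in the graph interpretation used in the proof of Proposition~\ref{p.Opq1}) reduces to $z^1_{\rho_2}\cap y^1_{\rho_2}$, which is trivial since $\rho_2$ is Anosov. This geometric verification is the substantive content of the proposition; the singular-value bookkeeping you carried out is only the preliminary step.
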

\begin{proof}
The representation $\rho$ is $\sroot_p$-Anosov as $\rho_2$ is convex cocompact, and is $\sroot_{p-1}$-Anosov as $\rho_1$ strictly dominates $\rho_2$. Explicitly writing down the boundary map $\xi^p$ associated to $\rho$ in term of the boundary maps $\xi^1_1:\G\to\bord\HR^k$, $\xi^1_2:\G\to\bord\HR^l$ associated to $\rho_1,\rho_2$ one verifies that $z^p_\rho(x^p_\rho\cap y^{q+1}_\rho)\cap y^p_\rho\cong z^1_{\rho_2}\cap y^1_{\rho_2}$ and the latter intersection is empty as the representation is Anosov.
\end{proof}
Danciger-Gueritaud-Kassel \cite[Proposition 1.8]{DGK18} gave an explicit construction of convex cocompact actions $\rho_1,\rho_2$ on $\mathbb H^8_\R$ of the the  group $\G$ generated  by  reflections  in  the  faces  of  a
4-dimensional regular right-angled  120-cell, such that $\rho_1$ strictly dominates $\rho_2$ and therefore Proposition \ref{p.Opq2} applies. In this case the boundary $\bord\G$ is a 3 sphere. 
It is also easy to construct representations satisfying the assumption of Proposition \ref{p.Opq2} when the group $\G$ is free, and in this case it one can  deform the representation $\rho:\F_n\to\SO(p,q)$ to obtain a Zariski dense representation whose image under $\wedge^p$ is locally conformal. 
We also expect that many more convex cocompact subgroups in rank one have the same property, and it is probably possible to give further examples of situations in which Proposition \ref{p.Opq1} applies for more complicated groups, as, for example, hyperbolic Coxeter groups.  

The same argument as in the proof of Proposition \ref{p.Opq1} gives the following
\begin{prop}\label{r.propH}
Let $\rho:\G\to\SL_d(\K)$ be $\{\sroot_{p-1},\sroot_p,\sroot_s\}$-Anosov. Assume that
\begin{enumerate}
\item  there exist constants $c,\mu$ such that
$$\frac{\sigma_{p-1}(\rho(\gamma))\sigma_s(\rho(\gamma))}{\sigma_p(\rho(\gamma))\sigma_{p+1}(\rho(\gamma))}>ce^{\mu|\gamma|},$$
\item for every $x,y,z\in\partial\G$ pairwise distinct, the sum
$$x^p_\rho+(z^p_\rho\cap y^{d-p+1}_\rho)+ y^{d-s}_\rho$$
is direct,
\end{enumerate}
then $\wedge^p\rho$ is $(1,1,s-p+1)$-hyperconvex. 
\end{prop}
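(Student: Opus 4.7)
The plan mirrors the proof of Proposition~\ref{p.Opq1}, with $q$ replaced by $s$ and the $\SO(p,q)$-symmetry $\sigma_{d-i+1}=\sigma_i^{-1}$ traded for hypothesis~(1). I split the argument into three tasks: verifying the Anosov property for $\wedge^p\rho$, identifying its boundary maps, and translating hyperconvexity into condition~(2).

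The singular values of $\wedge^p\rho(\gamma)$ are the products $\sigma_{i_1}\cdots\sigma_{i_p}$ (with $\sigma_k=\sigma_k(\rho(\gamma))$ and $i_1<\cdots<i_p$). The three gaps of $\rho$ imply that for $|\gamma|$ large the top two singular values of $\wedge^p\rho(\gamma)$ are $\sigma_1\cdots\sigma_p$ and $\sigma_1\cdots\sigma_{p-1}\sigma_{p+1}$, yielding $\sroot_1$-Anosov directly from the $\sroot_p$-gap of $\rho$. The top $s-p+1$ singular values are the $\sigma_1\cdots\sigma_{p-1}\sigma_k$ for $p\leq k\leq s$; the two natural competitors for the next singular value are $\sigma_1\cdots\sigma_{p-1}\sigma_{s+1}$, whose ratio to $\sigma_1\cdots\sigma_{p-1}\sigma_s$ decays by $\sroot_s$-Anosov, and $\sigma_1\cdots\sigma_{p-2}\sigma_p\sigma_{p+1}$, whose ratio is $\sigma_p\sigma_{p+1}/(\sigma_{p-1}\sigma_s)$ and decays by hypothesis~(1).

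For the boundary maps, the $\wedge^p$-equivariant map
$$L\colon\cal F_{p-1,p,s}(\K^d)\to\cal F_{1,s-p+1}(\wedge^p\K^d),\qquad (P_1,P_2,P_3)\mapsto\bigl(\wedge^p P_2,\,\wedge^{p-1}P_1\wedge P_3\bigr),$$
where $\wedge^{p-1}P_1\wedge P_3$ denotes the $(s-p+1)$-dimensional span of $v_1\wedge\cdots\wedge v_{p-1}\wedge w$ with $v_i\in P_1$, $w\in P_3$, sends attractors of proximal elements of $\SL_d(\K)$ to attractors of their $\wedge^p$-images. Composing with $x\mapsto(x^{p-1}_\rho,x^p_\rho,x^s_\rho)$ yields, via Proposition~\ref{p.bdry}, $\xi^1_{\wedge\rho}(x)=\wedge^p(x^p_\rho)$ and $\xi^{s-p+1}_{\wedge\rho}(x)=\wedge^{p-1}x^{p-1}_\rho\wedge x^s_\rho$.

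Checking $(1,1,s-p+1)$-hyperconvexity, the main technical step, follows the linear-algebra computation of Proposition~\ref{p.Opq1}. For pairwise distinct $x,y,z\in\bord\G$, fix a basis $b_1,\ldots,b_p$ of $x^p_\rho$ with $b_1,\ldots,b_{p-1}$ spanning $x^{p-1}_\rho$ and $\langle b_p\rangle=x^p_\rho\cap y^{d-p+1}_\rho$ (one-dimensional by $\sroot_p$-transversality), view $z^p_\rho$ as the graph of a linear map $\phi\colon x^p_\rho\to y^{d-p}_\rho$, and expand $c_1\wedge\cdots\wedge c_p$ for $c_i=b_i+\phi(b_i)$. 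Using the decomposition $\K^d=y^{p-1}_\rho\oplus y^{d-p+1}_\rho$ supplied by $\sroot_{p-1}$-Anosov, one checks that $\xi^{s-p+1}_{\wedge\rho}(y)$ sits in the bigraded piece with exactly one factor in $y^{d-p+1}_\rho$, so every summand other than $b_1\wedge\cdots\wedge b_p$ (which lies in $\xi^1_{\wedge\rho}(x)$) and $\pm b_1\wedge\cdots\wedge b_{p-1}\wedge\phi(b_p)$ contributes two or more factors in $y^{d-p+1}_\rho$ and therefore lies in $\xi^{N-(s-p+1)}_{\wedge\rho}(y)$. Further splitting $y^{d-p+1}_\rho=(y^s_\rho\cap y^{d-p+1}_\rho)\oplus y^{d-s}_\rho$, the projection of the surviving term modulo $\xi^1_{\wedge\rho}(x)$ onto $\xi^{s-p+1}_{\wedge\rho}(y)$ is nonzero precisely when $\phi(b_p)\notin y^{d-s}_\rho$, equivalently when $x^p_\rho+(z^p_\rho\cap y^{d-p+1}_\rho)+y^{d-s}_\rho$ is direct, i.e., condition~(2). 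The main obstacle is the bigrading bookkeeping that isolates this single contributing term.
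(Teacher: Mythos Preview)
Your strategy matches the paper's (which simply refers back to Proposition~\ref{p.Opq1}), and the first two steps---the singular-value count and the identification of the boundary maps via $L$---are correct. The linear-algebra step, however, rests on a recurring confusion: you treat $\xi^k$ and $\xi^{d-k}$ at the \emph{same} boundary point as complementary, when in the paper's convention they are nested (Proposition~\ref{p.splitting compatibility}). Thus ``$\K^d=y^{p-1}_\rho\oplus y^{d-p+1}_\rho$'' is not a decomposition (the first sits inside the second), ``$y^{d-p+1}_\rho=(y^s_\rho\cap y^{d-p+1}_\rho)\oplus y^{d-s}_\rho$'' fails for the same reason, and $\xi^{s-p+1}_{\wedge\rho}(y)\subset\xi^{N-(s-p+1)}_{\wedge\rho}(y)$, so locating the former in a bigraded piece does not imply that the latter contains the remaining pieces.

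Your conclusions are nonetheless correct; here is a way to justify them. Working in an eigenbasis adapted to the flag at $y$ one checks directly that
\[
\xi^{N-(s-p+1)}_{\wedge\rho}(y)\;=\;\wedge^2 y^{d-p+1}_\rho\wedge\wedge^{p-2}\K^d\;+\;y^{d-s}_\rho\wedge\wedge^{p-1}\K^d,
\]
these being exactly the weight spaces below the top $s-p+1$. Now use the genuine splitting $\K^d=x^{p-1}_\rho\oplus y^{d-p+1}_\rho$ (transversality at \emph{distinct} points, from $\sroot_{p-1}$-Anosov). Since $b_1,\ldots,b_{p-1}\in x^{p-1}_\rho$ while $b_p,\phi(b_1),\ldots,\phi(b_p)\in y^{d-p+1}_\rho$, every term in the expansion of $c_1\wedge\cdots\wedge c_p$ carrying at least two factors from the latter list lies in the first summand above; the only surviving terms are $b_1\wedge\cdots\wedge b_p$ and $b_1\wedge\cdots\wedge b_{p-1}\wedge\phi(b_p)$, and the second is nonzero modulo $\xi^{N-(s-p+1)}_{\wedge\rho}(y)$ exactly when $\phi(b_p)\notin y^{d-s}_\rho$, which unwinds to condition~(2).
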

Observe that the first condition, which guarantees that the map 
$$\begin{array}{cccc}
L:&\cal F_{p-1,p,s}(\K^{d})&\to&\cal F_{1,s-p+1}(\wedge^p\K^{d})\\
&(P,Q,R)&\mapsto&(\wedge^p(Q),\wedge^{p-1}(P)\wedge R)
\end{array}
$$
is proximal, is automatic if $s=p+1$.

\section{Fundamental groups of surfaces}\label{s.surfaces}

Let us denote by $\G_S$ a word-hyperbolic group such that\footnote{A celebrated Theorem of  Gabai \cite{Gabai} states that a hyperbolic group $\G_S$ such that $\bord\G_S$ is a circle is virtually the fundamental group of a connected, closed genus $\geq2$ surface. We will not use this fact.} $\bord\G_S$ is homeomorphic to $\mathbb S^1.$ One has the following direct consequence of Proposition \ref{diff} and Corollary \ref{2FrenetHffDim}.

\begin{cor}\label{h=1hyper} Let $\rho:\G_S\to\PGL_d(\R)$ be $(1,1,2)$-hyperconvex, then $h^{\sroot_1}_\rho=1.$
\end{cor}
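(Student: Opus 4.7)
The plan is to combine the two results cited by the authors. First I would invoke Corollary~\ref{2FrenetHffDim} for the $(1,1,2)$-hyperconvex representation $\rho$, which immediately gives the identification
$$h^{\sroot_1}_\rho = \Hff\bigl(\xi^1_\rho(\bord\G_S)\bigr).$$
So the problem reduces to computing the Hausdorff dimension of the limit curve $\xi^1_\rho(\bord\G_S)\subset\P(\R^d)$.

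Next I would apply Proposition~\ref{diff} with the parameter $r=2$: since $\bord\G_S$ is homeomorphic to $\mathbb S^1$, i.e.\ a sphere of dimension $r-1=1$, the proposition asserts that $\xi^1_\rho(\bord\G_S)$ is a $\class^1$ manifold, namely a $\class^1$-embedded circle in $\P(\R^d)$, with tangent space at $x^1_\rho$ equal to $\sf T_{x^1_\rho}\P(x^2_\rho)$. Note that $\xi^1_\rho$ is injective by hyperconvexity (Corollary~\ref{r.obstruction} applied to the stereographic projection viewpoint, or directly from the transversality), so the image is genuinely a topological circle.

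To conclude I would use the standard fact that a $\class^1$ one-dimensional submanifold of $\P(\R^d)$ is locally bi-Lipschitz to an interval of $\R$: around each point one can use a $\class^1$ chart and the compactness of $\xi^1_\rho(\bord\G_S)$ to extract a finite atlas of bi-Lipschitz parametrizations. Hausdorff dimension being preserved by bi-Lipschitz maps and being the maximum over a finite cover, this yields $\Hff\bigl(\xi^1_\rho(\bord\G_S)\bigr)=1$. Combined with the first step, this gives $h^{\sroot_1}_\rho=1$.

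There is essentially no obstacle here: the statement is a clean packaging of Corollary~\ref{2FrenetHffDim} (which equates critical exponent and Hausdorff dimension under hyperconvexity) and Proposition~\ref{diff} (which upgrades the limit set to a $\class^1$ manifold when $\bord\G_S$ has the right dimension), together with the trivial computation that a $\class^1$ curve has Hausdorff dimension $1$. The only mild care needed is to note that $r=2$ really is the correct value of $r$ in Proposition~\ref{diff} and that the dimension of $\bord\G_S$ matches $r-1$; both are immediate from the hypotheses.
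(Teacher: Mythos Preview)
Your proof is correct and matches the paper's approach exactly: the authors state this corollary as a direct consequence of Proposition~\ref{diff} and Corollary~\ref{2FrenetHffDim}, and your write-up simply makes explicit the trivial final step that a $\class^1$ circle has Hausdorff dimension $1$.
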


\subsection{Weak irreducibility and closedness}
A projective Anosov representation $\rho:\G\to\PGL_d(\K)$ is \emph{weakly irreducible} if the image of its boundary map is not contained in a proper subspace of $\P(\K^d)$. Clearly if $\rho$ is irreducible, then $\rho$ is weakly irreducible, but it is possible to construct examples of weakly irreducible Anosov representations with non reductive image.

The assumption of weak irreducibility can be used to study properties of the stereographic projection  $\stp_{z,\rho}$ defined in Definition \ref{stereo}.

\begin{lemma}\label{stereographic2}
Let $\rho:\G\to\PGL_d(\K)$ be $\{\sroot_1,\sroot_p\}$-Anosov.
If the stereographic projection  $\stp_{z,\rho}:\bord\G\setminus \{z\}\to\P(\K^d)$ collapses an open set $U\subset \bord\G$, then $\stp_{z,\rho}$ is constant. In particular the representation $\rho$ is not weakly irreducible. 
\end{lemma}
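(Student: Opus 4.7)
The plan is to assume that $\stp_{z,\rho}$ takes the constant value $L\in\P(\K^d/z^{d-p}_\rho)$ on $U$ and to use the dynamics of the $\G$-action on $\bord\G$ to propagate the resulting containment to all of $\bord\G$. Lifting $L$ to a one-dimensional subspace $\tilde L\subset\K^d$, set $W:=\tilde L+z^{d-p}_\rho$, a subspace of dimension $d-p+1$ satisfying $\xi^1(U)\subset\P(W)$. Both assertions of the lemma will follow once $\xi^1(\bord\G)\subset\P(W)$ is established: non-weak-irreducibility because $\dim W=d-p+1<d$ in the nontrivial case $p\ge 2$ (for $p=1$ the target of $\stp_{z,\rho}$ is a point and the statement is vacuous), and constancy of $\stp_{z,\rho}$ because $W/z^{d-p}_\rho=L$.

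First I would fix a loxodromic element $\g\in\G$ whose repelling fixed point $\g^-$ lies in $U$, whose attracting fixed point $\g^+$ is different from $z$, and such that $\rho(\g)$ is strongly regular; such $\g$ exists by density of loxodromic fixed-point pairs in $\bord^{(2)}\G$ together with openness of strong regularity. For every $y\in\bord\G\setminus\{\g^+\}$ the iterates $\g^{-n}y$ converge to $\g^-\in U$, hence $\g^{-n}y\in U$ for all sufficiently large $n$, and equivariance of $\xi^1$ yields
$$\xi^1(y)=\rho(\g^n)\,\xi^1(\g^{-n}y)\in\rho(\g^n)\,\P(W).$$

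The heart of the argument is then to identify the limit $\rho(\g^n)W\to W'_\g$ inside $\Gr_{d-p+1}(\K^d)$. Using that $\rho$ is simultaneously $\{\sroot_1\}$- and $\{\sroot_p\}$-Anosov, $\rho(\g)$ is proximal on $\P(\K^d)$ with attractor $(\g^+)^1_\rho$ and on $\Gr_{d-p}(\K^d)$ with attractor $(\g^+)^{d-p}_\rho$. Adjusting $\tilde L$ modulo $z^{d-p}_\rho$ so as to avoid the repelling hyperplane $(\g^-)^{d-1}_\rho$, and using $z\neq\g^-$, one gets $\rho(\g^n)\tilde L\to(\g^+)^1_\rho$ and $\rho(\g^n)z^{d-p}_\rho=(\g^nz)^{d-p}_\rho\to(\g^+)^{d-p}_\rho$. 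Strong regularity of $\rho(\g)$ supplies a gap of index $d-p+1$, so the $(d-p+1)$-dimensional subspaces $\rho(\g^n)W$ converge to a well-defined limit $W'_\g$ which, by the two convergences just recorded, contains $(\g^+)^{d-p}_\rho$. Combining with the displayed inclusion gives $\xi^1(y)\in\P(W'_\g)$ for every $y\in\bord\G$ (with $y=\g^+$ handled by continuity), already proving that $\rho$ is not weakly irreducible.

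To upgrade this to constancy of $\stp_{z,\rho}$, I would vary $\g$ along a sequence with $\g^+\to z$ while keeping $\g^-$ inside $U$. Continuity of $\xi^{d-p}$ and of the attractor map in the Grassmannian shows that the limits $W'_\g$ converge to a $(d-p+1)$-dimensional subspace $W'_\infty$ that contains $z^{d-p}_\rho$. Since $\xi^1(U)\subset\P(W)\cap\P(W'_\infty)$ and the image of $\xi^1(U)$ in $\K^d/z^{d-p}_\rho$ spans the line $L$, the subspace $\langle\xi^1(U)\rangle+z^{d-p}_\rho$ equals $W$, forcing $W\subset W'_\infty$ and hence $W=W'_\infty$ by equality of dimensions. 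Therefore $\xi^1(\bord\G)\subset\P(W)$ and $\stp_{z,\rho}\equiv L$ on $\bord\G\setminus\{z\}$. The main technical obstacle is the identification of the Grassmannian limit $W'_\g$: since $\rho$ need not be $\{\sroot_{d-p+1}\}$-Anosov, one cannot invoke a group-level attractor in $\Gr_{d-p+1}(\K^d)$, and must instead combine strong regularity of an individual well-chosen $\g\in\G$ with the decomposition $W=\tilde L+z^{d-p}_\rho$ to pin down the limit unambiguously.
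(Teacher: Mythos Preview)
Your route is quite different from the paper's and, as written, has a genuine gap at the step where you invoke strong regularity of $\rho(\g)$. The hypotheses only supply gaps at indices $1$, $p$, $d-p$, $d-1$; there is no reason any $\g\in\G$ should satisfy $\sigma_{d-p+1}(\rho(\g))>\sigma_{d-p+2}(\rho(\g))$ --- think of $\rho$ factoring through a subgroup in which this equality holds for every element. Even when a gap at $d-p+1$ is available, your ``two convergences'' do not pin down $\lim_n\rho(\g)^nW$: since $(\g^+)^1_\rho\subset(\g^+)^{d-p}_\rho$, the sum $\rho(\g)^n\tilde L+\rho(\g)^nz^{d-p}_\rho$ drops dimension in the limit, so knowing the limits of the two summands does not determine the limit in $\Gr_{d-p+1}(\K^d)$; one still has to check that $W$ misses the bottom $(p-1)$-eigenspace of $\rho(\g)$, which you do not. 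Finally, the claimed continuity of $\g\mapsto W'_\g$ as $\g^+\to z$ would require exactly the boundary map $\xi^{d-p+1}$ you note is unavailable. All of this can be patched by replacing limits with accumulation points in the compact Grassmannian at both stages (any accumulation point of $(\rho(\g)^nW)_n$ contains both $(\g^+)^{d-p}_\rho$ and $\xi^1(\bord\G)$, and one then accumulates again as $\g^+\to z$), but the repaired argument is longer, not shorter.

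The paper avoids these difficulties entirely by reversing the role of $\g$: it chooses $\g$ with \emph{attractor} $\g^+\in U$, shrinks $U$ so that $\g U\subset U$, and observes that the linear span $V:=\langle\xi^1(U)\rangle$ is then $\rho(\g)$-invariant directly from equivariance. North--south dynamics gives $\bigcup_n\g^{-n}U=\bord\G\setminus\{\g^-\}$, so $\xi^1(\bord\G\setminus\{\g^-\})\subset V\subset W$, and continuity of $\xi^1$ finishes. No regularity of $\rho(\g)$ beyond the Anosov hypothesis, no Grassmannian limits, no variation of $\g$.
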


\begin{proof}
Indeed, as fixed points of attractive elements are dense in $\bord\G$ we can find $\g\in\G$ with $\g^+\in U$. Up to shrinking $U$ we can assume that $\gamma\cdot U\subset U$. Let $V\subset \K^d$ be the smallest subspace containing $\xi(t)$ for every $t$ in $U$. As  $\stp_{z,\rho}|_U$ is constant, the subspace $V$ is proper, furthermore $\rho(\gamma)V=V$, since if $\xi(x_1),\ldots,\xi(x_k)$ is a basis of $V$ then   $\xi(\gamma x_1),\ldots,\xi(\gamma x_k)$ are also linearly independent vectors contained in $V$. In particular, for every $n$, $\stp_{z,\rho}(\gamma^{-n}U)$ is constant. As the union of the sets of the form $\gamma^{-n}U$ is the complement of a point in $\bord\G$, the first result follows by continuity of  $\stp_{z,\rho}$.

If the map $\stp_{z,\rho}$ is constant then, for every $x\in\bord\G- \{z\}$, the image of the boundary map is contained in the proper subspace $x^1_\rho+z_\rho^{d-r}$, hence the representation is not weakly irreducible.
\end{proof}

Lemma \ref{stereographic2} is particularly useful to analyze properties of $(1,1,2)$-representations of groups $\G_S.$
The following argument is very similar to Labourie \cite[Proposition 8.3]{Labourie-Anosov}. 

\begin{prop}\label{FrenetClosed} 
The space of real weakly irreducible $\{\sroot_1,\sroot_2\}$-Anosov representations of $\G_S$ that are not $(1,1,2)$-hyperconvex is open.
\end{prop}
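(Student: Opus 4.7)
The plan is to establish the dual statement---that $(1,1,2)$-hyperconvexity is a closed condition within the space of weakly irreducible $\{\sroot_1,\sroot_2\}$-Anosov representations of $\G_S$---via a topological monotonicity argument on stereographic projections. Suppose $\rho_n\to\rho$ is a convergent sequence of weakly irreducible $\{\sroot_1,\sroot_2\}$-Anosov representations with every $\rho_n$ being $(1,1,2)$-hyperconvex; I will argue that $\rho$ itself is $(1,1,2)$-hyperconvex. Assume for contradiction that it is not. Then there exist pairwise distinct $x,y,z\in\bord\G_S$ for which $\stp_{z,\rho}(x)=\stp_{z,\rho}(y)$, where $\stp_{z,\rho}\colon\bord\G_S\setminus\{z\}\to\P(\R^d/z^{d-2}_\rho)$ is the stereographic projection from Definition \ref{stereo}. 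Set $I:=\bord\G_S\setminus\{z\}$, an open interval because $\bord\G_S$ is a circle; the target $\P(\R^d/z^{d-2}_\rho)$ is $\R P^1\cong S^1$ since $\dim_\R \R^d/z^{d-2}_\rho=2$.

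By hyperconvexity each $\stp_{z,\rho_n}\colon I\to S^1$ is a continuous injection; by Corollary \ref{bdry.continuous} the maps $\xi^1$ and $\xi^{d-2}$ depend continuously on the representation, so after fixing a reference $2$-plane transverse to $z^{d-2}_\rho$ and using it to identify the nearby targets, the maps $\stp_{z,\rho_n}$ converge to $\stp_{z,\rho}$ uniformly on compacta of $I$. I pass to lifts under the universal cover $\R\to S^1$, obtaining strictly monotonic continuous maps $\widetilde{\stp}_n\colon I\to\R$. After extracting a subsequence I may assume all $\widetilde{\stp}_n$ are strictly increasing; after normalizing at a basepoint, they converge uniformly on compacta to a weakly increasing lift $\widetilde{\stp}$ of $\stp_{z,\rho}$. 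A continuous injection from the interval $I$ into $S^1$ has image an open arc of length at most $1$ in the $\R/\Z$-metric, so $\widetilde{\stp}_n(t)-\widetilde{\stp}_n(s)<1$ for every $s<t$ in $I$.

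The identity $\stp_{z,\rho}(x)=\stp_{z,\rho}(y)$ forces $\widetilde{\stp}(y)-\widetilde{\stp}(x)=k\in\Z$; monotonicity gives $k\geq 0$, and passing the length bound to the limit gives $k\leq 1$. If $k=0$, weak monotonicity makes $\widetilde{\stp}$ constant on the subinterval of $I$ from $x$ to $y$, so $\stp_{z,\rho}$ collapses the open arc between $x$ and $y$. If $k=1$, then for any $s\in I$ with $s<x$ the estimate $\widetilde{\stp}_n(x)-\widetilde{\stp}_n(s)<1-(\widetilde{\stp}_n(y)-\widetilde{\stp}_n(x))\to 0$ forces $\widetilde{\stp}(s)=\widetilde{\stp}(x)$, and symmetrically for $t>y$, so $\stp_{z,\rho}$ is constant on each of the two complementary open subarcs of $I\setminus[x,y]$. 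In either case $\stp_{z,\rho}$ collapses a nonempty open subset of $\bord\G_S$, so Lemma \ref{stereographic2} implies that $\rho$ is not weakly irreducible, contradicting the hypothesis.

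The main technical obstacle is the borderline case $k=1$, where the strict bound $\widetilde{\stp}_n(t)-\widetilde{\stp}_n(s)<1$ must be squeezed in the limit to localize the entire variation of $\widetilde{\stp}$ to the arc $[x,y]$. This step, together with the reduction to a monotone one-dimensional problem, depends essentially on both the one-dimensionality of $\bord\G_S$ (so that lifts exist and are monotonic) and the two-dimensionality of the quotient $\R^d/z^{d-2}_\rho$ (so that the target of the stereographic projection is a circle); neither property persists in higher-dimensional or complex settings, which is why the statement is restricted to surface groups.
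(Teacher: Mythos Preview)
Your argument is correct and arrives at the same conclusion using the same two ingredients---the one-dimensional topology of the stereographic projection $\stp_{z,\rho}$ and Lemma \ref{stereographic2}---but the logical structure is genuinely different from the paper's. The paper proves the \emph{openness} statement directly: starting from a non-hyperconvex weakly irreducible $\rho$, it uses that $\stp_{z,\rho}$ is non-injective yet does not collapse intervals to extract, via the intermediate value theorem, an interval $I\subset\bord\G_S\setminus\{z\}$ and a point $w\notin I$ with $\stp_{z,\rho}(w)$ lying in the interior of $\stp_{z,\rho}(I)$; this overlap configuration is manifestly stable under perturbation, so nearby representations also have non-injective stereographic projection. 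You instead prove \emph{closedness} via a limiting argument: lifting the injective maps $\stp_{z,\rho_n}$ to strictly monotone maps into $\R$, you show their weakly monotone limit must collapse an open arc if it fails to be injective, invoking Lemma \ref{stereographic2} at the limit rather than at the start. The paper's approach is slightly more economical---no choice of lifts, no case split on the winding number $k$---while your approach makes the role of monotone limits explicit and perhaps generalizes more transparently to other one-dimensional situations. Both depend in the same essential way on $\bord\G_S$ being a circle and on the target being $\P(\R^2)$.
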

\begin{proof}
Let $\rho:\G_S\to\PGL_d(\R)$ be $\{\sroot_1,\sroot_2\}$-Anosov and not $(1,1,2)$-hyperconvex. By definition, there exists a triple of pairwise distinct points $x,y,z\in\bord\G_S$ such that \begin{equation}
\label{not-Frenet}(x^1_\rho\oplus y^1_\rho)\cap z^{d-2}_\rho\neq0,
\end{equation} 
and thus the stereographic projection $\stp_{z,\rho}$ is not injective.

Note that $\P(\R^d/z_\rho^{d-2})$ is topologically a circle. Therefore the stereographic projection  $\stp_{z,\rho}$ is a map from an interval with a point removed to a circle that:
\begin{itemize}
\item[-] does not collapse intervals, 
\item[-] is not injective.
\end{itemize} 

One can therefore, using the intermediate value theorem, find  an interval $I\subset\bord\G_S\setminus\{z\}$ and a point $w\in\bord\G_S\setminus(\{z\}\cup I)$ such that  $\stp_{z,\rho}(w)$ belongs to the interior of $\stp_{z,\rho}(I).$ 

This last property will hold for any map close enough to $\stp_{z,\rho},$ in particular for the stereographic projection $\stp_{z,\eta}$ for some $\eta$ close to $\rho.$ Thus, $\stp_{z,\eta}$ is not injective and hence $\eta$ is not $(1,1,2)$-hyperconvex, as desired.

\end{proof}

Recall from Definition \ref{coherentDef} that a reducible representation $\pi:\SL_2(\K)\to\SL_d(\K)$ is $k$-coherent if it has a gap of index $k$ and its highest $k$ weights belong to the same irreducible factor.
Combining results from previous sections one has the following.

\begin{cor}\label{clopen} Let $\pi:\SL_2(\R)\to\SL_d(\R)$ be a $2$-coherent representation and $\rho:\G_S\to\PSL_2(\R)$ be co-compact, then any deformation $\eta$ of $\pi\rho$ among weakly irreducible $\{\sroot_1,\sroot_2\}$-Anosov representations into $\PSL_d(\R)$ is $(1,1,2)$-hyperconvex. In particular, verifies: 
\begin{itemize}
\item[-] has $\class^1$-limit set in $\P(\R^d),$
\item[-] the exponential growth rate $h^{\sroot_1}_\eta=1.$
\end{itemize}
\end{cor}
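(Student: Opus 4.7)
The plan is to combine the openness and closedness properties of $(1,1,2)$-hyperconvexity with Proposition~\ref{2-coherent} to propagate hyperconvexity from the base point $\pi\rho$ along any deformation that stays inside the weakly irreducible $\{\sroot_1,\sroot_2\}$-Anosov locus, and then to read off the two consequences from Proposition~\ref{diff} and Corollary~\ref{h=1hyper}.

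First I would check that the base point $\pi\rho$ is itself $(1,1,2)$-hyperconvex: since $\rho$ is cocompact into $\PSL_2(\R)$ it is in particular convex cocompact, and since $\pi$ is $2$-coherent, Proposition~\ref{2-coherent} applied with $p=q=1$, $k=2$ gives exactly this. The main step is then a clopen argument on the space of weakly irreducible $\{\sroot_1,\sroot_2\}$-Anosov representations of $\G_S$: Proposition~\ref{FrenetOpen} provides openness of $(1,1,2)$-hyperconvexity in the entire representation variety, while Proposition~\ref{FrenetClosed} says that the complement of the $(1,1,2)$-hyperconvex locus is also open once restricted to the weakly irreducible Anosov subspace. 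Hence $(1,1,2)$-hyperconvexity is clopen on that subspace; combined with the fact that an entire $\hom(\G_S,\PGL_d(\R))$-neighborhood of $\pi\rho$ already consists of $(1,1,2)$-hyperconvex representations, any path starting at $\pi\rho$ and staying in the weakly irreducible $\{\sroot_1,\sroot_2\}$-Anosov locus is forced to remain in the $(1,1,2)$-hyperconvex set. In particular $\eta$ is $(1,1,2)$-hyperconvex.

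The point to be careful with is that $\pi\rho$ typically is \emph{not} itself weakly irreducible: if $\pi$ decomposes with several irreducible factors, the limit curve of $\pi\rho$ lies in the proper subspace spanned by the top factor $\sym^{d_1-1}(\R^2)$ (as recalled in the proof of Proposition~\ref{2-coherent}). Consequently Proposition~\ref{FrenetClosed} cannot be invoked at $\pi\rho$ directly; one must first use openness of $(1,1,2)$-hyperconvexity in the full representation variety to propagate the property slightly away from $\pi\rho$, and only afterwards apply the clopen dichotomy inside the weakly irreducible Anosov locus. Once $\eta$ is known to be $(1,1,2)$-hyperconvex, the two bulleted conclusions are immediate: since $\bord\G_S\simeq\mathbb S^1$ is a sphere of dimension $r-1=1$, Proposition~\ref{diff} with $r=2$ yields that $\xi^1_\eta(\bord\G_S)$ is a $\class^1$ submanifold of $\P(\R^d)$, and Corollary~\ref{h=1hyper} directly gives $h^{\sroot_1}_\eta=1$.
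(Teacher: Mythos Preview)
Your proof is correct and follows essentially the same approach as the paper: invoke Proposition~\ref{2-coherent} for the base point, combine openness (Proposition~\ref{FrenetOpen}) with closedness among weakly irreducible representations (Proposition~\ref{FrenetClosed}) to obtain a clopen set, and conclude with Proposition~\ref{diff} and Corollary~\ref{h=1hyper} (the paper cites Corollary~\ref{2FrenetHffDim} instead, but these are equivalent here). Your explicit remark that $\pi\rho$ need not be weakly irreducible when $\pi$ has several irreducible factors, so that openness in the full representation variety must be used before invoking the clopen dichotomy inside the weakly irreducible locus, is a genuine subtlety that the paper's terse proof glosses over; it is a welcome clarification rather than a deviation.
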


\begin{proof} Proposition \ref{2-coherent} states that $\pi\rho$ is $(1,1,2)$-hyperconvex. Proposition \ref{FrenetOpen} states hyperconvexity is an open property and, since $\bord\G_S$ is topologically a circle, Proposition \ref{FrenetClosed} implies that $(1,1,2)$-hyperconvex is closed among weakly irreducible $\{\sroot_1,\sroot_2\}$-Anosov representations. The remaining statements follow from Proposition \ref{diff} and Corollary \ref{2FrenetHffDim} for $\K=\R.$
\end{proof}
This result can be useful to distinguish some components of weakly irreducible Anosov representations (similar bounds on the number of connected components of Anosov representations were obtained with different techniques by Stecker-Treib \cite[Corollary 8.2]{SteTre}).

\subsection{The Hitchin component of $\PSL_d(\R)$}\label{HitchinPSL}

Let $S$ be a closed connected oriented surface of genus $\geq2$. The \emph{Hitchin component} of $\PSL_d(\R)$ is a connected component of the character variety $\frak X(\pi_1S,\PSL_d(\R))$ that contains a \emph{Fuchsian representation}, i.e. a representation that factors as $$\pi_1S\to\PSL_2(\R)\overset{\iota_d}{\longrightarrow} \PSL_d(\R),$$ where the first arrow is a the choice of a hyperbolic metric on $S.$ Such a connected component is usually denoted by $\hitchin_d(S)$ and an element $\rho\in\hitchin_d(S)$ is called a \emph{Hitchin representation}.

Recall from Labourie \cite{Labourie-Anosov} that a map $\xi:\bord\G\to\cal F(\R^d)$ satisfies Property (H) if for every triple of distinct points $x,y,z$ and every integer $k$ one has
$$\xi^{k+1}(y)+(\xi^{k+1}(z)\cap \xi^{n-k}(x))+\xi^{n-k-2}(x)=\R^d.$$ 
One has the following central result by Labourie \cite{Labourie-Anosov}.
\begin{thm}[Labourie \cite{Labourie-Anosov}]\label{HitchinHyperconvex} 
Every Hitchin representation $\rho:\G\to\PSL_d(\R)$ is $(p,q,r)$-hyperconvex, for every triple with $p+q=r.$ The equivariant boundary map $\xi:\bord\G\to\cal F(\R^d)$ has property (H).
\end{thm}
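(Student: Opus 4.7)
\medskip

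\noindent\textbf{Proof proposal.} The plan is to argue by connectedness of $\hitchin_d(S)$, combining the analysis already developed in the paper for the Fuchsian locus with an openness--closedness argument. Since the Hitchin component is connected and contains a Fuchsian representation, it suffices to show that the subset of $\hitchin_d(S)$ consisting of representations that are $(p,q,r)$-hyperconvex for every triple with $p+q=r$ and whose boundary map satisfies property (H) is nonempty, open, and closed.

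\emph{Nonemptiness at Fuchsian points.} For a Fuchsian representation $\rho = \iota_d \circ \rho_0$ with $\rho_0:\pi_1 S \to \PSL_2(\R)$ uniformizing, the boundary map factors as $\zeta \circ \xi_{\rho_0}$, where $\zeta$ is the Veronese embedding of $\P(\R^2)$ into $\cal F(\sym^{d-1}(\R^2))$. Proposition~\ref{VeronesseHyperconvex} gives, for every splitting $p+q+s=d$ and pairwise distinct $x,y,z\in\P(\R^2)$, the equality $\zeta^p(x)\oplus \zeta^q(y)\oplus \zeta^s(z)=\R^d$; applied with $s=d-r=d-p-q$ this is precisely $(p,q,r)$-hyperconvexity. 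To obtain property (H) one checks the analogous direct-sum decomposition involving $\zeta^{k+1}(z)\cap\zeta^{n-k}(x)$, which is a linear-algebraic computation in the symmetric power, exploiting the transitivity of $\PSL_2(\R)$ on triples.

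\emph{Openness.} The $(p,q,r)$-hyperconvexity condition is open by Proposition~\ref{FrenetOpen}, and property (H) can be rephrased as a finite family of transversality conditions of the same type (varying $k$ and permuting $x,y,z$); hence the intersection over all admissible parameters remains open in $\hom(\pi_1 S,\PSL_d(\R))$, and a fortiori in $\hitchin_d(S)$.

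\emph{Closedness, the main obstacle.} Let $\rho_n\to\rho$ in $\hitchin_d(S)$ with each $\rho_n$ satisfying property (H). By Corollary~\ref{bdry.continuous}, the flag maps $\xi_{\rho_n}$ converge uniformly to $\xi_\rho$, and since Hitchin representations are $\{\sroot_k\}$-Anosov for every $k$, pairwise transversality of $\xi_\rho$ is automatic. The delicate point is that at a degenerate triple $(x,y,z)$ the subspace $(\xi^p_\rho(x)\oplus \xi^q_\rho(y))\cap \xi^{d-r}_\rho(z)$ could jump in dimension. The plan is to rule this out using the dynamics: choose $\alpha\in\pi_1 S$ whose attracting and repelling fixed points are close to $x$ and $z$ and iterate; by equivariance and the contraction properties of Cartan attractors (Proposition~\ref{p.bdry}, Lemma~\ref{l.dominationattractor}), one can push the hypothetical intersection to a nontrivial incidence between fixed flags of loxodromic elements, forcing a reducibility or a flag collision that contradicts the Borel Anosov property of $\rho$. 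The cleanest incarnation of this step is Labourie's positivity/Frenet argument, which upgrades pointwise hyperconvexity on a dense set (fixed points of proximal elements) to the full triple, using that the cross-ratio-type invariants computed from $\xi_{\rho_n}$ never vanish and pass to strictly positive values in the limit under Anosov domination.

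\emph{Expected main obstacle.} The delicate step is the closedness: on general grounds a limit of transverse triples can become non-transverse, and one needs the global rigidity coming from the Borel Anosov assumption together with the connectedness of the Hitchin component. Once this is handled, the two statements of the theorem are equivalent packagings of the same list of transversality conditions, so property (H) follows from the strong form of $(p,q,r)$-hyperconvexity for all $p+q=r$ established along the way.
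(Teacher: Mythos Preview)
The paper does not prove this theorem; it is quoted as a result of Labourie \cite{Labourie-Anosov} and used as input for Proposition~\ref{apendix}. So there is no ``paper's proof'' to compare against. That said, your proposal has two genuine gaps.

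\textbf{Closedness.} Your sketch of the closedness step is circular. You write that ``the cleanest incarnation of this step is Labourie's positivity/Frenet argument,'' but that argument \emph{is} the proof of the theorem you are trying to establish. Labourie does not prove hyperconvexity by an open--closed argument on $\hitchin_d(S)$; he constructs the Frenet curve directly from the Anosov dynamics and shows it satisfies the required transversality. The only closedness result available in the present paper is Proposition~\ref{FrenetClosed}, and that is specific to $(1,1,2)$-hyperconvexity: it uses in an essential way that the stereographic projection lands in a \emph{circle}, so that the intermediate value theorem applies. For general $(p,q,r)$ with $p+q=r$ no such one-dimensional target is available, and your dynamical sketch (pushing a hypothetical intersection by a loxodromic element) does not produce a contradiction: the Borel Anosov property only gives pairwise transversality of flags, not the triple transversality you need.

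\textbf{Property (H) is not a repackaging of hyperconvexity.} Your final paragraph asserts that property (H) and $(p,q,r)$-hyperconvexity for all $p+q=r$ are ``equivalent packagings of the same list of transversality conditions.'' They are not. Property (H) requires that
\[
\xi^{k+1}(y)+\big(\xi^{k+1}(z)\cap \xi^{d-k}(x)\big)+\xi^{d-k-2}(x)=\R^d,
\]
which involves the \emph{line} $\xi^{k+1}(z)\cap \xi^{d-k}(x)$ and asks that it be transverse to $\xi^{k+1}(y)+\xi^{d-k-2}(x)$. This is a genuinely different condition from any instance of $(p,q,r)$-hyperconvexity, which only concerns sums of the form $\xi^p(x)\oplus\xi^q(y)$ against $\xi^{d-r}(z)$. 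Indeed, the paper uses property (H) precisely because it gives hyperconvexity of the \emph{exterior powers} $\wedge^p\rho$ (Proposition~\ref{r.propH}), something that hyperconvexity of $\rho$ alone does not yield. So even if your open--closed argument went through for hyperconvexity, property (H) would still require separate work.
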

Thus, one concludes the following for deformations of the exterior powers.
\begin{prop}\label{apendix}
Let $\rho\in\hitchin_d(S)$ and consider any $k\in\lb1,d-1\rb.$ Then any weakly irreducible $\{\sroot_1,\sroot_2\}$-Anosov representation $\eta:\pi_1S\to\PSL(\wedge^k\R^d)$ connected by weakly irreducible $\{\sroot_1,\sroot_2\}$-Anosov representations to $\wedge^k\rho$ is $(1,1,2)$-hyperconvex and consequently verifies: \begin{itemize}
\item[-] has $\class^1$-limit set in $\P(\wedge^k\R^	d),$
\item[-] the exponential growth rate $h^{\sroot_1}_\eta=1.$
\end{itemize}
\end{prop}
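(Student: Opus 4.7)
The plan is to show that $\wedge^k\rho$ is itself weakly irreducible and $(1,1,2)$-hyperconvex, and then to transport $(1,1,2)$-hyperconvexity along the given path by a clopen argument.

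For the first step I would invoke Proposition \ref{r.propH} applied to $\rho$ with $p=k$ and $s=k+1$, whose conclusion is exactly that $\wedge^k\rho$ is $(1,1,2)$-hyperconvex. Hitchin representations are $\{\sroot_j\}$-Anosov for every $j\in\lb 1,d-1\rb$ by Labourie, which supplies the Anosov hypothesis. The gap hypothesis of Proposition \ref{r.propH} becomes automatic when $s=p+1$, as remarked in the paper immediately after that proposition. The transversality hypothesis reads
\[
\xi^k(x)+\bigl(\xi^k(z)\cap \xi^{d-k+1}(y)\bigr)+\xi^{d-k-1}(y)=\R^d,
\]
and matching this to Labourie's property (H) in Theorem \ref{HitchinHyperconvex}, via the substitution $k'\mapsto k-1$ in (H) and the swap $x\leftrightarrow y$, is the one mildly delicate bookkeeping step. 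Weak irreducibility of $\wedge^k\rho$ follows because $\rho(\pi_1 S)$ is Zariski dense in $\PSL_d(\R)$ and $\wedge^k$ is an irreducible representation of $\PSL_d(\R)$.

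Next, since $\bord\pi_1S$ is a circle, Proposition \ref{FrenetClosed} asserts that $(1,1,2)$-hyperconvexity is closed inside the space of weakly irreducible $\{\sroot_1,\sroot_2\}$-Anosov representations $\pi_1S\to\PSL(\wedge^k\R^d)$, while Proposition \ref{FrenetOpen} gives openness. Hence along the given path from $\wedge^k\rho$ to $\eta$ inside this space, the subset of $(1,1,2)$-hyperconvex representations is clopen and contains $\wedge^k\rho$, so it is the whole path; in particular $\eta$ is $(1,1,2)$-hyperconvex.

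The two bulleted conclusions then follow immediately: the $\class^1$ regularity of $\xi^1_\eta(\bord\pi_1S)\subset\P(\wedge^k\R^d)$ is Proposition \ref{diff} applied with $r=2$ (using that $\bord\pi_1S$ is a $1$-sphere), and the identity $h^{\sroot_1}_\eta=1$ is Corollary \ref{h=1hyper}. Beyond the indexing match with property (H) I do not foresee any real obstacle.
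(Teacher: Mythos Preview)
Your argument is essentially the paper's own: establish $(1,1,2)$-hyperconvexity of $\wedge^k\rho$ via Proposition~\ref{r.propH} and Property~(H), propagate along the path by the clopen combination of Propositions~\ref{FrenetOpen} and~\ref{FrenetClosed}, and conclude with Proposition~\ref{diff} and Corollary~\ref{h=1hyper}. The paper simply cites Corollary~\ref{clopen} for the clopen step and Corollary~\ref{2FrenetHffDim} for the entropy computation.

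One correction: your claim that $\rho(\pi_1S)$ is Zariski dense in $\PSL_d(\R)$ is false in general---the Fuchsian locus has Zariski closure the principal $\PSL_2(\R)$, and other Hitchin representations can land in $\PSp(2n,\R)$ or $\PSO(n,n+1)$. In fact $\wedge^k\rho$ need not be weakly irreducible: for Fuchsian $\rho$ and $2\le k\le d-2$ the curve $\xi^1_{\wedge^k\rho}(\bord\pi_1S)$ lies in the top irreducible $\SL_2$-summand of $\wedge^k\R^d$, which is a proper subspace. This is harmless, however: $(1,1,2)$-hyperconvexity is open in the full $\hom$-space (Proposition~\ref{FrenetOpen}), hence holds on an initial segment of the path regardless of whether $\wedge^k\rho$ itself is weakly irreducible, and Proposition~\ref{FrenetClosed} then carries it along the remainder, where weak irreducibility is assumed. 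So simply drop the weak-irreducibility claim for $\wedge^k\rho$; it is neither true nor needed.
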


\begin{proof} 
Observe that for every $s$, the representation $\wedge^s\rho$ is $\{\sroot_1,\sroot_2\}$-Anosov, furthermore Proposition \ref{r.propH} ensures that  $\wedge^s\rho$ is $(1,1,2)$-hyperconvex as the boundary curve satisfies Property (H) (for $k=s-1$). The result then follows from Proposition \ref{diff},  Corollary \ref{2FrenetHffDim} and Corollary \ref{clopen}.
\end{proof}

When no deformation is applied one recovers the following result from Potrie-S. \cite[Theorem B]{exponentecritico}.

\begin{thm}[{Potrie-S. \cite{exponentecritico}}] For every $\rho\in\hitchin_d(S)$ and every $k\in\lb1,d-1\rb$ one has $h^{\sroot_k}_\rho=1.$ 
\end{thm}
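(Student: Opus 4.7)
The plan is to reduce the statement for $\sroot_k$ to the statement for $\sroot_1$ applied to the exterior power representation $\wedge^k\rho:\pi_1 S\to\PSL(\wedge^k\R^d)$, and then to invoke the machinery already developed for $(1,1,2)$-hyperconvex representations of surface groups.

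First I would establish the elementary identity of Dirichlet series $\Phi^{\sroot_k}_\rho = \Phi^{\sroot_1}_{\wedge^k\rho}$. This comes from the observation that the singular values of $\wedge^k g$ are the decreasing sequence of products $\sigma_{i_1}(g)\cdots\sigma_{i_k}(g)$ with $i_1<\cdots<i_k$, so that
$$
\frac{\sigma_2(\wedge^k g)}{\sigma_1(\wedge^k g)} \;=\; \frac{\sigma_{k+1}(g)}{\sigma_k(g)}.
$$
Summing over $\gamma\in\pi_1 S$ and passing to critical exponents gives $h^{\sroot_k}_\rho = h^{\sroot_1}_{\wedge^k\rho}$.

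Next I would check that $\wedge^k\rho$ belongs to the class of representations for which the first critical exponent has already been computed. By Labourie's Theorem~\ref{HitchinHyperconvex}, the Hitchin representation $\rho$ is $\{\sroot_{k-1},\sroot_k,\sroot_{k+1}\}$-Anosov and its flag curve satisfies property (H). Proposition~\ref{r.propH} applied with $p=k$ and $s=k+1$ then yields that $\wedge^k\rho$ is $\{\sroot_1,\sroot_2\}$-Anosov and $(1,1,2)$-hyperconvex: the singular-value gap hypothesis is automatic since $s=p+1$, and the transversality hypothesis is precisely property (H) at level $k-1$. Weak irreducibility of $\wedge^k\rho$ is not strictly needed here, but follows from Zariski density of the Hitchin representation in $\PSL_d(\R)$.

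Finally, since $\bord\pi_1 S\cong\mathbb S^1$ and $\wedge^k\rho$ is $(1,1,2)$-hyperconvex, Corollary~\ref{h=1hyper} (equivalently, the combination of Proposition~\ref{diff} and Corollary~\ref{2FrenetHffDim}) gives
$$
h^{\sroot_1}_{\wedge^k\rho}=\Hff\bigl(\xi^1_{\wedge^k\rho}(\bord\pi_1 S)\bigr)=1,
$$
the second equality because the limit set is a $\class^1$-embedded circle in $\P(\wedge^k\R^d)$. Combining with the reduction in the first step yields $h^{\sroot_k}_\rho=1$, as desired.

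The only step that requires genuine input beyond bookkeeping is verifying that $\wedge^k\rho$ is $(1,1,2)$-hyperconvex, and this reduces to identifying the transversality condition of Proposition~\ref{r.propH}(ii) with property~(H) of the Frenet curve of $\rho$; once this identification is made the rest of the argument is a direct assembly of results already in the paper.
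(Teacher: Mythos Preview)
Your proposal is correct and follows essentially the same route as the paper: the theorem is presented as the special case ``no deformation applied'' of Proposition~\ref{apendix}, whose proof uses exactly your ingredients---Labourie's property~(H), Proposition~\ref{r.propH} with $p=k$ and $s=k+1$ to get $(1,1,2)$-hyperconvexity of $\wedge^k\rho$, then Corollary~\ref{h=1hyper}---together with the singular-value identity $\sigma_2(\wedge^k g)/\sigma_1(\wedge^k g)=\sigma_{k+1}(g)/\sigma_k(g)$ giving $h^{\sroot_k}_\rho=h^{\sroot_1}_{\wedge^k\rho}$. One minor aside: your remark that weak irreducibility ``follows from Zariski density of the Hitchin representation in $\PSL_d(\R)$'' is not quite accurate (Hitchin representations need not be Zariski dense), but as you note this is irrelevant to the argument.
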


\subsection{Hitchin representations in other groups}\label{HitchinSO} More generally, let $G_\R$ be a simple real-split Lie group. These have been classified, i.e. up to finite coverings $G_\R$ is a group in the following list: $ \PSL_d(\R),\PSp(2n,\R),\SO(n,n+1)\SO(n,n),$ or it is the split real forms of the exceptional groups $\sf{F}_{4,\split{}}, \sf G_{2,\split{}}$,$E_{6,\split{}}$,$ E_{7,\split{}}$ and $E_{8,\split{}}.$

The work of Kostant \cite{kostant} provides a subalgebra $\iota_{\frak g_\R}:\frak{sl}_2(\R)\to\frak g_\R,$ unique up to conjugation and called \emph{the principal $\frak{sl}_2$},  such that the centralizer of $\iota_{\frak g_\R}\big(\begin{smallmatrix} 0&1\\ 0&0\end{smallmatrix}\big)$ has minimal dimension. Denote by $\iota_{G_\R}:\PSL_2(\R)\to G_\R$ the induced morphism. For example, $\iota_{\PSL_d(\R)}=\iota_d$ is the (unique up to conjugation) irreducible representation of $\SL_2(\R)$ in $\R^d$ defined in Subsection \ref{SL2}.

Let $S$ be a closed connected genus $\geq2$ surface. The \emph{Hitchin component} of $G_\R$ is the connected component of the character variety $\frak X(\pi_1S,G_\R)$ that contains a \emph{Fuchsian representation}, i.e. a representation that factors as $$\pi_1S\to\PSL_2(\R)\overset{\iota_{G_\R}}{\longrightarrow} G_\R,$$ where the first arrow is a the choice of a hyperbolic metric on $S.$ We will denote this connected component  by $\hitchin(S,G_\R)$ and an element $\rho\in\hitchin(S,G_\R)$ is called a \emph{Hitchin representation}.

\begin{obs}[Canonical inclusions]\label{inclusions} By construction, one sees that the irreducible representation $\iota_d:\SL_2(\R)\to\SL_d(\R)$ factors, depending on the parity of $d$, as 
$$\begin{array}{l}
\SL_2(\R)\overset{\iota_{\Sp(2n,\R)}}{\longrightarrow}\Sp(2n,\R)\to\SL_{2n}(\R),\\
\SL_2(\R)\overset{\iota_{\SO(n,n+1)}}{\longrightarrow}\SO(n,n+1)\to\SL_{2n+1}(\R),\\
\SL_2(\R)\overset{\sf{G}_{2,\R}}{\longrightarrow}\sf{G}_{2,\R}\to\SO(3,4)\to\SL_7(\R),
\end{array}
$$
where, in each case, the first arrows is the principal inclusion $\iota_{G_\R} $.
Thus 
$$\begin{array}{l}
\hitchin\big(S,\PSp(2n,\R)\big)\subset\hitchin_{2n}(S),\\
\hitchin\big(S,\PSO(n,n+1)\big)\subset\hitchin_{2n+1}(S),\\
\hitchin(S,\sf \P G_{2,\R})\subset\hitchin(S,\PSO(3,4)\big)\subset\hitchin_7(S).
\end{array}$$
On the other hand if we consider the embedding $\SO(n-1,n)\subset\SO(n,n)$ as the stabilizer of a positive definite line. The morphism $\iota_{\SO(n,n)}$ is the composition of $\iota_{\SO(n-1,n)}$ with such inclusion. Hence the induced action of $\iota_{\SO(n,n)}$
on $\R^{n,n}$ decomposes in $\SL_2(\R)$-irreducible modules as $$\iota_{2n-1}\oplus\iota_1,$$
and in particular $\hitchin\big(S,\PSO(n,n)\big)$ is not a subset of a $\PSL_{2n}(\R)$-Hitchin component. 
\end{obs}

It is known to experts that every Hitchin representation is Anosov with
respect to the minimal parabolic of $G,$ see for example Fock-Goncharov \cite{FG}.

Recall that the simple roots of the group $\PSO(n,n)$ are given by
$\{\sroot_1,\ldots,\sroot_{n-1}, \sf{b}_n\}$ where, as above,
$\sroot_i(x)=x_i-x_{i+1}$ and $\sf{b}_n$ is defined by
$$\sf{b}_n(x)=x_{n-1}+x_n.$$ Thus every representation $\rho\in
\hitchin\big(S,\PSO(n,n)\big)$, when considered as a representation in
$\SL_{2n}(\R)$ under the canonical inclusion, is $\{\sroot_p\}$-Anosov
for every $p\leq n-1$. 

Furthermore it is easy to check that 
the $n$-th exterior power $$\wedge^n:\PSO(n,n)\to\PSL(\wedge^n\R^{2n})$$ splits as the direct sum of two irreducible
$\PSO(n,n)$-modules, which have respectively $\sroot_{n-1}$ and $\sf b_n$ as first root (see for example Danciger-Zhang \cite{DZ}).
In particular we obtain the following result, independently announced by
Labourie \cite{Labourieh1}.

\begin{thm}\label{Hitn,n}
For every $\rho\in\hitchin(S,\PSO(n,n))$ and every $p\leq n-2$ the
exterior power $\wedge^p\rho$  is $(1,1,2)$-hyperconvex, and the same
holds for each one of the two irreducible submodules of
$\wedge^{n}\rho$. Thus the associated limit curve of $\rho$ on the
$p$-Grassmannian  for $p\leq n-2,$ as well as each one of the two limit
curves in the $n$-Grassmannian, is $\class^{1}$ and one has
$h^{\sroot}_\rho=1$ for every simple root $\sroot$.
\end{thm}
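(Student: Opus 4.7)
The plan is to reduce both claims in the theorem to the $(1,1,2)$-hyperconvexity of suitable representations derived from $\rho$: once this is established, Proposition \ref{diff} yields $\class^{1}$ regularity of the corresponding limit curves (since $\bord\pi_1S$ is a circle and the relevant $r$ equals $2$), and Corollary \ref{2FrenetHffDim} combined with the fact that a $\class^{1}$ circle has Hausdorff dimension $1$ gives $h^{\sroot_1}_\eta=1$ for the auxiliary representation $\eta$. Because $\sroot_1\circ\nu\circ\wedge^p=\sroot_p\circ\nu$ on the Cartan subalgebra, this translates into $h^{\sroot_p}_\rho=1$ for $p\le n-2$; the analogous identification for the half-spin factors $V_{\pm}$ gives $h^{\sroot_{n-1}}_\rho=h^{\sf b_n}_\rho=1$, exhausting the simple roots of $\PSO(n,n)$.

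For the first statement, with $p\le n-2$, I would apply Proposition \ref{r.propH} to $\rho$ viewed as a representation into $\PSL_{2n}(\R)$, with $s=p+1$. The Anosov hypothesis $\{\sroot_{p-1},\sroot_p,\sroot_{p+1}\}$ holds because these are all simple roots of $\PSO(n,n)$ (using $p+1\le n-1$), and Hitchin representations are Anosov with respect to the minimal parabolic. Condition~(1) of Proposition~\ref{r.propH} is automatic when $s=p+1$, so what remains is Labourie's Property~(H) at index $p$: for every pairwise distinct triple $x,y,z\in\bord\pi_1S$, the sum $x^p_\rho+(z^p_\rho\cap y^{2n-p+1}_\rho)+y^{2n-p-1}_\rho$ is direct.

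Property~(H) is immediate at the Fuchsian representation $\rho_0$: by Remark \ref{inclusions} the image of $\rho_0$ lies in $\PSO(n-1,n)\subset\PSO(n,n)$, and $\hitchin(S,\PSO(n-1,n))\subset\hitchin_{2n-1}(S)$, so Theorem~\ref{HitchinHyperconvex} applies to give Property~(H) at index $p$ in $\PSL_{2n-1}$; the boundary curve of $\rho_0$ lies in the invariant hyperplane $\R^{2n-1}\subset\R^{2n}$, and the extra trivial summand does not interfere with the triple intersection, so Property~(H) transfers to $\rho_0$ as a $\PSL_{2n}$-representation. The main obstacle is extending Property~(H) to the entire Hitchin component. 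I would argue that Property~(H) is open in the space of representations (by continuity of the boundary map on the compact quotient $\bord^{(3)}\pi_1 S/\pi_1S$) and invoke a closedness statement coming from the positivity of Hitchin representations in $\PSO(n,n)$: either by adapting Labourie's Frenet-curve argument to the $\PSO(n,n)$ setting, or by using the Fock--Goncharov/Guichard--Labourie--Wienhard positive structure on the Hitchin component, the required transversality should be preserved along every continuous path, hence hold on the entire connected component through $\rho_0$.

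For the second statement, the splitting $\wedge^n\R^{2n}=V_+\oplus V_-$ is preserved by $\PSO(n,n)$, and the first simple-root gaps of $\pi_\pm\colon\PSO(n,n)\to\SL(V_\pm)$ correspond to the simple roots $\sroot_{n-1}$ and $\sf b_n$ respectively. For each factor I would run an argument parallel to the one above: $\pi_\pm\circ\rho$ is $\{\sroot_1,\sroot_2\}$-Anosov because $\rho$ is Anosov with respect to all simple roots of $\PSO(n,n)$; at the Fuchsian point $(1,1,2)$-hyperconvexity follows by reduction to Labourie's theorem via the factorization through $\PSO(n-1,n)$ and a direct analysis of the half-spin restriction; and extension to the full component uses the same positivity or Frenet-type argument. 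The crux in all cases is therefore precisely the verification of the transversality hypothesis of Proposition \ref{r.propH} (or its analogue for $V_\pm$) along the whole Hitchin component, which is the key technical step beyond the openness of hyperconvexity recorded in Proposition \ref{FrenetOpen}.
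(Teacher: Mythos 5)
Your verification at the Fuchsian point is a legitimate alternative to the paper's: instead of computing highest weights of $\wedge^k(\iota_{2n-1}\oplus\iota_1)$ to check $2$-coherence and invoke Proposition~\ref{2-coherent}, you reduce to Labourie's Property~(H) in $\PSL_{2n-1}$ via the $\PSO(n-1,n)$ factorization and feed this into Proposition~\ref{r.propH}. That much is fine.

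The genuine gap is in propagating hyperconvexity to the whole Hitchin component. You propose to argue that Property~(H) is open and then to ``invoke a closedness statement coming from the positivity of Hitchin representations in $\PSO(n,n)$,'' either by adapting Labourie's Frenet-curve argument or by using the Fock--Goncharov/Guichard--Labourie--Wienhard positive structure. This is exactly the kind of hard, group-by-group argument the paper is engineered to avoid. The closedness statement is already in the paper, and it does not use positivity: Proposition~\ref{FrenetClosed} shows that for a group $\G_S$ with circle boundary, $(1,1,2)$-hyperconvexity is \emph{closed} among real, weakly irreducible, $\{\sroot_1,\sroot_2\}$-Anosov representations, via a short intermediate-value-theorem argument on the stereographic projection. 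Combined with the openness in Proposition~\ref{FrenetOpen}, this yields Corollary~\ref{clopen}: any deformation of a $2$-coherent $\pi\circ\rho$ through weakly irreducible $\{\sroot_1,\sroot_2\}$-Anosov representations stays $(1,1,2)$-hyperconvex. The paper applies this corollary to $\wedge^p\rho$ (and to the two half-spin factors), not to $\rho$ itself, so one works with hyperconvexity of the auxiliary projective representation throughout rather than trying to propagate Property~(H) of $\rho$ directly.

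You also leave unaddressed a hypothesis needed for this open-closed argument to run: weak irreducibility of $\wedge^p\rho$ and of the two half-spin representations along the entire component. This is why the paper cites Guichard's classification of Zariski closures of Hitchin representations -- it guarantees that the relevant limit curves never fall into a proper subspace. Without securing that, the closedness statement you need simply does not apply.
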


\begin{proof}
Considering a diagonalizable element in $\SL_2(\R)$ as in the proof of
Lemma \ref{Principal2coherent} we obtain that
$\wedge^k(\iota_{2n-1}\oplus\iota_1)$ is 2-coherent for every
$k\in\lb1,n-2\rb$. Similarly a direct computation shows that the 5
highest weights  of $\wedge^n(\iota_{2n-1}\oplus\iota_1)$ are
\begin{itemize}
\item[-]$\chi_{\wedge^n(\iota_{2n-1}\oplus\iota_1)}=\chi_{\wedge^n(\iota_{2n-1}\oplus\iota_1)}^{(2)}=2n+\ldots+
2=n(n+1),$
\item[-]$\chi_{\wedge^n(\iota_{2n-1}\oplus\iota_1)}^{(3)}=\chi_{\wedge^n(\iota_{2n-1}\oplus\iota_1)}^{(4)}=2n+\ldots+
4=n(n+1)-2$
\item[-]$\chi_{\wedge^n(\iota_{2n-1}\oplus\iota_1)}^{(5)}=2n+\ldots+
6=n(n+1)-4$
\end{itemize}
  and each of the first four weights appears with multiplicity one in
each irreducible $\SO(n,n)$-submodules of $\wedge^n\R^{2n}$. We deduce
that the restriction of the representation
$\wedge^n(\iota_{2n-1}\oplus\iota_1)$ to each of the two submodules is
also $(1,1,2)$-hyperconvex.
The result is then a consequence of Corollary \ref{clopen} together with
the classification of Zariski closures due to Guichard \cite{OlivierZC}.
\end{proof}

\begin{obs}
Danciger-Zhang \cite{DZ} recently proved that when a representation
$\rho\in\hitchin(S,\PSO(n,n))$ is regarded as a representation in
$\PSL_{2n}(\R)$, it is, instead, never $\{\sroot_n\}$-Anosov and the
limit curve in the $n-1$-Grassmannian is never $\class^1$.
\end{obs}
%
%It is known to experts that every Hitchin representation is Anosov with respect to the minimal parabolic of $G$. 
%
%This implies that every representation $\rho\in \hitchin\big(S,\PSO(n,n)\big)$, when considered as a representation in $\SL_{2n}(\R)$ under the canonical inclusion, is $\{\sroot_p\}$-Anosov for every $p$ smaller than $n$.
%In particular we obtain the following result, independently announced by Labourie \cite{Labourieh1}.
%
%\begin{thm}\label{Hitn,n}
%For every $\rho\in\hitchin(S,\PSO(n,n))$ and every $k\in\lb1,n-2\rb$ the exterior power $\wedge^{k}\rho$ is $(1,1,2)$-hyperconvex, in particular the limit curve of $\rho$ on the $k$-Grassmannian is $\class^{1}$ and one has $h^{\sroot_k}_\rho=1.$
%\end{thm}
%\begin{proof}
%Considering a diagonalizable element in $\SL_2(\R)$ as in the proof of Lemma \ref{Principal2coherent} we obtain that $\wedge^k(\iota_{2n-1}\oplus\iota_1)$ is 2-coherent for every $k\in\lb1,n-2\rb$. The result is then a consequence of Corollary \ref{clopen} together with the classification of Zariski closures due to Guichard \cite{OlivierZC}.
%\end{proof}
%
%\begin{obs}
%Danciger-Zhang \cite{DZ} recently proved that when a representation $\rho\in\hitchin(S,\PSO(n,n))$ is regarded as a representation in $\PSL_{2n}(\R)$, it is never $\{\sroot_n\}$-Anosov.
%\end{obs}
%

\bibliography{new}

\begin{thebibliography}{10}

\bibitem{PlanarSets}
B.~B{\'a}r{\'a}ny, M.~Hochman, and A.~Rapaport.
\newblock Hausdorff dimension of planar self-affine sets and measures.
\newblock https://arxiv.org/abs/1712.07353, 2017.

\bibitem{convexes1}
Y.~Benoist.
\newblock {Convexes divisibles $\textrm{I}$}.
\newblock In {\em {Algebraic groups and arithmetic}}, pages 339--374. Tata
  Inst. Fund. Res., 2004.

\bibitem{BG}
J.~Bochi and N.~Gourmelon.
\newblock {Some characterizations of domination}.
\newblock {\em Math. Z.}, 263:221--231, 2009.

\bibitem{BPS}
J.~Bochi, R.~Potrie, and A.~Sambarino.
\newblock {Anosov Representations and dominated splittings}.
\newblock To appear in \emph{J. Eur. Math. Soc.,}
  \url{https://www.ems-ph.org/journals/forthcoming.php?jrn=jems}, 2017.

\bibitem{bourdon}
M.~Bourdon.
\newblock {Structure conforme au bord et flot g{\'e}od{\'e}sique d'un
  $\textrm{CAT}(-1)$-espace.}
\newblock {\em Enseign. Math. (2)}, 41:63--102, 1995.

\bibitem{pressure}
M.~Bridgeman, R.~Canary, F.~Labourie, and A.~Sambarino.
\newblock {The pressure metric for Anosov representations}.
\newblock {\em Geom. Funct. Anal.}, 25(4):1089--1179, 2015.

\bibitem{BH}
M.R. Bridson and A.~Haefliger.
\newblock {\em Metric spaces of non-positive curvature}, volume 319.
\newblock Springer-Verlag, Berlin, 1999.

\bibitem{BrTi}
F.~Bruhat and J.~Tits.
\newblock Groupes r{\'e}ductifs sur un corps local, i. donn{\'e}es radicielles
  valu{\'e}es.
\newblock {\em Publ. Math. de l'I.H.E.S.}, 41:5--251, 1972.

\bibitem{BI}
M.~Burger and A.~Iozzi.
\newblock A measurable cartan theorem and applications to deformation rigidity
  in complex hyperbolic geometry.
\newblock {\em Pure Appl. Math. Q.}, 41:181--202, 2008.
\newblock Special Issue: In honor of Grigory Margulis.

\bibitem{nonconformal}
J.~Chen and Y.~Pesin.
\newblock {Dimension of non-conformal repellers: a survey}.
\newblock {\em Nonlinearity}, 23(4):93--114, 2010.

\bibitem{CDP}
M.~Coornaert, T.~Delzant, and A.~Papadopoulos.
\newblock {\em G{\'e}om{\'e}trie et th{\'e}orie des groupes: Les groupes
  hyperboliques de Gromov}.
\newblock Springer-Verlag, Berlin, 1990.

\bibitem{CP}
S.~Crovisier and R.~Potrie.
\newblock Introduction to partially hyperbolic dynamics.
\newblock In {\em School on Dynamical Systems}, Trieste, 2015. ICTP.

\bibitem{DGK18}
J.~Danciger, F.~Gu{\'e}ritaud, and F.~Kassel.
\newblock Proper affine actions for right-angled coxeter groups.
\newblock \url{https://arxiv.org/abs/1804.03132}.

\bibitem{DZ}
J.~Danciger and T.~Zhang.
\newblock Affine actions with hitchin linear part.
\newblock \url{https://arxiv.org/abs/1812.03930}.

\bibitem{dufloux-these}
L.~Dufloux.
\newblock {Dimension de Hausdorff des ensembles limites}.

\bibitem{HCSchottky}
L.~Dufloux.
\newblock {Hausdorff dimension of limit sets}.
\newblock {\em Geom. Dedic.}, 191:1--35, 2017.

\bibitem{FG}
V.~Fock and A.~Goncharov.
\newblock Moduli spaces of local systems and higher teichm\"uller theory.
\newblock {\em Publ. Math. de l'I.H.E.S.}, 103(1):1--211, 2006.

\bibitem{Gabai}
D.~Gabay.
\newblock Convergence groups are fuchsian groups.
\newblock {\em Ann. of Math.}, 136:447--510, 1992.

\bibitem{GMT}
O.~Glorieux, D.~Monclair, and N.~Tholozan.
\newblock Hausdorff dimension of limit set for projective anosov groups.
\newblock In preparation, 2018.

\bibitem{GGKW}
F.~Gu{\'e}ritaud, O.~Guichard, F.~Kassel, and A.~Wienhard.
\newblock {Anosov representations and proper actions}.
\newblock {\em Geom. \& Top.}, 21:485--584, 2017.

\bibitem{OlivierZC}
O.~Guichard.
\newblock Personal communication.

\bibitem{guichard}
O.~Guichard.
\newblock {Composantes de $\textrm{H}$itchin et repr{\'e}sentations
  hyperconvexes de groupes de surface.}
\newblock {\em J. Differential Geom.}, 80:391--431, 2008.

\bibitem{GW-Domains}
O.~Guichard and A.~Wienhard.
\newblock {Anosov representations: domains of discontinuity and applications}.
\newblock {\em Invent. Math.}, 190:357--438, 2012.

\bibitem{CoursH}
P.~Ha{\"\i}ssinsky.
\newblock G{\'e}om{\'e}trie quasiconforme, analyse au bord des espaces
  m{\'e}triques hyperboliques et rigidit{\'e}s.

\bibitem{james}
J.~Humphreys.
\newblock {\em {Introduction to Lie algebras and representation theory}}.
\newblock $Springer-Verlag$, 1972.

\bibitem{JM}
D.~Johnson and J.~Millson.
\newblock Deformation spaces associated to compact hyperbolic manifolds.
\newblock In R.~Howe, editor, {\em Discrete groups in geometry and analysis},
  volume~67 of {\em {Progress in Mathematics}}, pages 48--106. Birkh{\"a}user,
  Boston, MA., 1987.

\bibitem{KLP-Morse}
M.~Kapovich, B.~Leeb, and J.~Porti.
\newblock {A Morse Lemma for quasigeodesics in symmetric spaces and euclidean
  buildings}.
\newblock \url{https://128.84.21.199/abs/1411.4176v1}, 2014.

\bibitem{KLP1}
M.~Kapovich, B.~Leeb, and J.~Porti.
\newblock {Morse actions of discrete groups on symmetric space}.
\newblock \url{http://front.math.ucdavis.edu/1403.7671}, 2014.

\bibitem{kostant}
Bertram Kostant.
\newblock The principal three-dimensional subgroup and the {B}etti numbers of a
  complex simple {L}ie group.
\newblock {\em Amer. J. Math.}, 81:973--1032, 1959.

\bibitem{Labourieh1}
F.~Labourie.
\newblock Entropy and affine actions for surfaces groups.
\newblock https://arxiv.org/abs/1908.00599.

\bibitem{Labourie-Anosov}
F.~Labourie.
\newblock {Anosov Flows, Surface Groups and Curves in Projective Space}.
\newblock {\em Invent. Math.}, 165:51--114, 2006.

\bibitem{ly1}
F.~Ledrappier and L.-S. Young.
\newblock {The metric entropy of diffeomorphisms: Part I: Characterization of
  measures satisfying Pesin's entropy formula}.
\newblock {\em Ann. of Math.}, 122(3):509--539, 1985.

\bibitem{LM}
D.~Lind and R.~Marcus.
\newblock {\em An introduction to symbolic dynamics and coding}.
\newblock Cambridge University Press, Cambridge, 1995.

\bibitem{Marg}
G.~Margulis.
\newblock {\em Discrete subgroups of semisimple Lie groupe}.
\newblock Springer Verlag, 1991.

\bibitem{exponentecritico}
R.~Potrie and A.~Sambarino.
\newblock {Eigenvalues and entropy of a $\textrm{Hitchin}$ representation}.
\newblock {\em Invent. Math.}, 209:885--925, 2017.

\bibitem{Po}
B.~Pozzetti.
\newblock Maximal representations of complex hyperbolic lattices into su(m, n).
\newblock {\em GAFA}, 41:1290--1332, 2015.

\bibitem{Quint-localFields}
J.-F. Quint.
\newblock C\^ones limites de sous-groupes discrets des groupes r\'eductifs sur
  un corps local.
\newblock {\em Transformation groups}, 7(3):247--266, 2002.

\bibitem{Quint-Divergence}
J.-F. Quint.
\newblock {Divergence exponentielle des sous-groupes discrets en rang
  sup{\'e}rieur}.
\newblock {\em Comment. Math. Helv.}, 77:563--608, 2002.

\bibitem{quint1}
J.-F. Quint.
\newblock {Mesures de $\textrm{P}$atterson-$\textrm{S}$ullivan en rang
  sup{\'e}rieur}.
\newblock {\em Geom. funct. anal.}, 12:776--809, 2002.

\bibitem{Samb}
M.~Sambarino.
\newblock A (short) survey on dominated splittings.
\newblock In J.A. de~la Pe{\~n}a, J.A. L{\'o}pez-Mimbela, M.~Nakamura, and
  J.~Petean, editors, {\em Mathematical congress of the Americas}, volume 656
  of {\em Contemporary Mathematics}, pages 149--183, 2016.

\bibitem{SteTre}
F.~Stecker and N.~Treib.
\newblock Domains of discontinuity in oriented flag manifolds.
\newblock \url{https://arxiv.org/abs/1806.04459}, 2018.

\bibitem{sullivan}
D.~Sullivan.
\newblock {The density at infinity of a discrete group of hyperbolic motions}.
\newblock {\em Publ. Math. de l'I.H.E.S.}, 50:171--202, 1979.

\bibitem{tukia}
P.~Tukia.
\newblock {Convergence groups and $\textrm{G}$romov's hyperbolic spaces}.
\newblock {\em New Zeland J. Math.}, 23, 1994.

\bibitem{yue}
C.~Yue.
\newblock {The ergodic theory of discrete isometry groups on manifolds of
  variable negative curvature}.
\newblock {\em Trans. of the A.M.S.}, 348(12):4965--5005, 1996.

\bibitem{ZZ}
T.~Zhang and A.~Zimmer.
\newblock Regularity of limit sets of $\textrm{A}$nosov representations.
\newblock In preparation.

\end{thebibliography}
\bibliographystyle{plain}

\bigskip

%\author{\vbox{\footnotesize\noindent 
%	Martin Bridgeman\\
%	Boston College \\ Department of Mathematics\\ 
%	Chestnut Hill, Ma 02467 Unites States of America\\
%	\texttt{bridgem@bc.edu}
%\bigskip}}

\author{\vbox{\footnotesize\noindent 
	Beatrice Pozzetti\\
Ruprecht-Karls Universit\"at Heidelberg\\ Mathematisches Institut, Im
Neuenheimer Feld 205, 69120 Heidelberg, Germany\\
	\texttt{pozzetti@mathi.uni-heidelberg.de}
\bigskip}}

\author{\vbox{\footnotesize\noindent 
	Andr\'es Sambarino\\
	Sorbonne Universit\'e \\ IMJ-PRG (CNRS UMR 7586)\\ 
	4 place Jussieu 75005 Paris France\\
	\texttt{andres.sambarino@imj-prg.fr}
\bigskip}}

\author{\vbox{\footnotesize\noindent 
	Anna Wienhard\\
	Ruprecht-Karls Universit\"at Heidelberg\\ Mathematisches Institut, Im
Neuenheimer Feld 205, 69120 Heidelberg, Germany\\ HITS gGmbH, Heidelberg Institute for Theoretical Studies\\ Schloss-Wolfsbrunnenweg 35, 69118 Heidelberg, Germany\\
	\texttt{wienhard@uni-heidelberg.de}
\bigskip}}

\end{document}